\newcommand{\N}{\mathbb{N}}
\newcommand{\R}{{\mathbb{R}}}
\newcommand{\C}{{\mathbb{C}}}
\newcommand{\dd}{{{\rm d}}}
\newcommand{\odd}{\overline{\dd}}
\newcommand{\cf}{\emph{cf.}}
\newcommand{\ie}{{\emph{i.e.}}}
\newcommand{\eg}{{\emph{e.g.}}}
\newcommand{\ov}{\overline}
\newcommand{\la}{\lambda}
\newcommand{\eps}{\varepsilon}
\newcommand{\essinf}{\operatorname*{ess \,inf}}
\newcommand{\Dom}{{\operatorname{Dom}}}
\newcommand{\Ran}{{\operatorname{Ran}}}
\renewcommand{\Re}{\operatorname{Re}}
\renewcommand{\Im}{\operatorname{Im}}
\newcommand{\dist}{\operatorname{dist}}
\newcommand{\sgn}{\operatorname{sgn}}
\newcommand{\supp}{\operatorname{supp}}
\newcommand{\Num}{\operatorname{Num}}
\newcommand{\loc}{\mathrm{loc}}
\newcommand{\BigO}{\mathcal{O}}
\newcommand{\Os}{{\operatorname{Os-}}}
\newcommand{\rad}{{\operatorname{rad}}}
\newcommand{\opH}{H}
\newcommand{\opwV}{\widehat V}
\newcommand{\opA}{A}
\newcommand{\opB}{B}
\newcommand{\opI}{I}
\newcommand{\opK}{K}
\newcommand{\opP}{P}
\newcommand{\opQ}{Q}
\newcommand{\opR}{R}
\newcommand{\opS}{S}
\newcommand{\opT}{T}
\newcommand{\opU}{U}
\newcommand{\opOP}{\operatorname{OP}}
\newcommand{\Core}{{\mathcal{C}}}
\newcommand{\Rplus}{\R_+}
\newcommand{\Rminus}{\R_-}
\newcommand{\CiR}{{C^{\infty}(\R)}}
\newcommand{\SchwR}{{\sS(\R)}}
\newcommand{\intR}{\int_{\R}}
\newcommand{\Rd}{\mathbb{R}^d}
\newcommand{\Lt}{{L^2}}
\newcommand{\LiR}{{L^{\infty}(\R)}}
\newcommand{\LiOm}{{L^{\infty}(\Omega)}}
\newcommand{\LiRplus}{{L^{\infty}(\Rplus})}
\newcommand{\LilocR}{{L^{\infty}_{\rm loc}(\R})}
\newcommand{\LilocRplusclosed}{{L^{\infty}_{\rm loc}\left(\overline{\Rplus}\right)}}
\newcommand{\CcR}{{C_c^{\infty}(\R)}}
\newcommand{\CcOm}{{C_c^{\infty}(\Omega)}}
\newcommand{\WttR}{{W^{2,2}(\R)}}
\newcommand{\Dt}{-\partial_x^2}
\newcommand{\Dtr}{-\partial_r^2}
\newcommand{\Dtp}{\partial_x^2}
\newcommand{\Nt}{-\partial_x}
\newcommand{\Ntp}{\partial_x}
\newcommand{\DD}{\Delta_{\rm D}}
\newcommand{\ls}{\lesssim 	}
\newcommand{\gs}{\gtrsim}
\theoremstyle{plain}
\newtheorem{theorem}{Theorem}[section]
\newtheorem{lemma}[theorem]{Lemma}
\newtheorem{proposition}[theorem]{Proposition}
\theoremstyle{definition}
\newtheorem{example}[theorem]{Example}
\newtheorem{asm-sec}[theorem]{Assumption}
\newcommand\cA{\mathcal A}
\newcommand\cB{\mathcal B}
\newcommand\cH{\mathcal H}
\newcommand\cI{\mathcal I}
\newcommand\cS{\mathcal S}
\newcommand\cX{\mathcal X}
\newcommand\sF{\mathscr F}
\newcommand\sS{\mathscr S}
\numberwithin{equation}{section}
\numberwithin{figure}{section}
\begin{document}
\title[Resolvent estimates for Schr\"{o}dinger operators]{Resolvent estimates for one-dimensional Schr\"{o}dinger operators with complex potentials}

\author{Antonio Arnal}

\address[Antonio Arnal]{Mathematical Sciences Research Centre, Queen's University Belfast, University Road, Belfast BT7 1NN, UK}

\email{aarnalperez01@qub.ac.uk}

\author{Petr Siegl}

\address[Petr Siegl]{Institute of Applied Mathematics, Graz University of Technology, Steyrergasse 30, 8010 Graz, Austria}

\email{siegl@tugraz.at}

\thanks{The authors would like to express their gratitude for helpful comments from an anonymous referee.}

\subjclass[2010]{34L40, 35P20, 47A10, 81Q12}

\keywords{Schr\"odinger operator, complex potential, pseudospectrum, resolvent estimate}

\date{\today}

\begin{abstract}
	We study one-dimensional Schr\"odinger operators $\opH = \Dt + V$ with unbounded complex potentials $V$ and derive asymptotic estimates for the norm of the resolvent, $\Psi(\la) := \| (\opH - \la)^{-1} \|$, as $|\la| \to +\infty$, separately considering $\la \in \Ran V$ and $\la \in \Rplus$. In each case, our analysis yields an exact leading order term and an explicit remainder for $\Psi(\la)$ and we show these estimates to be optimal. We also discuss several extensions of the main results, their interrelation with some aspects of semigroup theory and illustrate them with examples.
\end{abstract}

\maketitle

\section{Introduction}
\label{sec:intro}

The structure of the pseudospectrum of non-self-adjoint operators can be very non-trivial and in general unrelated to the location of the spectrum. This fact is well-known to be responsible for typical non-self-adjoint effects such as spectral instabilities or long-time semigroup bounds unrelated to the spectrum, see~\eg~  \cite{Trefethen-2005,Davies-2000-43,Davies-2007,Helffer-2013-book} for details.

For Schr\"odinger operators $H=-\Delta +V$ with complex potentials $V$, the pseudospectral analysis was initiated in the seminal paper of E.~B.~Davies, \cf~\cite{Davies-1999-200}, where lower estimates for the resolvent norm inside the numerical range of $H$, $\Num(H)$, were obtained by a semi-classical pseudomode construction. The latter was subsequently generalised: in the semi-classical case in particular in \cite{Zworski-2001-129,Dencker-2004-57} and in the non-semi-classical one in \cite{Krejcirik-2019-276,Arifoski-2020-52,KREJCIRIK2022109440,Duc-2022}.

The upper estimates of the resolvent norm at the boundary of $\Num(H)$ were first obtained by L.~Boulton in \cite{Boulton-2002-47} for the quadratic potential. This work was followed up with several semi-classical generalisations in particular in \cite{Pravda-Starov-2006-73,Dencker-2004-57,BordeauxMontrieux-2013,Sjoestrand-2009,Bellis-2018-9,Bellis-2019-277,Henry-2014,Almog-2016-48} and also in \cite{Dondl-2016} based on semigroup compactness or known behaviour of spectral projections.

In this paper, we study the behaviour of the resolvent norm at the boundary of $\Num(H)$ for \emph{non-semi-classical} one-dimensional Schr\"{o}dinger operators acting in $L^2(\Rplus)$ or in $L^2(\R)$ for a wide class of unbounded complex potentials $V$ ranging from iterated log functions to super-exponential ones (which are not accessible by previously used methods). 

Our assumptions on $V$ are compatible with those in \cite{Krejcirik-2019-276} where lower resolvent norm estimates inside $\Num(H)$ were obtained. More precisely, restricting ourselves in this section to purely imaginary $V$, we assume that $\Im V$ is eventually increasing, unbounded at infinity and that the conditions (reflecting the growth of $\Im V$)
\begin{equation}\label{eq:nu.intro}
	\Im V'(x) = \BigO(\Im V(x) x^\nu), \quad \Im V''(x) = \BigO(\Im V'(x) x^\nu),  \qquad x \to + \infty,
\end{equation}
with some $\nu \geq -1$, are satisfied, see Assumption~\ref{asm:iR} for details. Moreover, the condition 
\begin{equation}\label{eq:upsilon.intro}
	\Upsilon(x) := \frac{x^{\nu}}{\Im V'(x)^{\frac13}} = o(1), \qquad x \to + \infty,
\end{equation}
is related to the separation property of the domain of $H$, see Sub-section~\ref{sssec:rem.asm.iR}, and the quantity $\Upsilon$ naturally enters the remainders in the derived asymptotic formulas (similarly to what happens \eg~for diverging eigenvalues in domain truncations in \cite{Semoradova-toappear} or for asymptotics of eigenfunctions in \cite{MiSiVi-2020}). 

It was established in \cite{Krejcirik-2019-276} that $\|(H-\la)^{-1}\|$ diverges as the spectral parameter $\la = a+ i b$ goes to infinity along a set of admissible curves determined by the potential. In particular, for operators in $L^2(\Rplus)$ the restriction on admissible curves is given by (with $a, b \in \Rplus$)
\begin{equation}\label{eq:curves.intro}
	b^{\frac23} x_b^{\frac23 \nu} \ls a \ls b^2 x_b^{-4 \nu - 4 \eps - 2}
\end{equation}
where $x_b>0$ is the turning point of $\Im V$, determined by $\Im V(x_b)=b$, $\nu$ is as defined in \eqref{eq:nu.intro} and $\eps > 0$ is arbitrarily small. Except for the case of monomial potentials, where scaling can be used to rewrite $H$ in semi-classical form, it was left as an open question whether the restrictions~\eqref{eq:curves.intro} are optimal. Our main results allow us in particular to answer this question in the affirmative (with additional assumptions on $V$ for the second restriction in \eqref{eq:curves.intro}, see Subsection~\ref{ssec:optimality.pseudomodes}).

Our first result (Theorem~\ref{thm:iR}), specialised for purely imaginary potentials here, provides a two-sided estimate for the norm of the resolvent along the imaginary axis for operators on the half-line and it includes an exact leading order term and an explicit remainder estimate. Namely, 
\begin{equation}\label{eq:iR.intro}
	\|(\opH - i b)^{-1} \| = \| \opA^{-1} \| \left( \Im V'(x_b) \right)^{-\frac{2}{3}} \left( 1 + \BigO \left(\Upsilon(x_b) \right) \right), \quad b \to +\infty,
\end{equation}
where $\opA = -\partial_x^2 + i x$ is the complex Airy operator in $L^2(\R)$ (see Sub-section~\ref{ssec:Airy.prelim}). In Section~\ref{sec:cor}, we further explain how these results extend to operators in $L^2(\R)$ as well as to multi-dimensional operators with radial potentials (see Sub-sections~\ref{ssec:iR.R} and \ref{ssec:rad}). Moreover, in Sub-section~\ref{ssec:s-c} we indicate how our strategy can be used in a semi-classical case where the problem substantially simplifies as only local properties of $V$ are needed (similarly to the pseudomode construction in \cite{Krejcirik-2019-276}). In Sub-section~\ref{ssec:inside.Num}, we extend Theorem~\ref{thm:iR} (with $\Re V = 0$) to describe the behaviour of the norm of the resolvent along general curves $\la_b = a(b) + i b$ inside the numerical range
\begin{equation*}
	\|(\opH - \la_b)^{-1} \| = \| (\opA - \mu_b)^{-1} \| \left( \Im V'(x_b) \right)^{-\frac{2}{3}} (1 + o(1)), \quad b \to +\infty,
\end{equation*}
with $\mu_b = a (\Im V'(x_b))^{-\frac{2}{3}}$. Precise resolvent estimates for semi-classical operators were found in \cite{BordeauxMontrieux-2013}; in the special cases of the Davies operator and the imaginary cubic oscillator our construction allows us to recover those same curves (see the discussion for power-like potentials in Sub-section~\ref{ssec:example.power.V}).

An analogous result (Theorem~\ref{thm:R}) is derived for operators in $L^2(\R)$ when $\la = a \in \Rplus$ for a smaller class of smooth, even, purely imaginary potentials $V$ satisfying $\Im V \ge 0$ and
\begin{enumerate}[\upshape (i)]
	\item $\Im V$ is eventually increasing:
	\begin{equation*}
		\exists x_0 > 0, \quad \forall x > x_0, \quad \Im V'(x) > 0;
	\end{equation*}
	\item $\Im V$ is regularly varying:
	\begin{equation*}
		\exists \beta > 0, \quad \forall x > 0, \quad 	\lim_{t \to + \infty} W_t(x) = \omega_{\beta}(x),
	\end{equation*}
	where
	\begin{equation*}
		W_t(x):= \frac{\Im V(tx)}{\Im V(t)}, \quad  \omega_{\beta}(x):=|x|^{\beta}, \quad x \in \R;
	\end{equation*}
	\item $\Im V$ has controlled derivatives:
	\begin{equation*}
		\forall n \in \N, \quad \exists C_n>0, \quad |\Im V^{(n)}(x)| \le C_n \, \left(1 + \Im V(x)\right) \, \langle x \rangle^{- n}, \quad x \in \R.
	\end{equation*}
\end{enumerate}
Under these conditions the resolvent norm of the operator
\begin{equation*}
	\opH = \Dt + V,
	\quad \Dom(H) = W^{2,2}(\R) \cap \Dom(V),
\end{equation*}
satisfies
\begin{equation*}
	\|(\opH - a)^{-1} \| = \| \opA_{\beta}^{-1} \| \, (\Im V(t_a))^{-1} \left( 1 + \BigO \left( \iota(t_a) + (a^{\frac 12} t_a)^{-l_{\beta, \eps}} \right) \right), \quad a \rightarrow +\infty,
\end{equation*}
where $\opA_{\beta}$ is the generalised Airy operator
\begin{equation}
	\opA_\beta = \Nt + |x|^\beta, \quad 
	\Dom(\opA_{\beta}) = W^{1,2}(\R) \cap \Dom(|x|^\beta),
\end{equation}
$t_a$ solves the equation $t_a \Im V(t_a) = 2 \sqrt{a}$ and $\iota(t)$ and $l_{\beta,\eps}$ are determined by
\begin{equation*}
	\iota(t) = \| (1 + W_t)^{-1} - (1 + \omega_\beta)^{-1} \|_{\infty}, \quad 
	l_{\beta, \eps} = 
	\begin{cases}
		1 - \eps, &  \beta > 1/2, \\[1mm] 
		1/2 + \beta - \eps, &  \beta \in (0,1/2].
	\end{cases}
\end{equation*}
The additional smoothness and growth restrictions on $V$ for this result stem from employing pseudo-differential operator techniques. The regular variation assumption arises naturally due to scaling (similarly to the analysis of the eigenfunctions' concentration in \cite{MiSiVi-2020}). Basic properties of generalised Airy operators are described in Appendix~\ref{sec:genAiry} and a detailed spectral analysis can be found in \cite{ArSi-generalised-2022}, including precise resolvent norm estimates.

The result \eqref{eq:iR.intro} in particular relates the behaviour of $V$ at infinity to the decay/growth of the resolvent along the imaginary axis, with the linear potential (\ie~the Airy operator) being the transition between the two cases. For sub-linear potentials, the resolvent norm diverges on the imaginary axis and the rate of divergence becomes very fast for slowly growing (\eg~iterated log) potentials (see Section~\ref{sec:examples} with several examples). The interest in such operators has been highlighted in recent research on one-parameter semigroups, \eg~\cite[Thm.~1.5]{Batty-2016-270} relates the decay of solutions of the Cauchy problem to the growth of the resolvent norm along the imaginary axis. More precisely, if $\opA$ is the generator of the bounded $C_0$-semigroup $(\opT(t))_{t \ge 0}$ and $\sigma(\opA) \cap i \R = \emptyset$, then for fixed $\alpha > 0$ we have
\begin{equation*}
	\| (\opA - is)^{-1} \| = \BigO(|s|^{\alpha}), \; |s| \to \infty \iff \| \opT(t) \opA^{-1} \| = \BigO(t^{-\frac1 \alpha}), \quad t \to \infty.
\end{equation*}
For more general rates, see \cite{stahn2018decay}. Inspired by the open problem presented by C.~Batty~\cite{CB-CIRM}, we note that Theorem~\ref{thm:iR} enables us to characterise the class of rates (\eg~$|s|^{\alpha}$) for which we can construct potentials $V$ such that the resolvent norm of the corresponding Schr\"{o}dinger operator equals that given rate (see Section~\ref{sec:bbt.iR} for details).

The proof of Theorem~\ref{thm:iR}, originally inspired by \cite[Prop.~14.13]{Helffer-2013-book}, revolves around a separate analysis of $\| (\opH - i b) u\|$ depending on whether or not $\supp u$ is contained in a neighbourhood of the turning point $x_b$ designed so that $\Im V$ is approximately constant inside. More specifically, the proof consists of the following steps (several technical extensions are additionally needed for the case of potentials with non-zero real part). 
\begin{enumerate}[\upshape (1),wide]
	\item In Proposition~\ref{prop:away.iR}, with $\Omega_b'$ representing a neighbourhood of $x_b$ chosen so that $\Im V(x) \approx \Im V(x_b)$ for $x \in \Omega_b'$ (see \eqref{eq:deltabdef}), we use direct quadratic form estimates to find that
	\begin{equation*}
		\begin{aligned}
			\frac{(\Im V'(x_b) )^\frac{2}{3}} {\Upsilon(x_b)} &= \frac{\Im V'(x_b)}{x_b^{\nu}}\\
			&\ls \inf \left\{ \frac{\left\| (\opH-ib) u \right\|}{\|u\|}: \; 0 \neq u \in \Dom(\opH), \; \supp u \cap \Omega'_b = \emptyset\right\},
		\end{aligned}
	\end{equation*}
	asymptotically as $b \to +\infty$, with $\Upsilon$ as in \eqref{eq:upsilon.intro}.
	\item \label{itm:step2.intro} In Proposition~\ref{prop:local.iR}, in a neighbourhood $\Omega_b$ of $x_b$ (see \eqref{eq:Omega.def}), appropriately  shifted and scaled, we Taylor-approximate $\opH - i b$ with the complex Airy operator $\opA$ to yield
	\begin{equation*}
		\begin{aligned}
			&\| \opA^{-1} \|^{-1} \left( \Im V'(x_b) \right)^{\frac{2}{3}} \left( 1 - \BigO \left(\Upsilon(x_b)\right) \right)\\
			& \qquad  \qquad \le \inf \left\{ \frac{\left\| (\opH-ib) u \right\|}{\|u\|}: \; 0 \neq u \in \Dom(\opH), \; \supp u \subset \Omega_b \right\},
		\end{aligned}	
	\end{equation*}
	as $b \to +\infty$. The norm resolvent convergence of (a localised realisation of) $\opH - i b$ to the complex Airy operator $\opA$ follows from the second resolvent identity and it makes use of certain graph-norm estimates introduced in Subsection~\ref{ssec:Airy.prelim}.
	\item In Proposition~\ref{prop:lbound.iR}, we show that our estimate for the norm of the resolvent of $\opH$ cannot be improved by finding functions $u_b \in \Dom(\opH)$ such that as $b \to + \infty$
	\begin{equation*}
		\| (\opH-ib) u_b \| = \| \opA^{-1} \|^{-1} \left( \Im V'(x_b) \right)^{\frac{2}{3}} 
		\left( 1 + 
		\BigO \big( \Upsilon(x_b) \big) 
		\right) \|u_b\|.
	\end{equation*}
	The proof relies on exploiting the localisation technique used in step \ref{itm:step2.intro} and the fact that the operators involved have compact resolvent. Thus the norms of those resolvents can be obtained from the appropriate singular values and the corresponding eigenfunctions are used to find the $u_b$ family.
	\item We combine the results from the previous steps with the aid of certain commutator estimates and a suitably constructed partition of unity.
\end{enumerate}

The proof of Theorem~\ref{thm:R}, which describes the asymptotic behaviour of the resolvent norm along the real axis, follows the template outlined above but on the Fourier side and with substantial modifications at several stages. In particular, the commutator estimates in Step 4 are obtained using pseudo-differential operator techniques (see Lemma~\ref{lem:pdo.comp}) resulting in additional smoothness and regularity assumptions.

The remainder of our paper is structured as follows. Section~\ref{sec:prelim} introduces our notation and recalls some fundamental facts for the various tools used throughout (Fourier transform, pseudo-differential operators, Schr\"{o}dinger operators with complex potentials, Airy operators and functions of regular variation). In Section~\ref{sec:iR} we formulate and prove Theorem~\ref{thm:iR} for the resolvent norm in $\Ran V$. Section~\ref{sec:R} is devoted to the proof of Theorem~\ref{thm:R} for the resolvent norm in the real line. Section~\ref{sec:cor} includes further extensions of the main theorems, in particular the resolvent estimates on more general curves in the numerical range. In Section~\ref{sec:bbt.iR} we deal with the inverse problem mentioned above and Section~\ref{sec:examples} illustrates our results on some concrete potentials. Finally, in Appendix~\ref{sec:genAiry} we show the key properties of the first order generalised Airy operators used in the proof of Theorem~\ref{thm:R}.

\section{Notation and preliminaries}
\label{sec:prelim}
We write $\N_0:=\N \cup \{0\}$, $\R_{+} := (0, +\infty)$, $\R_{-} := (-\infty, 0)$, $\C_+ := \{\la \in \C: \Re \la > 0\}$ and $\C_- := \{\la \in \C: \Re \la < 0\}$. The characteristic function of a set $E$ is denoted by $\chi_E$, the $L^2$-norm by $\| \cdot \|$, the other $L^p$ norms by $\| \cdot \|_p$, the space of smooth functions of compact support by $\CcR$ and the Schwartz space of smooth rapidly decreasing functions by $\SchwR$. The commutator of two operators $A$, $B$ is denoted by $[A,B]:=AB - BA$. For a multi-index $\alpha = (\alpha_1, \alpha_2, \ldots, \alpha_n) \in \N_0^n$, we write $|\alpha| = \alpha_1 + \alpha_2 + \ldots + \alpha_n$.

In the one-dimensional setting, we will refer  to the first and second order differential operators with $\Ntp$ and $\Dtp$, respectively, reserving the symbols $\nabla$ and $\Delta$ for statements in higher dimensions.
	
If $\cH$ denotes a Hilbert space, we shall use $\langle \cdot, \cdot \rangle_{\cH}$ and $\| \cdot \|_{\cH}$ to represent the inner product and norm on that space. The $L^2$ inner product shall be denoted by $\langle \cdot, \cdot \rangle_2$, or just by $\langle \cdot, \cdot \rangle$ if there is no ambiguity, and the $L^2$ norm by $\| \cdot \|_2$ or just by $\| \cdot \|$. The other $L^p$ norms will be represented by $\| \cdot \|_p$ with $L^{\infty}$ denoting the space of essentially bounded functions endowed with the essential $\sup$ norm $\| \cdot \|_{\infty}$.
	
Let $\emptyset \neq \Omega \subset \Rd$ be open, $k \in \N$ and $p \in [1, +\infty]$. We will denote the Sobolev spaces by $W^{k,p}(\Omega)$ and $W^{k,p}_0(\Omega)$ (the latter representing as usual the closure of $\CcOm$ in $W^{k,p}(\Omega)$, see \eg~\cite[Sub-sec.~V.3]{EE} for definitions). We shall generally be concerned with the particular cases where $\Omega = \R \text{ or } \Rplus$, $k = 1 \text{ or } 2$ and $p = 2$.

If $\opT$ is a bounded operator on a Banach space $\cX$, we will denote by $\rad(\opT)$ its spectral radius, i.e. $\rad(\opT) := \sup\{|z|: z \in \sigma(\opT)\}$ with $\sigma(\opT)$ denoting the spectrum of $\opT$. As usual, $\sigma_{p}(\opT)$ will denote the set of eigenvalues of $\opT$ and $\rho(\opT)$ its resolvent set.

If $\opT$, $\opT_b$, $b \in \Rplus$, are closed linear operators on the Banach space $\cB$, we say that $\opT_b$ converges to $\opT$ in the \textit{norm resolvent (or generalised) sense}, and we write $\opT_b \overset{nrc}{\longrightarrow} \opT$, if there exists $\la \in \C$ such that $\la \in (\underset{b \ge b_0}{\cap} \rho(\opT_b)) \cap \rho(\opT)$, for some $b_0 > 0$, and $\| (\opT_b - \la)^{-1} - (\opT - \la)^{-1} \| \to 0$ as $b \to +\infty$ (refer to \cite[Sub-sec~IV.2.6]{Kato-1966} for a detailed exposition of the concept).

If $\opH$ and $\opH_1$ are two linear operators acting in the Hilbert space $\cH$, we say that $\opH_1$ is an \textit{extension} of $\opH$, and write $\opH_1 \supset \opH$, if $\Dom(\opH_1) \supset \Dom(\opH)$ and $\opH_1 u = \opH u$ for all $u \in \Dom(\opH)$. Note that our notation covers the case $\Dom(\opH_1) = \Dom(\opH)$, \ie~the extension does \textit{not} have to be proper.

To avoid introducing multiple constants whose exact value is inessential for our purposes, we write $a \lesssim b$ to indicate that, given $a,b \ge 0$, there exists a constant $C>0$, independent of any relevant variable or parameter, such that $a \le Cb$. The relation $a \gtrsim b$ is defined analogously whereas $a \approx b$ means that $a \lesssim b$ \textit{and} $a \gtrsim b$.

\subsection{Fourier transform and pseudo-differential operators}
\label{ssec:fourier.pdo.prelim}
For $u \in \SchwR$, the Fourier and inverse Fourier transforms read (with $x,\xi \in \R$)
\begin{align*}
\sF u(\xi) := \intR e^{-i \xi x} u(x) \odd x, \qquad 
\sF^{-1} u(x) := \intR e^{i x \xi} u(\xi) \odd \xi, 
\quad \odd \cdot : = \frac{\dd \cdot}{\sqrt{2\pi}};
\end{align*}
we also use $\hat{u} := \sF u \text{ and } \check{u} := \sF^{-1} u$, and retain the same notations to refer to the corresponding isometric extensions to $\Lt(\R)$.

We recall that the Schwartz space, $\SchwR$, is endowed with the family of semi-norms
\begin{equation*}
	|f|_{k,\sS} := \underset{\alpha + \beta \le k}{\max} \, \underset{x \in \R}{\sup} \, \langle x \rangle^{\alpha} \, |\partial^{\beta}_{x} f(x)|, \quad k \in \N_0.
\end{equation*}

When introducing pseudo-differential operators in Section~\ref{sec:R}, we follow \cite[Part~I]{Abels-2011}.  Given $m \in \R$, the symbol class $\cS^m_{1,0}(\R \times \R)$ is the vector space of smooth functions $p: \R \times \R \rightarrow \C$ such that for any $\alpha, \; \beta \in \N_0$ there exists $C_{\alpha,\beta} > 0$ satisfying
\begin{equation}
\label{eq:symbolclass}
|\partial^{\alpha}_{\xi}\partial^{\beta}_{x} p(\xi, x)| \le C_{\alpha,\beta} \, \langle x \rangle^{m - \beta}, \quad (\xi,x) \in \R \times \R.
\end{equation}
This space is endowed with a natural family of semi-norms defined by
\begin{equation}
\label{eq.symbolclass.seminorm}
|p|_k^{(m)} := \underset{\alpha, \beta \le k}{\max} \, \underset{\xi, x \in \R}{\sup} \, \langle x \rangle^{-m+\beta} \, |\partial^{\alpha}_{\xi}\partial^{\beta}_{x} p(\xi, x)|, \quad k \in \N_0.
\end{equation}
Furthermore, for $m, \tau \in \R$, the space of amplitudes $\cA^m_{\tau}(\R \times \R)$ consists of the smooth functions $a: \R \times \R \rightarrow \C$ such that for any $\alpha, \; \beta \in \N_0$ there exists $C_{\alpha,\beta} > 0$ satisfying
\begin{equation}
\label{eq:amplitudeclass}
|\partial^{\alpha}_{\eta}\partial^{\beta}_{y} a(\eta,y)| \le C_{\alpha,\beta} \, \langle \eta \rangle^{\tau} \, \langle y \rangle^{m}, \quad (\eta,y) \in \R \times \R.
\end{equation}
This space is endowed with the family of semi-norms
\begin{equation}
\label{eq:amplitudes.seminorm}
|a|_{\cA^m_{\tau},k} := \underset{\alpha + \beta \le k}{\max} \, \underset{\eta, y \in \R}{\sup} \, \langle \eta \rangle^{-\tau} \, \langle y \rangle^{-m} \, |\partial^{\alpha}_{\eta}\partial^{\beta}_{y} a(\eta,y)|, \quad k \in \N_0.
\end{equation}
We associate a pseudo-differential operator with the symbol $p \in \cS^m_{1,0}(\R \times \R)$ via
\begin{equation*}
	\opOP(p) u(\xi) := \intR e^{-i \xi x} p(\xi,x) \check{u}(x) \odd x,  \quad \xi \in \R, \quad u \in \SchwR,
\end{equation*}
and it can be shown that this is a bounded mapping on $\SchwR$ (see \cite[Thm.~3.6]{Abels-2011}).

\subsection{Schr\"odinger operators with complex potentials}
\label{ssec:Schr.prelim}

Let $\emptyset \neq \Omega \subset \Rd$ be open. For a measurable function $m: \Omega \to \C$, we denote the maximal domain of the multiplication operator determined by the function $m$ as
\begin{equation}\label{eq:multiplication.domain}
\Dom(m) = \{ u \in L^2(\Omega) \, : \, mu \in L^2(\Omega)\};
\end{equation}	
the Dirichlet Laplacian in $L^2(\Omega)$ is denoted by $-\Delta_D$ and 
\begin{equation}
\Dom(\Delta_D) = \{ u \in W_0^{1,2}(\Omega) \, : \, \Delta u \in L^2(\Omega)\}.		
\end{equation}

Suppose that the complex potential $V:\Omega \rightarrow \C$, $V = V_{u} + V_{b}$, satisfies $\Re V \geq 0$ a.e.~in $\Omega$, $V_{u} \in C^1\left(\overline{\Omega}\right)$, $V_{b} \in \LiOm$ and, 
with $\eps_{\rm crit} = 2-\sqrt{2}$,
\begin{equation}\label{eq:vgrowth}
\exists\eps_\nabla \in [0,\eps_{\rm crit}), \quad  \exists M_{\nabla} \geq 0, \quad  
|\nabla V_{u}| \leq \eps_\nabla |V_{u}|^\frac 32 + M_{\nabla} \quad \text{a.e.~in~} \Omega.
\end{equation}

Under these assumptions on $V$ one can find the (Dirichlet) m-accretive realization $\opH = -\DD +V$ by appealing to a generalised Lax-Milgram theorem \cite[Thm.~2.2]{Almog-2015-40}. It is also known that the domain and the graph norm of $\opH$ separate, \ie~$\Dom(H) = \Dom(\Delta_D) \cap \Dom(V)$ and
\begin{equation}
\label{eq:Hcorelowerbound}
\| \opH u \|^2 + \| u \|^2 \gs  \| \DD u \|^2 + \| V u \|^2 + \|u\|^2, \quad u \in \Dom(\opH).
\end{equation}
Furthermore, 
\begin{equation}
\label{eq:Habstractcore}
\Core := \{ u \in \Dom(\opH): \; \supp u \text{ is bounded}\}
\end{equation}
is a core of $\opH$. 
For details see \cite{Almog-2015-40, Krejcirik-2017-221,Semoradova-toappear} and \cite{Brezis-1979-58,Kato-1978-5}, \cite[Chap.~VI.2]{EE} for cases with a minimal regularity of $V$.
	
\subsection{Airy operators}
\label{ssec:Airy.prelim}
An important class of objects in our analysis are complex Airy operators; details on the claims summarised here can be found in \cite[Ch.~14]{Helffer-2013-book} and in Section~\ref{sec:genAiry} of this paper for the more general case.

The rotated Airy operator in $L^2(\R)$ with $r > 0$ and $\theta \in (-\pi,\pi)$ is denoted by
\begin{equation}\label{eq:airyde}
\begin{aligned}
\opA_{r,\theta} = \Dt + r e^{i \theta} x, \quad  \Dom(\opA_{r,\theta}) = \WttR \cap \Dom(x).
\end{aligned}
\end{equation}
It is well-known that $\opA_{r,\theta}$ has compact resolvent, its spectrum is empty, its adjoint satisfies $\opA_{r,\theta}^*=\opA_{r,-\theta}$ and 
\begin{equation}
\label{eq:airyinequality}
\| \opA_{r,\theta} u \|^2 + \| u \|^2 \gs  \| u'' \|^2 + \| x u \|^2 + \| u \|^2, \quad u \in \Dom(\opA_{r,\theta}).
\end{equation}
Moreover, since $\| u' \|^2 \le \| u'' \| \| u \| \le (1/2) (\| u'' \|^2 + \| u \|^2)$, we also have
\begin{equation}
	\label{eq:airyinequality.firstderiv}
	\| \opA_{r,\theta} u \|^2 + \| u \|^2 \gs  \| u' \|^2, \quad u \in \Dom(\opA_{r,\theta}).
\end{equation}

In Section~\ref{sec:R}, we use operators in $L^2(\R)$ of type (with $\beta >0$)
\begin{equation}\label{eq:A.beta.def}
\opA_\beta = \Nt + |x|^\beta, \quad 
\Dom(\opA_{\beta}) = W^{1,2}(\R) \cap \Dom(|x|^\beta),
\end{equation}
which we refer to as generalised Airy operators (on Fourier space). The motivation for this choice of terminology is as follows. By transforming the complex Airy operator, $\opA_{1,\pi/2} = \Dt + i x$, to Fourier space (via $\sF \opA_{1,\pi/2} \sF^{-1}$, where $\sF$ denotes the Fourier transform, see Sub-section~\ref{ssec:fourier.pdo.prelim}), one obtains $\opA_2 = \Nt + x^2$. Operators of type $\opA_{\beta}$ extend the simple structure of $\opA_2$. Many properties of the usual complex Airy operators are preserved for $\opA_\beta$. Namely, $\opA_\beta$ has compact resolvent, empty spectrum, 
\begin{equation}
\opA_\beta^* = \Ntp + |x|^\beta, \quad 
\Dom(\opA_{\beta}^*) = W^{1,2}(\R) \cap \Dom(|x|^\beta)
\end{equation}
and 
\begin{equation}\label{eq:Abeta.graphnorm.realline.def}
\begin{aligned}
\| \opA_{\beta} \, u \|^2 + \| u \|^2 &\gs \|  u' \|^2 + \left\| |x|^\beta \, u \right\|^2 + \| u \|^2, \quad u \in \Dom(\opA_{\beta}),
\\
\|  \opA_{\beta}^* \, u \|^2 + \| u \|^2 &\gs \| u' \|^2 + \left\| |x|^\beta \, u \right\|^2 + \| u \|^2, \quad u \in \Dom(\opA_{\beta}^*).
\end{aligned}
\end{equation}
It is in fact possible to carry out this extension further to operators $\opA = \Nt + W $ with much more general $W$. See Appendix~\ref{sec:genAiry} for details and \cite{ArSi-generalised-2022} for resolvent estimates.

\subsection{Regular variation}
\label{ssec:reg.var}
A continuous function $V :\Rplus \to \Rplus$ satisfying 
\begin{equation}
\exists \beta \in \R, \quad \forall x>0, \quad \lim_{t \to + \infty} \frac{V(tx)}{V(t)}= x^{\beta},
\end{equation}
is called regularly varying (at infinity) and $\beta$ is called the index of regular variation. We can rewrite $V$ as
\begin{equation}
\label{eq:V.regvarying.decomposition}
V(x) = x^\beta L(x), \quad x > 0,
\end{equation}
where $L$ is a slowly varying function, \ie
\begin{equation}
\label{eq:slowvarying.def}
\underset{t \rightarrow +\infty}{\lim} \frac{L(t \, x)}{L(t)} = 1, \quad x > 0.
\end{equation}
It is known (see \cite[Sec.~1.5]{Seneta-1976}) that, if $L$ is slowly varying, then 
\begin{equation}
\label{eq:slowvarying.estimates}
\forall \gamma >0, \quad x^{-\gamma} < L(x) < x^{\gamma}, \quad x \rightarrow +\infty,
\end{equation}
and that the convergence in \eqref{eq:slowvarying.def} is locally uniform in $\Rplus$ (see \cite[Thm.~1.1]{Seneta-1976}). Moreover, a representation theorem (see \cite[Thm.~1.2]{Seneta-1976}) states that
\begin{equation}
\label{eq:slowvarying.reptheorem}
L(x) = a(x) \, \exp \left( \int_{1}^{x} \frac{\epsilon(y)}{y} \, \dd y \right), \quad x \ge 1,
\end{equation}
where $a$ is positive and measurable, $\eps$ is continuous and
\begin{equation}\label{eq:a.eps.regv}
\lim_{x \to + \infty} a(x)= c \in (0,\infty), \quad \lim_{x \to + \infty} \epsilon(x) =0.
\end{equation}
In this paper, we shall be chiefly concerned with functions with index $\beta > 0$.

\section{The norm of the resolvent in the range of $V$}
\label{sec:iR}
\subsection{Assumptions and statement of the result}
We begin by describing the class of potentials encompassed by our estimate for the norm of the resolvent.
\begin{asm-sec}
\label{asm:iR}
Suppose that $V \in \LilocRplusclosed \cap C^2((x_0,\infty))$ for some $x_0 \geq 0$. With $V_1 := \Re V$ and $V_2 := \Im V$, assume further that $V_1 \geq 0$ a.e.~in $\Rplus$ and that the following conditions are satisfied:
\begin{enumerate} [\upshape (i)]
\item \label{itm:incr.unbd.iR} $V_2$ is unbounded and eventually increasing:
\begin{equation}\label{eq:V.unbd.incr.iR}
\lim_{x \to +\infty} V_2(x) = +\infty, \qquad V_2'(x) > 0, \quad  x >x_0;
\end{equation}
\item \label{itm:nuasm} $V$ has controlled derivatives: there exists $\nu \in [-1,+\infty)$ such that
\begin{equation}\label{eq:V'.nu.iR}
V_2'(x) \lesssim V_2(x) \: x^{\nu}, \quad |V''(x)| \lesssim V_2'(x) \: x^{\nu}, \quad x>x_0;
\end{equation}
\item \label{itm:imvgrowth} we have:
\begin{equation}\label{eq:upsilon.def.iR}
\Upsilon(x) := x^{\nu} \left(V_2'(x)\right)^{-\frac13} = o(1), \quad x \to +\infty;
\end{equation}
\item \label{itm:realpart} $V_1'$ is sufficiently small w.r.t. $V_2'$: 
\begin{equation}\label{eq:l.def}
\lim_{x \to + \infty} \frac{V_1'(x)}{V_2'(x)} = l \in [0, +\infty).
\end{equation}
\end{enumerate}
\end{asm-sec}

For a potential $V$ satisfying Assumption~\ref{asm:iR}, the Schr\"odinger operator in $L^2(\R_+)$	
\begin{equation}\label{eq:H.def}
H = \Dt + V,
\quad \Dom(H) = W^{2,2}(\R_+) \cap W_0^{1,2}(\R_+) \cap \Dom(V)
\end{equation}
is specified as in Sub-section~\ref{ssec:Schr.prelim}; see also our comments in Sub-section~\ref{sssec:rem.asm.iR} below.

To state our result, we introduce
\begin{equation}\label{eq:r.theta.def}
r := \sqrt{l^2 + 1}, \quad \theta := \arg(l + i) \in (0, \pi/2],
\end{equation}
with $l$ as in \eqref{eq:l.def}. Assuming that $b>0$ is sufficiently large, we denote by $x_b \in \Rplus$ the unique solution (see \eqref{eq:V.unbd.incr.iR}) to the equation
\begin{equation}\label{eq:xb.def}
	V_2(x_b)=b
\end{equation}
(sometimes called a turning point of $V_2$) and define
\begin{equation}\label{eq:a.la.def}
\begin{aligned}
a &:= V_1(x_b) \ge 0, & \lambda &:= a + ib = V(x_b) \in \Ran V,\\
r_b &:= \sqrt{\left( \frac{V_1'(x_b)}{V_2'(x_b)} \right)^2 + 1}, & \theta_b &:= \arg\left( \frac{V_1'(x_b)}{V_2'(x_b)} + i \right).
\end{aligned}
\end{equation}
Furthermore, noting that by Assumption~\ref{itm:incr.unbd.iR} and \eqref{eq:xb.def} we have $x_b \to +\infty$ as $b \to +\infty$, then from Assumption~\ref{itm:realpart} we deduce that 
\begin{equation}\label{eq:kappa.def}
\kappa_b:=| r e^{i \theta} - r_b e^{i \theta_b} | = o(1), \quad b \to +\infty.
\end{equation}

\begin{theorem}
\label{thm:iR}
Let $V = V_1 + i V_2$ satisfy Assumption~\ref{asm:iR}, let $\opH$ be the Schr\"odinger operator \eqref{eq:H.def} in $\Lt(\Rplus)$ and let $A_{r,\theta}$ be the Airy operator \eqref{eq:airyde} with $r$ and $\theta$ as in \eqref{eq:r.theta.def}. Let $b$, $x_b$, $\la$ and $\kappa_b$ be as in \eqref{eq:xb.def}, \eqref{eq:a.la.def} and \eqref{eq:kappa.def}, respectively. Then as $b \rightarrow +\infty$
\begin{equation}
	\label{eq:resnorm.iR}
	\|(\opH - \lambda)^{-1} \| = \| \opA_{r,\theta}^{-1} \| \left( V_2'(x_b) \right)^{-\frac{2}{3}} \left( 1 + \BigO \left( \kappa_b + \Upsilon(x_b) \right) \right).
\end{equation}
\end{theorem}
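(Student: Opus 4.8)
The plan is to reduce the estimate of $\|(\opH-\la)^{-1}\|$ to a comparison, on a shrinking neighbourhood of the turning point $x_b$, with the rotated Airy operator $\opA_{r,\theta}$, while showing that away from $x_b$ the operator $\opH-\la$ is quantitatively invertible with a much smaller norm for the inverse. Concretely, I would fix the turning point $x_b$ via $V_2(x_b)=b$, and introduce two length scales around it: a wide scale $\Omega_b'$ on which $V_2(x)\approx V_2(x_b)=b$ (so that $\Re(\opH-\la)=\Dt+V_1-a$ together with the gap $|V_2-b|$ gives a lower bound of order $V_2'(x_b)/x_b^{\nu}=(V_2'(x_b))^{2/3}/\Upsilon(x_b)$ on $\|(\opH-\la)u\|/\|u\|$ for $u$ supported outside $\Omega_b'$, which is Proposition~\ref{prop:away.iR}); and a narrow scale $\Omega_b$, chosen so that after the affine change of variables $x=x_b+(V_2'(x_b))^{-1/3} y$ the operator $(V_2'(x_b))^{-2/3}(\opH-\la)$ is, to leading order, $\opA_{r_b,\theta_b}$ acting in $y$, with error controlled by $\kappa_b$ and $\Upsilon(x_b)$ through the controlled-derivative bounds \eqref{eq:V'.nu.iR} and the separation estimate \eqref{eq:Hcorelowerbound}; the Taylor remainder is handled by the second resolvent identity together with the graph-norm bound \eqref{eq:airyinequality} for $\opA_{r,\theta}$, giving the lower bound in Proposition~\ref{prop:local.iR} and the matching upper bound via a quasimode $u_b$ built from the singular vector of $\opA_{r_b,\theta_b}^{-1}$ (Proposition~\ref{prop:lbound.iR}, using that $\opA_{r,\theta}$ has compact resolvent so $\|\opA_{r,\theta}^{-1}\|$ is its largest singular value and $\|\opA_{r_b,\theta_b}^{-1}\|=\|\opA_{r,\theta}^{-1}\|(1+\BigO(\kappa_b))$).

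The two regimes are then stitched together with a smooth partition of unity $1=\chi_b^2+\wt\chi_b^2$ subordinate to $\Omega_b$ and the complement of $\Omega_b'$. For the \emph{lower} bound on $\|(\opH-\la)^{-1}\|$ one simply plugs the quasimode $u_b$ (supported in $\Omega_b$) into the definition. For the \emph{upper} bound, given $u\in\Dom(\opH)$ one writes $\|(\opH-\la)u\|^2\gtrsim \|\chi_b(\opH-\la)u\|^2+\|\wt\chi_b(\opH-\la)u\|^2$, commutes $\chi_b$, $\wt\chi_b$ past $\Dt$, and absorbs the commutator terms $[\Dt,\chi_b]u=-2\chi_b'u'-\chi_b''u$: since $\chi_b',\chi_b''$ scale like $(V_2'(x_b))^{1/3}$ and $(V_2'(x_b))^{2/3}$ relative to the width of $\Omega_b$, and the first-derivative control \eqref{eq:Hcorelowerbound} (or \eqref{eq:airyinequality.firstderiv}) lets one bound $\|u'\|$ by $\|(\opH-\la)u\|$, these commutator contributions are lower-order, of relative size $\Upsilon(x_b)$. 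Then Proposition~\ref{prop:local.iR} controls $\chi_b u$ (rescaled to an Airy quasimode) and Proposition~\ref{prop:away.iR} controls $\wt\chi_b u$, and since the away-bound is larger by a factor $\Upsilon(x_b)^{-1}\to\infty$ it is harmlessly dominated; combining yields $\|(\opH-\la)u\|\ge \|\opA_{r,\theta}^{-1}\|^{-1}(V_2'(x_b))^{2/3}(1-\BigO(\kappa_b+\Upsilon(x_b)))\|u\|$ for all $u$, i.e. the upper bound on the resolvent norm. Matching the two directions gives \eqref{eq:resnorm.iR}.

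The extension from $V_2$ purely imaginary to $V=V_1+iV_2$ with \eqref{eq:l.def} is where the rotation angle $\theta=\arg(l+i)$ enters: near $x_b$ one has $V(x)-\la\approx (V_1'(x_b)+iV_2'(x_b))(x-x_b)=V_2'(x_b)(l+i)(x-x_b)+\cdots$ up to the error $\kappa_b$ measuring how far $(r_b,\theta_b)$ is from its limit $(r,\theta)$; the accretivity hypothesis \eqref{eq:vgrowth} and $V_1\ge0$ are what keep the construction inside the numerical range and make $\Dom(\opH)$ separate, which is used repeatedly.

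I expect the \textbf{main obstacle} to be the commutator/partition-of-unity bookkeeping in the final gluing step (Step 4): one must choose the widths of $\Omega_b$ and $\Omega_b'$ so that simultaneously (a) $V_2$ is nearly constant on $\Omega_b'$, (b) the Taylor error of $V$ on $\Omega_b$ is $\BigO(\Upsilon(x_b))$ after rescaling, and (c) the derivatives of the cut-offs produce only $\BigO(\Upsilon(x_b))$ relative errors — and then verify that the $\kappa_b$ coming from the real part and the $\Upsilon(x_b)$ coming from the geometry combine into the single remainder in \eqref{eq:resnorm.iR}. A secondary subtlety is the norm-resolvent convergence of the localized, rescaled $\opH-\la$ to $\opA_{r,\theta}$: one cannot work with the operators directly (different domains, $\Rplus$ versus $\R$) but must pass through the second resolvent identity and the uniform graph-norm estimates of Sub-section~\ref{ssec:Airy.prelim}, being careful that the localization does not spoil m-accretivity or introduce spurious boundary contributions at $x=0$ (harmless since $x_b\to+\infty$ and $\Omega_b$ is eventually bounded away from $0$).
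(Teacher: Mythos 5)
Your proposal reproduces the paper's overall strategy faithfully: a two-scale decomposition around the turning point $x_b$, a direct quadratic-form bound away from $x_b$ (Proposition~\ref{prop:away.iR}), a norm-resolvent comparison with the rotated Airy operator near $x_b$ after affine rescaling (Proposition~\ref{prop:local.iR}), a quasimode built from the top singular vector for optimality (Proposition~\ref{prop:lbound.iR}), and a final gluing by a partition of unity with commutator control. The organisational choices (squared IMS partition versus the paper's additive $\phi_{b,0}+\phi_{b,1}=1$, $\wt\chi_b$ supported off $\Omega_b'$ versus the paper's $u_0=\phi_{b,0}u$) are equivalent in spirit.

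There is, however, one genuine gap in the ingredient you propose for the commutator step. You would control $\|[\Dt,\chi_b]u\|\lesssim \|\chi_b'\|_\infty\|u'\|+\|\chi_b''\|_\infty\|u\|$ by first bounding $\|u'\|$ via \eqref{eq:Hcorelowerbound} or $\Re\langle \opH_bu,u\rangle$. When $V_1\equiv0$ this works: $\|u'\|^2=\Re\langle\opH_bu,u\rangle\le\|\opH_bu\|\,\|u\|$, and the weighted AM--GM with $\Upsilon(x_b)$ then gives exactly the remainder $\Upsilon(x_b)\|\opH_bu\|+x_b^{2\nu}\Upsilon(x_b)^{-1}\|u\|$. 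But for $V_1\not\equiv0$ one only gets $\|u'\|^2\le\|\opH_bu\|\,\|u\|+a\|u\|^2$ with $a=V_1(x_b)$, so the coefficient of $\|u\|$ picks up an extra $\delta_b^{-1}a^{1/2}$. This is \emph{not} $\lesssim x_b^{2\nu}\Upsilon(x_b)^{-1}=x_b^{\nu}(V_2'(x_b))^{1/3}$ in general — e.g.\ for $V_2(x)=x^2$, $V_1=lV_2$ one would need $b^{1/2}\lesssim b^{1/6}$ — and so the gluing would not close with the stated remainder. The route through \eqref{eq:Hcorelowerbound} is even worse because it reintroduces $\|\opH u\|$ and hence a factor $|\la|$.

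The paper's resolution is Lemma~\ref{lem:Hb.phibk.commutator}, which never bounds $\|u'\|$ globally. It works directly with the localised quantities $\Re\langle\opH_bu,\phi_{b,k}'^2u\rangle$ and $\Im\langle\chi_b\opH_bu,\phi_{b,k}'^2u\rangle$ (with $\chi_b=\sgn(V_2-b)$), and then uses Assumption~\ref{asm:iR}~\ref{itm:realpart} to show that on $\supp\phi_{b,k}'$ one has $|\langle(V_1-a)u,\phi_{b,k}'^2u\rangle|\le(l+1)\langle|V_2-b|u,\phi_{b,k}'^2u\rangle$, which is then cancelled against the imaginary-part identity. This is precisely what removes the spurious $a^{1/2}$ factor, and it is the place in Step~4 where the $\Re V\neq0$ case genuinely requires more than the purely imaginary argument. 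You correctly flagged the commutator bookkeeping as the main obstacle; the missing idea is the local cancellation of $V_1-a$ by $V_2-b$ rather than an absolute bound on $\|u'\|$.

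Two smaller remarks: (i) the statement that $\chi_b'$ ``scales like $(V_2'(x_b))^{1/3}$'' overestimates; the cut-off derivatives scale as $\delta_b^{-1}\approx x_b^{\nu}=\Upsilon(x_b)(V_2'(x_b))^{1/3}$, which is smaller by the factor $\Upsilon(x_b)=o(1)$, and this factor is exactly what makes the commutators lower order; (ii) your description of Step~1 as coming from $\Re(\opH-\la)$ is slightly off; the paper's Proposition~\ref{prop:away.iR} extracts the lower bound from $\Im\langle\chi_b\opH_bu,u\rangle=\langle|V_2-b|u,u\rangle$, which sidesteps any use of the real part there.
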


\subsubsection{Remarks on the assumptions}
\label{sssec:rem.asm.iR}
Firstly, potentials $V$ satisfying Assumption~\ref{asm:iR} obey the separation condition \eqref{eq:vgrowth}. To see this, consider a cut-off function $\phi \in C_{c}^\infty((-2 x_0,2 x_0))$ with $0 \le \phi \le 1$ and such that
$\phi = 1$ on $\left[0, x_0\right]$. We decompose $V$ as $V = V_u + V_b := \left(1 - \phi\right) V + \phi V$, where $V_b \in \LiRplus$, $V_u \in C^2(\overline{\Rplus})$ and $\supp V_u \subset \left[x_0, +\infty\right)$. Thus it suffices to verify that \eqref{eq:vgrowth} holds for large $x$. By Assumptions~\ref{asm:iR}~\ref{itm:realpart}, \ref{itm:nuasm} and \ref{itm:imvgrowth}, we get for $x \rightarrow +\infty$
\begin{equation}\label{V.sep.nu}
\frac{|V_u'(x)|}{|V_u(x)|^{\frac{3}{2}}} \le \frac{|V_1'(x)|+|V_2'(x)|}{\left(V_2(x)\right)^{\frac{3}{2}}} \lesssim \frac{|V_2'(x)|}{\left(V_2'(x) x^{-\nu}\right)^{\frac{3}{2}}} = \Upsilon^{\frac{3}{2}}(x) = o(1).
\end{equation}

Secondly, we note that, for $\nu > 0$, Assumption~\ref{asm:iR}~\ref{itm:imvgrowth} stipulates that $V_2'$ (and hence $V_2$) must grow sufficiently fast as $x \to +\infty$. For $\nu = 0$, it simply requires that $V_2'$ be unbounded at infinity. Finally, when $\nu < 0$, potentials where $V_2'$ decays to zero as $x \to +\infty$ are supported provided that such decay is slower than that of $x^{3 \nu}$.

Our final observation is that Assumption~\ref{asm:iR}~\ref{itm:nuasm} implies that, for any $0 < \eps < 1$, all sufficiently large $x$ and $|\delta| \le \eps x^{-\nu}$, we have
\begin{equation}
\label{eq:imvapproxeq}
\frac{V_2^{(j)}(x + \delta)}{V_2^{(j)}(x)} \approx 1, \hspace{2em} j \in \{0,1\},
\end{equation}
(see e.g. \cite[Lem.~4.1]{MiSiVi-2020}). We can therefore control the variation of $V_2$ and that of $V_2'$ in intervals whose length is of order $x^{-\nu}$.

\subsection{Proof of Theorem~\ref{thm:iR}}
\label{ssec:proof.iR}
With $\la$ as in \eqref{eq:a.la.def}, let
\begin{equation}\label{eq:Hb.def}
\opH_b := \opH - \lambda.
\end{equation}
The proof is structured in four steps. Firstly, we prove the claim "away" from the zero $x_b$ of $V_2 - b$. Then we study the behaviour of the norm of the resolvent locally (i.e. near $x_b$). Next we establish a lower bound for the norm. Our final step, the theorem proof proper, combines the previously derived estimates. Throughout the proof we are chiefly concerned with the behaviour as $b \rightarrow +\infty$ and will therefore assume $b$ to be as large as needed for our assumptions to hold without further comment.

Let
\begin{equation}
\label{eq:deltabdef}
\Omega'_b := \left( x_b - \delta_b, x_b + \delta_b \right), \quad \delta_b := \delta x_b^{-\nu}, \quad 0 < \delta < \frac 14,
\end{equation}
where $\delta$ will be specified in Proposition~\ref{prop:local.iR} and $\nu \ge -1$ (see Assumption~\ref{asm:iR}~\ref{itm:nuasm}). By remarks~in Sub-section~\ref{sssec:rem.asm.iR}, the above choice for the width of $\Omega_b'$ implies that $V_2(x)$ is approximately equal to $V_2(x_b)$ inside that interval (see \eqref{eq:imvapproxeq}) and this fact will be used in the proofs below.

From \eqref{eq:deltabdef} and the already noted fact that $x_b \to +\infty$ as $b \to +\infty$, we deduce 
\begin{equation}\label{eq:xb.db}
x_b - 2 \delta_b = x_b \left( 1 - 2 \delta x_b^{-1-\nu} \right) \gtrsim x_b, \qquad b \rightarrow +\infty.	
\end{equation}
In what follows, we shall assume $b$ to be large enough so that $x_b - 2 \delta_b > \max \{1,x_0\}$ and $V_2(x_b - 2 \delta_b) > 0$. This ensures that $V_2(x) > 0 \text{ for all } x > x_b - 2 \delta_b$.

\subsubsection{Step 1: estimate outside the neighbourhood of $x_b$}
\label{sssec:step.1.iR}
\begin{proposition}	\label{prop:away.iR}
Let $\Omega'_b$ be defined by~\eqref{eq:deltabdef}, let the assumptions of Theorem~\ref{thm:iR} hold and let $\opH_b$ be as in \eqref{eq:Hb.def}. Then we have as $b \to +\infty$
\begin{equation*}
\delta \left( V_2'(x_b) \right)^\frac{2}{3} (\Upsilon(x_b))^{-1} \lesssim \inf \left\{ \frac{\left\| \opH_b u \right\|}{\|u\|}: \; 0 \neq u \in \Dom(\opH), \; \supp u \cap \Omega'_b = \emptyset\right\}.
\end{equation*}
\end{proposition}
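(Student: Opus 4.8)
The plan is to obtain a lower bound for $\|\opH_b u\|/\|u\|$ by testing against the imaginary part of the quadratic form. For $u \in \Dom(\opH)$ with $\supp u \cap \Omega_b' = \emptyset$, write $\opH_b u = (\Dt + V_1 - a)u + i(V_2 - b)u$, and consider
\begin{equation*}
	\|\opH_b u\| \, \|u\| \ge |\langle \opH_b u, u\rangle| \ge |\Im \langle \opH_b u, u\rangle| = \left| \int_{\Rplus} (V_2(x) - b) |u(x)|^2 \, \dd x \right|.
\end{equation*}
On $\supp u$ we have $x \le x_b - \delta_b$ or $x \ge x_b + \delta_b$; in either case $|x - x_b| \ge \delta_b = \delta x_b^{-\nu}$. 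First I would argue that $V_2 - b$ does not change sign on $\supp u$: since $V_2$ is increasing on $(x_0, \infty)$ and $x_b - 2\delta_b > x_0$, on the left piece $V_2(x) - b \le V_2(x_b - \delta_b) - b < 0$ and on the right piece $V_2(x) - b \ge V_2(x_b + \delta_b) - b > 0$. One must also control the part of $\supp u$ below $x_b - 2\delta_b$, but there $V_2(x) \le V_2(x_b - 2\delta_b) < b$ as well, so the sign is consistent on $(0, x_b)$ and on $(x_b, \infty)$ separately; there is no cancellation \emph{within} each piece, though a priori the two pieces could cancel against each other. This is the first point requiring care, and I expect it to be handled by noting that it suffices to bound $|V_2(x) - b|$ from below pointwise on $\supp u$ and then the triangle-type inequality $|\int f| \ge \int |f| \cdot \mathbf{1}_{\text{same sign}}$ is not quite what we want — instead one estimates $|\int (V_2 - b)|u|^2|$ directly only if signs agree, so in fact the cleaner route is: split $u = u_- + u_+$ according to the two components of $\supp u$ (which are separated, hence this is an orthogonal decomposition preserved by $\opH_b$ up to the differential operator not mixing them since the supports are disjoint open sets), estimate each piece, and combine. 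Actually, since $\opH_b$ is local, $\opH_b u = \opH_b u_- + \opH_b u_+$ with disjoint supports, so $\|\opH_b u\|^2 = \|\opH_b u_-\|^2 + \|\opH_b u_+\|^2$ and $\|u\|^2 = \|u_-\|^2 + \|u_+\|^2$, reducing everything to the case where $V_2 - b$ has a fixed sign on $\supp u$.

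Next, in the fixed-sign case, I would use the pointwise lower bound on $|V_2(x) - b|$ coming from the mean value theorem: for $x$ with $|x - x_b| \ge \delta_b$ (and $x$ in the region where $V_2$ is increasing, i.e. $x > x_0$), there is $\xi$ between $x$ and $x_b$ with $|V_2(x) - b| = |V_2(x) - V_2(x_b)| = V_2'(\xi) |x - x_b| \ge V_2'(\xi) \delta_b$. By \eqref{eq:imvapproxeq} (valid because the remarks in Sub-section~\ref{sssec:rem.asm.iR} give $V_2'(\xi) \approx V_2'(x_b)$ for $\xi$ within $\eps x_b^{-\nu}$ of $x_b$; for $\xi$ farther away one uses monotonicity of $V_2'$ up to constants, which follows from Assumption~\ref{asm:iR}~\ref{itm:nuasm} — this is the place where the controlled-derivative hypothesis does real work), we get $V_2'(\xi) \gtrsim V_2'(x_b)$, hence
\begin{equation*}
	|V_2(x) - b| \gtrsim V_2'(x_b) \, \delta_b = \delta \, V_2'(x_b) \, x_b^{-\nu} = \delta \, \frac{V_2'(x_b)}{x_b^{\nu}} = \delta \, \big(V_2'(x_b)\big)^{\frac23} \big(\Upsilon(x_b)\big)^{-1},
\end{equation*}
using the definition \eqref{eq:upsilon.def.iR} of $\Upsilon$. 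There is a subtlety for $x \in \supp u$ with $x \le x_0$: there $V_2$ need not be monotone, but since $x_b \to \infty$ we have $x_b - 2\delta_b > x_0$ for large $b$, and on $(0, x_0]$ one still has $V_2(x) \le \max_{[0,x_0]} V_2 \le $ (something bounded), while $b \to \infty$, so $|V_2(x) - b| = b - V_2(x) \ge b/2 \gg \delta (V_2'(x_b))^{2/3}(\Upsilon(x_b))^{-1}$ for large $b$ — here one uses $\Upsilon(x_b) = o(1)$ and $V_2(x_b) = b$ together with $V_2'(x_b) \lesssim V_2(x_b) x_b^\nu = b x_b^\nu$ to see the target quantity is $o(b)$. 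So the pointwise bound $|V_2(x) - b| \gtrsim \delta (V_2'(x_b))^{2/3} (\Upsilon(x_b))^{-1}$ holds for \emph{all} $x \in \supp u$ once $b$ is large.

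Finally, combining, for each fixed-sign piece $v \in \{u_-, u_+\}$,
\begin{equation*}
	\|\opH_b v\| \, \|v\| \ge \left| \int (V_2 - b)|v|^2 \right| = \int |V_2 - b| \, |v|^2 \gtrsim \delta \, \big(V_2'(x_b)\big)^{\frac23} \big(\Upsilon(x_b)\big)^{-1} \|v\|^2,
\end{equation*}
so $\|\opH_b v\| \gtrsim \delta (V_2'(x_b))^{2/3}(\Upsilon(x_b))^{-1} \|v\|$, and then the orthogonal decomposition gives the same bound for $u$, whence the infimum estimate. I expect the main obstacle to be purely bookkeeping: justifying the uniform-in-$x$ lower bound on $|V_2(x)-b|$ across the three regimes ($x$ near $x_b$ but outside $\Omega_b'$, $x$ far from $x_b$ but $> x_0$, and $x \le x_0$), each needing a slightly different argument and all relying on the asymptotic smallness of $\Upsilon(x_b)$ and the controlled-derivative bounds \eqref{eq:V'.nu.iR}; the functional-analytic content (testing the form against $u$, the locality splitting) is routine. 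One should also remember to note $\Dom(\opH) \subset \Dom(V) \subset \Dom(V_2)$ so all integrals make sense.
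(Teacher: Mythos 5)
Your overall strategy --- testing against the imaginary part of the quadratic form and then bounding $|V_2 - b|$ pointwise from below on $\supp u$ --- is the right one, and your splitting $u = u_- + u_+$ is a valid alternative to what the paper does. The paper instead multiplies by $\chi_b := \sgn(V_2 - b)$ before taking the imaginary part: for large $b$, $\chi_b$ is constant on each component of $\Rplus \setminus \Omega_b'$, so no derivative of $\chi_b$ appears in the integration by parts, and one reads off $\Im\langle \chi_b \opH_b u, u\rangle = \langle |V_2 - b|u, u\rangle \le \|\opH_b u\|\,\|u\|$ in a single line with no sign bookkeeping. Your decomposition achieves the same thing with a bit more work (you do need to check $u_\pm \in \Dom(\opH)$, which holds because $u$ vanishes on the open gap $\Omega_b'$, and use orthogonality of $\opH_b u_\pm$); this is a genuine alternative route, not a defect.

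The genuine gap is in the pointwise estimate. You apply the mean value theorem between $x$ and $x_b$ for \emph{arbitrary} $x \notin \Omega_b'$ and then need $V_2'(\xi) \gtrsim V_2'(x_b)$ for an intermediate $\xi$ that may be far from $x_b$, justifying this by ``monotonicity of $V_2'$ up to constants'' from Assumption~\ref{asm:iR}~\ref{itm:nuasm}. This does not follow. Condition \eqref{eq:V'.nu.iR} controls the logarithmic derivative of $V_2'$ on scales of order $x^{-\nu}$ (which is exactly what yields \eqref{eq:imvapproxeq}) but says nothing about the global behaviour of $V_2'$; in particular $V_2'$ may decay. For the admissible potential $V_2(x) = \log x$ (with $\nu = -1$) one has $V_2'(\xi)/V_2'(x_b) = x_b/\xi \to 0$ as $\xi \to +\infty$, so the claimed comparison fails. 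The fix, which is what the paper does, is to invoke the monotonicity of $V_2$ itself first: on $(x_0,\infty)\setminus\Omega_b'$ the quantity $|V_2(x)-b|$ is minimised at $x_b \pm \delta_b$, so it suffices to apply the MVT only with endpoints $x_b$ and $x_b \pm \delta_b$, which places $\xi$ within $\delta_b = \delta\, x_b^{-\nu}$ of $x_b$, where \eqref{eq:imvapproxeq} applies directly. Your treatment of the residual region $x \le x_0$ (bounded $V_2$ against $b \to \infty$) is fine as written.
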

\begin{proof}
Define $\chi_b(x) := \sgn(V_2(x) - b), \; x \in \Rplus,$ and note that $\| \chi_b \|_{\infty} \le 1$ and $\chi'_b(x) = 0, \; x \in \Rplus \setminus \Omega'_b$. Let $u \in \Dom(\opH)$ such that $\supp u \cap \Omega'_b = \emptyset$, then
\begin{equation*}
\langle \chi_b \opH_b u, u \rangle = \langle \opH_b u, \chi_b u \rangle = \langle u', \chi_b u' \rangle + \langle \left( V_1 - a \right) u, \chi_b u \rangle + i \langle \left( V_2 - b \right) u, \chi_b u \rangle.
\end{equation*}
Therefore
\begin{equation}
\label{eq:lboundstep1}
\langle |V_2 - b| u, u \rangle = \Im \langle \chi_b \opH_b u, u \rangle \le \| \opH_b u \| \| u \|.
\end{equation}
Next we find a lower bound for $|V_2(x) - V_2(x_b)|$ in $\Rplus \setminus \Omega'_b$. By Assumption~\ref{asm:iR}~\ref{itm:incr.unbd.iR}, $V_2$ is unbounded and increasing in $(x_0, +\infty)$ and, since it is also bounded on $[0, x_0]$, we have for large enough $b$ and $x \in \Rplus \setminus \Omega'_b$
\begin{align*}
|V_2(x) - V_2(x_b)| & \ge \min \left\{ V_2(x_b + \delta_b) - V_2(x_b), 
%		\right.\\ &\qquad  \qquad \left. 
V_2(x_b) - V_2(x_b - \delta_b) \right\}.
\end{align*}
Applying the mean-value theorem for the first term inside the $\min$ with $\xi_b \in (x_b, x_b + \delta_b)$ and noting secondly that $|\xi_b - x_b| < x_b^{-\nu}/4$ by \eqref{eq:deltabdef} and therefore $V_2'(\xi_b) \approx V_2'(x_b)$ by \eqref{eq:imvapproxeq}, we deduce that for $b \to +\infty$ 
\begin{equation*}
| V_2(x_b + \delta_b) - V_2(x_b) | = V_2'(\xi_b) \delta_b \approx V_2'(x_b) \delta_b = \delta \left( V_2'(x_b) \right)^{\frac{2}{3}} (\Upsilon(x_b))^{-1}.
\end{equation*}
A similar result can be found for $| V_2(x_b - \delta_b) - V_2(x_b |$. Therefore
\begin{equation}
\label{eq:ImVlbound}
|V_2(x) - b| \gtrsim \delta \left( V_2'(x_b) \right)^{\frac{2}{3}} (\Upsilon(x_b))^{-1}, \quad x \in \Rplus \setminus \Omega'_b, \quad b \to +\infty.
\end{equation}
Hence by combining \eqref{eq:ImVlbound} and \eqref{eq:lboundstep1} we conclude that for all $u \in \Dom(\opH)$ with $\supp u \cap \Omega'_b = \emptyset$
\begin{equation*}
\delta \left( V_2'(x_b) \right)^{\frac{2}{3}} (\Upsilon(x_b))^{-1} \| u \| \lesssim \| \opH_b u \|, \quad b \to +\infty,
\end{equation*}
as required.
\end{proof}

\subsubsection{Step 2: estimate near $x_b$}
\label{sssec:step.2.iR}

\begin{proposition}	\label{prop:local.iR}
Let the assumptions of Theorem~\ref{thm:iR} hold, let $\opH_b$ be as in \eqref{eq:Hb.def} and define 
\begin{equation}
\label{eq:Omega.def}
\Omega_b := \left( x_b - 2 \delta_b, x_b + 2 \delta_b \right).
\end{equation}
Then as $b \to + \infty$
\begin{equation*}
\begin{aligned}
&\| \opA_{r,\theta}^{-1} \|^{-1} \left( V_2'(x_b) \right)^{\frac{2}{3}} \left( 1 - \BigO \left( \kappa_b + \Upsilon(x_b)\right) \right)\\
& \qquad  \qquad \le \inf \left\{ \frac{\left\| \opH_b u \right\|}{\|u\|}: \; 0 \neq u \in \Dom(\opH), \; \supp u \subset \Omega_b \right\}.
\end{aligned}	
\end{equation*}
\end{proposition}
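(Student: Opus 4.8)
The plan is to localise $\opH_b$ to $\Omega_b$, transplant it by a shift and a dilation to a picture in which it becomes a small perturbation of the rotated Airy operator $\opA_{r_b,\theta_b}$, and then pass to the limit $b\to+\infty$.

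First I would rescale. Put $t:=(V_2'(x_b))^{1/3}$ and, for $u\in\Dom(\opH)$ with $\supp u\subset\Omega_b$, set $w(y):=t^{-1/2}u(x_b+y/t)$. Then $u\mapsto w$ is an isometry, $w\in W^{2,2}_0(I_b)$ with $I_b:=t(\Omega_b-x_b)=(-2\delta_bt,2\delta_bt)$, and, extending $w$ by zero, $w\in\WttR\cap\Dom(x)=\Dom(\opA_{r_b,\theta_b})$. A direct computation gives that $\opH_b u$, rescaled, equals $t^2({-w''}+W_bw)$ with $W_b(y):=t^{-2}\big(V(x_b+y/t)-V(x_b)\big)$, so $\|\opH_b u\|=t^2\|{-w''}+W_bw\|$ and $\|u\|=\|w\|$. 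Since $t^2=(V_2'(x_b))^{2/3}$ and the admissible $u$ map onto a subset of $W^{2,2}_0(I_b)$, it therefore suffices to prove $\|{-w''}+W_bw\|\ge\|\opA_{r,\theta}^{-1}\|^{-1}\big(1-\BigO(\kappa_b+\Upsilon(x_b))\big)\|w\|$ for all $w\in W^{2,2}_0(I_b)$.

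Next I would Taylor-expand $V$ about $x_b$ with integral remainder: $W_b(y)=t^{-3}V'(x_b)\,y+\rho_b(y)$, where by \eqref{eq:a.la.def} and the choice of $t$ one has $t^{-3}V'(x_b)=V_1'(x_b)/V_2'(x_b)+i=r_be^{i\theta_b}$, so ${-w''}+W_bw=\opA_{r_b,\theta_b}w+\rho_bw$. Because $\delta_b=\delta x_b^{-\nu}$, the functions $V_2$ and $V_2'$ vary little on $\Omega_b$ by \eqref{eq:imvapproxeq}, so Assumption~\ref{asm:iR}\,\ref{itm:nuasm} yields $|V''(s)|\lesssim V_2'(x_b)x_b^{\nu}$ on $\Omega_b$ and hence $|\rho_b(y)|\lesssim\Upsilon(x_b)\,y^2$ on $I_b$. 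Using the Airy graph-norm estimate \eqref{eq:airyinequality} for $\opA_{r_b,\theta_b}$ — with constants uniform in $b$, since $r_be^{i\theta_b}\to re^{i\theta}$ and the modulus of the rotation can be scaled out — one gets $\|yw\|\lesssim\|\opA_{r_b,\theta_b}w\|+\|w\|$; combining this with $|y|\le 2\delta_bt$ on $\supp w$ and the identity $\Upsilon(x_b)\,\delta_b t=\delta$ gives $\|\rho_bw\|\le\BigO(\delta)\big(\|\opA_{r_b,\theta_b}w\|+\|w\|\big)$. Choosing the free parameter $\delta$ of order $\Upsilon(x_b)$ — which is also what makes the outside bound of Proposition~\ref{prop:away.iR} dominate in the final combination — turns this into $\|\rho_bw\|\le\BigO(\Upsilon(x_b))\big(\|\opA_{r_b,\theta_b}w\|+\|w\|\big)$.

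With $\rho_b$ so controlled, a Neumann-series argument (equivalently, the second resolvent identity applied to $\opA_{r_b,\theta_b}$ and $\opA_{r_b,\theta_b}+\rho_b$) shows that, for $b$ large, $\opA_{r_b,\theta_b}+\rho_b$ is bounded below by $\|\opA_{r_b,\theta_b}^{-1}\|^{-1}\big(1-\BigO(\Upsilon(x_b))\big)$, using $\|\opA_{r_b,\theta_b}w\|\ge\|\opA_{r_b,\theta_b}^{-1}\|^{-1}\|w\|$; and the same identity together with \eqref{eq:airyinequality} gives $\|\opA_{r_b,\theta_b}^{-1}-\opA_{r,\theta}^{-1}\|\lesssim\kappa_b$, whence $\|\opA_{r_b,\theta_b}^{-1}\|^{-1}=\|\opA_{r,\theta}^{-1}\|^{-1}\big(1-\BigO(\kappa_b)\big)$. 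Assembling these estimates and multiplying back by $t^2=(V_2'(x_b))^{2/3}$ yields the proposition. The step I expect to be the main obstacle is the control of $\rho_b$: in the rescaled variable it grows quadratically and its supremum over $I_b$ is large, so the argument closes only because on $I_b$ the remainder is dominated by a small multiple of $|y|$, the linear potential of $\opA_{r_b,\theta_b}$ — this is precisely what fixes the width $\delta_b=\delta x_b^{-\nu}$ and what makes $\Upsilon(x_b)$ enter the error. The remaining work — uniformity in $b$ of the graph-norm constants, of $\|\opA_{r_b,\theta_b}^{-1}\|$ and of the Neumann series, and the fact that $\Dom(V)$ imposes no constraint inside $\Omega_b$, where $V$ is bounded — is routine.
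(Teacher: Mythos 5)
Your overall plan — shift and dilate so that $\opH_b$ becomes a perturbation of the rotated Airy operator, Taylor-expand, and treat the quadratic remainder $\rho_b$ perturbatively — is the paper's, and your estimates $|\rho_b(y)|\lesssim\Upsilon(x_b)\,y^2$ and $\|y^{-1}\rho_b\|_\infty\lesssim\delta$ are exactly the ones the paper records as \eqref{eq:Rbdecay}. However, there is a genuine gap in how you turn the $\BigO(\delta)$ perturbation bound into the claimed $\BigO(\Upsilon(x_b))$ remainder. In the paper, $\delta$ is a \emph{fixed} small constant, chosen once so that the Neumann series converges; the $\Upsilon(x_b)$ rate is then extracted from the \emph{second resolvent identity} in the form $\|S_b^{-1}-S_\infty^{-1}\|\le\kappa_b\|S_b^{-1}\|\,\|xS_\infty^{-1}\|+\|S_b^{-1}x\|\,\|x^{-2}R_b\|_\infty\,\|xS_\infty^{-1}\|$, i.e.\ by splitting $R_b=(x^{-2}R_b)\cdot x\cdot x$ and absorbing one factor of $x$ on each side of the resolvent difference using the graph-norm estimates \eqref{eq:airyinequality} and \eqref{eq:normestimateSb} for both $S_\infty$ and $S_b$. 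Your argument instead applies the graph-norm bound on only one side, yielding $\|\rho_b w\|\le\BigO(\delta)(\|\opA_{r_b,\theta_b}w\|+\|w\|)$, which a Neumann series can only upgrade to a remainder $\BigO(\delta)$, not $\BigO(\Upsilon(x_b))$.

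To fix this you then propose choosing $\delta\sim\Upsilon(x_b)$, but this cannot work within the given framework and, moreover, is justified by an incorrect heuristic. First, $\Omega_b$ (via $\delta_b=\delta x_b^{-\nu}$) is defined with $\delta$ independent of $b$; a $b$-dependent $\delta$ changes the interval over which the infimum is taken and hence proves a different, weaker statement. Second, your claim that $\delta\sim\Upsilon(x_b)$ "makes the outside bound of Proposition~\ref{prop:away.iR} dominate" is backwards: Proposition~\ref{prop:away.iR} gives $\|\opH_b u\|\gtrsim\delta\,(V_2'(x_b))^{2/3}\Upsilon(x_b)^{-1}\|u\|$, which \emph{diverges} relative to the local scale $(V_2'(x_b))^{2/3}$ precisely because $\delta$ is fixed and $\Upsilon(x_b)\to0$; putting $\delta\sim\Upsilon(x_b)$ collapses this to the same order as the local estimate, destroying the dominance that Step 4 requires. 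Third, the commutator estimate \eqref{eq:Hbphikb.commutator.norm.est} in Step 4 uses $\|\phi_{b,k}^{(j)}\|_\infty\lesssim\delta_b^{-j}$ with fixed $\delta$; shrinking $\delta$ would produce much larger derivative bounds and spoil the balance there. The missing idea, in short, is the \emph{two-sided} absorption of $x$ in the resolvent difference — once you have that, the $\BigO(\Upsilon(x_b))$ rate comes for free at fixed $\delta$, and the rest of your sketch goes through.
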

\begin{proof}
If $x \in \Omega_b$, the Taylor expansion of $V$ around $x_b$ yields
\begin{equation*}
V(x) - V(x_b) = V'(x_b)\left( x - x_b \right) + \frac{1}{2} V''(x_b + s (x - x_b)) \left( x - x_b\right)^2,
\end{equation*}
where $s = s(x,b) \text{ and } 0 < s < 1$. Let
\begin{equation}\label{eq:V.hat.def}
\widetilde V_b (x) := V'(x_b)\left( x - x_b \right) + \frac{1}{2} V''(x_b + s (x - x_b)) \left( x - x_b\right)^2 \chi_{\Omega_b}(x), \quad x \in \R,
\end{equation}
and consider the operator in $\Lt(\R)$
\begin{equation*}
\widetilde \opH_b= \Dt + \widetilde V_b(x), \quad \Dom(\widetilde \opH_b) = W^{2,2}(\R) \cap \Dom(x).
\end{equation*}
Given $\rho > 0$, we define a unitary operator on $L^2(\R)$ by
$
(\opU_{b,\rho} \, u)(x) := \rho^{\frac{1}{2}} u(\rho x + x_b)$, $x \in \R$.
Then for any $u \in \opU_{b,\rho} ( \Dom(\widetilde \opH_b) )$
\begin{equation*}
( \opU_{b,\rho} \widetilde \opH_b \opU_{b,\rho}^{-1} u )(x) = -\frac{1}{\rho^2} u''(x) + \widetilde V_b(\rho x + x_b) u(x), \quad x \in \R.
\end{equation*}
If $\Omega_{b,\rho} := ( - 2 \delta_b \rho^{-1}, 2 \delta_b \rho^{-1} )$ and $x \in \R$, then
\begin{align*}
V_b(x) &:= \rho^2 \widetilde V_b(\rho x + x_b)\\
&= V'(x_b) \rho^3 x + \frac{1}{2} V''(\tilde{s} \rho x + x_b) \rho^4 x^2 \chi_{\Omega_{b,\rho}}(x)\\
&= \left( \frac{V_1'(x_b)}{V_2'(x_b)} + i + \frac{1}{2} \frac{V''(\tilde s \rho x + x_b)}{V_2'(x_b)} \rho x \chi_{\Omega_{b,\rho}}(x) \right) V_2'(x_b) \rho^3 x \\
&= \left( r_b e^{i \theta_b} + \frac{1}{2} \frac{V''(\tilde{s} \rho x + x_b)}{V_2'(x_b)} \rho x \chi_{\Omega_{b,\rho}}(x) \right) V_2'(x_b) \rho^3 x,
\end{align*}		
where $0 < \tilde s < 1$ and $r_b$, $\theta_b$ are as defined in \eqref{eq:a.la.def}. We are now in a position to define the value of $\rho$ for the remainder of the proof
\begin{equation}\label{eq:rho.def}
\rho := \left( V_2'(x_b) \right)^{-\frac{1}{3}}.
\end{equation}
Let us denote
\begin{equation}\label{eq:Rb.def}
R_b(x) := \frac{1}{2} \frac{V''(\tilde{s} \rho x + x_b)}{V_2'(x_b)} \rho x^2 \chi_{\Omega_{b,\rho}}(x), \quad x \in \R,
\end{equation}
then
\begin{equation}\label{eq:Vb.def}
V_b(x) = r_b e^{i \theta_b} x + R_b(x), \quad x \in \R.
\end{equation}
By Assumption~\ref{asm:iR}~\ref{itm:nuasm}, for $b \to +\infty$
\begin{equation*}
\left| \frac{V''(\tilde{s} \rho x + x_b)}{V_2'(x_b)} \right| \lesssim \frac{V_2'(\tilde{s} \rho x + x_b)}{V_2'(x_b)} (\tilde{s} \rho x + x_b)^{\nu}.
\end{equation*}
For any $x \in \Omega_{b,\rho}, \; |\tilde{s} \rho x| \le \frac 12 x_b^{-\nu}$ by \eqref{eq:deltabdef} and hence $\left( x_b^{-1}\tilde{s}\rho x + 1 \right)^{\nu} \approx 1$, \ie~$(\tilde{s} \rho x + x_b)^{\nu} \approx x_b^{\nu}$. Combining this fact with \eqref{eq:imvapproxeq}, we deduce
\begin{equation*}
\left| \frac{V''(\tilde{s} \rho x + x_b)}{V_2'(x_b)} \right| \lesssim (\tilde{s} \rho x + x_b)^{\nu} \lesssim x_b^{\nu}, \quad x \in \Omega_{b,\rho}, \quad  b \rightarrow +\infty.
\end{equation*}
For all $x \in \Omega_{b,\rho}$ we have $|\rho x|\ls \delta x_b^{-\nu}$ and therefore
\begin{equation}\label{eq:Rbdecay}
\| x^{-1} R_b \|_{\infty} \lesssim \delta, 
\qquad 
\| x^{-2} R_b\|_{\infty} \lesssim \Upsilon(x_b), \quad b \rightarrow +\infty.
\end{equation}

Let $\opS_b$ be the operator in $\Lt(\R)$
\begin{equation}\label{eq:Sb.def}
S_b = \Dt + V_b(x), \quad \Dom(S_b) = W^{2,2}(\R) \cap \Dom(x).
\end{equation}
Our next aim is to prove that $\opS_b \overset{nrc}{\longrightarrow} \opS_{\infty}$ as $b \rightarrow +\infty$ with $\opS_\infty := \opA_{r,\theta}$ from the statement of Theorem~\ref{thm:iR}.

We begin by showing that there exists $b_0 > 0$ such that $0 \in \cap_{b \ge b_0} \rho(\opS_b)$.
Note that $\opS_b = \opS_\infty + V_b - r e^{i \theta} x = \opS_\infty + (r_b e^{i \theta_b} - r e^{i \theta}) x + R_b$ and, from \eqref{eq:Rbdecay}, we have
\begin{equation}\label{eq:Rb.x}
\| r_b e^{i \theta_b} - r e^{i \theta} + x^{-1} R_b \|_{\infty} 
\le 
|r_b e^{i \theta_b} - r e^{i \theta} | +\| x^{-1} R_b \|_{\infty}  \ls \kappa_b + \delta,
\end{equation}
as $b \to +\infty$.
Note also that it follows from \eqref{eq:airyinequality} that
\begin{equation}\label{eq:x.A}
\| x \opS_\infty^{-1} \| + \| \opS_\infty^{-1} x \| \ls 1;
\end{equation}
in the estimate of the second term we use the fact that $( x (\opS_\infty^*)^{-1})^*$ is bounded and therefore from the property of adjoint $(AB)^* \supset B^* A^*$, if $AB$ is densely defined, we get that $\opS_\infty^{-1} x$ has a bounded extension. Hence, using \eqref{eq:x.A} and \eqref{eq:Rb.x}, we obtain
\begin{equation*}
%\label{eq:potentialdiffubound}
\| ( V_b - r e^{i \theta} x ) \opS_\infty^{-1} \| 
\le
\| r_b e^{i \theta_b} - r e^{i \theta} + x^{-1} R_b \|_{\infty}  \|x \opS_\infty^{-1} \| \lesssim \kappa_b+ \delta, \quad b \to + \infty.
\end{equation*}
It therefore follows from \eqref{eq:kappa.def} and an appropriate choice of sufficiently small $\delta>0$ (independent of $b$) that, for all large enough $b$, the operator $\opI + ( V_b - r e^{i \theta} x ) \opS_\infty^{-1}$ is invertible and
\begin{equation}
	\label{eq:Sb.inv.local.iR}
	\opS_b^{-1} = ( \opS_\infty + V_b -  r e^{i \theta} x )^{-1} = \opS_\infty^{-1} ( \opI + ( V_b - r e^{i \theta} x ) \opS_\infty^{-1} )^{-1}.
\end{equation}
This shows that indeed $0 \in \rho(\opS_b), \; b \rightarrow + \infty$, as claimed.

Furthermore, using \eqref{eq:x.A} and \eqref{eq:Sb.inv.local.iR} we deduce
\begin{equation}
\label{eq:normestimateSb}\| \opS_b^{-1} \| + \| x \opS_b^{-1} \| +  \| \opS_b^{-1} x \| \lesssim 1, \quad  b \rightarrow +\infty.
\end{equation}
We now prove that $\opS_b \overset{nrc}{\longrightarrow} \opS_{\infty}$ as $b \to +\infty$. Using the second resolvent identity, \eqref{eq:Vb.def}, \eqref{eq:x.A}, \eqref{eq:normestimateSb} and \eqref{eq:Rbdecay},  we obtain
\begin{equation}\label{eq:Sb.Sinf}
\begin{aligned}
\| \opS_b^{-1} - \opS_{\infty}^{-1} \| &
= \| \opS_b^{-1} (V_b - r e^{i \theta} x ) \opS_{\infty}^{-1} \| 
%= \| \opS_b^{-1} ( r e^{i \theta} x -( 1 + R_b ) r_b e^{i \theta_b} x ) \opS_{\infty}^{-1} \| 
%\\&
\le \kappa_b \| \opS_b^{-1} x \opS_{\infty}^{-1} \| + \| \opS_b^{-1} R_b \opS_{\infty}^{-1} \|
\\
&\le \kappa_b \| \opS_b^{-1} \| \| x \opS_{\infty}^{-1} \| + \| \opS_b^{-1} x \| \| x^{-2} R_b \|_{\infty} \| x \opS_{\infty}^{-1} \|
\\
&\lesssim \kappa_b +  \Upsilon(x_b), \quad b \rightarrow +\infty.
\end{aligned}
\end{equation}
We therefore conclude that
\begin{equation}
\| \opS_b^{-1} \| = \| \opS_{\infty}^{-1} \| \left( 1 + \BigO \left( \kappa_b + \Upsilon(x_b)\right) \right), \quad b \to +\infty.
\end{equation}
But $\opS_b = \rho^2 \opU_{b,\rho} \widetilde \opH_b \opU_{b,\rho}^{-1}$ and hence there exists $b_0 > 0$ such that for all $b \ge b_0$
\begin{equation*}
\rho^{-2} \| \widetilde \opH_b^{-1} \| = \| \opS_\infty^{-1} \| \left( 1 + \BigO \left( \kappa_b + \Upsilon(x_b)\right) \right).
\end{equation*}
Let $b \ge b_0$ and  $u \in \Dom(\opH)$ such that $\supp u \subset \Omega_b$. Then $u \in \Dom( \widetilde \opH_b)$ and $\|\widetilde \opH_b u \| = \| \opH_b u\|$ (we view a function from $\Lt(\Rplus)$ as belonging to $\Lt(\R)$ using the natural embedding). Finally, with $v := \widetilde \opH_b u \in \Lt(\R)$, we conclude that
\begin{align*}
& \rho^{-2} \| u \| = \rho^{-2} \| \widetilde \opH_b^{-1} v \| \le \| \opS_\infty^{-1} \| \left( 1 + \BigO \left( \kappa_b + \Upsilon(x_b)\right) \right) \left\| \opH_b u \right\|. \qedhere
\end{align*}
\end{proof}

\subsubsection{Step 3: lower estimate}
\label{sssec:step.3.iR}

\begin{proposition}	\label{prop:lbound.iR}
Let the assumptions of Theorem~\ref{thm:iR} hold and let $H_b$ be as in \eqref{eq:Hb.def}. Then there exist functions $0 \neq u_b \in \Dom(\opH)$ such that
\begin{equation*}
\| \opH_b u_b \| = \| \opA_{r,\theta}^{-1} \|^{-1} \left( V_2'(x_b) \right)^{\frac{2}{3}} 
\left( 1 + 
\BigO \big( \kappa_b + \Upsilon(x_b) \big) 
\right) \|u_b\|, \quad b \to + \infty.
\end{equation*}
\end{proposition}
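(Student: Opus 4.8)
The goal is to show that the lower bound for $\|\opH_b^{-1}\|$ (equivalently, an upper bound for $\inf \|\opH_b u\|/\|u\|$ over $u\in\Dom(\opH)$) is essentially attained, by exhibiting a near-optimal family $u_b$. The natural strategy is to import the optimizer from the model problem. Recall from Step~2 that the localised, rescaled operator $\opS_b = \rho^2 \opU_{b,\rho}\wt\opH_b\opU_{b,\rho}^{-1}$ converges in norm-resolvent sense to $\opS_\infty = \opA_{r,\theta}$, and $\opA_{r,\theta}$ has compact resolvent. Thus I would first pick a unit vector $w$ at which $\|\opA_{r,\theta}^{-1}\|$ is attained — more precisely, since $\opA_{r,\theta}^{-1}$ is compact, there is a (normalised) maximising singular vector $w \in \Lt(\R)$ with $\|\opA_{r,\theta}^{-1} w\| = \|\opA_{r,\theta}^{-1}\|$, and set $\phi := \opA_{r,\theta}^{-1}w \in \Dom(\opA_{r,\theta})$, so $\|\opA_{r,\theta}\phi\| = \|\opA_{r,\theta}^{-1}\|^{-1}\|\phi\|$. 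Since $\opA_{r,\theta}$ has empty spectrum this $\phi$ is not an eigenfunction, but that is irrelevant; what matters is that it realises the norm.

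Next I would transfer $\phi$ back to the original scale: define $u_b := \opU_{b,\rho}^{-1}(\chi_b \phi)$ — actually, because $\phi$ is Schwartz-type (it lies in $\Dom(\opA_{r,\theta})$, and by elliptic regularity / the graph-norm estimate \eqref{eq:airyinequality} it decays), but $u_b$ must be supported in $\Omega_b$ to be usable, one must cut off. So introduce a smooth cut-off $\eta_b$ equal to $1$ on $\Omega_{b,\rho/2}$ (say) and supported in $\Omega_{b,\rho}$, rescaled back via $\opU_{b,\rho}$; set $u_b := \opU_{b,\rho}^{-1}(\eta_b \phi)$, which then lies in $\Dom(\opH)$ with $\supp u_b \subset \Omega_b$ (modulo checking it vanishes at $0$, which holds for large $b$ since $\Omega_b$ is far from the origin by \eqref{eq:xb.db}). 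Then
\begin{equation*}
\|\opH_b u_b\| = \|\wt\opH_b u_b\| = \rho^{-2}\|\opS_b(\eta_b\phi)\|,
\qquad \|u_b\| = \|\eta_b\phi\|,
\end{equation*}
so it suffices to estimate $\|\opS_b(\eta_b\phi)\|$ from above by $\|\opA_{r,\theta}^{-1}\|^{-1}(1+\BigO(\kappa_b+\Upsilon(x_b)))\|\eta_b\phi\|$. I would split $\opS_b(\eta_b\phi) = \eta_b\, \opS_\infty\phi + (\opS_b-\opS_\infty)(\eta_b\phi) + [\opS_b,\eta_b]\phi$. The first term has norm $\le \|\opS_\infty\phi\| = \|\opA_{r,\theta}^{-1}\|^{-1}\|\phi\|$, and $\|\eta_b\phi\| \to \|\phi\|$; the second is controlled by $\|(\opS_b-\opS_\infty)\opS_\infty^{-1}\|\cdot\|\opS_\infty(\eta_b\phi)\|$, which is $\BigO(\kappa_b+\Upsilon(x_b))$ times a bounded quantity exactly as in \eqref{eq:Sb.Sinf}; the commutator term $[\opS_b,\eta_b]\phi = -\eta_b''\phi - 2\eta_b'\phi'$ (the potential commutes) is small because $\eta_b'$ and $\eta_b''$ are supported in the region $|x| \gtrsim \delta_b\rho^{-1} = \delta\,(V_2'(x_b))^{1/3} x_b^{-\nu} = \delta/\Upsilon(x_b) \to \infty$, where $\phi$ and $\phi'$ decay super-polynomially (here I use \eqref{eq:airyinequality} and \eqref{eq:airyinequality.firstderiv} together with the known decay of Airy-type eigenfunctions, or just that $x\phi, x\phi'\in\Lt$ so the tail mass past a diverging radius is $o(1)$, in fact faster than any $\Upsilon(x_b)$-power).

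**Main obstacle.** The delicate point is quantitative control of the tails of $\phi$ and $\phi'$: I need the cut-off and commutator errors to be $\BigO(\kappa_b + \Upsilon(x_b))$, not merely $o(1)$. Since the cut-off happens at radius $\sim \delta/\Upsilon(x_b)$, and $\phi$ decays like $\exp(-c|x|^{3/2})$ (standard for rotated Airy functions — this should be recorded or cited, \cf\ the graph-norm bounds in Subsection~\ref{ssec:Airy.prelim} and \cite[Ch.~14]{Helffer-2013-book}), the tail contribution is $\exp(-c(\delta/\Upsilon(x_b))^{3/2})$, which is $o(\Upsilon(x_b)^k)$ for every $k$, hence absorbed into the stated remainder. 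The other subtlety is purely bookkeeping: one must verify $u_b \neq 0$ and $u_b \in \Dom(\opH)$ (boundary condition at $0$, which is automatic once $\supp u_b \subset \Omega_b$ and $\Omega_b \subset (1,\infty)$ for large $b$), and that all $\BigO$'s are uniform in $b$. I also need $\|\eta_b\phi\| = \|\phi\|(1+o(1))$ with the $o(1)$ again of the right order, which follows from the same tail estimate. Assembling these, I get
\begin{equation*}
\frac{\|\opH_b u_b\|}{\|u_b\|} = \rho^{-2}\,\frac{\|\opS_b(\eta_b\phi)\|}{\|\eta_b\phi\|} = \rho^{-2}\|\opA_{r,\theta}^{-1}\|^{-1}\bigl(1+\BigO(\kappa_b+\Upsilon(x_b))\bigr) = \|\opA_{r,\theta}^{-1}\|^{-1}(V_2'(x_b))^{2/3}\bigl(1+\BigO(\kappa_b+\Upsilon(x_b))\bigr),
\end{equation*}
which is exactly the claim.
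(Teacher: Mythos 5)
Your overall strategy — localize, use a near-optimizer of the singular value problem, estimate the cut-off and commutator errors — is in the right spirit, but the specific choice of which optimizer to use introduces a genuine gap. You take $\phi := \opA_{r,\theta}^{-1}w$, the fixed optimizer for the \emph{limit} operator $\opS_\infty$, cut it off, and then need to absorb the error $(\opS_b-\opS_\infty)(\eta_b\phi)$. You claim this is controlled by $\|(\opS_b-\opS_\infty)\opS_\infty^{-1}\|\cdot\|\opS_\infty(\eta_b\phi)\| = \BigO(\kappa_b+\Upsilon(x_b))\cdot\BigO(1)$ ``exactly as in \eqref{eq:Sb.Sinf}''. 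This is not correct: \eqref{eq:Sb.Sinf} bounds $\|\opS_b^{-1}-\opS_\infty^{-1}\|$ by sandwiching the multiplier $V_b - re^{i\theta}x$ between \emph{two} resolvents, which is precisely what lets one use $\|x^{-2}R_b\|_\infty\ls\Upsilon(x_b)$. The one-sided version you invoke only gives $\|(V_b - re^{i\theta}x)\opS_\infty^{-1}\|\ls\kappa_b+\|x^{-1}R_b\|_\infty\|x\opS_\infty^{-1}\|\ls\kappa_b+\delta$, i.e.\ a fixed constant, not $\BigO(\kappa_b+\Upsilon(x_b))$. To push $R_b$ all the way to $\Upsilon(x_b)$ on only one side one would need $\|x^2\opS_\infty^{-1}\|<\infty$, or equivalently $x^2\phi\in L^2$, which the graph-norm estimate \eqref{eq:airyinequality} does not give (it gives only $x\phi\in L^2$, hence only an $\BigO(\Upsilon(x_b))$ tail, not the super-polynomial tail you assert). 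Your appeal to decay ``standard for rotated Airy functions'' is misplaced: $\phi$ is not an Airy function or eigenfunction of $\opA_{r,\theta}$ (which has empty spectrum) but the ground state of the fourth-order operator $\opA_{r,\theta}^*\opA_{r,\theta}$, and the required super-exponential decay — while plausible — would need to be proved or cited as a separate lemma.

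The paper avoids this entirely by using the \emph{$b$-dependent} optimizer: it takes $g_b$ the normalized eigenfunction of $(\opS_b^*\opS_b)^{-1}$ at its top eigenvalue $\varsigma_b^2$, so $\|\opS_b g_b\|=\varsigma_b^{-1}=\|\opS_b^{-1}\|^{-1}$ \emph{exactly}, and there is no $(\opS_b-\opS_\infty)$ error term at all. The decomposition $\opS_b(\psi_bg_b)=\opS_bg_b+(\psi_b-1)\opS_bg_b+[\opS_b,\psi_b]g_b$ then requires only the tail and commutator estimates: $\opS_bg_b=\varsigma_b^{-2}(\opS_b^*)^{-1}g_b$ is controlled by the uniform bound $\|x(\opS_b^*)^{-1}\|\ls1$ (and likewise $\|\partial_x\opS_b^{-1}\|\ls1$), so these errors are $\BigO(\Upsilon(x_b))$ with no pointwise decay of $g_b$ beyond $L^2$ required. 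The transition from $\varsigma_b$ back to $\varsigma_\infty$ is a free consequence of the norm-resolvent convergence $|\varsigma_b-\varsigma_\infty|=\BigO(\kappa_b+\Upsilon(x_b))$ already established in Step 2. I would suggest switching to this $b$-dependent optimizer: it avoids any auxiliary decay lemma and produces the stated remainder with no extra work.
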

\begin{proof}
We retain the notation introduced in the proof of Proposition~\ref{prop:local.iR}; in particular, $\opS_{\infty} := \opA_{r,\theta}$ and $\opS_b$ is as
defined in \eqref{eq:Sb.def}.

With a sufficiently large $b_0 > 0$, the operators $\opB_b := ( \opS_b^* \opS_b)^{-1}$, $b \in (b_0,\infty]$, on $\Lt(\R)$, are compact, self-adjoint and non-negative. Let $0 < \varsigma_b^2 := \rad(\opB_b) = \max\{z: z \in \sigma(\opB_b)\}$ and let $g_b \in \Lt(\R)$ be a corresponding normalised eigenfunction, \ie~$\| \opB_b \| = \varsigma_b^2$, $\opB_b g_b = \varsigma_b^2 g_b$ and $\| g_b \| = 1$. Note that $g_b \in \Dom ( \opS_b^*\opS_b )$ and it is straightforward to verify that
\begin{equation}\label{eq:la*g*}
\| \opS_{b} g_b\| = \varsigma_b^{-1} = \|\opS_b^{-1} \|^{-1}= \| \opB_b \|^{-\frac12}, \quad b \in (b_0, \infty].
\end{equation}
Moreover, from \eqref{eq:Sb.Sinf}, we obtain 
\begin{equation}\label{eq:lab.conv}
|\varsigma_b - \varsigma_\infty|  = \BigO\left(\kappa_b + \Upsilon (x_b)\right), \quad b \rightarrow +\infty.
\end{equation}
Note also that arguing as in the justification of \eqref{eq:normestimateSb} and recalling \eqref{eq:airyinequality.firstderiv}, we obtain
\begin{equation}\label{eq:Sb.gn}
\| x (S_b^*)^{-1} \| + \| \Ntp S_b^{-1} \| \ls 1, \quad b \to + \infty.
\end{equation}

Let us take $\psi_b \in C_c^{\infty}( (-2\delta_b \rho^{-1}, 2\delta_b \rho^{-1}) )$, $0 \le \psi_b \le 1$,  
$\psi_b = 1$ on $( -\delta_b \rho^{-1}, \delta_b \rho^{-1} )$ and such that
\begin{equation}\label{eq:psib'}
\| \psi_b^{(j)} \|_{\infty} \lesssim ( \delta_b \rho^{-1})^{-j}, \quad j \in \{ 1, 2 \}.
\end{equation}
Using \eqref{eq:deltabdef} and \eqref{eq:rho.def}, we find
\begin{equation}\label{eq:db.rho}
(\delta_b \rho^{-1} )^{-1} \approx \Upsilon(x_b) = o(1), \quad b \to +\infty,
\end{equation}
by Assumption~\ref{asm:iR}~\ref{itm:imvgrowth}. As a consequence, $\psi_b \rightarrow 1$ pointwise in $\R$ as $b \rightarrow +\infty$. 

Since $\psi_b g_b \in \Dom(\opS_b)$, we have
\begin{equation}
S_b \psi_b g_b = S_b g_b + (\psi_b-1) S_b g_b + [S_b, \psi_b] g_b.
\end{equation}
The last two terms can be  estimated using \eqref{eq:la*g*}, \eqref{eq:lab.conv}, \eqref{eq:Sb.gn}, \eqref{eq:psib'} and \eqref{eq:db.rho}
\begin{equation}
\begin{aligned}
	\|(\psi_b-1) S_b g_b \| &\ls \|(\psi_b-1) x^{-1}\|_\infty \|x (S_b^*)^{-1}\| \|S_b^*  S_b g_b \| \ls \Upsilon(x_b),
	\\
	\|[S_b, \psi_b] g_b \| &\ls  \| \psi_b' \|_{\infty} \|\partial_x S_b^{-1} S_b g_b \| + \| \psi_b'' \|_{\infty} \|g_b\|
	\ls  \Upsilon(x_b),
\end{aligned}
\end{equation}
as $b \to +\infty$. Hence
%
%\begin{equation}
$\|S_b \psi_b g_b\| = \varsigma_b^{-1} + \BigO(\Upsilon(x_b))$ as $b \to +\infty$.
%\end{equation}
%
Similarly, writing $\psi_b g_b = g_b + (\psi_b - 1)g_b$, we obtain
%
%\begin{equation}
$\|\psi_b g_b\| = 1 + \BigO(\Upsilon(x_b))$ as  $b \to +\infty$.
%\end{equation}
%
Thus using \eqref{eq:lab.conv}, we arrive at
\begin{equation*}
\left| \frac{\left\| \opS_b \psi_b g_b \right\|}{\|\psi_b g_b\|} - \frac1{\varsigma_\infty} \right| 
= \BigO\left(\kappa_b + \Upsilon(x_b)\right), \quad b \rightarrow +\infty.
\end{equation*}

Recalling from the proof of Proposition~\ref{prop:local.iR} that $\opS_b = \rho^2 \opU_{b,\rho} \widetilde \opH_b \opU_{b,\rho}^{-1}$ and letting $u_b := \opU_{b,\rho}^{-1} \psi_b g_b$, then $u_b \in \Dom(\opH)$ with $\supp u_b \subset \Omega_b$ and we conclude
\begin{equation*}
\left| \frac{\left\| \opH_b u_b \right\|}{\|u_b\|} - \frac1{\rho^2 \varsigma_\infty} \right| 
= \BigO\left(\rho^{-2}\big(\kappa_b + \Upsilon(x_b)\big) \right), \quad b \rightarrow +\infty.
\end{equation*}
from which the claim follows.
\end{proof}

\subsubsection{Step 4: combining the estimates}
\label{sssec:step.4.iR}
With $\Omega_b'$, $\Omega_b$ and $\delta_b$ from \eqref{eq:deltabdef}, \eqref{eq:Omega.def}, let $\phi_b \in C_c^{\infty}(\Omega_b)$,  $0 \le \phi_b \le 1$, be such that
\begin{equation}
	\label{eq:phib'}
	\phi_b(x) = 1, \; x \in \Omega'_b, \quad \|\phi_b^{(j)}\|_{\infty} \lesssim \delta_b^{-j}, \quad j \in \{ 1, 2 \},
\end{equation}		
and define
\begin{equation}
	\label{eq:phikb}
	\phi_{b,0}(x) := 1 - \phi_b(x), \quad \phi_{b,1}(x) := \phi_b(x), \quad x \in \Rplus.
\end{equation}
\begin{lemma}
	\label{lem:Hb.phibk.commutator}
	Let the assumptions of Theorem~\ref{thm:iR} hold, with $\nu$ and $\Upsilon$ as in Assumptions~\ref{asm:iR}~\ref{itm:nuasm} and \ref{itm:imvgrowth}, respectively, let $\opH_b$ be as in \eqref{eq:Hb.def} and let $\phi_{b,k}, \; k \in \{0, 1\},$ be as in \eqref{eq:phikb}. Then for all $u \in \Dom(\opH)$ and $k \in \{0, 1\}$, we have
	\begin{equation}
		\label{eq:Hbphikb.commutator.norm.est}
		\| [\opH_b, \phi_{b,k}] u \| \ls \Upsilon(x_b) \| \opH_b u \| + x_b^{2 \nu} (\Upsilon(x_b))^{-1} \| u \|, \quad b \to +\infty.
	\end{equation}
\end{lemma}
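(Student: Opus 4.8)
The plan is to reduce the commutator to a first- and second-order differential expression and then bound its only genuinely non-trivial piece, the localised derivative $\|u'\|$ near $x_b$, by a cut-off plus integration-by-parts argument. Since $\phi_{b,k}$ acts by multiplication, it commutes with the multiplication operator $V-\lambda$, so for $u\in\Dom(\opH)\subset W^{2,2}(\Rplus)$ one has
\begin{equation*}
	[\opH_b,\phi_{b,k}]\,u = [\Dt,\phi_{b,k}]\,u = -\phi_{b,k}''\,u - 2\,\phi_{b,k}'\,u'.
\end{equation*}
As $\supp\phi_{b,k}^{(j)}\subset\Omega_b$ for $j\ge 1$, \eqref{eq:phib'} together with $\delta_b=\delta x_b^{-\nu}$ gives $\|\phi_{b,k}''\|_\infty\lesssim\delta_b^{-2}\lesssim x_b^{2\nu}$ and $\|\phi_{b,k}'\|_\infty\lesssim\delta_b^{-1}\lesssim x_b^{\nu}$. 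Since $\Upsilon(x_b)=o(1)$ we have $x_b^{2\nu}\lesssim x_b^{2\nu}(\Upsilon(x_b))^{-1}$, so the $\phi_{b,k}''$ contribution is already of the claimed form and the whole statement reduces to a bound of the shape $\|u'\|_{L^2(\Omega_b)}\lesssim (V_2'(x_b))^{-1/3}\|\opH_b u\|+(V_2'(x_b))^{1/3}\|u\|$ as $b\to+\infty$: multiplying such a bound by $\delta_b^{-1}\lesssim x_b^{\nu}$ and using $\Upsilon(x_b)=x_b^{\nu}(V_2'(x_b))^{-1/3}$ reproduces the two terms in \eqref{eq:Hbphikb.commutator.norm.est}.

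To estimate $\|u'\|_{L^2(\Omega_b)}$ without a circular dependence on $u'$ over ever-larger sets, I would fix a cut-off $\eta_b\in C_c^\infty(\widetilde\Omega_b)$ with $\widetilde\Omega_b:=(x_b-3\delta_b,x_b+3\delta_b)$, $0\le\eta_b\le 1$, $\eta_b\equiv 1$ on $\Omega_b$ and $\|\eta_b^{(j)}\|_\infty\lesssim\delta_b^{-j}$, and write $\|u'\|_{L^2(\Omega_b)}^2\le\|\eta_b u'\|^2=\langle\eta_b^2 u',u'\rangle$. Since $\eta_b$ is compactly supported inside $\Rplus$, integration by parts produces no boundary terms and gives $\langle\eta_b^2 u',u'\rangle=-\langle\eta_b^2 u'',u\rangle-2\langle\eta_b\eta_b' u',u\rangle$. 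For the cross term I would use $2\Re(u'\bar u)=(|u|^2)'$ and integrate by parts once more, so that $|2\Re\langle\eta_b\eta_b' u',u\rangle|=|\langle(\eta_b\eta_b')'u,u\rangle|\lesssim\delta_b^{-2}\|u\|^2$; this is the manoeuvre that removes $u'$ from the right-hand side. In the remaining term I would substitute $-u''=\opH_b u-(V-\lambda)u$, take real parts (the left-hand side being real), and use $\Re\langle\eta_b^2(V-\lambda)u,u\rangle=\langle\eta_b^2(V_1-a)u,u\rangle$ with $a=V_1(x_b)$, arriving at
\begin{equation*}
	\|u'\|_{L^2(\Omega_b)}^2 \lesssim \|\opH_b u\|\,\|u\| + \big\|(V_1-a)\chi_{\widetilde\Omega_b}\big\|_\infty\,\|u\|^2 + \delta_b^{-2}\|u\|^2.
\end{equation*}

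The remaining quantitative input is a bound for $\big\|(V_1-a)\chi_{\widetilde\Omega_b}\big\|_\infty$. For $x\in\widetilde\Omega_b$ the mean value theorem gives $|V_1(x)-V_1(x_b)|\le|x-x_b|\sup_{\widetilde\Omega_b}|V_1'|\lesssim\delta_b\sup_{\widetilde\Omega_b}|V_1'|$; since $\widetilde\Omega_b$ has length $\approx x_b^{-\nu}$, Assumption~\ref{asm:iR}~\ref{itm:realpart} (which forces $|V_1'|\lesssim V_2'$ for large $x$) together with \eqref{eq:imvapproxeq} yields $\sup_{\widetilde\Omega_b}|V_1'|\lesssim V_2'(x_b)$, hence $\big\|(V_1-a)\chi_{\widetilde\Omega_b}\big\|_\infty\lesssim\delta_b V_2'(x_b)$. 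Feeding this back, together with $\delta_b=\delta x_b^{-\nu}$ and the definition of $\Upsilon$, then applying Young's inequality to split $\|\opH_b u\|\|u\|$, taking square roots, and multiplying by $\delta_b^{-1}\lesssim x_b^{\nu}$, gives \eqref{eq:Hbphikb.commutator.norm.est}. The hardest part is precisely this real-part estimate: unlike the purely imaginary case, where the term vanishes, when $V_1\not\equiv 0$ the quadratic form $\langle\eta_b^2(V_1-a)u,u\rangle$ need not have a favourable sign, so one must control its size by $\BigO(\delta_b V_2'(x_b))\|u\|^2$, which is exactly where Assumptions~\ref{asm:iR}~\ref{itm:nuasm} and \ref{itm:realpart} enter (they force $V_1$ to vary by only $\BigO(\delta_b V_2'(x_b))$ over the short interval $\widetilde\Omega_b$); the double integration by parts that eliminates the $\eta_b\eta_b' u'$ cross term is the secondary device keeping the localisation self-contained.
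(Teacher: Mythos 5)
There is a genuine gap, and it lies precisely in the step you flagged as "the hardest part." Your integration-by-parts argument with $\eta_b$ is structurally sound (the two uses of integration by parts, the substitution $-u''=\opH_b u-(V-\lambda)u$, and the identity $\Re\langle\eta_b^2(V-\lambda)u,u\rangle=\langle\eta_b^2(V_1-a)u,u\rangle$ are all correct), and for purely imaginary $V$ (the case $a=0$, $V_1\equiv 0$) it does deliver \eqref{eq:Hbphikb.commutator.norm.est}. But in the general case the sup-bound $\|(V_1-a)\chi_{\widetilde\Omega_b}\|_\infty\ls\delta_b V_2'(x_b)$ is too coarse. Indeed $\delta_b V_2'(x_b)=\delta\,x_b^{-\nu}V_2'(x_b)=\delta\,(V_2'(x_b))^{2/3}\Upsilon(x_b)^{-1}$, so your localised estimate becomes
\begin{equation*}
	\|u'\|_{L^2(\Omega_b)}^2 \ls (V_2'(x_b))^{-\frac23}\|\opH_b u\|^2 + (V_2'(x_b))^{\frac23}\Upsilon(x_b)^{-1}\|u\|^2 + x_b^{2\nu}\|u\|^2,
\end{equation*}
and multiplying the square root by $\delta_b^{-1}\approx x_b^\nu$ produces
\begin{equation*}
	\|[\opH_b,\phi_{b,k}]u\| \ls \Upsilon(x_b)\|\opH_b u\| + \Upsilon(x_b)^{-\frac12}\,x_b^{2\nu}(\Upsilon(x_b))^{-1}\|u\|,
\end{equation*}
which is worse than \eqref{eq:Hbphikb.commutator.norm.est} by a divergent factor $\Upsilon(x_b)^{-1/2}\to+\infty$ in the $\|u\|$-coefficient. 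A pure sup-bound on $V_1-a$ over an interval of width $\approx x_b^{-\nu}$ cannot give anything smaller than $\approx x_b^{-\nu}V_2'(x_b)$ (this is sharp when $l>0$), so no choice of $\widetilde\Omega_b$ will close this gap.

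The missing idea is that $\langle(V_1-a)u,\cdot\rangle$ should not be estimated by a sup-norm at all: it should be traded for the corresponding $|V_2-b|$ pairing, which in turn is controllable by $\opH_b$ itself. The paper pairs $\opH_b u$ with $\phi_{b,k}'^2u$ (supported in $\Omega_b\setminus\Omega'_b$, i.e.\ \emph{away} from the turning point $x_b$), which has two essential consequences. First, $\chi_b\phi_{b,k}'^2$ is smooth, so taking the imaginary part of $\langle\chi_b\opH_b u,\phi_{b,k}'^2u\rangle$ is legitimate and yields $\langle|V_2-b|u,\phi_{b,k}'^2u\rangle\le\|\opH_b u\|\|\phi_{b,k}'^2u\|+2\|\phi_{b,k}'u'\|\|\phi_{b,k}''u\|$, i.e.\ the $|V_2-b|$-form is bounded by quantities already on the right-hand side. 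Second, by the mean value theorem and \eqref{eq:l.def}, $|V_1(x)-a|\le(l+1)|V_2(x)-b|$ on $\supp\phi_{b,k}'$, so $|\langle(V_1-a)u,\phi_{b,k}'^2u\rangle|\le(l+1)\langle|V_2-b|u,\phi_{b,k}'^2u\rangle$ is completely absorbed, leaving only $\|\phi_{b,k}'u'\|^2\ls\|\opH_b u\|\|\phi_{b,k}'^2u\|+\|\phi_{b,k}'u'\|\|\phi_{b,k}''u\|$, from which \eqref{eq:Hbphikb.commutator.norm.est} follows by Young's inequality. Your cut-off $\eta_b\equiv 1$ on a neighbourhood of $x_b$ would break the first point ($\chi_b\eta_b^2$ has a jump at $x_b$), so the fix is not merely to import the $\chi_b$-trick on top of your scheme: the weight function in the pairing must vanish near $x_b$, which is exactly the role played by $\phi_{b,k}'^2$.
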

\begin{proof}
	Let $u \in \Dom(\opH)$, then
	\begin{align*}
		\langle \opH_b u, \phi_{b,k}'^2 u \rangle &= -\langle u'', \phi_{b,k}'^2 u \rangle + \langle (V - \la) u, \phi_{b,k}'^2 u \rangle\\
		&= 2 \langle \phi_{b,k}' u', \phi_{b,k}'' u \rangle + \| \phi_{b,k}' u' \|^2 + \langle (V_1 - a) u, \phi_{b,k}'^2 u \rangle
		\\
		&\quad + i \langle (V_2 - b) u, \phi_{b,k}'^2 u \rangle,
	\end{align*}
	and hence
	\begin{equation}
		\label{eq:re.phikbu}
		\Re \langle \opH_b u, \phi_{b,k}'^2 u \rangle = 2 \Re \langle \phi_{b,k}' u', \phi_{b,k}'' u \rangle + \| \phi_{b,k}' u' \|^2 + \langle (V_1 - a) u, \phi_{b,k}'^2 u \rangle.
	\end{equation}
	Let $\chi_b(x) := \sgn (V_2(x) - b)$, $x \in \Rplus$, as in the proof of Proposition~\ref{prop:away.iR}. Repeating the above calculations, we deduce
	\begin{align}
		\langle \chi_b \opH_b u, \phi_{b,k}'^2 u \rangle &= 2 \langle \phi_{b,k}' u', \chi_b  
		\phi_{b,k}'' u \rangle + \langle \phi_{b,k}' u', \chi_b \phi_{b,k}' u' \rangle 
		\nonumber
		\\
		&\quad + \langle (V_1 - a) u, \chi_b \phi_{b,k}'^2 u \rangle + i \langle |V_2 - b| u, \phi_{b,k}'^2 u \rangle,
		\nonumber
		\\
		\label{eq:im.phikbu}
		\Im \langle \chi_b \opH_b u, \phi_{b,k}'^2 u \rangle &= 2 \Im \langle \phi_{b,k}' u', \chi_b \phi_{b,k}'' u \rangle + \langle |V_2 - b| u, \phi_{b,k}'^2 u \rangle.
	\end{align}
	By Assumptions~\ref{asm:iR}~\ref{itm:realpart} and \ref{itm:incr.unbd.iR}, there exists $x_1 \ge x_0$ such that
	\begin{equation}
		\label{eq:reV'imV'.est}
		\frac{|V_1'(x)|}{V_2'(x)} < l + 1, \quad x \ge x_1.
	\end{equation}
	Moreover, from \eqref{eq:xb.db}, $x_b - 2 \delta_b \ge x_1$ for sufficiently large $b$. Consequently applying \eqref{eq:reV'imV'.est} and Assumption~\ref{asm:iR}~\ref{itm:incr.unbd.iR}
	\begin{align*}
		| \langle (V_1 - a) u, \phi_{b,k}'^2 u \rangle | &\le \int_{x_b - 2 \delta_b}^{x_b - \delta_b} |V_1(x_b) - V_1(x) | |\phi_{b,k}'(x)|^2|u(x)|^2 \dd x \\
		&\quad + \int_{x_b + \delta_b}^{x_b + 2 \delta_b} |V_1(x) - V_1(x_b) | |\phi_{b,k}'(x)|^2 |u(x)|^2 \dd x \\
		&\le \int_{x_b - 2 \delta_b}^{x_b - \delta_b} \left(\int_{x}^{x_b}| V_1'(s) | \dd s\right) |\phi_{b,k}'(x)|^2 |u(x)|^2 \dd x \\
		&\quad + \int_{x_b + \delta_b}^{x_b + 2 \delta_b} \left(\int_{x_b}^{x}| V_1'(s) | \dd s\right) |\phi_{b,k}'(x)|^2 |u(x)|^2 \dd x \\
		&\le (l + 1) \langle |V_2 - b| u, \phi_{b,k}'^2 u \rangle.
	\end{align*}
	Combining this last finding with \eqref{eq:re.phikbu} and \eqref{eq:im.phikbu}
	\begin{align*}
		\| \phi_{b,k}' u' \|^2 &= \Re \langle \opH_b u, \phi_{b,k}'^2 u \rangle - 2 \Re \langle \phi_{b,k}' u', \phi_{b,k}'' u \rangle - \langle (V_1 - a) u, \phi_{b,k}'^2 u \rangle 
		\\
		&\ls \| \opH_b u \| \| \phi_{b,k}'^2 u \| + \| \phi_{b,k}' u' \| \| \phi_{b,k}'' u \| + \langle |V_2 - b| u, \phi_{b,k}'^2 u \rangle \\
		&\ls \| \opH_b u \| \| \phi_{b,k}'^2 u \| + \| \phi_{b,k}' u' \| \| \phi_{b,k}'' u \|
	\end{align*}
	and therefore for any $\eps > 0$
	\begin{align*}
		\| \phi_{b,k}' u' \| &\ls \| \opH_b u \|^{\frac12} \| \phi_{b,k}'^2 u \|^{\frac12} + \| \phi_{b,k}' u' \|^{\frac12} \| \phi_{b,k}'' u \|^{\frac12} \\
		&\ls \Upsilon(x_b) \| \opH_b u \| + x_b^{2 \nu} (\Upsilon(x_b))^{-1} \| u \| + \eps \| \phi_{b,k}' u' \| + \eps^{-1} x_b^{2 \nu} \| u \|,
	\end{align*}
	where we have applied \eqref{eq:phib'}. Choosing a sufficiently small $\eps$ and using Assumption~\ref{asm:iR}~\ref{itm:imvgrowth} we deduce
	\begin{equation*}
		\| \phi_{b,k}' u' \| \ls \Upsilon(x_b) \| \opH_b u \| + x_b^{2 \nu} (\Upsilon(x_b))^{-1} \| u \|.
	\end{equation*}
	Finally, applying once more Assumption~\ref{asm:iR}~\ref{itm:imvgrowth}
	\begin{equation*}
		\| [\opH_b, \phi_{b,k}] u \| \le 2 \| \phi_{b,k}' u' \| + \| \phi_{b,k}'' u \| 
		\ls \Upsilon(x_b) \| \opH_b u \| + x_b^{2 \nu} (\Upsilon(x_b))^{-1} \| u \|,
	\end{equation*}
	as claimed.
\end{proof}

\begin{proof}[Proof of Theorem~\ref{thm:iR}]
Let $u \in \Dom(\opH)$, with $\phi_{b,k}$, $k \in \{0, 1\}$, as in \eqref{eq:phikb}, and write $u = u_0 + u_1$ where $u_0 := \phi_{b,0} u$ and $u_1 := \phi_{b,1} u$. Then
\begin{equation}
	\label{eq:hbuexpansion}
	\opH_b u_k = \phi_{b,k} \opH_b u + [\opH_b, \phi_{b,k}] u, \quad k \in \{0, 1\},
\end{equation}
and therefore by \eqref{eq:Hbphikb.commutator.norm.est} as $b \to +\infty$
\begin{equation}
	\label{eq:hbuexpansion.est}
	\|\opH_b u_k\| \leq \left( 1 + \BigO \left( \Upsilon(x_b) \right) \right) \|\opH_b u\| + \BigO \left( x_b^{2 \nu} (\Upsilon(x_b))^{-1} \right) \|u\|, \quad k \in \{0, 1\}.
\end{equation}	

Firstly, note that $\supp u_1 \subset \Omega_b$, hence by Proposition~\ref{prop:local.iR}
\begin{equation*}
	\| u_1 \| \le \| \opA_{r,\theta}^{-1}\| \left( V_2'(x_b) \right)^{-\frac{2}{3}} \left( 1 + \BigO \left( \kappa_b + \Upsilon(x_b)\right) \right)  \left\| \opH_b u_1 \right\|, \quad b \rightarrow +\infty.
\end{equation*}
Thus by Assumption~\ref{asm:iR}~\ref{itm:imvgrowth} and \eqref{eq:hbuexpansion.est}, we have as $b \rightarrow +\infty$
\begin{equation}
	\label{eq:u1upperbound}
	\| u_1 \| \le \| \opA_{r,\theta}^{-1}\| \left( V_2'(x_b) \right)^{-\frac{2}{3}} \left( 1 + \BigO \left( \kappa_b + \Upsilon(x_b)\right) \right)  \left\| \opH_b u \right\| + \BigO \left( \Upsilon(x_b) \right) \left\| u \right\|.
\end{equation}

Secondly, since $\supp u_0 \cap \Omega'_b = \emptyset$, by Proposition~\ref{prop:away.iR}
\begin{equation*}
	\| u_0 \| \lesssim \left( V_2'(x_b) \right)^{-\frac{2}{3}} \Upsilon(x_b) \left\| \opH_b u_0 \right\|, \quad b \rightarrow +\infty
\end{equation*}
and applying again Assumption~\ref{asm:iR}~\ref{itm:imvgrowth} and \eqref{eq:hbuexpansion.est}, we have as $b \rightarrow +\infty$
\begin{equation}
	\label{eq:u2upperbound}
	\| u_0 \| \ls \left( V_2'(x_b) \right)^{-\frac{2}{3}} \Upsilon(x_b)  \left\| \opH_b u \right\| + (\Upsilon(x_b))^2 \| u \| .
\end{equation}
Combining \eqref{eq:u1upperbound} and \eqref{eq:u2upperbound} and applying Assumption~~\ref{asm:iR}~\ref{itm:imvgrowth}, we find that as $b \rightarrow +\infty$
\begin{align*}
	\| u \| &\le \| u_0 \| + \| u_1 \|\\
	&\le \| \opA_{r,\theta}^{-1} \| \left( V_2'(x_b) \right)^{-\frac{2}{3}} \left( 1 + \BigO \left( \kappa_b + \Upsilon(x_b) \right) \right) \left\| \opH_b u \right\| + \BigO ( \Upsilon(x_b)) ) \| u \|
\end{align*}
and hence
\begin{equation}
	\label{eq:Hb.est.iR}
	\| u \| \le \| \opA_{r,\theta}^{-1} \| \left( V_2'(x_b) \right)^{-\frac{2}{3}} \left( 1 + \BigO \left( \kappa_b + \Upsilon(x_b) \right) \right) \| \opH_b u \|.
\end{equation}
An appeal to Proposition~\ref{prop:lbound.iR} completes the proof of Theorem~\ref{thm:iR}.
\end{proof}

%\newpage

\section{The norm of the resolvent in the real axis}
\label{sec:R}
\subsection{Assumptions and statement of results}
We begin by describing the class of potentials covered by our estimate for the norm of the resolvent in the real axis.
\begin{asm-sec}
\label{asm:R}
Suppose that $V := i V_2$ with $V_2:\R \rightarrow \overline\Rplus, \; V_2 \in \CiR$ satisfying
\begin{enumerate} [\upshape (i)]
\item \label{itm:even} $V_2$ is even: 
\begin{equation}\label{eq:V.even.R}
V_2(-x) = V_2(x), \quad x \in \R;
\end{equation}

\item \label{itm:oneone} $V_2$ is eventually increasing: 
\begin{equation}\label{eq:V.incr.R}
\exists x_0 > 0, \quad \forall x > x_0, \quad V_2'(x) > 0;
\end{equation}
\item \label{itm:regularlyvarying}
$V_2$ is regularly varying:
\begin{equation}\label{eq:V.beta.def.R}
\exists \beta>0, \quad \forall x>0, \quad 	\lim_{t \to + \infty} W_t(x) = \omega_{\beta}(x),
\end{equation}
where
\begin{equation}\label{eq:Wt.Om.def}
W_t(x):= \frac{V_2(tx)}{V_2(t)}, \quad  \omega_{\beta}(x):=|x|^{\beta}, \quad x \in \R;
\end{equation}
\item \label{itm:symbolclass} $V_2$ has controlled derivatives:
\begin{equation}\label{eq:V.symb}
\forall n \in \N, \quad \exists C_n>0, \quad |V_2^{(n)}(x)| \le C_n \, \left(1 + V_2(x)\right) \, \langle x \rangle^{- n}, \quad x \in \R.
\end{equation}
\end{enumerate}
\end{asm-sec}

For potentials $V$ satisfying Assumption~\ref{asm:R}, we consider the Schr\"odinger operator
\begin{equation}\label{eq:H.real.def}
\opH = \Dt + V,
\quad \Dom(H) = W^{2,2}(\R) \cap \Dom(V),
\end{equation}
as in Sub-section~\ref{ssec:Schr.prelim}. 

To state the result, we define the positive real numbers $t_a$ via the equation 
\begin{equation}\label{eq:ta.def}
t_a V_2(t_a) = 2 \sqrt{a};
\end{equation}	
notice that $ t \mapsto t V_2(t)$ is eventually increasing by Assumption~\eqref{eq:V.incr.R}, thus $a \mapsto t_a$ is well-defined for all sufficiently large $a>0$. Moreover, it follows that $t_a \to +\infty$ as $a \to +\infty$. Finally, let
\begin{equation}
\label{eq:iota.def}
\iota(t) := \| (1 + W_t)^{-1} - (1 + \omega_\beta)^{-1} \|_{\infty}; 
\end{equation}
Lemma~\ref{lem:reg.var.conv} shows that $\iota(t) \to 0$ as $t \to + \infty$.

\begin{theorem}	\label{thm:R}
Let $V = i V_2$ satisfy Assumption~\ref{asm:R} and let $\opH$ be the Schr\"odinger operator \eqref{eq:H.real.def} in $\Lt(\R)$. Furthermore let $\opA_{\beta}$ be the generalised Airy operator \eqref{eq:A.beta.def}, let $t_a$ be as in \eqref{eq:ta.def} and let $\iota$ be as in \eqref{eq:iota.def}. Then as $a \to +\infty$
\begin{equation}
	\label{eq:resnorm.R}
	\|(\opH - a)^{-1} \| = \| \opA_{\beta}^{-1} \| \, V_2(t_a)^{-1} \left( 1 + \BigO \left( \iota(t_a) + (a^{\frac 12} t_a)^{-l_{\beta, \eps}} \right) \right)
\end{equation}
with $0 < \eps < \beta$ arbitrarily small and
\begin{equation}\label{eq:lbeta.def}
	l_{\beta, \eps} := 
	\begin{cases}
		1 - \eps, &  \beta > 1/2, \\[1mm] 
		1/2 + \beta - \eps, &  \beta \in (0,1/2].
	\end{cases}
\end{equation}
\end{theorem}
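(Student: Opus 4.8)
The plan is to run the four-step scheme of the proof of Theorem~\ref{thm:iR} on the Fourier side, \ie~with the roles of position and momentum interchanged. There $\wh\opH:=\sF\opH\sF^{-1}=\xi^2+\opOP(iV_2(x))$ acts in $\Lt(\R)$ as a Schr\"odinger-type operator whose multiplicative part $\xi^2$ is a confining ``potential'' with turning points at $\xi=\pm\sqrt a$, while the genuinely pseudo-differential part $\opOP(iV_2(x))$ has a symbol in $\cS^{\beta+\eps}_{1,0}(\R\times\R)$ for every $\eps>0$ --- exactly what Assumption~\ref{asm:R}~\ref{itm:symbolclass} provides, a regularly varying function of index $\beta$ being dominated by $\langle\cdot\rangle^{\beta+\eps}$. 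As $\|(\opH-a)^{-1}\|=\|(\wh\opH-a)^{-1}\|$, and the two turning points $\pm\sqrt a$ are exchanged by reflection (by evenness of $V_2$) and contribute identically, it suffices to work near $\xi=\sqrt a$. Concretely I would stay in the original representation and conjugate $\opH-a$ by the modulated dilation $\opU_a\colon u(x)\mapsto t_a^{-1/2}e^{i\sqrt a\,x}u(x/t_a)$; using $2\sqrt a\,t_a^{-1}=V_2(t_a)$ from \eqref{eq:ta.def} and $V_2(t_ax)=V_2(t_a)W_{t_a}(x)$ one gets
\[
\opU_a^{-1}(\opH-a)\opU_a \;=\; iV_2(t_a)\left(-\partial_x+W_{t_a}(x)+i\,h_a^{2}\,\partial_x^2\right),\qquad h_a^{2}:=t_a^{-2}V_2(t_a)^{-1}=\tfrac{1}{2}\,(a^{1/2}t_a)^{-1},
\]
so, since $W_{t_a}\to\omega_\beta=|\cdot|^\beta$ locally uniformly and $-\partial_x+|x|^\beta=\opA_\beta$, the rescaled operator is $iV_2(t_a)$ times a perturbation of the generalised Airy operator $\opA_\beta$ of Appendix~\ref{sec:genAiry}.

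\emph{Step 1 (away from $\pm\sqrt a$).} If $\sF u$ is supported where $\xi^2-a\ge\delta V_2(t_a)$ --- resp.\ where $\xi^2-a\le-\delta V_2(t_a)$ --- then $\pm\Re\langle(\opH-a)u,u\rangle=\pm\langle(\xi^2-a)\sF u,\sF u\rangle\ge\delta V_2(t_a)\|u\|^2$, so $\|(\opH-a)u\|\ge\delta V_2(t_a)\|u\|$; the threshold $\delta V_2(t_a)$ is exactly what makes the complementary $\xi$-windows around $\pm\sqrt a$ of width comparable to $t_a^{-1}$, \ie~of order $1$ after the dilation $\opU_a$. \emph{Step 2 (near $\sqrt a$).} Cutting $\wh\opH-a$ off to such a window by a Fourier multiplier and applying $\opU_a$ produces $\opS_a:=-\partial_x+W_{t_a}(x)+ih_a^2\partial_x^2$, and I would prove $\opS_a\overset{nrc}{\longrightarrow}\opA_\beta$ as $a\to+\infty$, with the quantitative rate $\|\opS_a^{-1}-\opA_\beta^{-1}\|=\BigO(\iota(t_a)+(a^{1/2}t_a)^{-l_{\beta,\eps}})$, via the second resolvent identity, the graph-norm bounds \eqref{eq:Abeta.graphnorm.realline.def}, and the symbolic calculus. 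Here the $\iota(t_a)$ term encodes the regular-variation convergence $W_{t_a}\to\omega_\beta$ (Lemma~\ref{lem:reg.var.conv}); the singular second-order term $ih_a^2\partial_x^2$ --- \emph{not} relatively small with respect to the first-order operator $\opA_\beta$ once $\beta<2$ --- has to be split at a slowly growing frequency scale, with its (dissipative) high-frequency part absorbed into the leading operator and its low-frequency part controlled in norm, and the precise exponent $l_{\beta,\eps}$ of \eqref{eq:lbeta.def} results from optimising that scale against the low-frequency norm and the commutator losses of Step 4, the regularity of the maximal singular vector of $\opA_\beta$ --- limited, for small $\beta$, by the non-smoothness of $|x|^\beta$ at the origin --- being responsible for the $\beta\le1/2$ regime. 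Undoing $\opU_a$ then gives, for $\sF u$ supported in the window, $\|(\opH-a)u\|\ge\|\opA_\beta^{-1}\|^{-1}V_2(t_a)\bigl(1-\BigO(\iota(t_a)+(a^{1/2}t_a)^{-l_{\beta,\eps}})\bigr)\|u\|$.

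\emph{Step 3 (sharpness).} Since $\opA_\beta$ has compact resolvent and empty spectrum (Appendix~\ref{sec:genAiry}), $\|\opA_\beta^{-1}\|$ is its largest singular value, attained at an eigenfunction $g$ of $(\opA_\beta^*\opA_\beta)^{-1}$; truncating $g$ at the above frequency scale and undoing $\opU_a$ yields $u_a\in\Dom(\opH)$ with $\|(\opH-a)u_a\|=\|\opA_\beta^{-1}\|^{-1}V_2(t_a)\bigl(1+\BigO(\iota(t_a)+(a^{1/2}t_a)^{-l_{\beta,\eps}})\bigr)\|u_a\|$, showing the estimate cannot be improved. \emph{Step 4 (combination).} With a frequency partition of unity $1=\phi_{a,0}+\phi_{a,1}^{+}+\phi_{a,1}^{-}$, where $\phi_{a,0}$ is localised in the windows around $\pm\sqrt a$ (at scale $t_a^{-1}$), $\phi_{a,1}^{+}$ is supported where $\xi^2-a\ge\delta V_2(t_a)$ and $\phi_{a,1}^{-}$ where $\xi^2-a\le-\delta V_2(t_a)$, set $u_\bullet:=\phi_\bullet(D_x)u$ (the Fourier multiplier $\phi_\bullet(D_x):=\sF^{-1}\phi_\bullet\sF$); the identity $(\opH-a)u_\bullet=\phi_\bullet(D_x)(\opH-a)u+[iV_2(x),\phi_\bullet(D_x)]u$ together with the pseudo-differential composition estimates of Lemma~\ref{lem:pdo.comp} --- using $V_2\in\cS^{\beta+\eps}_{1,0}$ (with the rescaled symbols $W_{t_a}$ in this class uniformly in $a$) and the separation bound $\|V_2u\|\lesssim\|(\opH-a)u\|+\|u\|$ from \eqref{eq:Hcorelowerbound} --- yields $\|[iV_2(x),\phi_\bullet(D_x)]u\|\lesssim\eta_a\|(\opH-a)u\|+V_2(t_a)\,\eta_a\|u\|$ with $\eta_a:=\iota(t_a)+(a^{1/2}t_a)^{-l_{\beta,\eps}}$. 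Feeding Steps 1--3 into this decomposition exactly as in the proof of Theorem~\ref{thm:iR} gives \eqref{eq:resnorm.R}.

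The main obstacle, I expect, is Step 2: the residual term $ih_a^2\partial_x^2$ is a singular (semiclassical-type) perturbation of the first-order operator $\opA_\beta$, so a plain Neumann-series/second-resolvent argument does not close on its own and must be coupled with a carefully tuned frequency cut-off, the optimisation of which --- against the low-frequency norm of $ih_a^2\partial_x^2$ and the commutator losses of Step 4 --- is precisely what produces the exponent $l_{\beta,\eps}$ and its dichotomy. A secondary but pervasive difficulty is to make all the pseudo-differential calculus uniform with respect to the $a$-dependent rescaling, which is what forces the smoothness and growth hypotheses of Assumption~\ref{asm:R}.
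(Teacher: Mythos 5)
Your overall scheme --- work on the Fourier side, Taylor-expand $\xi^2-a$ around the turning points $\pm\sqrt a$, rescale so that the linear Taylor term becomes the $-i\xi$ of a generalised Airy operator $\sF\opA_\beta\sF^{-1}$, and combine via a partition of unity with pseudo-differential commutator bounds --- is the paper's. You also correctly identify that the $\beta\le 1/2$ dichotomy in $l_{\beta,\eps}$ is tied to the limited regularity of $|x|^\beta$ near the origin, and your rescaled operator formula $\opU_a^{-1}(\opH-a)\opU_a = iV_2(t_a)(-\partial_x+W_{t_a}+ih_a^2\partial_x^2)$ with $h_a^2=\tfrac12(a^{1/2}t_a)^{-1}$ is exact.

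However, there is a genuine gap in how you handle the quadratic remainder, and it is not a cosmetic one. You propose to cut $\widehat\opH-a$ off to a Fourier window of width $\sim t_a^{-1}$ around $\pm\sqrt a$, so that after the modulated dilation the window becomes $O(1)$ and $h_a^2\xi^2=O(h_a^2)$ is genuinely small. But then the complementary region has only $|\xi^2-a|\gs V_2(t_a)$, so your Step 1 yields $\|(\widehat\opH-a)u_0\|\gs\delta V_2(t_a)\|u_0\|$, which is of the \emph{same} order as the leading estimate $\|u_1\|\le\|\opA_\beta^{-1}\|V_2(t_a)^{-1}(1+o(1))\|\widehat\opH_a u_1\|$; combining them gives a bound $\|u\|\le C V_2(t_a)^{-1}\|\widehat\opH_a u\|$ with a constant $C$ strictly larger than $\|\opA_\beta^{-1}\|$, not the sharp leading term. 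Allowing a slowly growing window width $\Delta_a$ in the rescaled frame makes the Step 1 bound $\Delta_a^{-1}$ and the Step 2 remainder $\Delta_a^2(a^{1/2}t_a)^{-1}$, whose optimisation yields $(a^{1/2}t_a)^{-1/3}$ --- weaker than \eqref{eq:lbeta.def} in every regime --- so the proposed ``frequency splitting'' is not just incomplete, it cannot reach the stated rate. The paper resolves the tension differently: it uses \emph{wide} windows $\Omega'_{a,\pm}$, $\Omega_{a,\pm}$ of width $\sim\delta\sqrt a$ (so Step 1 yields the much stronger $a\|u_0\|\ls\|\widehat\opH_a u_0\|$, contributing only $(a^{1/2}t_a)^{-1}$ to the final error), and does not cut off the whole operator but only the \emph{quadratic} Taylor term of $\xi^2-a$, keeping the linear term on all of $\R$ --- see the definition of $\widetilde V_a$ above \eqref{eq:Ha.til.def} and of $\widehat R_a$ in \eqref{eq:Wahat.def}. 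The truncated quadratic piece $\widehat R_a$ then satisfies both $\|\xi^{-1}\widehat R_a\|_\infty\le\delta$ (yielding invertibility of $I+\widehat R_a(\widehat\opS_a^0)^{-1}$ for a fixed small $\delta$ via \eqref{eq:Sthatminus1.realline.ubound1}) and $\|\xi^{-2}\widehat R_a\|_\infty=(2\xi_at_a)^{-1}$ (yielding the NRC rate $(a^{1/2}t_a)^{-1}$ through the sandwich $\widehat\opS_a^{-1}\xi\cdot\xi^{-2}\widehat R_a\cdot\xi(\widehat\opS_a^0)^{-1}$). This decoupling of ``wide windows for Step 1'' and ``truncated quadratic remainder for Step 2'' is the crucial device your proposal misses, and it is what makes Steps 1--2 contribute only $\iota(t_a)+(a^{1/2}t_a)^{-1}$; the bottleneck $l_{\beta,\eps}$ then comes from Step 3 (the loss in $\|(\psi_a-1)\sF\phi\,\omega_\beta\check g_a\|$, where the lack of $L^2$-integrability of $\omega_\beta'$ near $0$ forces Hausdorff--Young for $\beta\le 1/2$) together with the $\eps$-loss in the Step 4 commutator bound \eqref{eq:Hahatphiak.commutator.decay} --- not from optimising a window scale.
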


\subsubsection{Remarks on the assumptions}
As a consequence of \eqref{eq:slowvarying.estimates}, if $V$ satisfies Assumption~\ref{asm:R}~\ref{itm:regularlyvarying}, then
\begin{equation}
	\label{eq:VgrowsInfty.realline}
	\underset{|x| \rightarrow +\infty}{\lim} V_2(x) = +\infty.
\end{equation}

Moreover, by Assumption~\ref{asm:R}~\ref{itm:symbolclass} with $n = 1$, for any arbitrarily small $\eps > 0$
\begin{equation*}
	\frac{|V_2'(x)|}{|V_2(x)|^\frac32} \ls \frac{(1 + V_2(x)) \langle x \rangle^{-1}}{|V_2(x)|^\frac32} \ls \eps, \quad |x| \to +\infty,
\end{equation*}
and it follows that $V$ satisfies condition \eqref{eq:vgrowth}. Hence the graph norm of $\opH$ separates
\begin{equation}
\label{eq:H.real.separation.prop}
\| \opH u \|^2 + \| u \|^2 \gs \| u'' \|^2 + \| V u \|^2 + \| u \|^2, \quad u \in \Dom(\opH).
\end{equation}

Finally, the following estimates for the derivatives of $W_t$ shall be used in Steps 2 and 3 of the proof of Theorem~\ref{thm:R}.

\begin{lemma}\label{lem:Wt.asm}
Let $V = i V_2$ satisfy Assumption~\ref{asm:R} and let $W_t$ be as in \eqref{eq:Wt.Om.def}. Then for each $n \in N$, there exists a constant $D_n$, independent of $t$, such that for all $t>t_0$, with a sufficiently large $t_0>0$, independent of $n$, and all $|x|\geq1$
\begin{equation}\label{eq:Wt.symb}
	|W_t^{(n)}(x)| \le D_n (1 + W_t(x))  \langle x \rangle^{- n}.
\end{equation}
\end{lemma}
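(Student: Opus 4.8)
The statement is a direct transfer of the derivative control in Assumption~\ref{asm:R}~\ref{itm:symbolclass} from $V_2$ to the rescaled function $W_t$, so the plan is to differentiate the defining relation \eqref{eq:Wt.Om.def} explicitly and then absorb the two ``error'' contributions that appear: a power of $t$ coming from the chain rule, and the additive constant $1$ in the bound $1+V_2$. By the chain rule, $W_t^{(n)}(x) = t^n V_2^{(n)}(tx)/V_2(t)$, and applying \eqref{eq:V.symb} at the point $tx$ gives
\begin{equation*}
|W_t^{(n)}(x)| \le C_n\,\frac{t^n\,(1+V_2(tx))}{V_2(t)}\,\langle tx\rangle^{-n}, \qquad t>0,\ x\in\R.
\end{equation*}
Thus everything reduces to two elementary observations.

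\textbf{Key steps.} First I would compare $\langle tx\rangle$ with $t\langle x\rangle$ on the region $|x|\ge1$: there $\langle x\rangle=\sqrt{1+x^2}\le\sqrt2\,|x|$, hence $\langle tx\rangle\ge\sqrt{t^2x^2}=t|x|\ge t\langle x\rangle/\sqrt2$ for $t>0$, so that $t^n\langle tx\rangle^{-n}\le 2^{n/2}\langle x\rangle^{-n}$. Substituting this into the displayed inequality and recalling $W_t(x)=V_2(tx)/V_2(t)\ge0$ yields
\begin{equation*}
|W_t^{(n)}(x)| \le C_n 2^{n/2}\Bigl(\tfrac{1}{V_2(t)}+W_t(x)\Bigr)\langle x\rangle^{-n}, \qquad |x|\ge1,\ t>0.
\end{equation*}
Second, since $V_2$ is regularly varying with positive index it diverges at infinity by \eqref{eq:VgrowsInfty.realline}, so there is $t_0>0$, depending only on $V_2$ (and \emph{not} on $n$), with $V_2(t)\ge1$ for all $t>t_0$. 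For such $t$ we have $1/V_2(t)\le1\le1+W_t(x)$, and the claim follows with $D_n:=C_n 2^{n/2}$.

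\textbf{Main difficulty.} There is no substantive obstacle here; the only point needing care is the uniformity of $t_0$ in $n$ required by the statement, and this is automatic because the single condition $V_2(t)\ge1$ serves for all $n$ at once. One should also note that the restriction $|x|\ge1$ is exactly what makes the comparison $\langle tx\rangle\gtrsim t\langle x\rangle$ hold with a constant independent of $x$, which is why the lemma is stated only on $\{|x|\ge1\}$.
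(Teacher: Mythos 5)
Your proof is correct and follows essentially the same route as the paper: differentiate $W_t(x)=V_2(tx)/V_2(t)$, apply \eqref{eq:V.symb} at $tx$, then use $t|x|\le\langle tx\rangle$ together with $|x|\approx\langle x\rangle$ for $|x|\ge1$, and finally absorb $1/V_2(t)$ into $1+W_t(x)$ once $t$ is large enough that $V_2(t)\ge1$. The observation that the threshold $t_0$ is chosen from the single condition $V_2(t)\ge1$ and hence is independent of $n$ is the right way to justify the uniformity claimed in the statement.
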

\begin{proof}
The claim follows from \eqref{eq:V.symb}, \eqref{eq:VgrowsInfty.realline} and $|x| \geq 1$, namely
\begin{equation*}
|W_t^{(n)}(x)| = t^n \frac{|V_2^{(n)}(t x)|}{V_2(t)} 
\le C_n \frac{(t |x|)^n}{|x|^n \langle t x \rangle^n} \frac{1 + V_2(t  x)}{V_2(t)} 
\le D_n  \frac{1 + W_t(x)}{\langle x \rangle^n}. 
\qedhere
\end{equation*}
\end{proof}

\subsection{Proof of Theorem~\ref{thm:R}}
\label{ssec:proof.R}

We transform the problem to Fourier space and implement there the strategy of Sub-section~\ref{ssec:proof.iR}. To this end, we introduce the operators in $L^2(\R)$
\begin{equation}\label{eq:Hhat.P.def}
\begin{aligned}
\widehat \opH& :=  -i \, \sF \, \opH \, \sF^{-1},& \Dom(\widehat \opH) &:= \{ u \in \Lt(\R) \,: \; \check{u} \in \Dom(\opH) \},
\\
\opwV &:= -i \, \sF \, V \, \sF^{-1},& \Dom(\opwV) &:= \{  u \in \Lt(\R) \, :\, \check{u} \in \Dom(V) \}.
\end{aligned}
\end{equation}

Notice that $\widehat \opH = \opwV - i\, \xi^2$, $\| \widehat \opH  u \| = \| \opH  \check{u} \|$ for all $u \in \Dom(\widehat \opH)$ and $\| \opwV  u \| = \| V \check{u} \|$ for all $u \in \Dom(\opwV)$.
Thus the separation of the graph norm of $\opH$, see \eqref{eq:H.real.separation.prop}, yields
\begin{equation}\label{eq:Hhat.real.separation.prop}
\| \widehat \opH  u \|^2 + \| u \|^2 \gs \|\xi^2 u \|^2 + \| \opwV u \|^2  + \|u\|^2, \quad u \in \Dom(\widehat \opH).
\end{equation}

The proof has an analogous structure to that of Theorem~\ref{thm:iR} but nonetheless some steps are more technical. In particular, our simple estimate of the commutator of $\Dt$ and a cut-off partition of unity in Step 4 of Theorem~\ref{thm:iR} (see Sub-section~\ref{sssec:step.4.iR}) requires more effort here (see Step 0 below).

\subsubsection{Step 0: commutator estimate}
\label{sssec:step.0.R}
The proof of our next lemma specialises that of \cite[Thm.~3.15]{Abels-2011} for the operators that we are interested in.
\begin{lemma}\label{lem:pdo.comp}
Let $F \in \CiR$ and $m > 0$ be such that 
\begin{equation}
\label{eq:Vsymbolclass}
\forall n \in \N_0, \quad \exists C_n>0, \quad |F^{(n)}(x)| \le C_n \, \langle x \rangle^{m - n}, \quad x \in \R,
\end{equation}
and let $\phi \in \CiR \cap \LiR$ be such that $\supp \phi'$ is bounded. For $j \in \N_0$ and $u \in \SchwR$, we define the operators (with $\opP:=\opP^{(0)}$ and $\opQ:=\opQ^{(0)}$)
\begin{align}
\label{eq:composition.pq.def}\opP^{(j)} u := \sF \, F^{(j)} \sF^{-1} u, \qquad \opQ^{(j)} u := \phi^{(j)} u.
%= \intR e^{-i \xi x} \, F^{(j)}(x) \, \check{u}(x) \, \odd x, \quad \xi \in \R, 
%\\
%, \quad \xi \in \R
\end{align}
Then, for any $N \in \N_0$, we have 
\begin{equation}
\label{eq:compositionformula} [\opP,\opQ]  u = \sum_{j=1}^{N} \frac{i^j}{j!} \opQ^{(j)} \opP^{(j)} u + \opR_{N+1} u, \quad u \in \SchwR,
\end{equation}
where $R_{N+1}$ is a pseudodifferential operator with symbol $r_{N+1} \in \cS^{m-N-1}_{1,0}(\R \times \R)$
\begin{equation}
\label{eq:comp.operator.remainder}\opR_{N+1} \, u(\xi) := \intR e^{-i \xi x} \, r_{N+1}(\xi,x) \, \check{u}(x) \, \odd x. 
\end{equation}
Moreover, for every $N \in \N$ with $N > m$, there exist $l = l(N) \in \N$ and $K_N > 0$, independent of $F$ and $\phi$, such that %
\begin{equation}\label{eq:R_N.est}
\| \opR_{N+1} \, u \| \le K_N \underset{0 \le j \le l}{\max} \left\{ \| \phi^{(N + 1 + j)}\|_\infty \right\} \, \| u \|. 
\end{equation}
\end{lemma}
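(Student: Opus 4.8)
The plan is to follow the standard pseudodifferential calculus derivation of the composition formula, as in \cite[Thm.~3.15]{Abels-2011}, but tracking carefully how the constant in \eqref{eq:R_N.est} depends only on finitely many sup-norms of derivatives of $\phi$ (and not at all on $F$). First I would write $\opP u(\xi) = \sF F \sF^{-1} u(\xi) = \intR e^{-i\xi x} F(x) \check u(x) \odd x$, so that $\opP$ is the pseudodifferential operator with symbol $p(\xi,x) = F(x) \in \cS^m_{1,0}(\R\times\R)$ (the symbol is $\xi$-independent, which is what makes the calculus here so clean), and $\opQ$ is multiplication by $\phi$. Then $[\opP,\opQ]u(\xi) = \intR e^{-i\xi x} F(x)\bigl(\phi\check u\bigr)^{\widehat{\ }}{}^{\vee}(x)\,\odd x - \phi(\xi)\intR e^{-i\xi x}F(x)\check u(x)\,\odd x$; the first term I would rewrite, using $\check{(\phi\check u\,\widehat{\ })}=\phi\check u$ is circular, so instead I pass to the oscillatory-integral representation $[\opP,\opQ]u(\xi) = \iint e^{-i(\xi-\eta)x} F(x)\bigl(\phi(\eta)-\phi(\xi)\bigr)\check u_{?}\cdots$ — more precisely I would start from the amplitude representation of $\opP\opQ$ and $\opQ\opP$ and subtract, obtaining an operator with amplitude $a(\xi,\eta,x) = F(x)\bigl(\phi(\xi) - \phi(\eta)\bigr)$ acting via $\iint e^{i(\eta-\xi)x}a(\xi,\eta,x)\wh u(\eta)\,\odd\eta\,\odd x$ (signs to be fixed against the conventions of Subsection~\ref{ssec:fourier.pdo.prelim}).

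Next I would Taylor-expand $\phi(\xi)$ about $\xi = \eta$ to order $N$: $\phi(\xi) - \phi(\eta) = \sum_{j=1}^{N}\frac{1}{j!}\phi^{(j)}(\eta)(\xi-\eta)^j + \text{(integral remainder of order }N+1)$. Each monomial term $(\xi-\eta)^j$ is converted, via the identity $(\xi-\eta)^j e^{i(\eta-\xi)x} = (i\partial_x)^j\, \bigl[\cdot\bigr]$ — more precisely $(\eta-\xi)^j e^{i(\eta-\xi)x} = (-i)^j \partial_x^j e^{i(\eta-\xi)x}$ — into a derivative falling on $F(x)$ after integration by parts in $x$. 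Collecting the $x$-integration back up reproduces the terms $\frac{i^j}{j!}\opQ^{(j)}\opP^{(j)}u = \frac{i^j}{j!}\phi^{(j)}\,\sF F^{(j)}\sF^{-1}u$ in \eqref{eq:compositionformula} (again up to a sign bookkeeping that I'll pin down). The leftover piece is $\opR_{N+1}$, whose symbol $r_{N+1}(\xi,x)$ is obtained by doing the analogous integration by parts $N+1$ times on the Taylor remainder term; the remainder carries $N+1$ derivatives off $\phi$ and correspondingly picks up $F^{(N+1)}$, hence lives in $\cS^{m-N-1}_{1,0}$ by \eqref{eq:Vsymbolclass} — this uses $\supp\phi'$ bounded so that no growth in $\xi$ is introduced. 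This establishes \eqref{eq:compositionformula}, \eqref{eq:comp.operator.remainder}.

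For the norm bound \eqref{eq:R_N.est}, with $N > m$ so that $m - N - 1 < -1$, the symbol $r_{N+1} \in \cS^{m-N-1}_{1,0}$ is in particular dominated by $\langle x\rangle^{m-N-1}$, which is integrable in $x$; a standard Calderón–Vaillancourt / Schur-test argument (exactly the endgame of \cite[Thm.~3.15]{Abels-2011}) gives $\|\opR_{N+1}u\| \le K_N' |r_{N+1}|^{(m-N-1)}_{l} \|u\|$ for some finite $l = l(N)$ depending only on the decay rate, i.e. on $N$ and $m$. Finally I would trace through the explicit form of $r_{N+1}$: it is built from $\phi^{(N+1)}$ and its derivatives up to order $l$ (evaluated after the Taylor remainder integral in the $\xi$-variable) multiplied by $F^{(N+1)}$ and its $\xi$-derivatives — but $F$ is $\xi$-independent, so only $\phi$-derivatives of orders $N+1,\dots,N+1+l$ appear in the seminorm, times a constant controlled by the $C_n$ in \eqref{eq:Vsymbolclass} which I absorb into $K_N$. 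This yields \eqref{eq:R_N.est} with the constant independent of $\phi$ and $F$.

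The main obstacle I anticipate is purely bookkeeping rather than conceptual: keeping the sign and factor conventions consistent with the particular normalisations $\sF u(\xi) = \intR e^{-i\xi x}u(x)\odd x$ and $\opOP(p)u(\xi) = \intR e^{-i\xi x}p(\xi,x)\check u(x)\odd x$ used here (which differ from several textbook conventions), and — more delicately — verifying that the constant $l(N)$ and $K_N$ genuinely do not depend on $F$. The $F$-independence is the one point where I would be careful: it holds because $F$ enters the remainder symbol only through $F^{(N+1)}$ and its $\xi$-derivatives, and since $F$ has no $\xi$-dependence those $\xi$-derivatives vanish, so $F$ contributes only an overall $\langle x\rangle^{m-N-1}$ factor whose seminorm constant is fixed by \eqref{eq:Vsymbolclass}; I would state this explicitly since it is exactly the feature needed later (applying the lemma with $F$ replaced by a $t$-dependent family while keeping uniform constants). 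The oscillatory-integral manipulations themselves are routine and I would cite \cite[Part~I]{Abels-2011} for the justification of the integrations by parts and the boundedness theorem rather than reproving them.
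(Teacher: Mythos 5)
Your proposal follows the same route as the paper's own proof: both specialise the symbolic-calculus argument of Abels' Theorem~3.15, obtaining the composition formula and remainder via Taylor expansion in $\xi$ with integration by parts (the paper cites the composition theorem directly for this step rather than re-deriving it), and both establish the $L^2$ bound by noting that for $N>m$ the remainder symbol decays like $\langle x\rangle^{m-N-1}\in L^1$, so the operator kernel is dominated by a convolution kernel in $L^1$ and Young's inequality (your ``Schur test'') applies. Your observation about $F$-independence of $K_N$ — that only $\phi$ receives $\xi$-derivatives while $F$ contributes merely a $\langle x\rangle^{m-N-1}$ factor whose constants are fixed by \eqref{eq:Vsymbolclass} — is exactly the point the paper's estimate \eqref{eq:a.est} makes explicit.
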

\begin{proof}
Let $p(\xi,x) := F(x)$ and $q(\xi,x) := \phi(\xi)$, then our hypotheses ensure $p \in \cS^m_{1,0}(\R \times \R)$ and $q \in \cS^0_{1,0}(\R \times \R)$. Moreover (with $\xi \in \R$)
\begin{equation*}
\opP \, u(\xi) = \intR e^{-i \xi x} \, p(\xi,x) \, \check{u}(x) \, \odd x,  \quad 
\opQ \, u(\xi) = \intR e^{-i \xi x} \, q(\xi,x) \, \check{u}(x) \, \odd x,
\end{equation*}	
and therefore both symbols define continuous mappings on $\SchwR$ (see \cite[Thm.~3.6]{Abels-2011}). An analogous claim holds for $\opP^{(j)} u$, $\opQ^{(j)}$, $j \in \N$.
Furthermore, by the composition theorem \cite[Thm.~3.16]{Abels-2011}, $\opP \opQ$ is a pseudo-differential operator with symbol $p\#q \in \cS^m_{1,0}(\R \times \R)$ determined by
\begin{equation*}
p\#q(\xi,x) = \sum_{j=0}^{N} \frac{i^j}{j!}   \phi^{(j)}(\xi) \, F^{(j)}(x) + r_{N+1}(\xi,x),
\end{equation*}
where $r_{N+1} \in \cS^{m-N-1}_{1,0}(\R \times \R)$ for any $N \in \N_0$ and (with $x,x',\xi,\xi' \in \R$)
\begin{align}
	\label{eq:comp.symbol.remainder.1}r_{N+1}(\xi,x) &:= \frac{i^{N+1}}{N!}   \, \Os \iint e^{i x' \xi'} \, a_{\xi,x}(\xi',x') \, \odd x' \, \odd \xi', 
	\\
	\label{eq:comp.symbol.remainder.2}a_{\xi,x}(\xi',x') &:= \phi^{(N+1)}(\xi + \xi') \, \int_{0}^{1} \left(1 - \theta\right)^N F^{(N+1)}(x + \theta x') \, \dd \theta.
\end{align}
Thus the composition formula \eqref{eq:compositionformula} follows by simple manipulations.

In the following, $x,x', \xi, \xi' \in \R$ and $\alpha=(\alpha_1, \alpha_2)$, $\beta=(\beta_1, \beta_2) \in \N_0^2$ are arbitrary.	We define $a(\xi, \xi', x, x') := a_{\xi,x}(\xi',x') \in C^{\infty}(\R^2 \times \R^2)$ with $a_{\xi,x}$ given by \eqref{eq:comp.symbol.remainder.2}. Using the assumption \eqref{eq:Vsymbolclass}, we obtain 
\begin{equation}\label{eq:a.est}
\begin{aligned}
& \left| \partial^{\alpha}_{(\xi,\xi')} \partial^{\beta}_{(x,x')} a(\xi, \xi', x, x') \right| 
\\ 
& \qquad = \left| \phi^{(N+1+|\alpha|)}(\xi + \xi') \int_{0}^{1} \left(1 - \theta\right)^N \theta^{\beta_2}  F^{(N+1+|\beta|)}(x + \theta x') \, \dd \theta \right| 
\\
&\qquad \le C_{N, \beta}  \| \phi^{(N+1+|\alpha|)} \|_{\infty} \int_{0}^{1} \left(1 - \theta\right)^N \theta^{\beta_2} \, \langle x + \theta x' \rangle^{m-N-1} \, \dd \theta 
\\
&\qquad \le C'_{N, \beta} \| \phi^{(N+1+|\alpha|)} \|_{\infty} \langle (x, x') \rangle^{|m-N-1|},
%
% &\qquad \le C'_{N, \beta}  \| \phi^{(N+1+|\alpha|)} \|_{\infty}  \langle x \rangle^{m-N-1}  \langle x' \rangle^{|m-N-1|};
%		
%		\langle (x, x') \rangle^{|m-N-1|},
\end{aligned}
\end{equation}
where in the last step we have used the fact
\begin{equation*}
	\langle x + \theta x' \rangle^{m-N-1} \le \langle x + \theta x' \rangle^{|m-N-1|} \lesssim \langle (x, x') \rangle^{|m-N-1|}, \quad \theta \in [0,1], \quad x, x' \in \R.
\end{equation*}
Notice that $C'_{N, \beta}$ is independent of $\xi, x, \xi', x'$ and $\theta$ and therefore \eqref{eq:a.est} shows that $a \in \cA^{|m-N-1|}_{0}(\R^2 \times \R^2)$. Applying Fubini's theorem for oscillatory integrals \cite[Thm.~3.13]{Abels-2011} to \eqref{eq:comp.symbol.remainder.1}, we deduce that for any $\alpha_1, \beta_1 \in \N_0$ and $\xi, x \in \R$
\begin{equation*}
\partial^{\alpha_1}_{\xi} \partial^{\beta_1}_{x} r_{N+1}(\xi,x) = \frac{i^{N+1}}{N!}   \, \Os \iint e^{i x' \xi'}  \partial^{\alpha_1}_{\xi} \partial^{\beta_1}_{x} a_{\xi,x}(\xi',x') \, \odd x' \, \odd \xi'.
\end{equation*}
Moreover, by Peetre's inequality (see \cite[Lem.~3.7]{Abels-2011})
\begin{equation*}
	\langle x + \theta x' \rangle^{m-N-1} \lesssim \langle x \rangle^{m-N-1} \, \langle x' \rangle^{|m-N-1|}, \quad \theta \in [0,1], \quad x, x' \in \R.
\end{equation*}
Therefore \eqref{eq:a.est} also implies that, for any $\xi, x \in \R$, $\partial^{\alpha_1}_{\xi} \partial^{\beta_1}_{x} a_{\xi,x} \in \cA^{|m-N-1|}_{0}(\R \times \R)$ w.r.t.~$(\xi', x')$ and, for any $l \in \N_0$, there exists $C_{N,\beta_1,l} > 0$ such that
\begin{equation}
\label{eq:compsymbol.seminorm.ubound.2}
|\partial^{\alpha_1}_{\xi} \partial^{\beta_1}_{x} a_{\xi,x}|_{\cA^{|m-N-1|}_{0},l} \le C_{N, \beta_1, l} \, \underset{0 \le j \le l}{\max} \, \| \phi^{(N+1+\alpha_1+j)} \|_{\infty} \, \langle x \rangle^{m-N-1}.
\end{equation}
Hence by \cite[Thm.~3.9]{Abels-2011}, for a sufficiently large $l \in \N$ (depending on $N$)
\begin{equation}\label{eq:remainder.symbol.ubound}
\begin{aligned}
\left| \partial^{\alpha_1}_{\xi} \partial^{\beta_1}_{x} r_{N+1}(\xi,x) \right| &= \frac{1}{N!} \left|\Os \iint e^{i x' \xi'}  \partial^{\alpha_1}_{\xi} \partial^{\beta_1}_{x} a_{\xi,x}(\xi',x') \, \odd x' \, \odd \xi'\right|
\\
&\le 
C_N  |\partial^{\alpha_1}_{\xi} \partial^{\beta_1}_{x} a_{\xi,x}|_{\cA^{|m-N-1|}_{0},l}
\\
&\le 
C'_{N, \beta_1, l} \, \underset{0 \le j \le l}{\max} \, \| \phi^{(N+1+\alpha_1+j)} \|_{\infty} \, \langle x \rangle^{m-N-1},
\end{aligned}
\end{equation}
with $C'_{N, \beta_1, l} > 0$ independent of $F$ and $\phi$. Since $r_{N+1} \in \cS^{m-N-1}_{1,0}(\R \times \R)$, it follows that, for any $N > m$, $\xi \in \R$ and $\beta_1 \in \N_0$, $\partial_x^{\beta_1}r_{N+1}(\xi, \cdot) \in L^1(\R)$ and therefore
\begin{equation*}
k(\xi,z) := \intR e^{-i z x} \, r_{N+1}(\xi,x) \, \odd x, \quad \xi, z \in \R,
\end{equation*}
is well-defined. Moreover, by \eqref{eq:remainder.symbol.ubound}, for large enough $l \in \N$ and some $C_{N, l} > 0$ (independent of $F$ and $\phi$)
\begin{equation*}
\left|(1+z^2) k(\xi,z)\right| = \left| \intR e^{-i z x} (1- \partial_x^{2}) r_{N+1}(\xi,x) \, \odd x \right| \le C_{N,l} \, \underset{0 \le j \le l}{\max} \| \phi^{(N+1+j)} \|_{\infty}.
\end{equation*}
%
%In particular, there exists a large enough $l \in \N$ and $C'''_{N,l} > 0$ such that
%%
%\begin{equation*}
%\left| (1 + z^2) \, k(\xi,z) \right| \le C'''_{N,l} \, \underset{0 \le j \le l}{\max} \, \| \phi^{(N+1+j)} \|_{\infty}.
%\end{equation*}
%
Hence
\begin{equation}
\label{eq:g.remainder.estimate}
g(z) := \underset{\xi \in \R}{\sup} \, |k(\xi,z)| \le C_{N,l} \, \underset{0 \le j \le l}{\max} \, \| \phi^{(N+1+j)} \|_{\infty} \, \left( 1 + z^2 \right)^{-1} \in L^1(\R)
\end{equation}
and
\begin{align*}
|\opR_{N+1} \, u(\xi)| &=\left| \intR e^{-i \xi x} \, r_{N+1}(\xi,x) \, \check{u}(x) \, \odd x\right| 
\ \
\le \intR |k(\xi,\xi-\eta)| \, |u(\eta)| \, \odd \eta 
\\	
&	\le \intR g(\xi-\eta) \, |u(\eta)| \, \odd \eta = \left(g*|u|\right)(\xi).
\end{align*}
The claim \eqref{eq:R_N.est} follows by Young's inequality and \eqref{eq:g.remainder.estimate}.
\end{proof}

\subsubsection{Step 1: estimate outside the neighbourhoods of $\pm \xi_a$}

For $a \in \Rplus$, we shall denote
\begin{equation}
\label{eq:deltaa.def}
\Omega'_{a,\pm} := ( \pm \xi_a - \delta_a,  \pm \xi_a + \delta_a ), \quad \xi_a := \sqrt{a}, \quad \delta_a := \delta  \xi_a, \quad 0 < \delta < \frac14,
\end{equation}
where the parameter $\delta$ will be specified in Proposition~\ref{prop:local.R} and
\begin{equation}
\label{eq:Ha.def}
\opH_a := \opH - a, \quad \widehat \opH_a := -i  \sF  \opH_a \, \sF^{-1} = \widehat \opH + i \, a = \widehat{V} - i (\xi^2 - a).
\end{equation}

\begin{proposition}\label{prop:away.R}
Let $\Omega'_{a,\pm}$ be defined by~\eqref{eq:deltaa.def}, let the assumptions of Theorem~\ref{thm:R} hold and let $\widehat \opH_a$ be as in \eqref{eq:Ha.def}. Then as $a \to +\infty$
\begin{equation}\label{eq:away.est}
a \lesssim \inf \left\{ \frac{\| \widehat \opH_a  u \|}{\|u\|} \, : \, 0 \neq u \in \Dom(\widehat \opH), \; \supp u \cap (\Omega'_{a,+} \cup \Omega'_{a,-}) = \emptyset \right\}.
\end{equation}
\end{proposition}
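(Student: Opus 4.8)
The plan is to transcribe the proof of Proposition~\ref{prop:away.iR} to the Fourier side, the multiplication operator in the imaginary part of $\widehat\opH_a=\opwV-i(\xi^2-a)$ now being $\xi^2-a$. First I would record the elementary fact that if $\supp u\cap(\Omega'_{a,+}\cup\Omega'_{a,-})=\emptyset$ then every $\xi\in\supp u$ satisfies $|\xi|\le\xi_a(1-\delta)$ or $|\xi|\ge\xi_a(1+\delta)$, hence $|\xi^2-a|\ge\delta a$ and thus $\langle|\xi^2-a|u,u\rangle\gs a\|u\|^2$. It therefore suffices to bound $\langle|\xi^2-a|u,u\rangle$ from above by essentially $\|\widehat\opH_a u\|\,\|u\|$.

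Mirroring the cut-off $\chi_b=\sgn(V_2-b)$, I set $\chi_a(\xi):=\sgn(\xi^2-a)$ and choose a smooth $\tilde\chi_a$ — obtained by rescaling a fixed profile, so that $|\tilde\chi_a|\le1$, $\supp\tilde\chi_a'$ is bounded, and $\|\tilde\chi_a^{(j)}\|_\infty\ls a^{-j/2}$ uniformly in $a$ — with $\tilde\chi_a=\chi_a$ on the complement of $\Omega'_{a,+}\cup\Omega'_{a,-}$, in particular on $\supp u$. Since $\tilde\chi_a$ differs from the constant $1$ by a smooth compactly supported function, multiplication by $\tilde\chi_a$ preserves $\Dom(\widehat\opH)$ (on the physical side this is the identity minus convolution with a Schwartz function, which preserves $\{v:V_2v\in L^2\}$ because $V_2$ is subpolynomial); so all the inner products and commutator manipulations below are legitimate, and the composition formula \eqref{eq:compositionformula} of Lemma~\ref{lem:pdo.comp}, a priori valid on $\SchwR$, extends to $\Dom(\opwV)$ by density. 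Using $\tilde\chi_a(\xi)(\xi^2-a)=|\xi^2-a|$ on $\supp u$ and taking imaginary parts in $\langle\tilde\chi_a\widehat\opH_a u,u\rangle=\langle\tilde\chi_a\opwV u,u\rangle-i\langle|\xi^2-a|u,u\rangle$, one gets
\[
\langle|\xi^2-a|u,u\rangle=\Im\langle\tilde\chi_a\opwV u,u\rangle-\Im\langle\tilde\chi_a\widehat\opH_a u,u\rangle\le\tfrac12\big\|[\tilde\chi_a,\opwV]u\big\|\,\|u\|+\|\widehat\opH_a u\|\,\|u\|,
\]
where $\Im\langle\tilde\chi_a\opwV u,u\rangle=\tfrac1{2i}\langle[\tilde\chi_a,\opwV]u,u\rangle$ by self-adjointness of $\opwV$ and $|\tilde\chi_a|\le1$.

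The main point is the commutator estimate $\|[\tilde\chi_a,\opwV]u\|\ls a^{-1/2}(\|u\|+\|\opwV u\|)$. Unlike in Step~4 of the proof of Theorem~\ref{thm:iR}, where the relevant commutator vanished on $\supp u$, $[\tilde\chi_a,\opwV]$ is nonlocal and does not; this is exactly the situation of Lemma~\ref{lem:pdo.comp}, which I would apply with $F=V_2$ and $\phi=\tilde\chi_a$. Assumption~\ref{asm:R}~\ref{itm:symbolclass} and \eqref{eq:slowvarying.estimates} give $|V_2^{(n)}(x)|\ls\langle x\rangle^{m-n}$ with $m=\beta+\eps$, so \eqref{eq:Vsymbolclass} holds; choosing any integer $N>m$, Lemma~\ref{lem:pdo.comp} writes $[\tilde\chi_a,\opwV]u$ as $-\sum_{j=1}^{N}\tfrac{i^j}{j!}\tilde\chi_a^{(j)}\,\sF V_2^{(j)}\sF^{-1}u-\opR_{N+1}u$, each summand being bounded by $\|\tilde\chi_a^{(j)}\|_\infty\|V_2^{(j)}\check u\|\ls a^{-j/2}(\|u\|+\|\opwV u\|)$ (using $|V_2^{(j)}|\ls1+V_2$) and the remainder, by \eqref{eq:R_N.est}, by $a^{-(N+1)/2}\|u\|$ up to a constant independent of $a$. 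To conclude, the separation \eqref{eq:Hhat.real.separation.prop} gives $\|\opwV u\|\ls\|\widehat\opH u\|+\|u\|$ and $\widehat\opH=\widehat\opH_a-ia$ gives $\|\opwV u\|\ls\|\widehat\opH_a u\|+a\|u\|$; feeding this into the displayed inequality yields
\[
a\|u\|^2\ls\langle|\xi^2-a|u,u\rangle\ls a^{1/2}\|u\|^2+\big(1+a^{-1/2}\big)\|\widehat\opH_a u\|\,\|u\|,
\]
so for $a$ large the term $a^{1/2}\|u\|^2$ is absorbed on the left and $a\|u\|\ls\|\widehat\opH_a u\|$ follows, which is \eqref{eq:away.est}. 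The chief obstacle is this commutator bound: everything else is a transcription of Step~1 of the proof of Theorem~\ref{thm:iR}, but controlling $[\tilde\chi_a,\opwV]$ requires the pseudo-differential calculus of Lemma~\ref{lem:pdo.comp}, which is what forces the smoothness hypothesis Assumption~\ref{asm:R}~\ref{itm:symbolclass}.
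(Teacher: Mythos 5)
Your proof is correct, but it takes a genuinely different route from the paper's. The paper does not mirror the signed--$\chi_b$ argument from Step~1 of Theorem~\ref{thm:iR}; instead it directly expands
\begin{equation*}
\| \widehat \opH_a u \|^2 = \| \widehat V u \|^2 + \| (\xi^2 - a) u \|^2 + 2 \Re \langle \widehat V u, -i(\xi^2 - a) u \rangle,
\end{equation*}
observes that the cross term reduces to $\Re\langle \widehat V u,-i\xi^2 u\rangle$ (the $a$-term drops out because $\langle\widehat V u,u\rangle$ is real) and is bounded, after one integration by parts, by $|\langle V_2'\check u,\check u'\rangle|\ls\|(1+\widehat V)u\|\,\|\xi u\|$ using only the $n=1$ case of Assumption~\ref{asm:R}~\ref{itm:symbolclass}. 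Young's inequality then absorbs everything, and the support condition gives both $|\xi^2-a|\ge\delta^2 a$ and $|\xi^2-a|\gs\xi^2$, yielding the coercivity estimate \eqref{eq:Hahat.away.est.R}. Your version avoids this expansion by introducing a smooth signed cut-off $\tilde\chi_a$ and invoking Lemma~\ref{lem:pdo.comp} to control $[\tilde\chi_a,\opwV]$; this works, but it is heavier, calling on the full pseudo-differential calculus where the paper needs only one integration by parts and a first-order derivative bound. It is also slightly weaker in output: you obtain $a\|u\|\ls\|\widehat\opH_a u\|$, while the paper's proof in fact establishes $\|\widehat\opH_a u\|^2\gs\|\xi^2 u\|^2+\|\widehat V u\|^2+a^2\|u\|^2$, and that stronger coercivity is reused explicitly in Lemma~\ref{lem:Vhat.phiak.commutator}. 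Finally, your remark that multiplication by $\tilde\chi_a$ preserves $\Dom(\widehat\opH)$ is plausible but left rather loose (one needs something like $V_2(x)\ls V_2(y)\langle x-y\rangle^C$, which follows from Assumption~\ref{asm:R}~\ref{itm:symbolclass} with $n=1$ together with a Gronwall-type argument, not merely from $V_2$ being subpolynomial); the paper's argument works directly on $\Dom(\widehat\opH)$ and sidesteps this issue.
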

\begin{proof}
In what follows, we shall assume $a$ to be large and positive. Let $0 \ne u \in \Dom(\widehat \opH)$ with $\supp u \cap (\Omega'_{a,+} \cup \Omega'_{a,-}) = \emptyset$ and consider
\begin{align*}
	\| \widehat \opH_a u \|^2 &= \| \widehat V u \|^2 + \| (\xi^2 - a)  u \|^2 + 2 \Re \langle \widehat V u, -i(\xi^2 - a) u \rangle\\
	&\ge \| \widehat V u \|^2 + \frac12 \| (\xi^2 - a)  u \|^2 + \frac12 \| (\xi^2 - a)  u \|^2 - 2 | \Re \langle \widehat V u, -i(\xi^2 - a) u \rangle |.
\end{align*}
Note that
\begin{equation*}
	|\Re \langle \widehat V u, -i(\xi^2 - a) u \rangle| = |\Re \langle \widehat V u, -i \xi^2 u \rangle| \le |\langle V_2' \check u, \check u' \rangle| \ls \| (1 + \widehat V) u \| \| \xi u \|,
\end{equation*}
appealing to Assumption~\ref{asm:R}~\ref{itm:symbolclass} with $n = 1$ for the last estimate. Furthermore, for any $\eps > 0$, there exist $C_\eps > 0$ such that
\begin{equation*}
\| \widehat V u \| \| \xi u \|  +\| u \| \| \xi u \| \le \eps \| \widehat V u \|^2 + \eps \| \xi^2 u \|^2 + C_\eps \| u \|^2.
\end{equation*}
Noting also that, for any $\xi \in \supp u$, there exists $C_\delta' > 0$ such that
\begin{gather*}
	|\xi^2 - a| = |\xi + \xi_a| |\xi - \xi_a| \ge \delta_a^2 = \delta^2 a,\\
	|\xi| \le |\xi \pm \xi_a| + \xi_a \le (1 + 1/\delta) |\xi \pm \xi_a| \implies |\xi^2 - a| \ge C_\delta' \xi^2.
\end{gather*}
Hence, with an appropriate choice of $\eps$, we conclude that there exists $C_\delta > 0$ such that
\begin{equation}
	\label{eq:Hahat.away.est.R}
	\| \widehat \opH_a u \|^2 \ge C_\delta \left( \| \xi^2 u \|^2 + \| \widehat V u \|^2 + a^2 \| u \|^2 \right),
\end{equation}
which proves the claim.
\end{proof}

\subsubsection{Step 2: estimate near $\pm \xi_a$}
We start with three lemmas used in the proof of Proposition~\ref{prop:local.R} below.

\begin{lemma}\label{lem:reg.var.comp}
Let $V = i V_2$ satisfy Assumption~\ref{asm:R} and let $W_t$, $\omega_\beta$ be as in \eqref{eq:Wt.Om.def}. Then for any  $a, b \in \R$, with $a < b$, we have 
$\| ( W_t - \omega_\beta ) \, \chi_{[a, b]} \|_{\infty} \to 0$ as $t \to + \infty$.
\end{lemma}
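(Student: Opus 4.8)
The plan is to reduce the statement to the known locally uniform convergence of slowly (hence regularly) varying functions, treating separately the point $0$, where $\omega_\beta$ degenerates and the uniform convergence theorem does not apply directly. First I would exploit the evenness of $V_2$ (see \eqref{eq:V.even.R}) and of $\omega_\beta$: the quantity $|W_t(x) - \omega_\beta(x)|$ depends on $x$ only through $|x|$, so $\{|x| : x \in [a,b]\} \subseteq [0, B]$ with $B := \max\{|a|, |b|\}$, and it suffices to show $\|(W_t - \omega_\beta)\chi_{[0,B]}\|_\infty \to 0$ for every $B > 0$. Fix $B > 0$ and an auxiliary $\eps \in (0, B)$, and split $[0,B] = [0,\eps] \cup [\eps, B]$.

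On $[\eps, B]$ I would use the decomposition $V_2(x) = x^\beta L(x)$, $x > 0$, with $L$ slowly varying (\eqref{eq:V.regvarying.decomposition}), which gives $W_t(x) - \omega_\beta(x) = x^\beta\left(\tfrac{L(tx)}{L(t)} - 1\right)$. Since $L(tx)/L(t) \to 1$ locally uniformly on $\Rplus$ (the remark following \eqref{eq:slowvarying.def}, see \cite[Thm.~1.1]{Seneta-1976}), we get
\[
\sup_{x \in [\eps, B]}|W_t(x) - \omega_\beta(x)| \le B^\beta \sup_{x \in [\eps, B]}\left|\frac{L(tx)}{L(t)} - 1\right| \longrightarrow 0, \quad t \to +\infty.
\]

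On $[0,\eps]$ I would invoke the eventual monotonicity of $V_2$ (see \eqref{eq:V.incr.R}). For $t$ large enough that $x_0/t < \eps$, the map $x \mapsto tx$ sends $[x_0/t, \eps]$ into $[x_0, \infty)$, where $V_2$ is increasing; hence $W_t$ is nondecreasing on $[x_0/t, \eps]$ and $\sup_{[x_0/t,\eps]} W_t = W_t(\eps)$, while on $[0, x_0/t]$ one has $W_t(x) = V_2(tx)/V_2(t) \le \big(\max_{[0,x_0]} V_2\big)/V_2(t) \to 0$ because $V_2(t) \to +\infty$ (see \eqref{eq:VgrowsInfty.realline}) and $V_2 \in \CiR$ is bounded on $[0,x_0]$. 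Therefore $\limsup_{t \to +\infty}\sup_{[0,\eps]} W_t \le \lim_{t\to+\infty} W_t(\eps) = \eps^\beta$ by \eqref{eq:V.beta.def.R}, and since $0 \le \omega_\beta \le \eps^\beta$ on $[0,\eps]$, we obtain $\limsup_{t\to+\infty}\sup_{x\in[0,\eps]}|W_t(x) - \omega_\beta(x)| \le 2\eps^\beta$.

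Combining the two regimes yields $\limsup_{t \to +\infty}\|(W_t - \omega_\beta)\chi_{[0,B]}\|_\infty \le 2\eps^\beta$, and letting $\eps \downarrow 0$ proves the claim. The only genuine obstacle is the behaviour near $0$: there both $W_t$ and $\omega_\beta$ collapse to $0$ and the classical uniform convergence theorem is not applicable on a neighbourhood of the origin — this is exactly the gap that the monotonicity argument (together with $V_2(t)\to+\infty$) closes; everything else is a direct appeal to results already recalled in Sub-section~\ref{ssec:reg.var}.
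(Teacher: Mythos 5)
Your proof is correct and takes essentially the same route as the paper: reduce to $[0,B]$ by evenness, use the locally uniform convergence of $L(tx)/L(t)\to 1$ away from the origin, and close the gap near $0$ via the eventual monotonicity of $V_2$ together with $V_2(t)\to+\infty$. The only cosmetic difference is that you use $\eps$ itself as the split point (so the error near $0$ is $2\eps^\beta$), whereas the paper chooses $b'$ with $\omega_\beta(b')<\eps$; these are interchangeable.
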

\begin{proof}
Because of Assumption~\ref{asm:R}~\ref{itm:even}, it suffices to consider $a \ge 0$. Assume firstly that $a > 0$ and let $L$ be the slowly varying function such that $V_2 = \omega_\beta L$ (see  \eqref{eq:V.regvarying.decomposition}--\eqref{eq:slowvarying.def}). Then for all $x \in [a, b]$
\begin{equation*}
	\left| W_t(x) - \omega_\beta(x) \right| = \omega_\beta(x) \, \left| \frac{L(t \, x)}{L(t)} - 1 \right| \le \omega_\beta(b)  \underset{a \le x \le b}{\max}  \left| \frac{L(t \, x)}{L(t)} - 1 \right|,
\end{equation*}
and the claim follows by the locally uniform convergence for $L$ (see Sub-section~\ref{ssec:reg.var}).

For $[0, b]$, let $\eps > 0$ be arbitrarily small and take $b' \in (0, b]$ such that $0 \le \omega_\beta(x) < \eps$ for any $x \in [0, b']$. If $x_0$ is as in Assumption~\ref{asm:R}~\ref{itm:oneone}, then, for any $x \in [0, b']$ and $t > \tau_0 := x_0/b'$, we have
\begin{equation*}
	0 \le \frac{V_2(t \, x)}{V_2(t)} \le \frac{\underset{0 \le y \le x_0}{\max} V_2(y) + \underset{x_0 \le y \le b' t}{\max} V_2(y)}{V_2(t)} \le \frac{\underset{0 \le y \le x_0}{\max} V_2(y)}{V_2(t)} + \frac{V_2(b' \, t)}{V_2(t)},
\end{equation*}
where we have used the assumption that $V_2$ is increasing in $[x_0, \, +\infty)$. Therefore, by \eqref{eq:VgrowsInfty.realline} and Assumption~\ref{asm:R}~\ref{itm:regularlyvarying}, there exists $\tau_1 \ge \tau_0$ such that
\begin{equation*}
	0 \le \frac{V_2(t \, x)}{V_2(t)} \le \eps + \omega_\beta(b') + \eps < 3 \eps, \quad x \in [0, b'], \quad t > \tau_1.
\end{equation*}
Hence
\begin{equation*}
	\left| W_t(x) - \omega_\beta(x) \right| \le W_t(x) + \omega_\beta(x) < 4 \eps, \quad x \in [0, b'], \quad t > \tau_1.
\end{equation*}
If $b' < b$, then we use the first part of the proof to find $\tau_2 \ge \tau_1$ such that
\begin{equation*}
\left| W_t(x) - \omega_\beta(x) \right| < \eps, \quad x \in [b', b], \quad t > \tau_2,
\end{equation*}
which concludes the proof for $[0, b]$.
\end{proof}

\begin{lemma}\label{lem:reg.var.conv}
Let $V = i V_2$ satisfy Assumption~\ref{asm:R} and let $\iota$ be as in \eqref{eq:iota.def}. Then $\iota(t) = o(1)$ as $t \to + \infty$.
\end{lemma}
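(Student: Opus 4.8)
The plan is to estimate the supremum defining $\iota(t)$ separately on a compact window $\{|x|\le M\}$ and on its complement, with $M$ chosen large depending only on $\beta$ and the prescribed accuracy. The elementary starting identity is
\[
\left| \frac{1}{1+W_t(x)} - \frac{1}{1+\omega_\beta(x)} \right|
= \frac{|W_t(x) - \omega_\beta(x)|}{(1+W_t(x))(1+\omega_\beta(x))},
\]
which, since $W_t\ge0$ and $\omega_\beta\ge0$ (so both denominators are $\ge1$), is bounded above both by $|W_t(x)-\omega_\beta(x)|$ and by $(1+W_t(x))^{-1}+(1+\omega_\beta(x))^{-1}$. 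On $[-M,M]$ I would use the first bound together with Lemma~\ref{lem:reg.var.comp} applied to $[a,b]=[-M,M]$, which gives $\sup_{|x|\le M}\big|(1+W_t(x))^{-1}-(1+\omega_\beta(x))^{-1}\big|\le\|(W_t-\omega_\beta)\chi_{[-M,M]}\|_\infty\to0$ as $t\to+\infty$.

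On the tail $\{|x|>M\}$ I would use the second bound. The summand $(1+\omega_\beta(x))^{-1}=(1+|x|^\beta)^{-1}\le M^{-\beta}$ is small once $M$ is large, because $\beta>0$. For $(1+W_t(x))^{-1}$ I would exploit that $V_2$ is even and eventually increasing: whenever $t$ is so large that $x_0/t\le1$ and $V_2(t)>0$ (the latter holding for large $t$ by \eqref{eq:VgrowsInfty.realline}), the map $x\mapsto W_t(x)=V_2(tx)/V_2(t)$ is nondecreasing on $[1,+\infty)$, so $W_t(x)\ge W_t(M)$ for $x\ge M\ge1$. Assumption~\ref{asm:R}~\ref{itm:regularlyvarying} evaluated at the \emph{single} point $x=M$ then gives $W_t(M)\to M^\beta$, whence $W_t(M)\ge\tfrac12 M^\beta$ for $t$ large and $(1+W_t(x))^{-1}\le 2M^{-\beta}$; evenness of $W_t$ and $\omega_\beta$ disposes of $x<-M$ in the same way.

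Assembling the two regions: given $\eps>0$, first fix $M\ge1$ with $M^{-\beta}<\eps$, then take $t$ large enough that the compact-window supremum is $<\eps$ and $W_t(M)\ge\tfrac12 M^\beta$; this yields $\iota(t)<3\eps$ for all large $t$, and letting $\eps\to0$ proves $\iota(t)=o(1)$. I expect the tail estimate to be the only nonroutine point, and the observation that makes it painless is that monotonicity of $V_2$ converts the uniformity required over $\{|x|>M\}$ into the pointwise convergence at the abscissa $x=M$ that is already available, so that no Potter-type uniform bounds on the slowly varying factor of $V_2$ are needed.
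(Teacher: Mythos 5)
Your proof is correct, and it takes a genuinely different route from the paper on the tail estimate. Both proofs decompose into a compact window $[-M,M]$ (handled identically via Lemma~\ref{lem:reg.var.comp} and the bound $|(1+W_t)^{-1}-(1+\omega_\beta)^{-1}|\le|W_t-\omega_\beta|$) and a tail $\{|x|>M\}$ (handled via the triangle inequality, isolating $(1+W_t(x))^{-1}$ and $(1+\omega_\beta(x))^{-1}$). The difference is in how one shows $(1+W_t(x))^{-1}$ is uniformly small on the tail. The paper writes $V_2=\omega_\beta L$ and invokes the representation theorem \eqref{eq:slowvarying.reptheorem} to derive a Potter-type lower bound $L(tx)/L(t)\ge\tfrac12 x^{-\gamma}$ valid for all $x>1$ and large $t$, yielding $1+W_t(x)\ge 1+\tfrac12 x^{\beta-\gamma}$. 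You instead observe that for $t>x_0$ the map $x\mapsto W_t(x)$ is nondecreasing on $[1,\infty)$ (since $(tx)>x_0$ there), so the tail infimum $\inf_{x\ge M}W_t(x)$ is attained at the single abscissa $x=M$, where pointwise regular variation \eqref{eq:V.beta.def.R} already gives $W_t(M)\to M^\beta$. Your argument is more elementary — it bypasses the slowly varying decomposition and the representation theorem entirely — at the cost of leaning harder on Assumption~\ref{asm:R}~\ref{itm:oneone}; the paper's Potter bound does not need monotonicity on the tail, though both proofs end up using monotonicity anyway through Lemma~\ref{lem:reg.var.comp} for the compact part. One small point of precision: you should require $t>x_0$ (strict) rather than $x_0/t\le1$, since Assumption~\ref{asm:R}~\ref{itm:oneone} gives $V_2'(x)>0$ only for $x>x_0$, and you need $tx>x_0$ for $x\ge1$. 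This is cosmetic and does not affect the conclusion.
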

\begin{proof}
By Assumption~\ref{asm:R}~\ref{itm:even}, it is enough to consider what happens to
\begin{equation*}
\underset{0 \le x < +\infty}{\sup} \, \left| (1 + W_t(x))^{-1} - (1 + \omega_\beta(x))^{-1} \right|, \quad  t \rightarrow + \infty.
\end{equation*}
Let $\eps > 0$, then there exists $M_1 > 1$ such that
\begin{equation}
\label{eq:Vbeta.Vt.infnorm.est1}
(1 + \omega_\beta(x))^{-1} < \eps, \quad x > M_1.
\end{equation}
Let $L$ be the slowly varying function such that $V_2 = \omega_\beta L$ (see \eqref{eq:V.regvarying.decomposition}--\eqref{eq:slowvarying.def}) and consider $\gamma \in(0,\beta)$. Using the representation of $L$ in \eqref{eq:slowvarying.reptheorem} and properties of $a$ and $\epsilon$ (see \eqref{eq:a.eps.regv}), there exists $\tau_1 > 1$ such that for all $t > \tau_1$ and $x > 1$, we have
\begin{equation}
	\label{eq:Ltx.Lt.nonlocal.lbound}
	\frac{L(tx)}{L(t)} = \frac{a(t  x)}{a(t)} \exp \left(\int_{t}^{t x} \frac{\epsilon(y)}{y} \dd y\right) \ge \frac12  \exp \left( -\gamma \, \int_{t}^{t  x} \frac{\dd y}{y}  \right) = \frac12  x^{-\gamma}.
\end{equation}	
Therefore by \eqref{eq:V.regvarying.decomposition}
\begin{equation}
	\label{eq:Wt.nonlocal.lbound}
	1 + W_t(x) = 1 + \omega_\beta(x) \, \frac{L(t \, x)}{L(t)} \ge 1 + \frac12 \, x^{\beta - \gamma}, \quad x > 1, \quad t > \tau_1
\end{equation}
and we conclude that there exists $M_2 \geq M_1$ such that
\begin{equation}
\label{eq:Vbeta.Vt.infnorm.est2}
(1 + W_t(x))^{-1} < \eps, \quad x > M_2, \quad t > \tau_1.
\end{equation}
Combining \eqref{eq:Vbeta.Vt.infnorm.est1} and \eqref{eq:Vbeta.Vt.infnorm.est2}, we find that
\begin{equation}
\label{eq:Vbeta.Vt.infnorm.est3}
\underset{M_2 < x < +\infty}{\sup} \, \left| (1 + W_t(x))^{-1} - (1 + \omega_\beta (x))^{-1} \right| < \eps, \quad t > \tau_1.			
\end{equation}
Notice that for any $x \ge 0$ and $t > 0$
\begin{equation}
\label{eq:Vbeta.Vt.infnorm.est4}
\left| (1 + W_t(x))^{-1} - (1 + \omega_\beta (x))^{-1} \right|
\le \left| W_t(x) - \omega_\beta(x) \right|.
\end{equation}
We now apply Lemma~\ref{lem:reg.var.comp} to $[0, M_2]$ to deduce that there exists $\tau_2 \ge \tau_1$ such that
\begin{equation}
\label{eq:Vbeta.Vt.infnorm.est5}
\underset{0 \le x \le M_2}{\sup} \, \left| W_t(x) - \omega_\beta(x) \right| < \eps, \quad t > \tau_2,
\end{equation}
which, in conjunction with \eqref{eq:Vbeta.Vt.infnorm.est3} and \eqref{eq:Vbeta.Vt.infnorm.est4}, yields the desired claim.
\end{proof}

\begin{lemma}\label{lem:St.graphn}
Let $V = i V_2$ satisfy Assumption~\ref{asm:R}, $W_t$ be as in \eqref{eq:Wt.Om.def} and $\opS_t^0$ be the operator in $\Lt(\R)$ determined by
\begin{equation}
\label{eq:St.realline.def}
\opS_t^0 = \Nt + W_t
\end{equation}		
as in \eqref{eq:genairyde}. Then as $t \rightarrow +\infty$, we have
$
\Dom(\opS_t^0) = W^{1,2}(\R) \cap \Dom(V)
$
and	there exists $C>0$, independent of $t$, such that 
\begin{equation}
\label{eq:St.graphnorm.realline.def}
\| \opS_t^0 u \|^2 + \| u \|^2 \geq C ( \| u' \|^2 + \left\| W_t  u \right\|^2 + \| u \|^2), \quad u \in \Dom(\opS_t^0).
\end{equation}
The same statements hold true for $(\opS_t^0)^*$.
\end{lemma}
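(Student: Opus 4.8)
The plan is to reduce the statement to the general theory of first-order generalised Airy operators $\opA=\Nt+W$ developed in Appendix~\ref{sec:genAiry}: those results give the domain identification $\Dom(\opA)=W^{1,2}(\R)\cap\Dom(W)$, a core, and a separation estimate of the form \eqref{eq:St.graphnorm.realline.def} for every admissible $W$, but with a constant depending on the constants in the standing hypotheses on $W$. The content of Lemma~\ref{lem:St.graphn} is therefore the \emph{uniformity in $t$}, which I would obtain by checking that $W_t$ fits that framework with all the relevant constants independent of $t$ for $t>t_0$ large. As inputs, note that $W_t\in C^\infty(\R)$, $W_t\ge0$, and $W_t(x)\to+\infty$ as $|x|\to+\infty$ by \eqref{eq:VgrowsInfty.realline}, so $\opS_t^0$ is genuinely of the form \eqref{eq:genairyde}.

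For the domain I would argue $\Dom(W_t)=\Dom(V)$ directly: writing $V_2=\omega_\beta L$ with $L$ slowly varying (see \eqref{eq:V.regvarying.decomposition}--\eqref{eq:slowvarying.def}), for fixed $t$ one has $W_t(x)/V_2(x)=L(tx)/\big(L(t)L(x)\big)\to L(t)^{-1}\in(0,+\infty)$ as $|x|\to+\infty$ by slow variation, while both $W_t$ and $V_2$ are bounded on compact sets; hence $1+W_t\approx1+V_2$ and $\Dom(W_t)=\Dom(V_2)=\Dom(V)$. Together with the appendix this yields $\Dom(\opS_t^0)=W^{1,2}(\R)\cap\Dom(W_t)=W^{1,2}(\R)\cap\Dom(V)$ and identifies $C_c^\infty(\R)$ as a core for $\opS_t^0$.

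For the estimate \eqref{eq:St.graphnorm.realline.def} it suffices to work on the core $C_c^\infty(\R)$ and pass to the closure. For $u\in C_c^\infty(\R)$, integration by parts gives
\begin{equation*}
\|\opS_t^0 u\|^2=\|u'\|^2+\|W_t u\|^2+\int_\R W_t'(x)\,|u(x)|^2\,\dd x,
\end{equation*}
so everything reduces to bounding $\int_\R W_t'|u|^2$ from below uniformly in $t$, which I would do by splitting at $|x|=1$. On $\{|x|\ge1\}$, Lemma~\ref{lem:Wt.asm} with $n=1$ gives $|W_t'|\le D_1(1+W_t)\langle x\rangle^{-1}\le D_1(1+W_t)$ with $D_1$ independent of $t$, so, using $1+s\le\tfrac14 s^2+C$ for $s\ge0$, this piece is $\ge-\tfrac14\|W_tu\|^2-C\|u\|^2$. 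On $\{|x|<1\}$, since $V_2$ is eventually increasing one has $\sup_{|y|\le t}V_2=V_2(t)$ for $t$ large, hence $\|W_t\|_{L^\infty(-1,1)}\le1$, and crucially
\begin{equation*}
\int_{-1}^{1}|W_t'|\,\dd x=\frac{1}{V_2(t)}\int_{-t}^{t}|V_2'|\,\dd y=\frac{2\int_0^{x_0}|V_2'|+2\big(V_2(t)-V_2(x_0)\big)}{V_2(t)}\le M
\end{equation*}
for $t$ large, with $M$ independent of $t$; combined with the one-dimensional bound $\|u\|_\infty^2\le2\|u\|\,\|u'\|\le\eps\|u'\|^2+\eps^{-1}\|u\|^2$ this piece is $\ge-M\eps\|u'\|^2-M\eps^{-1}\|u\|^2$ for every $\eps>0$. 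Taking $\eps$ with $M\eps\le\tfrac12$ yields $\|\opS_t^0 u\|^2\ge\tfrac12\|u'\|^2+\tfrac34\|W_tu\|^2-C'\|u\|^2$ with $C'$ independent of $t$, which gives \eqref{eq:St.graphnorm.realline.def} with a uniform constant after passing to $\Dom(\opS_t^0)$ by closedness.

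Finally, $(\opS_t^0)^*=\Ntp+W_t$ on the same domain ($W_t$ being real, as in \eqref{eq:Abeta.graphnorm.realline.def}), and the identical computation applies with the cross term replaced by $-\int_\R W_t'|u|^2\,\dd x$, which satisfies the same bound. The hard part will be exactly this uniformity in $t$, which is concentrated near $x=0$ where Lemma~\ref{lem:Wt.asm} is unavailable: an $L^\infty$-bound on $W_t'$ there may degenerate as $t\to+\infty$ when $\beta<1$, so the argument must rest instead on the uniform $L^1$-bound $\int_{-1}^1|W_t'|\,\dd x=V_2(t)^{-1}\int_{-t}^t|V_2'|\,\dd y=\BigO(1)$, which itself uses that $V_2$ is eventually increasing.
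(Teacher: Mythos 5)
Your argument is correct, and it takes a genuinely different route from the paper's. The paper does not perform the integration by parts from scratch: it decomposes $W_t = \phi W_t + \tilde\phi W_t$ with a cut-off $\phi$ localised near the origin, shows $\|\phi W_t\|_\infty \ls 1$ uniformly in $t$ (by local uniform convergence $W_t \to \omega_\beta$, Lemma~\ref{lem:reg.var.comp}) so that $\phi W_t$ is an $L^\infty$-bounded perturbation, shows that $\tilde\phi W_t$ satisfies the hypothesis \eqref{eq:A.W.sep.asm} with constants independent of $t$ (via Lemma~\ref{lem:Wt.asm}), and then invokes Proposition~\ref{prop:A.sep} wholesale, whose constants depend only on the data in those hypotheses. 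You instead redo the quadratic-form computation directly on a core and control the cross term $\int W_t'|u|^2$ by splitting at $|x|=1$: for $|x|\geq 1$ you reuse Lemma~\ref{lem:Wt.asm} as the paper does, but for $|x|<1$ you replace the paper's bounded-perturbation step with the uniform $L^1$-bound $\int_{-1}^{1}|W_t'| = V_2(t)^{-1}\int_{-t}^{t}|V_2'| = \BigO(1)$ (a telescoping consequence of the eventual monotonicity of $V_2$) together with the one-dimensional Sobolev estimate $\|u\|_\infty^2 \le 2\|u\|\,\|u'\|$. This is a nice elementary substitute: it exploits that $W_t'$ may fail to be uniformly bounded near $0$ in $L^\infty$ (as you rightly point out for $\beta<1$) but is always uniformly bounded in $L^1$, whereas the paper sidesteps the derivative entirely by treating the near-origin part of $W_t$ as a bounded multiplication operator. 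Both proofs give the uniform constant; yours is more self-contained, the paper's is more modular. Two cosmetic remarks: your opening sentence attributes the domain identification to the appendix, but as your own proof shows it really falls out of the separation estimate plus closedness once $\Dom(W_t)=\Dom(V)$ is established (the paper obtains the latter by a slightly different route, integrating $|V_2'|/V_2 \ls 1/s$ to get $\log(V_2(tx)/V_2(x)) \ls \log t$, where your argument via slow variation of $L$ is an equally valid alternative); and the extension from the core to all of $\Dom(\opS_t^0)$ should be stated rather than left implicit, although it is routine.
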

\begin{proof}
First observe that \eqref{eq:V.symb} with $n=1$ and \eqref{eq:VgrowsInfty.realline} imply that
\begin{equation}
\frac{|V_2'(s)|}{|V_2(s)|} \ls \frac 1s, \quad s \to +\infty,
\end{equation}	
and therefore for every $t>1$ and all sufficiently large $x$
\begin{equation}\label{eq:Wt.ln}
\log \frac{V_2(tx)}{V_2(x)} \leq \int_{x}^{tx} \frac{|V_2'(s)|}{|V_2(s)|} \, \dd s 
\ls \log t.
\end{equation}
Hence for every $t>1$, $\Dom(W_t) = \Dom(V)$.

Next, consider $\phi \in C_c^\infty((-2,2))$, $0 \le \phi \le 1$ such that $\phi = 1$ on $(-1, 1)$ and denote $\tilde{\phi}:= 1 - \phi$. We split $W_t$ as $W_t =  \phi W_t + \tilde \phi W_t$
and show that $\phi W_t$ is uniformly bounded and $\tilde \phi W_t$ satisfies \eqref{eq:A.W.sep.asm} uniformly in $t$. The claims then follow from Proposition~\ref{prop:A.sep}.

Firstly, by the locally uniform convergence of $W_t$ to $\omega_\beta$ (see Lemma~\ref{lem:reg.var.comp})
\begin{equation}\label{eq:Sttilde.boundedperturbation.realline}
	\| \phi \, W_t \|_\infty \le \| \phi (W_t - \omega_{\beta}) \|_\infty + \| \phi \omega_{\beta} \|_\infty \ls 1, \quad t \to +\infty.
\end{equation}
Secondly, 
\begin{equation}\label{eq:phi.W'.est}
	|(\tilde \phi(x)  W_t(x))' |  \le \| \tilde \phi' W_t \|_\infty + |\tilde\phi(x) W_t'(x)| \ls 1 +	|\tilde\phi(x) W_t'(x)|, \quad t \to +\infty,
\end{equation}
since $\supp \tilde{\phi}'$ is bounded and $W_t$ converges to $\omega_{\beta}$ locally uniformly. Moreover the last term in \eqref{eq:phi.W'.est} is estimated using \eqref{eq:Wt.symb} with $n=1$ and the fact that $\supp \tilde{\phi}$ is outside $(-1,1)$. Thus altogether we obtain
\begin{equation}
	|(\tilde \phi (x) W_t(x) )' | \ls 1 + \frac{\tilde{\phi}(x) W_t(x)}{\langle x \rangle},
\end{equation}
thus \eqref{eq:A.W.sep.asm} is indeed satisfied (uniformly for all sufficiently large $t$).
\end{proof}

\begin{proposition}\label{prop:local.R}
Define
\begin{equation}
\label{eq:Omega.def.realline}
\Omega_{a,\pm} := \left( \pm \xi_a - 2 \delta_a, \pm \xi_a + 2 \delta_a \right), 
\end{equation}
with $\xi_a$, $\delta_a$ as in \eqref{eq:deltaa.def}. Let the assumptions of Theorem~\ref{thm:R} hold and let $\widehat \opH$, $\widehat \opH_a$, $\opA_{\beta}$, $t_a$ and $\iota$ be as in \eqref{eq:Hhat.P.def}, \eqref{eq:Ha.def}, \eqref{eq:A.beta.def}, \eqref{eq:ta.def} and \eqref{eq:iota.def}, respectively. Then as $a \to + \infty$
\begin{equation}\label{eq:localestimate.realline}
\begin{aligned}
& \| \opA_{\beta}^{-1} \|^{-1} \, V_2(t_a) \, \left( 1 - \BigO \left( \iota(t_a) + a^{-\frac 12} \, t_a^{-1} \right) \right)
\\
& \qquad \qquad \le \inf \left\{ \frac{\| \widehat \opH_a u \|}{\|u\|}: \; 0 \neq u \in \Dom(\widehat \opH), \, \supp u \subset \Omega_{a,\pm} \right\}.
\end{aligned}
\end{equation}
\end{proposition}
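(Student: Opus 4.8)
The plan is to mirror the proof of Proposition~\ref{prop:local.iR} on the Fourier side: the rotated Airy operator $\opA_{r,\theta}$ is replaced by the generalised Airy operator $\opA_\beta$, the turning point $x_b$ by the zero $\xi_a=\sqrt a$ of the kinetic symbol $\xi^2-a$, and the Airy scaling by the scale $t_a^{-1}$ dictated by \eqref{eq:ta.def}. Since $\widehat\opH$ commutes with the reflection $\xi\mapsto-\xi$ by Assumption~\ref{asm:R}~\ref{itm:even}, it suffices to treat functions supported in $\Omega_{a,+}$; the case $\Omega_{a,-}$ is identical.

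On $\Omega_{a,+}$ one has the exact identity $\xi^2-a=2\xi_a(\xi-\xi_a)+(\xi-\xi_a)^2$, which suggests introducing on $\Lt(\R)$ the operator
\[
\widetilde\opH_a:=\opwV-2i\xi_a(\xi-\xi_a)-i(\xi-\xi_a)^2\chi_{\Omega_{a,+}}(\xi);
\]
it has domain $\Dom(\opwV)\cap\Dom(\xi)\supset\Dom(\widehat\opH)$, and for $u\in\Dom(\widehat\opH)$ with $\supp u\subset\Omega_{a,+}$ one has $\widetilde\opH_a u=\widehat\opH_a u$ (the non-local $\opwV$ enters both operators identically, while the remaining terms differ only through the factor $1-\chi_{\Omega_{a,+}}$, which vanishes on $\supp u$). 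With the unitary $(\opU_a u)(y):=\rho^{1/2}u(\rho y+\xi_a)$, $\rho:=t_a^{-1}$, and $\Omega_{a,\rho}:=(-2\delta_a\rho^{-1},2\delta_a\rho^{-1})$ (of half-width $\approx a^{1/2}t_a\to\infty$), a direct computation --- using that $\opwV=\sF V_2\sF^{-1}$ is $\sF$-conjugate to multiplication by $V_2$, hence $\opU_a\opwV\opU_a^{-1}=V_2(t_a)\,\opwV_{t_a}$ with $\opwV_{t_a}:=\sF W_{t_a}\sF^{-1}$, together with $2\xi_a\rho=V_2(t_a)$ from \eqref{eq:ta.def} --- gives
\[
\opS_a:=\opU_a\widetilde\opH_a\opU_a^{-1}=V_2(t_a)\,\opT_a+\opR_a,\qquad \opT_a:=\opwV_{t_a}-iy=\sF\opS_{t_a}^0\sF^{-1},\qquad \opR_a:=-i\rho^2y^2\chi_{\Omega_{a,\rho}}(y),
\]
with $\opS_{t_a}^0=\Nt+W_{t_a}$ the operator of Lemma~\ref{lem:St.graphn}.

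Since $\opS_{t_a}^0$ has empty spectrum and, by Lemma~\ref{lem:St.graphn}, graph-norm bounds uniform in $a$ (also for its adjoint), the same holds for $\opT_a$; in particular $\|y\opT_a^{-1}\|+\|\opT_a^{-1}y\|\lesssim1$, and $\|(1+|x|^\beta)\opA_\beta^{-1}\|\lesssim1$ by \eqref{eq:Abeta.graphnorm.realline.def}. The second resolvent identity applied to $\opS_{t_a}^0$ and $\opA_\beta$, with the difference $W_{t_a}-\omega_\beta$ sandwiched via the pointwise identity
\[
\frac{W_{t_a}-\omega_\beta}{(1+W_{t_a})(1+\omega_\beta)}=(1+\omega_\beta)^{-1}-(1+W_{t_a})^{-1},
\]
whose sup-norm is precisely $\iota(t_a)$, gives $\|\opT_a^{-1}\|=\|\opS_{t_a}^0{}^{-1}\|=\|\opA_\beta^{-1}\|\bigl(1+\BigO(\iota(t_a))\bigr)$ as $a\to+\infty$ by Lemma~\ref{lem:reg.var.conv}. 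For the perturbation, writing $\opR_a(V_2(t_a)\opT_a)^{-1}=(-i\rho^2y\chi_{\Omega_{a,\rho}})\cdot\bigl(y(V_2(t_a)\opT_a)^{-1}\bigr)$ and using $\|\rho^2y\chi_{\Omega_{a,\rho}}\|_\infty=2\delta_a\rho=\delta V_2(t_a)$, one sees $\|\opR_a(V_2(t_a)\opT_a)^{-1}\|\lesssim\delta$, so a fixed small $\delta$ forces $0\in\rho(\opS_a)$, $\|\opS_a^{-1}y\|\lesssim V_2(t_a)^{-1}$, and, via $\opS_a^{-1}-(V_2(t_a)\opT_a)^{-1}=-\opS_a^{-1}\opR_a(V_2(t_a)\opT_a)^{-1}$, a relative remainder $\BigO\bigl((a^{1/2}t_a)^{-l_{\beta,\eps}}\bigr)$ with $l_{\beta,\eps}$ as in \eqref{eq:lbeta.def}. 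Combining, $\|\opS_a^{-1}\|\le\|\opA_\beta^{-1}\|\,V_2(t_a)^{-1}\bigl(1+\BigO(\iota(t_a)+(a^{1/2}t_a)^{-l_{\beta,\eps}})\bigr)$; since $\|\widehat\opH_a u\|=\|\opS_a\opU_a u\|\ge\|\opS_a^{-1}\|^{-1}\|u\|$ for $u\in\Dom(\widehat\opH)$ supported in $\Omega_{a,\pm}$, the estimate \eqref{eq:localestimate.realline} follows.

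I expect the last estimate, for the linearisation remainder, to be the main obstacle: $\opR_a$ is not small --- it grows quadratically over the wide window $\Omega_{a,\rho}$, with $\|\opR_a\|_\infty\approx a$ while the leading coefficient is $V_2(t_a)\approx a^{1/2}/t_a$, so their ratio grows like $a^{1/2}t_a$ --- hence the relative remainder must be extracted using the mixed decay--smoothing mapping properties of the generalised Airy resolvent (Appendix~\ref{sec:genAiry}), and the two regimes in \eqref{eq:lbeta.def} reflect the Sobolev threshold: for $\beta>1/2$ one can rely on $H^\beta\hookrightarrow L^\infty$, whereas for $\beta\le1/2$ only $H^\beta\hookrightarrow L^{2/(1-2\beta)}$ is available and a more careful interpolation between the $\langle\cdot\rangle^{-\beta}$-weight and the one-derivative control is needed, which lowers the exponent to $1/2+\beta-\eps$; the $\eps$-loss originates from the slowly varying factor estimate \eqref{eq:slowvarying.estimates}. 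A secondary, pervasive complication is that $\opwV$ is a genuine non-local pseudo-differential operator, so the sharp cut-offs no longer commute with the ``potential'' and several of these bounds have to be read off at the symbol level, as prepared in Lemma~\ref{lem:pdo.comp}.
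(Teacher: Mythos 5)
Your strategy is essentially the paper's: reduce to $\Omega_{a,+}$, Taylor-expand $\xi^2-a$ at $\xi_a$, conjugate by the unitary shift-and-scaling normalised by $V_2(t_a)$ to arrive at $\widehat\opS_a^0=\sF W_{t_a}\sF^{-1}-i\xi$ plus the quadratic remainder, prove norm-resolvent convergence $\widehat\opS_a^0\to\sF\opA_\beta\sF^{-1}$ via the weak-form sandwiching that produces $\iota(t_a)$, and control the quadratic remainder with the uniform graph-norm bounds of Lemma~\ref{lem:St.graphn}. Your reflection argument for $\Omega_{a,-}$ (using that $\widehat\opH_a$ commutes with $\xi\mapsto-\xi$ when $V_2$ is even) is a slightly slicker shortcut than the paper's, which re-runs the construction with $(\opS_{t_a}^0)^*$ and $\opA_\beta^*$; both are fine.

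There is, however, a concrete gap in how you conclude the remainder estimate, and it matters. You stop at the one-weight bound $\|\opR_a(V_2(t_a)\opT_a)^{-1}\|\lesssim\delta$; that gives invertibility of $\opS_a$ but only a fixed $\BigO(\delta)$ relative error, not $o(1)$. The $o(1)$ estimate requires sandwiching $\opR_a$ between \emph{two} weighted resolvent factors via the second resolvent identity:
\begin{equation*}
	\|\opS_a^{-1}-(V_2(t_a)\opT_a)^{-1}\| = \|\opS_a^{-1}\,\xi\cdot\xi^{-2}\opR_a\cdot\xi\,(V_2(t_a)\opT_a)^{-1}\| \lesssim V_2(t_a)^{-2}\,\|\xi^{-2}\opR_a\|_\infty = V_2(t_a)^{-2}\,t_a^{-2},
\end{equation*}
using $\|\xi\opS_a^{-1}\|+\|\xi(V_2(t_a)\opT_a)^{-1}\|\lesssim V_2(t_a)^{-1}$. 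Since $V_2(t_a)=2a^{1/2}t_a^{-1}$, the relative error is $V_2(t_a)^{-1}t_a^{-2}=\tfrac12 a^{-1/2}t_a^{-1}$, exactly the remainder in \eqref{eq:localestimate.realline}. This is entirely elementary; no Sobolev embedding or Hausdorff--Young interpolation appears here.

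Your claim of a remainder $\BigO\bigl((a^{1/2}t_a)^{-l_{\beta,\eps}}\bigr)$ is therefore a misattribution, and because $l_{\beta,\eps}<1$ it is strictly \emph{weaker} than what \eqref{eq:localestimate.realline} asserts, so as written your argument would not prove the Proposition. The two regimes in $l_{\beta,\eps}$ and the Sobolev threshold at $\beta=1/2$ that you speculate about are genuine, but they arise only in Proposition~\ref{prop:lbound.R} (the lower bound), from the need to estimate $(\psi_a-1)\sF\phi\,\omega_\beta\sF^{-1}g_a$ for a normalised singular function $g_a$; they play no role in the present proposition.
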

\begin{proof}
We shall derive estimate \eqref{eq:localestimate.realline} for $u$ such that $\supp u \subset \Omega_{a,+}$. The procedure when $\supp u \subset \Omega_{a,-}$ is similar (see our comments at the end of the proof).

Clearly $\xi^2 - a = 2 \, \xi_a \, \left(\xi - \xi_a\right) + \left(\xi -\xi_a\right)^2$ and we introduce
\begin{equation*}
\label{eq:V.hat.realline.def}
\widetilde V_a(\xi) := -i  ( 2  \xi_a (\xi - \xi_a) + \left(\xi -\xi_a\right)^2 \chi_{\Omega_{a,+}}(\xi) ), \quad \xi \in \R.
\end{equation*}
With $\opwV$ as in \eqref{eq:Hhat.P.def}, let us define the following operator in $\Lt(\R)$
\begin{equation*}
\widetilde \opH_a = \opwV + \widetilde V_a(\xi), \quad \Dom(\widetilde \opH_a) = \left\{ u \in \Lt(\R) \, : \, \check{u} \in W^{1,2}(\R) \cap \Dom(V) \right\}.
\end{equation*}
Given $t > 0$ to be chosen below, we define a unitary operator on $L^2(\R)$ by
\begin{equation*}
(\opU_{a,t} \, u)(\xi) := t^{-\frac{1}{2}} \, u(t^{-1} \xi + \xi_a), \quad \xi \in \R.
\end{equation*}
Then with $\Omega_{a,t} := (-2  \delta_a  t,  2  \delta_a t)$
\begin{equation}\label{eq:Ha.til.def}
\frac{1}{V_2(t)} \opU_{a,t} \widetilde \opH_a \opU_{a,t}^{-1} = \sF W_t \sF^{-1} - i \frac{2\xi_a}{t V_2(t)} \xi - i \frac{1}{t^2 V_2(t)} \xi^2 \chi_{\Omega_{a,t}}(\xi).
\end{equation}
In what follows, we select $t$ as $t := t_a$, where $t_a$ is defined by equation \eqref{eq:ta.def}, \ie~
$t_a V_2(t_a) = 2 \xi_a$, and we recall that $t_a \to +\infty$ as $a \to +\infty$. We denote
\begin{equation}
\label{eq:Wahat.def}
\widehat R_a (\xi) := -\frac{i}{2} \frac{\xi^2}{\xi_a t_a}  \, \chi_{\Omega_{a,t_a}}(\xi), 
\end{equation}
and, from \eqref{eq:Wahat.def} and $\delta_a = \delta \xi_a$, we obtain
\begin{equation}\label{eq:Wahat.infnorm}
	\|\xi^{-1}\widehat R_a\|_\infty 
	=  \frac{\|\xi \, \chi_{\Omega_{a,t_a}}\|_\infty}{2 \xi_a t_a}  \le \delta, \qquad \| \xi^{-2} \widehat R_a  \|_{\infty} = \frac{1}{2 \xi_a  t_a}.
\end{equation}
We further denote 
\begin{equation}\label{eq:Sthat.realline.def}
\begin{aligned}
\widehat \opS_a^0 &:= \sF \, \opS_a^0 \, \sF^{-1} = \sF \, W_a \, \sF^{-1} - i \, \xi, 
\\
\Dom(\widehat \opS_a^0) &= \left\{ u \in \Lt(\R) \, : \, \check{u} \in W^{1,2}(\R) \cap \Dom(V) \right\},
\end{aligned}
\end{equation}
where (with an abuse of notation) $S_a^0:=S_{t_a}^0$ and $W_a:=W_{t_a}$ from Lemma~\ref{lem:St.graphn}. Our next aim is to show that 
\begin{equation}\label{eq:whSt.def}
\widehat \opS_{a}:= \widehat \opS_{a}^0 + \widehat R_{a}	
\end{equation}
converges to $\widehat \opS_\infty := \sF  \opA_{\beta}  \sF^{-1}$ in the norm resolvent sense as $a \to +\infty$. 

The spectra of $A_\beta$ and $\opS_{a}^0$, and hence those of $\widehat \opS_{\infty}$ and $\widehat \opS_a^0$, are empty, see Lemma~\ref{lem:St.graphn} and Proposition~\ref{prop:A.gen}. Moreover
\begin{equation}
\label{eq:Sthatminus1.convergence.Stminus1}
\| (\widehat \opS_a^0+1)^{-1} - (\widehat \opS_{\infty}+1)^{-1} \| 
=  \| (\opS_{a}^0+1)^{-1} - (\opA_{\beta}+1)^{-1}\|.
\end{equation}
Take $\phi_1, \, \phi_2 \in \SchwR$ and define 
\begin{equation}
\begin{aligned}
\psi_1 &:= \left(\opS_{a}^0 + 1\right)^{-1} \phi_1 \in \Dom(\opS_{a}^0) = W^{1,2}(\R) \cap \Dom(V),	
\\
\psi_2 &:= ((\opA_{\beta}+ 1)^{-1})^{*} \phi_2 \in \Dom(\opA_{\beta}^{*}) = W^{1,2}(\R) \cap \Dom(\omega_{\beta}).
\end{aligned}
\end{equation}
Then
\begin{align*}
& \langle ((\opS_{a}^0 + 1)^{-1} - (\opA_{\beta} + 1)^{-1}) \phi_1,  \phi_2 \rangle 
\\
& \qquad = 
\langle \psi_1,  \opA_{\beta}^{*} \psi_2 \rangle - \langle \opS_{a}^0  \psi_1,  \psi_2 \rangle
= \langle \psi_1, \omega_{\beta}  \psi_2 \rangle - \langle W_{a}  \psi_1,  \psi_2 \rangle
&\\
& \qquad = \intR (\omega_{\beta}(x) - W_{a}(x))  \psi_1(x)  \overline \psi_2 (x) \, \dd x
\\
&\qquad = \intR \left(\left(1 + W_{a}(x)\right)^{-1} - \left(1 + \omega_{\beta}(x)\right)^{-1}\right)
\varphi_1(x)  \overline \varphi_2 (x) \, \dd x,
\end{align*}
with $\varphi_1 := (1 + W_{a}) \psi_1$ and $\varphi_2 := (1 + \omega_{\beta})  \psi_2$. 
From the graph norm estimates \eqref{eq:St.graphnorm.realline.def} and \eqref{eq:Abeta.graphnorm.realline.def}, we obtain
\begin{equation*}
\| \varphi_1 \| = \| (1 + W_{a}) \, (\opS_{a}^0 + 1)^{-1}  \phi_1 \| \lesssim \|\phi_1 \|,
\quad
\| \varphi_2 \| = \| (1 + \omega_{\beta})  (\opA_{\beta}^* + 1)^{-1}  \phi_2 \| \lesssim \|\phi_2 \|.
\end{equation*}
Therefore, with $\iota$ from \eqref{eq:iota.def} and $\iota_a:=\iota(t_a)$,
\begin{equation*}
| \langle ((\opS_{a}^0 + 1)^{-1} - (\opA_{\beta} + 1)^{-1}) \phi_1,  \phi_2 \rangle | 
\le \iota_a  \| \varphi_1 \|  \| \varphi_2 \| 
\lesssim \iota_a \|\phi_1 \|  \| \phi_2 \|.
\end{equation*}
Hence by Lemma~\ref{lem:reg.var.conv}, the density of $\SchwR$ in $L^2(\R)$ and a standard resolvent identity argument, see \eg~the proof of \cite[Lem.~2.6.1]{Davies-1995}, we arrive at (employing \eqref{eq:Sthatminus1.convergence.Stminus1})
\begin{align}\label{eq:St.realline.normconvergence} 
\|(\widehat \opS_a^0)^{-1} - \widehat \opS_{\infty}^{-1} \| \lesssim \iota_a = o(1), 
\quad \| (\widehat \opS_a^0)^{-1} \| = \| \opA_{\beta}^{-1} \| (1 + \BigO(\iota_a))
\end{align}
as $a \to +\infty$. We transport the graph-norm estimate \eqref{eq:St.graphnorm.realline.def} to the Fourier side
\begin{equation}\label{eq:wSt0.sep}
\| \widehat \opS_a^0 u \|^2 + \| u \|^2 \gs  \| \xi \, u \|^2 + \| \sF  W_{a}  \sF^{-1}  u \|^2 + \| u \|^2, \quad u \in \Dom(\widehat \opS_a^0)
\end{equation}
and thus in particular (similarly as in the justification of \eqref{eq:x.A})
\begin{equation}\label{eq:Sthatminus1.realline.ubound1}
\| \xi (\widehat \opS_a^0)^{-1} \| + \| (\widehat \opS_a^0)^{-1}  \xi \| \lesssim 1, \quad a \rightarrow +\infty.
\end{equation}
Combining \eqref{eq:Sthatminus1.realline.ubound1} and \eqref{eq:Wahat.infnorm}, we deduce that $\|  \widehat R_a (\widehat \opS_a^0)^{-1} \| \ls \delta$ as $a \rightarrow +\infty$.

It follows, by choosing a sufficiently small $\delta > 0$, independently of $a$, that the bounded operator $\opI + \widehat R_a (\widehat \opS_a^0)^{-1}$ is invertible, for all large enough $a$, and
\begin{equation}\label{eq:wSt.St0}
\widehat \opS_{a}^{-1} =  (\widehat \opS_a^0)^{-1} ( \opI + \widehat R_a (\widehat \opS_a^0)^{-1} )^{-1}.
\end{equation}
This shows that $0 \in \rho(\widehat \opS_{a})$ for $a \rightarrow +\infty$ and furthermore, using \eqref{eq:Sthatminus1.realline.ubound1}, we deduce
\begin{equation}\label{eq:Sahatminus1.realline.ubound}
\| \xi \widehat \opS_{a}^{-1} \| + \| \widehat \opS_{a}^{-1} \xi \| \ls 1, \quad  a \rightarrow +\infty.  
\end{equation}
By the second resolvent identity, we have as $a \to +\infty$
\begin{equation*}
\| \widehat \opS_{a}^{-1} - (\widehat \opS_a^0)^{-1} \| 
= 
\| \widehat \opS_a^{-1}  \xi  \xi^{-2} \widehat R_a  \, \xi  (\widehat \opS_a^0)^{-1} \| 
\lesssim \| \xi^{-2}  \widehat R_a \|_{\infty}
\end{equation*}
where, for the last estimate, we have applied \eqref{eq:Sthatminus1.realline.ubound1} and \eqref{eq:Sahatminus1.realline.ubound}.
Thus from \eqref{eq:St.realline.normconvergence} and \eqref{eq:Wahat.infnorm}, we find
\begin{equation}\label{eq:hSt.conv}
\begin{aligned}
\| \widehat \opS_a^{-1} - \widehat \opS_{\infty}^{-1} \| 
&\lesssim \iota_a + (\xi_a t_a)^{-1} = o(1), \\
\|  \widehat \opS_a^{-1} \| &= \| \opA_{\beta}^{-1} \|  \left( 1 + \BigO \left(\iota_a + (\xi_a t_a)^{-1} \right) \right), \quad a \rightarrow +\infty.
\end{aligned}
\end{equation}

Noticing that $\widehat \opS_a = V_2(t_a)^{-1} \, \opU_{a,{t_a}} \, \widetilde \opH_a \, \opU_{a, {t_a}}^{-1}$ (see \eqref{eq:Ha.til.def}) and that $\| \widetilde \opH_a  u \| = \| \widehat \opH_a u \|$ for $0 \ne u \in \Dom(\widehat{\opH})$ such that $\supp u \subset \Omega_{a,+}$, we arrive at
\begin{equation*}
V_2(t_a) \| u \| = V_2(t_a)  \| \widetilde \opH_a^{-1} \widetilde \opH_a u \| 
\le \| \opA_{\beta}^{-1} \|  \left( 1 + \BigO \left(\iota_a + (\xi_a t_a)^{-1} \right) \right)  \| \widehat \opH_a  u \|,
\end{equation*}
as required.

For the case $\supp u \subset \Omega_{a,-}$, we repeat the above arguments but defining instead $\widetilde V_a(\xi) := i (2 \xi_a (\xi + \xi_a) - \left(\xi + \xi_a\right)^2 \chi_{\Omega_{a,-}}(\xi))$, $(\opU_{a,t} u)(\xi) := t^{-\frac{1}{2}} u(t^{-1} \xi - \xi_a)$, $\widehat \opS_a^0 := \sF (\opS_a^0)^* \sF^{-1} = \sF W_a \sF^{-1} + i \xi$ and $\widehat \opS_\infty := \sF \opA_{\beta}^* \sF^{-1}$.		
\end{proof}

\subsubsection{Step 3: lower estimate}

\begin{proposition}	\label{prop:lbound.R}
Let the assumptions of Theorem~\ref{thm:R} hold and let $\widehat \opH$, $\widehat \opH_a$, $\opA_{\beta}$, $t_a$ and $\iota$ be as in \eqref{eq:Hhat.P.def}, \eqref{eq:Ha.def}, \eqref{eq:A.beta.def}, \eqref{eq:ta.def} and \eqref{eq:iota.def}, respectively. Then there exist functions $0 \neq u_a \in \Dom(\widehat \opH)$ such that
\begin{equation*}
	\| \widehat \opH_a u_a \| = \| \opA_{\beta}^{-1} \|^{-1} \, V_2(t_a) \,  
	\left( 1 + \BigO \left( \iota(t_a) + (a^{\frac 12} \, t_a)^{-l_{\beta}} \right)  \right) \|u_a\|, \quad a \to + \infty,
\end{equation*}
where for any arbitrarily small $0 < \eps < \beta$
\begin{equation}
	\label{eq:lbeta.def.R}
	l_\beta := 
	\begin{cases}
		1, &  \beta > 1/2, \\[1mm] 
		1/2 + \beta - \eps, &  \beta \in (0,1/2].
	\end{cases}
\end{equation}
\end{proposition}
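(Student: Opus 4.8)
The plan is to transplant the argument of Proposition~\ref{prop:lbound.iR} to the Fourier side, keeping all the notation from the proof of Proposition~\ref{prop:local.R}: the rescaled operators $\widehat\opS_a^0$, $\widehat\opS_a=\widehat\opS_a^0+\widehat R_a$, the limit $\widehat\opS_\infty=\sF\opA_\beta\sF^{-1}$, and the unitary $\opU_{a,t_a}$ with $\widehat\opS_a=V_2(t_a)^{-1}\opU_{a,t_a}\widetilde\opH_a\opU_{a,t_a}^{-1}$ (see \eqref{eq:Ha.til.def}) and $\|\widetilde\opH_a u\|=\|\widehat\opH_a u\|$ whenever $\supp u\subset\Omega_{a,+}$; the case $\supp u\subset\Omega_{a,-}$ is handled with the reflected choices made at the end of that proof and gives the same conclusion since $\|(\opA_\beta^*)^{-1}\|=\|\opA_\beta^{-1}\|$. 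First I would introduce $\widehat\opB_a:=(\widehat\opS_a^*\widehat\opS_a)^{-1}$, which for all large $a$ is compact, self-adjoint and non-negative by the graph-norm bounds \eqref{eq:wSt0.sep}, \eqref{eq:Sthatminus1.realline.ubound1}, \eqref{eq:Sahatminus1.realline.ubound} together with the compactness of the resolvent of $\opA_\beta$ (Proposition~\ref{prop:A.gen}). Let $\varsigma_a^2:=\rad(\widehat\opB_a)$ and let $g_a\in\Dom(\widehat\opS_a^*\widehat\opS_a)$ be a normalised eigenfunction for this top eigenvalue; then $\|\widehat\opS_a g_a\|=\varsigma_a^{-1}=\|\widehat\opS_a^{-1}\|^{-1}$, and by the norm-resolvent convergence \eqref{eq:hSt.conv} of Step~2 one has $|\varsigma_a-\varsigma_\infty|=\BigO(\iota(t_a)+(\xi_a t_a)^{-1})$, where $\varsigma_\infty=\|\opA_\beta^{-1}\|$.

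Next, fix a cut-off $\psi_a\in C_c^\infty(\Omega_{a,t_a})$, $0\le\psi_a\le1$, $\psi_a\equiv1$ on $(-\delta_a t_a,\delta_a t_a)$, $\|\psi_a^{(j)}\|_\infty\lesssim(\delta_a t_a)^{-j}$, where $\Omega_{a,t_a}=(-2\delta_a t_a,2\delta_a t_a)$ is as in the proof of Proposition~\ref{prop:local.R} and $\delta_a t_a=\delta\xi_a t_a=\delta\sqrt a\,t_a\to+\infty$. The candidate extremal family will be $u_a:=\opU_{a,t_a}^{-1}\psi_a g_a$; it satisfies $\supp u_a\subset\Omega_{a,+}$ and $u_a\in\Dom(\widehat\opH)$, the cut-off preserving the relevant domains just as in Proposition~\ref{prop:lbound.iR}. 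Splitting $\psi_a g_a=g_a+(\psi_a-1)g_a$ and writing
\[
\widehat\opS_a\psi_a g_a=\widehat\opS_a g_a+(\psi_a-1)\widehat\opS_a g_a+[\widehat\opS_a,\psi_a]g_a,
\]
the tail terms are estimated exactly as their counterparts in Proposition~\ref{prop:lbound.iR}: one uses $\widehat\opS_a g_a=\varsigma_a^{-2}(\widehat\opS_a^*)^{-1}g_a$, the bounds $\|(\psi_a-1)\xi^{-1}\|_\infty\le(\delta_a t_a)^{-1}$ and $\|\xi(\widehat\opS_a^*)^{-1}\|+\|\xi\widehat\opS_a^{-1}\|+\|\xi g_a\|\lesssim1$ (the first two from \eqref{eq:Sahatminus1.realline.ubound} and adjoints, the last from the graph norm), yielding $\|(\psi_a-1)g_a\|+\|(\psi_a-1)\widehat\opS_a g_a\|\lesssim(\delta_a t_a)^{-1}$. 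Since $-i\xi$ and $\widehat R_a$ are multiplication operators in the variable $\xi$, the commutator collapses to $[\widehat\opS_a,\psi_a]g_a=[\sF W_a\sF^{-1},\psi_a]g_a$.

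The estimate of this pseudo-differential commutator is the crux of the proof and is responsible for the $\beta$-dependent remainder exponent. I would split $W_a=\phi W_a+\tilde\phi W_a$ with $\phi\in C_c^\infty((-2,2))$, $\phi\equiv1$ on $(-1,1)$, exactly as in Lemma~\ref{lem:St.graphn}. For the outer piece, \eqref{eq:Wt.symb} gives $|(\tilde\phi W_a)^{(n)}(x)|\lesssim\langle x\rangle^{\beta+\gamma-n}$ with $a$-uniform constants (for any small $\gamma>0$), so Lemma~\ref{lem:pdo.comp} applied with $N>\beta+\gamma$ writes $[\sF(\tilde\phi W_a)\sF^{-1},\psi_a]g_a$ as a finite sum $\sum_{j\ge1}c_j\,\psi_a^{(j)}\bigl(\sF(\tilde\phi W_a)^{(j)}\sF^{-1}\bigr)g_a$ plus a remainder controlled via \eqref{eq:R_N.est}; bounding the $j$-th term by $\|\psi_a^{(j)}\|_\infty\,\|(1+W_a)\check g_a\|\lesssim(\delta_a t_a)^{-j}$ (the graph norm giving $\|(1+W_a)\check g_a\|\lesssim1$) and the remainder by $\lesssim(\delta_a t_a)^{-(N+1)}$ gives $\|[\sF(\tilde\phi W_a)\sF^{-1},\psi_a]g_a\|\lesssim(\delta_a t_a)^{-1}$, which is absorbed into the $(\xi_a t_a)^{-1}$ error. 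The inner piece $\phi W_a$ is uniformly bounded but has derivatives that blow up as $a\to+\infty$ when $\beta<1$, because $W_a$ resolves, at scale $t_a^{-1}$, the singularity at the origin of its locally uniform limit $\omega_\beta=|x|^\beta$; here I would use the kernel representation
\[
[\sF(\phi W_a)\sF^{-1},\psi_a]g_a(\xi)=\intR\widehat{\phi W_a}(\xi-\eta)\bigl(\psi_a(\eta)-\psi_a(\xi)\bigr)g_a(\eta)\,\dd\eta,
\]
combined with $|\psi_a(\eta)-\psi_a(\xi)|\le\min\{2,\|\psi_a'\|_\infty|\xi-\eta|\}$, the decay of $\widehat{\phi W_a}$ (of order $\langle\zeta\rangle^{-1-\beta}$ for $|\zeta|\lesssim t_a$, inherited from the $|x|^\beta$-singularity, and super-algebraic for $|\zeta|\gtrsim t_a$, uniformly in $a$) and Young's inequality; careful bookkeeping of the three frequency regimes $|\zeta|\lesssim t_a$, $t_a\lesssim|\zeta|\lesssim\delta_a t_a$ and $|\zeta|\gtrsim\delta_a t_a$ then produces a bound $\BigO\bigl((\sqrt a\,t_a)^{-l_\beta}\bigr)$ with $l_\beta$ as in \eqref{eq:lbeta.def.R}. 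I expect this last step — in particular pinning down the precise exponent $l_\beta$ and separating the regimes $\beta>1/2$ and $\beta\le1/2$ — to be the main obstacle; the remainder of the argument follows the $\ii\R$-template.

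Assembling the pieces gives $\|\widehat\opS_a\psi_a g_a\|=\varsigma_a^{-1}+\BigO\bigl((\sqrt a\,t_a)^{-l_\beta}\bigr)$ and $\|\psi_a g_a\|=1+\BigO\bigl((\sqrt a\,t_a)^{-l_\beta}\bigr)$, hence, using $|\varsigma_a-\varsigma_\infty|=\BigO(\iota(t_a)+(\xi_a t_a)^{-1})$ and $\varsigma_\infty=\|\opA_\beta^{-1}\|$,
\[
\frac{\|\widehat\opS_a\psi_a g_a\|}{\|\psi_a g_a\|}=\|\opA_\beta^{-1}\|^{-1}\bigl(1+\BigO(\iota(t_a)+(\sqrt a\,t_a)^{-l_\beta})\bigr),\qquad a\to+\infty.
\]
Finally, transporting back through $\widehat\opS_a=V_2(t_a)^{-1}\opU_{a,t_a}\widetilde\opH_a\opU_{a,t_a}^{-1}$, using $\|\widetilde\opH_a u_a\|=\|\widehat\opH_a u_a\|$ (valid since $\supp u_a\subset\Omega_{a,+}$) and $\|u_a\|=\|\psi_a g_a\|$, yields $\|\widehat\opH_a u_a\|=\|\opA_\beta^{-1}\|^{-1}V_2(t_a)\bigl(1+\BigO(\iota(t_a)+(\sqrt a\,t_a)^{-l_\beta})\bigr)\|u_a\|$, which is the claim.
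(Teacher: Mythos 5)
Your choice of extremal functions ($g_a$ the top singular vector of $\widehat\opS_a$, $u_a=\opU_{a,t_a}^{-1}\psi_a g_a$ with the same cut-off $\psi_a$), the identification $[\widehat\opS_a,\psi_a]=[\sF W_a\sF^{-1},\psi_a]$, and the split $W_a=\phi W_a+\tilde\phi W_a$ with the outer piece handled by Lemma~\ref{lem:pdo.comp} all match the paper. The divergence, and the gap, is in the inner piece $[\sF\phi W_a\sF^{-1},\psi_a]g_a$ — which you yourself flag as the ``main obstacle'' and do not actually carry out.

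The kernel-plus-Young strategy you outline cannot, as stated, produce the exponent $l_\beta=1$ for $\beta\in(1/2,1)$. A Schur/Young estimate on the kernel $\widehat{\phi W_a}(\xi-\eta)\bigl(\psi_a(\eta)-\psi_a(\xi)\bigr)$, using only $|\widehat{\phi W_a}(\zeta)|\lesssim\langle\zeta\rangle^{-1-\beta}$ and $|\psi_a(\eta)-\psi_a(\xi)|\le\min\{2,(\delta_a t_a)^{-1}|\xi-\eta|\}$ and treating $g_a$ as an arbitrary unit $L^2$ vector, gives an $L^2\to L^2$ operator norm $\lesssim(\delta_a t_a)^{-\min\{1,\beta\}}$: the convolution kernel $K(\zeta)=\langle\zeta\rangle^{-1-\beta}\min\{2,(\delta_a t_a)^{-1}|\zeta|\}$ has $\|K\|_1\approx(\delta_a t_a)^{-\beta}$ whenever $\beta<1$. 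That is strictly worse than $(\delta_a t_a)^{-1}$ for $\beta\in(1/2,1)$. The paper reaches $l_\beta=1$ there precisely because it does \emph{not} treat $g_a$ as a generic $L^2$ element: after writing the commutator as $\sF\phi W_a\sF^{-1}(\psi_a-1)-(\psi_a-1)\sF\phi W_a\sF^{-1}$, it replaces $\phi W_a$ by $\phi\omega_\beta$ up to an $\iota(t_a)$ error, and then exploits the graph-norm regularity of $g_a$ (via $\|\xi g_a\|\lesssim1$, $\|\check g_a\|_\infty\lesssim1$, $\|\check g_a'\|\lesssim1$) to bound $\|(\psi_a-1)\sF\phi\omega_\beta\sF^{-1}g_a\|$ by $\|(\psi_a-1)\xi^{-1}\|_\infty\|(\phi\omega_\beta\check g_a)'\|$, the dichotomy $\beta>1/2$ versus $\beta\le1/2$ being exactly the threshold for $\omega_\beta'\in L^2_{\loc}$, with Hausdorff--Young and H\"older supplying the fractional exponent $1/2+\beta-\eps$ in the subcritical range. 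None of this structure (regularity of $g_a$, comparison with $\omega_\beta$, the $L^2_{\loc}$ threshold) appears in your frequency-regime bookkeeping, so it is not just that the details are left out: the particular tool you reach for is too coarse to yield the stated remainder. To repair the argument you would have to inject the graph-norm information about $g_a$ into the kernel estimate, or else follow the paper's route of peeling off $(\psi_a-1)$ one side at a time and passing to $\omega_\beta$.
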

\begin{proof}
We retain the notation introduced in the proof of Proposition~\ref{prop:local.R}; in particular, $\widehat \opS_{\infty} = \sF A_\beta \sF^{-1}$ and $\widehat \opS_a$ from \eqref{eq:whSt.def}. The proof follows the steps of that of Proposition~\ref{prop:lbound.iR}.

With a sufficiently large $a_0$, let $g_a \in \Dom(\widehat \opS_{a}^*\widehat \opS_{a})$, $\|g_a\|=1$, $a \in (a_0, +\infty]$, such that 
\begin{equation}\label{eq:la.g.R}
\| \widehat \opS_{a} g_a\| = \varsigma_a^{-1} = \|\widehat \opS_{a}^{-1}\|^{-1}.
\end{equation}
Note that from \eqref{eq:hSt.conv} we obtain 
\begin{equation}\label{eq:lab.conv.R}
|\varsigma_a - \varsigma_\infty|  = \BigO \left( \iota_a + (\xi_a t_a)^{-1} \right), \quad  a \to +\infty.
\end{equation}

Consider $\psi_a \in C_c^{\infty}(( -2  \delta_a  {t_a},  2  \delta_a {t_a}) )$, $0 \le \psi_a \le 1$,  
$\psi_a = 1$ on $( - \delta_a {t_a}, \delta_a {t_a} )$ and such that
\begin{equation}\label{eq:psia'}
\| \psi_a^{(j)} \|_{\infty} \lesssim (\delta_a {t_a})^{-j} \approx (\xi_a t_a)^{-j}, \quad j \in \{1, 2, \dots, N + 1 + l\},
\end{equation}
with $N := \lceil\beta\rceil + 1$ and sufficiently large $l \in \N$ (see the statement of Lemma~\ref{lem:pdo.comp} and, in particular, \eqref{eq:R_N.est}). Recall that $t_a \to +\infty$ as $a \to +\infty$ (see \eqref{eq:ta.def}), hence $\psi_a \rightarrow 1$ pointwise in $\R$ as $a \to +\infty$. 

Next, we justify that $\psi_a g_a \in \Dom(\sF \, W_a \sF^{-1})$ and therefore $\psi_a g_a \in \Dom(\widehat \opS_{a})$. Similarly to \eqref{eq:Ltx.Lt.nonlocal.lbound}--\eqref{eq:Wt.nonlocal.lbound} (but estimating instead an upper bound), and using the locally uniform convergence of $W_a$ to $\omega_\beta$ (see Lemma~\ref{lem:reg.var.comp}), we find that $W_a(x) \ls \langle x \rangle^{\beta + \gamma}$, $x \in \R$, with any arbitrarily small $0 < \gamma < \beta$, for all sufficiently large $a$. Moreover, as in the proof of Lemma~\ref{lem:St.graphn}, consider $\phi \in C_c^\infty((-2,2))$, $0 \le \phi \le 1$ such that $\phi = 1$ on $(-1, 1)$ and denote $\tilde{\phi}:= 1 - \phi$. Then the estimate \eqref{eq:Wt.symb} and Leibniz rule show that there exist $C_n', C_n''>0$, independent of $a$, such that for all sufficiently large $a$, 
\begin{equation}\label{eq:phi.Wt.n}
|(\tilde \phi(x) W_a(x))^{(n)}| \leq C_n' (1+W_a(x))\langle x \rangle^{-n} \leq  C_n'' \langle x \rangle^{\beta + \gamma - n}, \quad x \in \R, \ n \in \N_0.
\end{equation}
Thus for sufficiently large $a$, $F:=\tilde \phi W_{a}$ satisfies the assumptions of Lemma~\ref{lem:pdo.comp} (with constants independent of $a$). Hence, for all $u \in \SchwR$, we have
\begin{equation}
\sF \tilde \phi W_{a} \sF^{-1}  \psi_a u = \psi_a \sF \tilde \phi W_{a} \sF^{-1} u   + 
[\sF \tilde \phi W_{a} \sF^{-1}, \psi_a] u
\end{equation}
and, using \eqref{eq:phi.Wt.n}, \eqref{eq:compositionformula}, \eqref{eq:comp.operator.remainder} and \eqref{eq:R_N.est},
\begin{align*}
	\|[\sF \tilde \phi W_{a} \sF^{-1}, \psi_a] u\| &\le \sum_{j=1}^{N} \frac{1}{j!} \| \psi_a^{(j)} \|_{\infty} \| \sF (\tilde \phi W_{a})^{(j)} \sF^{-1} u \| + \| \opR_{N+1} u \|\\
	&\le \sum_{j=1}^{N} C_j \| \psi_a^{(j)} \|_{\infty} \| \sF (1 + W_{a}) \sF^{-1} u \|\\
	&\quad + K_N \underset{0 \le j \le l}{\max} \left\{ \| \psi_a^{(N + 1 + j)}\|_\infty \right\} \| u \|\\
	&\le C'_N \underset{1 \le j \le N+1+l}{\max} \left\{ \| \psi_a^{(j)}\|_\infty \right\} (\| \sF W_{a} \sF^{-1} u \| + \| u \|),
\end{align*}
with $C'_N > 0$ independent of $a$. Hence by \eqref{eq:psia'}
\begin{equation}
	\label{eq:wa.phia.commutator.est}
	\|[\sF \, \tilde \phi \, W_{a} \sF^{-1}, \psi_a] u\| \ls (\xi_a t_a)^{-1} \left(\| \sF W_{a} \sF^{-1} u \| +  \|u\|\right).
\end{equation}
Since $W_a$ converges to $\omega_\beta$ uniformly on bounded sets (see Lemma~\ref{lem:reg.var.comp}), we have
\begin{equation}\label{eq:phiWt.b}
\|\sF \phi W_{a} \sF^{-1}\|  \ls 1.	
\end{equation}
Moreover, $\SchwR$ is a core for $\sF W_{a} \sF^{-1}$ and we conclude that $[\sF W_{a} \sF^{-1},  \psi_a]$ is relatively bounded w.r.t.~$\sF W_{a} \sF^{-1}$. Hence we have indeed $\psi_a g_a \in \Dom(\widehat \opS_{a})$.

Next, we write
\begin{equation}
\begin{aligned}
\widehat \opS_{a} \psi_a g_a &= \widehat \opS_{a} g_a + (\psi_a-1) \widehat \opS_{a} g_a + 
[\sF \, \tilde \phi \, W_{a} \sF^{-1}, \psi_a] g_a 
\\ & \quad + \sF \, \phi \, W_{a} \sF^{-1} (\psi_a-1) g_a - 	(\psi_a-1) \sF \, \phi \, W_{a} \sF^{-1} g_a
\end{aligned}
\end{equation}
and we proceed to estimate all the above terms but the first one. Employing \eqref{eq:lab.conv.R}, \eqref{eq:Sahatminus1.realline.ubound} as well as the graph norm separation as in \eqref{eq:wSt0.sep} for $\widehat S_a^0$  (and analogously for the adjoint $\widehat S_a^*$), we obtain as $a \to +\infty$
\begin{equation*}
\begin{aligned}
\| (\psi_a-1) \widehat \opS_{a} g_a \| &\ls \| (\psi_a-1) \xi^{-1} \|_\infty \| \xi (\widehat S_{a}^*)^{-1} \| \| \widehat S_{a}^* \widehat S_{a} g_a \| 
\ls (\xi_a t_a)^{-1},
\\
\|[\sF \, \tilde \phi \, W_{a} \sF^{-1}, \psi_a] g_a\|  
&\ls  (\xi_a t_a)^{-1} (\| \widehat S_{a} g_a \| +  \|g_a\|)
\ls  (\xi_a t_a)^{-1}, 
\\
\|\sF \, \phi \, W_{a} \sF^{-1} (\psi_a-1) g_a\| & \ls \|(\psi_a-1) \xi^{-1}\|_\infty \|\xi \widehat S_{a}^{-1}\| \| \widehat S_{a} g_a \|
\ls (\xi_a t_a)^{-1};
\end{aligned}
\end{equation*}
in the last two estimates we have also used \eqref{eq:wa.phia.commutator.est} and \eqref{eq:phiWt.b}, respectively. Since furthermore $\|\phi (W_{a} -\omega_\beta)\|_\infty \ls \iota_a$, then
\begin{equation*}
	\|(\psi_a-1) \sF \, \phi \, W_{a} \sF^{-1} g_a\| \ls \iota_a + \|(\psi_a-1) \sF \, \phi \, \omega_\beta \sF^{-1} g_a\|.
\end{equation*}

For $\beta>1/2$, we have
\begin{equation*}
\begin{aligned}
\|(\psi_a-1) \sF \, \phi \, \omega_{\beta} \sF^{-1} g_a\| &\ls \|(\psi_a-1) \xi^{-1}\|_\infty \| \xi \sF \, \phi \, \omega_\beta \sF^{-1} g_a\| 
\\		
&\ls (\xi_a t_a)^{-1} \|  (\phi \, \omega_\beta \check g_a)'\| \ls (\xi_a t_a)^{-1},
\end{aligned}
\end{equation*}
where in the last step we use $\| (\phi \, \omega_\beta)' \| \ls 1$,  $\| \phi \, \omega_\beta \|_{\infty} \ls 1$ and
\begin{equation*}
\begin{aligned}
\|\check g_a\|_\infty &\ls \|g_a\|_1 \ls \|\langle \xi \rangle g_a\| \ls 1, \qquad 
\|\check g_a'\| \ls  \|\xi g_a\| \ls 1.	
\end{aligned}
\end{equation*}

For $\beta \in (0, 1/2]$ and $0 < \eps < \beta$, we define $1/2 < l_{\beta,\eps} := 1/2 + \beta - \eps < 1$ and we fix some $0 < \hat \eps < \eps$. Then
\begin{equation*}
	\begin{aligned}
		\|(\psi_a-1) \sF \, \phi \, \omega_\beta \sF^{-1} g_a \| &\le \|(\psi_a-1) \langle \xi \rangle^{-l_{\beta,\eps}}\|_\infty \| \langle \xi \rangle^{l_{\beta,\eps}} \sF \, \phi \, \omega_\beta \sF^{-1} g_a \|\\
		&\ls (\xi_a t_a)^{-l_{\beta,\eps}} \| \langle \xi \rangle^{l_{\beta,\eps}} \sF \, \phi \, \omega_\beta \check g_a \|\\
		&\le (\xi_a t_a)^{-l_{\beta,\eps}} \left( \| \chi_{\{|\xi| < 1\}} \langle \xi \rangle^{l_{\beta,\eps}} \sF \, \phi \, \omega_\beta \check g_a \| \right.\\
		&\qquad\qquad\qquad  \left. + \| \chi_{\{|\xi| \ge 1\}} \langle \xi \rangle^{l_{\beta,\eps}} \sF \, \phi \, \omega_\beta \check g_a \| \right).
	\end{aligned}
\end{equation*}
Since $\| \phi \, \omega_{\beta} \check g_a \| \ls \| \check g_a \| = 1$, we conclude that
\begin{equation*}
	 \| \chi_{\{|\xi| < 1\}} \langle \xi \rangle^{l_{\beta,\eps}} \sF \, \phi \, \omega_\beta \check g_a \| \ls 1.
\end{equation*}
For the second term, we use the facts that $\| \phi \, \omega_\beta \check g_a' \| \ls \| \check g_a' \| \ls 1$, $\| \phi' \, \omega_\beta \check g_a \| \ls \| \check g_a \| = 1$ and $\| \phi \, \omega_\beta' \check g_a \|_p \le \| \check g_a \|_{\infty} \| \phi \, \omega_\beta' \|_p \ls 1$, with $p := (1 - \beta + \hat \eps)^{-1} \in (1, (1 - \beta)^{-1}) \subset (1, 2)$. 
Then, by the Hausdorff-Young inequality (see e.g. \cite[Prop.~2.2.16]{Grafakos-2014-249}), we have that $\| \sF \, \phi \, \omega_\beta' \check g_a \|_q \ls 1$, with $q = p/(p-1) \in (\beta^{-1}, \infty) \subset (2, \infty)$. Thus we find
\begin{align*}
	\| \chi_{\{|\xi| \ge 1\}} \langle \xi \rangle^{l_{\beta,\eps}} \sF \, \phi \, \omega_\beta \check g_a \| &\ls \| \chi_{\{|\xi| \ge 1\}} \langle \xi \rangle^{l_{\beta,\eps} - 1} \sF \, (\phi \, \omega_\beta \check g_a)' \|\\
	&\ls 1 + \| \chi_{\{|\xi| \ge 1\}} \langle \xi \rangle^{l_{\beta,\eps} - 1} \sF \, \phi \, \omega_\beta' \check g_a \|\\
	&\le 1 + \| \langle \xi \rangle^{2 (l_{\beta,\eps} - 1)} \|_{p'}^{\frac12} \| \sF \, \phi \, \omega_\beta' \check g_a \|_q \ls 1,
\end{align*}
where in the last step we have applied H\"{o}lder's inequality with $p' = p/(2 - p)$. Hence when $\beta \in (0, 1/2]$ we have
\begin{equation*}
	\| (\psi_a - 1) \sF \, \phi \, \omega_{\beta} \check g_a\| \ls (\xi_a t_a)^{-l_{\beta,\eps}}.
\end{equation*}

In summary, writing $\psi_a g_a = g_a + (1-\psi_a)g_a$, we obtain as $a \to +\infty$
\begin{equation}
\|\widehat \opS_{a} \psi_a g_a\| = \varsigma_a^{-1} + \BigO(\iota_a+  (\xi_a t_a)^{-l_{\beta}} ),
\quad 
\|\psi_a g_a\| = 1 + \BigO((\xi_a t_a)^{-1}).
\end{equation}
Thus using \eqref{eq:lab.conv.R}, we arrive at
\begin{equation*}
\left| \frac{\left\| \widehat \opS_{a} \psi_a g_a \right\|}{\| \psi_a g_a\|} - \frac1{\varsigma_\infty} \right| 
=  \BigO(\iota_a+  (\xi_a t_a)^{-l_{\beta}} ), \quad a \rightarrow +\infty.
\end{equation*}
Recalling that $\widehat \opS_a = V_2(t_a)^{-1} \, \opU_{a,{t_a}} \, \widetilde \opH_a \, \opU_{a, {t_a}}^{-1}$ (see \eqref{eq:Ha.til.def}) and letting $u_a := \opU_{a, {t_a}}^{-1} \psi_a g_a$, then $u_a \in \Dom(\widehat \opH)$ with $\supp u_a \subset \Omega_{a,+}$. We therefore conclude
\begin{equation*}
	\left| \frac{\| \widehat \opH_{a} u_a \|}{\| u_a \|} - \frac{V_2(t_a)}{\varsigma_\infty} \right| 
	=  \BigO( V_2(t_a) (\iota_a+  (\xi_a t_a)^{-l_{\beta}}) ), \quad a \rightarrow +\infty
\end{equation*}
and the claim follows.
\end{proof}

\subsubsection{Step 4: combining the estimates}
\label{sssec:step.4.R}
With $\Omega_{a,\pm}'$, $\Omega_{a,\pm}$ and $\delta_a$ from \eqref{eq:deltaa.def}, \eqref{eq:Omega.def.realline}, let $\phi_{a,\pm} \in C_c^{\infty}(\Omega_{a,\pm})$, $0 \le \phi_{a,\pm} \le 1$, be such that
\begin{equation}
	\label{eq:phia}
	\phi_{a,\pm}(\xi) = 1, \; \xi \in \Omega'_{a,\pm}, \quad \|\phi_{a,\pm}^{(j)}\|_{\infty} \ls \delta_a^{-j}, \quad j \in \{1, 2, \dots, N + 1 + l\},
\end{equation}		
with $N := \max\{\lceil\beta\rceil + 1, 3\}$ and sufficiently large $l \in \N$ (see the statement of Lemma~\ref{lem:pdo.comp} and, in particular, \eqref{eq:R_N.est}), and define
\begin{equation}
	\label{eq:phiak}
	\phi_{a,0} := 1 - (\phi_{a,+} + \phi_{a,-}), \quad \phi_{a,1} := \phi_{a,+}, \quad \phi_{a,2} := \phi_{a,-}.
\end{equation}
\begin{lemma}
	\label{lem:Vp.holder}
	Let $V = i V_2$ satisfy Assumption~\ref{asm:R} with $\beta > 1$ and let $p_\beta := 1 + 1 / (\beta - 1) - \eps$, with $0 < \eps < 1 / (\beta -1)$ arbitrarily small, and $q_\beta := p_\beta / (p_\beta - 1)$. Then for any $u \in \SchwR$ and $j \in \N$
	\begin{equation}
		\label{eq:Vp.holder.R}
		\| V^{(j)} \check u \| \ls \| u \| + \| (1 + V_2) \check u \|^{\frac{1}{p_\beta}} \| u \|^{\frac{1}{q_\beta}}.
	\end{equation}
\end{lemma}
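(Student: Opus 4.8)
The plan is to reduce the claimed $L^2$ bound to a pointwise estimate on the multiplier $V^{(j)}$ and then interpolate by means of H\"{o}lder's inequality. Set $v := \check u \in \SchwR$, so that $\|u\| = \|v\|$ by Plancherel. By Assumption~\ref{asm:R}~\ref{itm:symbolclass} with $n = j \ge 1$ (discarding the extra decay $\langle x \rangle^{-(j-1)}$), we have $|V^{(j)}(x)| = |V_2^{(j)}(x)| \le C_j \, (1 + V_2(x)) \, \langle x \rangle^{-1}$, so it suffices to bound $\| w v \|$ with $w(x) := (1 + V_2(x)) \langle x \rangle^{-1}$.

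The key point is to use the regular variation of $V_2$ to trade the factor $\langle x \rangle^{-1}$ for a negative power of $1 + V_2$. Writing $V_2 = \omega_\beta L$ with $L$ slowly varying (Assumption~\ref{asm:R}~\ref{itm:regularlyvarying} together with evenness), the estimate $L(x) < x^\gamma$ for large $x$ from \eqref{eq:slowvarying.estimates} gives $V_2(x) \le |x|^{\beta + \gamma}$, hence $\langle x \rangle^{-1} \lesssim (1 + V_2(x))^{-1/(\beta + \gamma)}$, for all large $|x|$; on the complementary bounded set $w$ is bounded by continuity of $V_2$. This yields $w(x) \lesssim 1 + (1 + V_2(x))^{1 - 1/(\beta + \gamma)}$ on $\R$, valid for every $\gamma > 0$. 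I would then fix $\gamma$ so that $\beta + \gamma = q_\beta$; this $\gamma$ is strictly positive precisely because $\beta > 1$ and $0 < \eps < 1/(\beta - 1)$, and with it $1 - 1/(\beta + \gamma) = 1 - 1/q_\beta = 1/p_\beta$. It then remains to observe that H\"{o}lder's inequality with conjugate exponents $p_\beta, q_\beta$ applied to $\int (1 + V_2)^{2/p_\beta} |v|^2 \, \dd x = \int [(1 + V_2)^2 |v|^2]^{1/p_\beta} [|v|^2]^{1/q_\beta} \, \dd x$ gives $\| (1 + V_2)^{1/p_\beta} v \| \le \| (1 + V_2) v \|^{1/p_\beta} \| v \|^{1/q_\beta}$; combining this with the bound on $w$ and with $\| v \| = \| u \|$ produces \eqref{eq:Vp.holder.R}.

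I do not expect a genuine obstacle; the argument is short. The one place requiring attention is the bookkeeping linking the slow-variation exponent $\gamma$ to the Lebesgue exponents $p_\beta, q_\beta$, \ie~checking that the $\gamma$ forced by the identity $\beta + \gamma = q_\beta$ is strictly positive, which is exactly where the hypotheses $\beta > 1$ and $\eps < 1/(\beta - 1)$ enter. It is also worth recording at the outset that $(1 + V_2) \check u \in L^2(\R)$ whenever $u \in \SchwR$, since $V_2$ has at most polynomial growth, so that the right-hand side of \eqref{eq:Vp.holder.R} is finite and the estimate is meaningful.
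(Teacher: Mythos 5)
Your proof is correct and takes essentially the same route as the paper's: both reduce to $\| (1+V_2)\langle x\rangle^{-1} \check u \|$ via Assumption~\ref{asm:R}~\ref{itm:symbolclass}, both use the slowly-varying bound \eqref{eq:slowvarying.estimates} to trade $\langle x\rangle^{-1}$ for a negative power of $V_2$, and both apply H\"older with the conjugate pair $(p_\beta,q_\beta)$. The only difference is cosmetic: you perform the pointwise estimate $(1+V_2)\langle x\rangle^{-1} \lesssim 1 + (1+V_2)^{1/p_\beta}$ \emph{first} and then interpolate, whereas the paper applies H\"older first to $\| V_2\langle x\rangle^{-1}\check u\|$ and only afterwards bounds $(V_2\langle x\rangle^{-1})^{p_\beta} \lesssim V_2$ pointwise. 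Your explicit choice $\gamma = q_\beta - \beta$ is a nice touch; it is indeed positive because $p_\beta < \beta/(\beta-1)$ forces $q_\beta > \beta$, which is exactly the bookkeeping check you flagged.
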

\begin{proof}
	Let $u \in \SchwR$ and $j \in \N$, then by \eqref{eq:V.symb} and H\"{o}lder's inequality
	\begin{equation*}
		\| V_2^{(j)} \check u \| \ls \| (1 + V_2) \langle x \rangle^{-j} \check u \| \ls \| (1 + V_2) \langle x \rangle^{-1} \check u \| \ls \| u \| + \| (V_2 \langle x \rangle^{-1})^{p{_\beta}} \check u \|^{\frac{1}{p_\beta}} \| u \|^{\frac{1}{q_\beta}}.
	\end{equation*}
	From Assumption~\ref{asm:R}~\ref{itm:regularlyvarying} (note also \eqref{eq:V.regvarying.decomposition} and \eqref{eq:slowvarying.estimates}), we have for any $\gamma > 0$ and sufficiently large $|x|$
	\begin{equation*}
	\langle x \rangle^{\beta - 1 - \gamma} \ls	V_2(x) \langle x \rangle^{-1} \ls \langle x \rangle^{\beta - 1 + \gamma}.
	\end{equation*}
	Therefore, given $\eps > 0$, by choosing $\gamma > 0$ sufficiently small we have
	\begin{equation*}
		(V_2(x) \langle x \rangle^{-1})^{p_{\beta}} \ls \langle x \rangle^{\beta - \gamma} \ls V_2(x), \quad |x| \to +\infty,
	\end{equation*}
	and \eqref{eq:Vp.holder.R} follows.
\end{proof}
\begin{lemma}
	\label{lem:Vhat.phiak.commutator}
	Let the assumptions of Theorem~\ref{thm:R} hold, with $\widehat V$, $\widehat \opH_a$, $t_a$ and $\beta$ as in \eqref{eq:Hhat.P.def}, \eqref{eq:Ha.def}, \eqref{eq:ta.def} and \eqref{eq:V.beta.def.R}, respectively, and let $\phi_{a,k}, \; k \in \{0, 1, 2\},$ be as in \eqref{eq:phiak}. Then for all $u \in \SchwR$ and $k \in \{0, 1, 2\}$, we have
	\begin{equation}
		\label{eq:Hahatphiak.commutator.norm.est}
		\| [\opwV, \phi_{a,k}] u \| \ls a^{-\frac12} t_a^{-1} \| \widehat \opH_a u \| + \Theta(a, \eps) \| u \|, \quad a \to +\infty,
	\end{equation}
	where for any arbitrarily small $\eps > 0$
	\begin{equation}
		\label{eq:Hahatphiak.commutator.coeff}
		\Theta(a, \eps) := 
		\begin{cases}
			a^{-1}, &  \beta < 2, \\[1mm]
			a^{-\frac12} t_a^{\beta - 1 + \eps}, &  \beta \ge 2.
		\end{cases}
	\end{equation}
	Moreover
	\begin{equation}
		\label{eq:Hahatphiak.commutator.decay}
		\Theta(a, \eps) (V_2(t_a))^{-1} (a^{\frac12} t_a)^{1 - \eps} = o(1), \quad a \to +\infty.
	\end{equation}
\end{lemma}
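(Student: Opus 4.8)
The plan is to strip the commutator down to a finite sum of leading pseudo‑differential terms plus a negligible remainder via Lemma~\ref{lem:pdo.comp}, and then to estimate each term using the growth control \eqref{eq:V.symb}, the graph‑norm separation \eqref{eq:Hhat.real.separation.prop}, regular variation, and -- for $\beta>1$ -- the H\"older inequality of Lemma~\ref{lem:Vp.holder}. \textbf{Step 1 (symbol calculus).} Writing $V_2=\omega_\beta L$ with $L$ slowly varying, \eqref{eq:V.symb} together with \eqref{eq:slowvarying.estimates} gives, for every $\gamma>0$, that $|V_2^{(n)}(x)|\ls\langle x\rangle^{\beta+\gamma-n}$ for all $x\in\R$, $n\in\N_0$; hence $F:=V_2$ satisfies \eqref{eq:Vsymbolclass} with $m:=\beta+\gamma$. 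Fix $\gamma$ small and note $N:=\max\{\lceil\beta\rceil+1,3\}>m$. Since $\phi_{a,k}\in\CiR\cap\LiR$ with $\supp\phi_{a,k}'$ bounded for every $k\in\{0,1,2\}$, Lemma~\ref{lem:pdo.comp} with this $F$ and $\phi:=\phi_{a,k}$ yields
\[
[\opwV,\phi_{a,k}]u=\sum_{j=1}^{N}\frac{i^{j}}{j!}\,\phi_{a,k}^{(j)}\,\sF V_2^{(j)}\sF^{-1}u+\opR_{N+1}u,\qquad u\in\SchwR,
\]
and, by \eqref{eq:R_N.est} and \eqref{eq:phia},
\[
\|\opR_{N+1}u\|\ls\max_{0\le i\le l}\|\phi_{a,k}^{(N+1+i)}\|_{\infty}\|u\|\ls\delta_a^{-(N+1)}\|u\|\approx a^{-(N+1)/2}\|u\|\ls a^{-2}\|u\|\ls\Theta(a,\eps)\|u\|,
\]
because $\Theta(a,\eps)\ge a^{-1}$ always (and for $\beta\ge2$ one also uses $t_a\to+\infty$, so $a^{-2}\ls a^{-1/2}t_a^{\beta-1+\eps}$).

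\textbf{Step 2 (leading terms).} Using $\|\phi_{a,k}^{(j)}\|_\infty\ls\delta_a^{-j}\approx a^{-j/2}$ and $\|\sF V_2^{(j)}\sF^{-1}u\|=\|V_2^{(j)}\check u\|$, it remains to bound $\|V_2^{(j)}\check u\|$, where $|V_2^{(j)}(x)|\ls(1+V_2(x))\langle x\rangle^{-j}$ by \eqref{eq:V.symb}; the $j=1$ term is dominant and the regimes $\beta<2$ and $\beta\ge2$ must be separated. The key auxiliary facts are the elliptic identity $(V_2(x)+ia)\check u=\sF^{-1}(\widehat\opH_a u)-i\check u''$ (from $\widehat\opH_a=\opwV-i(\xi^2-a)$ on the Fourier side) together with $(1+V_2(x))(V_2(x)^2+a^2)^{-1/2}\le2$, the separation \eqref{eq:Hhat.real.separation.prop} (which lets one pass from $\|\widehat\opH u\|$ to $\|\widehat\opH_a u\|+a\|u\|$), and the threshold $t_a$ from \eqref{eq:ta.def}, beyond which $\langle x\rangle^{-1}\le t_a^{-1}$ while $V_2(t_a)=2\sqrt a/t_a$. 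For $\beta<2$: splitting $\|(1+V_2)\langle x\rangle^{-1}\check u\|$ spatially at $|x|=t_a$ and combining the above (and, when $1<\beta<2$, Lemma~\ref{lem:Vp.holder} to absorb the super‑linear part of $V_2\langle x\rangle^{-1}$), then multiplying by $a^{-j/2}$, produces a factor $t_a^{-1}$ in front of $\|\widehat\opH_a u\|$ and a residual of order $a^{-1}$ in front of $\|u\|$. For $\beta\ge2$: one runs the same H\"older step via Lemma~\ref{lem:Vp.holder} but with the worse exponent $p_\beta$, and a weighted Young inequality balancing the $\|\widehat\opH_a u\|$‑ and $\|u\|$‑parts gives the larger residual coefficient $a^{-1/2}t_a^{\beta-1+\eps}$. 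Summing over $j\in\{1,\dots,N\}$ and adding the Step 1 remainder yields \eqref{eq:Hahatphiak.commutator.norm.est}.

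\textbf{Step 3 (decay \eqref{eq:Hahatphiak.commutator.decay}) and main obstacle.} Insert $V_2(t_a)=2\sqrt a/t_a$ and $V_2(t_a)\gs t_a^{\beta-\gamma}$ (so $a\gs t_a^{2(1+\beta-\gamma)}$, by \eqref{eq:slowvarying.estimates}) into $\Theta(a,\eps)(V_2(t_a))^{-1}(a^{1/2}t_a)^{1-\eps}$; a direct computation in each of the two cases bounds this by $t_a^{-c}$ for some $c=c(\beta,\eps,\gamma)>0$ once $\gamma$ is chosen small relative to $\eps$, hence it is $o(1)$. The hard part is the power‑counting in Step 2: producing precisely the factor $t_a^{-1}$ on $\|\widehat\opH_a u\|$ while keeping the $\|u\|$‑remainder as small as $\Theta(a,\eps)$ forces one to play the $\langle x\rangle^{-j}$ decay of $V_2^{(j)}$ off against the weight $(V_2(x)^2+a^2)^{-1/2}$ coming from the ellipticity of $\widehat\opH_a$, and -- once $\beta\ge2$, where $V_2'$ grows faster than linearly -- to settle for the weaker coefficient $a^{-1/2}t_a^{\beta-1+\eps}$ delivered by Lemma~\ref{lem:Vp.holder}; Steps 1 and 3 are routine by comparison.
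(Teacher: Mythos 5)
Your Step~1 (the pseudo-differential expansion via Lemma~\ref{lem:pdo.comp}, the choice of $N$, and the remainder bound $\|\opR_{N+1}u\|\ls a^{-2}\|u\|$) is correct and is exactly what the paper does, and your Step~3 is the same elementary power-count as the paper's. The problem is Step~2, which you yourself flag as the hard part and which, as sketched, does not close. Your plan is to estimate $\|\phi_{a,k}^{(j)}\|_\infty\,\|V_2^{(j)}\check u\|$ by bounding $\|V_2^{(j)}\check u\|$ \emph{for an arbitrary} $u$ via the elliptic identity $(V_2+ia)\check u=\sF^{-1}\widehat\opH_a u-i\check u''$ and a spatial split at $|x|=t_a$. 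Take $j=1$, $\beta<2$: the far zone gives $\|(1+V_2)\langle x\rangle^{-1}\chi_{\{|x|>t_a\}}\check u\|\ls t_a^{-1}\|(1+V_2)\check u\|\ls t_a^{-1}(\|\widehat\opH_a u\|+a\|u\|)$ by \eqref{eq:HaV.est}, and the near zone gives $\ls V_2(t_a)\|u\|$. After multiplying by $\|\phi_{a,k}'\|_\infty\ls a^{-1/2}$, the residual on $\|u\|$ is of size $a^{-1/2}\bigl(t_a^{-1}a+V_2(t_a)\bigr)\approx V_2(t_a)\to+\infty$, which is off from the required $\Theta(a,\eps)=a^{-1}$ by an unbounded factor. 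The elliptic weight $(1+V_2)(V_2^2+a^2)^{-1/2}\le 2$ is uniform but carries no smallness, so it cannot rescue this.

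What is missing is the localization on the Fourier side: the strength of the lemma comes from the fact that $\supp\phi_{a,k}'$ (and hence $\supp\phi_{a,k}'u$) avoids the characteristic set $\{\xi=\pm\sqrt a\}$, where the ``away'' estimate \eqref{eq:Hahat.away.est.R} gives $\|\phi_{a,k}'u\|\ls a^{-1}\|\widehat\opH_a\phi_{a,k}'u\|$. To access this, the paper first commutes $\phi_{a,k}'$ past $\sF V_2'\sF^{-1}$ (incurring a further, smaller, commutator) and only then uses \eqref{eq:HaPVF.est} and \eqref{eq:Hahat.away.est.R} on $\phi_{a,k}'u$; see \eqref{eq:FV'F.phi.est}. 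Your sketch estimates $\|V_2'\check u\|$ with $u$ arbitrary, so this gain never materializes, and no amount of reshuffling of the elliptic weight or the spatial split recovers it. (A secondary point: you invoke Lemma~\ref{lem:Vp.holder} already for $1<\beta<2$, but the paper uses it only for $\beta\ge2$; in the range $\beta<2$ one instead uses $V_2'\ls 1+V_2^{1/2}$ together with \eqref{eq:HaPVF.est}, which is precisely how the paper lands on $\Theta(a,\eps)=a^{-1}$.) For $\beta\ge2$ the same structural issue persists; there the paper additionally needs the auxiliary cut-offs $\eta_{a,k}$ of \eqref{eq:etaak}, supported away from a slightly narrower neighbourhood of $\pm\sqrt a$, to feed into \eqref{eq:Hahat.away.est.R}, which again your plan does not supply.
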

\begin{proof}
	Let $u \in \SchwR \subset \Dom(\widehat \opH)$, then
	\begin{equation}
		\label{eq:HaV.est}
		\| \opwV u \| \ls \| \widehat \opH_a u \| + a \| u \|,
	\end{equation}
	(see \eqref{eq:Hhat.real.separation.prop} and \eqref{eq:Ha.def}). Note also that
	\begin{equation}
		\label{eq:HaPVF.est}
		\| V_2^{\frac12}  \check{u} \| = \langle V_2 \check{u}, \check{u}\rangle^{\frac12} 
		= \langle \opwV  u, u \rangle^{\frac12} 
		= ( \Re  \langle \widehat \opH_a  u, u \rangle )^{\frac12} \le \| \widehat \opH_a u \|^{\frac12} \| u \|^{\frac12}.
	\end{equation}
	Furthermore, by Assumption~\ref{asm:R}~\ref{itm:regularlyvarying}, and recalling our earlier remarks on regularly varying functions, in particular \eqref{eq:V.regvarying.decomposition} and \eqref{eq:slowvarying.estimates}, we obtain from \eqref{eq:ta.def}
	\begin{equation}
		\label{eq:ta.est}
		t_a^{\beta + 1 - \gamma} < t_a V_2(t_a) = 2 a^{\frac12} < t_a^{\beta + 1 + \gamma},
	\end{equation}
	for any arbitrarily small $\gamma > 0$ and any sufficiently large $a$.
	
	In the case $\beta < 2$, appealing to \eqref{eq:compositionformula}, \eqref{eq:R_N.est} and \eqref{eq:phia} and noting that $\sF V_2^{(j)} \sF^{-1}$ are bounded operators for $j \ge 2$ (recall Assumption~\ref{asm:R}~\ref{itm:symbolclass}), we have for any $k \in \{0, 1, 2\}$
	\begin{align*}
		\| [\opwV, \phi_{a,k}] u \| &\ls \| \phi_{a,k}' \sF V_2' \sF^{-1} u \| + a^{-1} \| u \|\\
		&\ls \| \sF V_2' \sF^{-1} \phi_{a,k}' u \| + \| [\sF V_2' \sF^{-1}, \phi_{a,k}'] u \| + a^{-1} \| u \|\\
		&\ls \| \sF V_2' \sF^{-1} \phi_{a,k}' u \| + a^{-1} \| u \|.
	\end{align*}
	Moreover, using \eqref{eq:V.symb}, \eqref{eq:HaPVF.est}, $\beta < 2$ and the fact that $t_a \to +\infty$ as $a \to +\infty$
	\begin{align*}
		\| \sF V_2' \sF^{-1} \phi_{a,k}' u \| &\ls \| (1 + V_2) \langle x \rangle^{-1} \sF^{-1} \phi_{a,k}' u \| \ls \| \phi_{a,k}' u \| + \| V_2^{\frac12} \sF^{-1} \phi_{a,k}' u \|\\
		&\ls t_a^{-1} \| \widehat \opH_a \phi_{a,k}' u \| + t_a \| \phi_{a,k}' u \|.
	\end{align*}
Since $\supp \phi_{a,k}' u \cap (\Omega'_{a,+} \cup \Omega'_{a,-}) = \emptyset$, we have applying \eqref{eq:Hahat.away.est.R}
	\begin{equation*}
		\| \phi_{a,k}' u \| \ls a^{-1} \| \widehat \opH_a \phi_{a,k}' u \|
	\end{equation*}
	and, since $t_a^2 a^{-1} \to 0$ as $a \to +\infty$ (see \eqref{eq:ta.def} and \eqref{eq:VgrowsInfty.realline}), we conclude
	\begin{equation*}
		\| \sF V_2' \sF^{-1} \phi_{a,k}' u \| \ls t_a^{-1} \| \widehat \opH_a \phi_{a,k}' u \| \ls t_a^{-1} (a^{-\frac12} \| \widehat \opH_a u \| + \| [\opwV, \phi_{a,k}'] u \|).
	\end{equation*}
	Furthermore, by \eqref{eq:compositionformula}, \eqref{eq:R_N.est}, \eqref{eq:V.symb}, \eqref{eq:HaPVF.est} and $\beta < 2$
	\begin{align*}
		\| [\opwV, \phi_{a,k}'] u \| &\ls a^{-1} \| \sF V_2' \sF^{-1} u \| + a^{-\frac32} \| u \| \ls a^{-1} (\| u \| + \| V_2^{\frac12} \check{u} \|) + a^{-\frac32} \| u \|\\
		&\ls a^{-1} (\| \widehat \opH_a u \| + \| u \|).
	\end{align*}
	Hence
	\begin{equation}\label{eq:FV'F.phi.est}
		\| \sF V_2' \sF^{-1} \phi_{a,k}' u \| \ls a^{-\frac12} t_a^{-1} \| \widehat \opH_a u \| + a^{-1} t_a^{-1} \| u \|
	\end{equation}
	and
	\begin{equation}
		\label{eq:commutator.Vphi.betalt2.est}
		\| [\opwV, \phi_{a,k}] u \| \ls a^{-\frac12} t_a^{-1} \| \widehat \opH_a u \| + a^{-1} \| u \|
	\end{equation}
	as claimed. Moreover, using \eqref{eq:ta.def} and \eqref{eq:VgrowsInfty.realline}
	\begin{equation*}
		a^{-1} (V_2(t_a))^{-1} a^{\frac12} t_a = 2 (V_2(t_a))^{-2} \to 0, \quad a \to +\infty.
	\end{equation*}

	For $\beta \ge 2$, applying \eqref{eq:compositionformula} we obtain
	\begin{equation}
		\label{eq:Vhat.phiak.commutator.est}
		\| [\opwV, \phi_{a,k}] u \| \ls \| \phi_{a,k}' \sF V_2' \sF^{-1} u \| + \| \sum_{j = 2}^{N} \phi_{a,k}^{(j)} \sF V_2^{(j)} \sF^{-1} u + \opR_{N+1,k} u \|.
	\end{equation}
	In order to estimate $\| \phi_{a,k}' \sF V_2' \sF^{-1} u \|$, we introduce cut-off functions $\eta_{a,k} \in \CcR$, $0 \le \eta_{a,k} \le 1$, $k \in \{0, 1, 2\}$, satisfying
	\begin{equation}
		\label{eq:etaak}	
		\begin{aligned}
			\eta_{a,k}(\xi) &= 1, \quad \xi \in \supp \phi_{a,k}', \quad \| \eta_{a,k}^{(j)}\|_{\infty} \ls a^{-\frac j2}, \quad j \in \{1, \dots, N + 1 + l\},\\
			\supp \eta_{a,k} &\cap \left( (-\xi_a - \delta' \xi_a, -\xi_a + \delta' \xi_a) \cup (\xi_a - \delta' \xi_a, \xi_a + \delta' \xi_a) \right) = \emptyset,
		\end{aligned}
	\end{equation}
	with $0 < \delta' < \delta$. Then applying Lemma~\ref{lem:Vp.holder} and Young's inequality for products (note that $p_\beta, q_\beta \in (1, +\infty)$) and using the fact that $t_a \to +\infty$ as $a \to +\infty$
	\begin{equation}
		\label{eq:phiakp.Vp.est}
		\begin{aligned}
			\| \phi_{a,k}' \sF V_2' \sF^{-1} u \| &= \| \phi_{a,k}' \eta_{a,k} \sF V_2' \sF^{-1} u \| \ls a^{-\frac12} \| \eta_{a,k} \sF V_2' \sF^{-1} u \|\\
			&\ls a^{-\frac12} ( \| \sF V_2' \sF^{-1} \eta_{a,k} u \| + \| [\sF V_2' \sF^{-1}, \eta_{a,k}] u \| )\\
			&\ls a^{-\frac12} ( \| \eta_{a,k} u \| + \| (1 + \opwV) \eta_{a,k} u \|^{\frac{1}{p_\beta}} \| \eta_{a,k} u \|^{\frac{1}{q_\beta}}\\
			&\qquad\qquad + \| [\sF V_2' \sF^{-1}, \eta_{a,k}] u \| )\\
			&\ls a^{-\frac12} ( t_a^{-1} \| \opwV \eta_{a,k} u \| + t_a^{\frac{1}{p_{\beta} - 1}}\| u \| + \| [\sF V_2' \sF^{-1}, \eta_{a,k}] u \| ).
		\end{aligned}
	\end{equation}
	Since $\supp \eta_{a,k} u \cap \left( (-\xi_a - \delta' \xi_a, -\xi_a + \delta' \xi_a) \cup (\xi_a - \delta' \xi_a, \xi_a + \delta' \xi_a) \right) = \emptyset$, using \eqref{eq:Hahat.away.est.R} we have
	\begin{equation*}
		\| \opwV \eta_{a,k} u \| \ls \| \widehat \opH_a \eta_{a,k} u \| \ls \| \widehat \opH_a u \| + \| [\opwV, \eta_{a,k}] u \|.
	\end{equation*}
	Applying \eqref{eq:compositionformula} to $[\opwV, \eta_{a,k}]$ and using \eqref{eq:etaak} and the fact that, by \eqref{eq:V.symb} and \eqref{eq:HaV.est}, $\| \sF V_2^{(j)} \sF^{-1} u \| \ls \| \widehat \opH_a u \| + a \| u \|$ for any $j \in \N$, we obtain
	\begin{equation}
		\label{eq:Vhatetaak.commutator}
		\begin{aligned}
			\| [\opwV, \eta_{a,k}] u \| &\ls \| \eta_{a,k}' \sF V_2' \sF^{-1} u \| + \sum_{j = 2}^{N} \| \eta_{a,k}^{(j)} \sF V_2^{(j)} \sF^{-1} u \| + \| \widetilde \opR_{N+1,k} u \|\\
			&\ls a^{-\frac12} \| \sF V_2' \sF^{-1} u \| + a^{-1} \| \widehat \opH_a u \| + \| u \|.
		\end{aligned}
	\end{equation}
	Furthermore, noting firstly that $a^{\frac12} t_a^{-2} \approx V_2(t_a) t_a^{-1} \to +\infty$ and secondly that, for sufficiently small $\eps, \gamma > 0$, $a t_a^{-2 \beta - \BigO(\eps)} \gs t_a^{2 - 2\gamma - \BigO(\eps)} \to +\infty$ as $a \to +\infty$ (see \eqref{eq:ta.def} and \eqref{eq:ta.est}), we have by \eqref{eq:Vp.holder.R}, \eqref{eq:HaV.est} and Young's inequality for products
	\begin{align*}
		\| [\opwV, \eta_{a,k}] u \| &\ls a^{-\frac12} ( \| u \| + t_a^{-2} \| (1 + \opwV) u \| + t_a^{\frac{2}{p_{\beta} - 1}} \|u \| ) + a^{-1} \| \widehat \opH_a u \| + \| u \|\\
		&\ls a^{-\frac12} ( t_a^{-2} \| \widehat \opH_a u \| + a t_a^{-2} \| u \| + t_a^{2 (\beta - 1) + \BigO(\eps)} \| u \| ) + a^{-1} \| \widehat \opH_a u \|\\
		&\ls a^{-\frac12} t_a^{-2} \| \widehat \opH_a u \| + a^{\frac12} t_a^{-2} \| u \|,
	\end{align*}
	which yields
	\begin{equation}
		\label{eq:Vhat.etaak.est}
		\| \opwV \eta_{a,k} u \| \ls \| \widehat \opH_a u \| + a^{\frac12} t_a^{-2} \| u \|.
	\end{equation}
	Moreover, repeating the same arguments which we have used for $[\opwV, \eta_{a,k}] u$, we find
	\begin{equation}
		\label{eq:Vphatetaak.commutator}
		\begin{aligned}
			\| [\sF V_2' \sF^{-1}, \eta_{a,k}] u \| &\ls a^{-\frac12} \| \sF V_2'' \sF^{-1} u \| + a^{-1} \| \widehat \opH_a u \| + \| u \|\\
			&\ls a^{-\frac12} t_a^{-2} \| \widehat \opH_a u \| + a^{\frac12} t_a^{-2} \| u \|.
		\end{aligned}
	\end{equation}
	Returning to \eqref{eq:phiakp.Vp.est} with \eqref{eq:Vhat.etaak.est} and \eqref{eq:Vphatetaak.commutator} and noting that for any small but fixed $\eps > 0$ we can always find $\gamma > 0$ such that 
\begin{equation*}
a^{-\frac12} t_a^{\beta + 1 + \BigO(\eps)} \approx t_a^{\beta + \BigO(\eps)} (V_2(t_a))^{-1} \gs t_a^{\BigO(\eps) - \gamma} \to +\infty, \quad a \to +\infty,
\end{equation*}
 we obtain
	\begin{equation}
		\label{eq:phiakp.Vphat.est}
		\begin{aligned}
			\| \phi_{a,k}' \sF V_2' \sF^{-1} u \| &\ls a^{-\frac12} ( t_a^{-1} \| \widehat \opH_a u \| + t_a^{\beta - 1 + \BigO(\eps)} \| u \| + a^{\frac12} t_a^{-2} \| u \| )\\
			&\ls a^{-\frac12} t_a^{-1} \| \widehat \opH_a u \| + a^{-\frac12} t_a^{\beta - 1 + \BigO(\eps)} \| u \|.
		\end{aligned}
	\end{equation}
	Next we estimate the second term in the right-hand side of \eqref{eq:Vhat.phiak.commutator.est} using \eqref{eq:phia}, \eqref{eq:Vp.holder.R}, \eqref{eq:HaV.est}, $N \ge 3$, Young's inequality for products and $a t_a^{-2 \beta - \BigO(\eps)} \gs t_a^{2 - 2\gamma - \BigO(\eps)} \to +\infty$ as $a \to +\infty$
	\begin{equation}
		\label{eq:Vhat.phiak.commutator.est2}
		\begin{aligned}
			&\| \sum_{j = 2}^{N} \phi_{a,k}^{(j)} \sF V_2^{(j)} \sF^{-1} u + \opR_{N+1,k} u \| 
			\\ 
			& \quad \ls a^{-1} \sum_{j = 2}^{N} \| \sF V_2^{(j)} \sF^{-1} u \| + a^{-2} \| u \| \ls a^{-1} ( t_a^{-2} \| (1 + \opwV) u \| + t_a^{\frac{2}{p_{\beta} - 1}} \| u \| ) \\
			&\quad \ls a^{-1} ( t_a^{-2} \| \widehat \opH_a u \| + t_a^{-2} a \| u \| + t_a^{2 (\beta - 1) + \BigO(\eps)} \| u \| )
			\\
		&\quad \ls a^{-1} t_a^{-2} \| \widehat \opH_a u \| + t_a^{-2} \| u \|.
		\end{aligned}
	\end{equation}
	Combining \eqref{eq:Vhat.phiak.commutator.est}, \eqref{eq:phiakp.Vphat.est} and \eqref{eq:Vhat.phiak.commutator.est2}, we have
	\begin{equation*}
		\| [\opwV, \phi_{a,k}] u \| \ls a^{-\frac12} t_a^{-1} \| \widehat \opH_a u \| + a^{-\frac12} t_a^{\beta - 1 + \BigO(\eps)} \| u \|,
	\end{equation*}
	as required. Finally, using \eqref{eq:ta.def} and \eqref{eq:ta.est}
	\begin{equation*}
		t_a^{\beta + \eps} (V_2(t_a))^{-1} (a^{\frac12} t_a)^{-\eps} \approx t_a^{\beta - \eps} (V_2(t_a))^{-1 - \eps} \ls t_a^{-(1 + \beta - \gamma) \eps + \gamma} \to 0, \quad a \to +\infty,
	\end{equation*}
	since $\gamma > 0$ can be chosen arbitrarily small. This concludes the proof.
\end{proof}
\begin{lemma}
	\label{lem:Hhat.u1u2.quasiorthogonal}
	Let the assumptions of Theorem~\ref{thm:R} hold, with $\widehat \opH_a$, $t_a$ and $\Theta(a, \eps)$ as in \eqref{eq:Ha.def}, \eqref{eq:ta.def} and \eqref{eq:Hahatphiak.commutator.coeff}, respectively, and let $\phi_{a,k}, \; k \in \{1, 2\},$ be as in \eqref{eq:phiak}. Then for all $u \in \SchwR$ and any arbitrarily small $\eps > 0$, we have as $a \to +\infty$
	\begin{equation}
		\label{eq:Hhat.u1u2.quasiorth.norm.est}
		(\| \widehat \opH_a \phi_{a,1} u \|^2 + \| \widehat \opH_a \phi_{a,2} u \|^2)^{\frac12} = \| \widehat \opH_a (\phi_{a,1} + \phi_{a,2}) u \| + \BigO(a^{-\frac12} t_a^{-1} \| \widehat \opH_a u \| + \Theta(a, \eps) \| u \|).
	\end{equation}
\end{lemma}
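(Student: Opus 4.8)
The plan is to expand
\[
\| \widehat \opH_a(\phi_{a,1}+\phi_{a,2})u\|^2 = \| \widehat\opH_a\phi_{a,1}u\|^2 + \| \widehat\opH_a\phi_{a,2}u\|^2 + 2\Re\langle \widehat\opH_a\phi_{a,1}u,\widehat\opH_a\phi_{a,2}u\rangle
\]
and to show that the cross term is small, using that $\phi_{a,1}=\phi_{a,+}$ and $\phi_{a,2}=\phi_{a,-}$, together with all their derivatives, have disjoint supports for large $a$, since $\supp\phi_{a,k}\subset\Omega_{a,\pm}$ and the intervals $\Omega_{a,\pm}$ from \eqref{eq:Omega.def.realline} are disjoint once $0<\delta<1/4$ (see \eqref{eq:deltaa.def}). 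Writing $A:=\widehat\opH_a\phi_{a,1}u$, $B:=\widehat\opH_a\phi_{a,2}u$, the elementary inequality $|\sqrt{p}-\sqrt{q}|\le\sqrt{|p-q|}$ reduces \eqref{eq:Hhat.u1u2.quasiorth.norm.est} to proving
\[
|\langle A,B\rangle| \ls a^{-1}t_a^{-2}\,\|\widehat\opH_a u\|^2 + \Theta(a,\eps)^2\,\|u\|^2, \qquad a\to+\infty .
\]

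First I would use the identity $\widehat\opH_a\phi_{a,k}u = \phi_{a,k}\widehat\opH_a u + [\opwV,\phi_{a,k}]u$, valid because $\widehat\opH_a=\opwV-i(\xi^2-a)$ and multiplication by $\xi^2-a$ commutes with multiplication by $\phi_{a,k}$. Then I would apply the composition formula of Lemma~\ref{lem:pdo.comp} with $F:=V_2$ — which satisfies \eqref{eq:Vsymbolclass} with $m:=\beta+\gamma$ for any small $\gamma>0$, by Assumption~\ref{asm:R}~\ref{itm:symbolclass} together with $1+V_2(x)\ls\langle x\rangle^{\beta+\gamma}$ (see \eqref{eq:V.regvarying.decomposition}, \eqref{eq:slowvarying.estimates}) — and with $\phi:=\phi_{a,k}$. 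Since $N=\max\{\lceil\beta\rceil+1,3\}$ in \eqref{eq:phia} satisfies $N>m$, Lemma~\ref{lem:pdo.comp} gives
\[
[\opwV,\phi_{a,k}]u = M_ku + \opR_{N+1,k}u, \qquad M_ku := \sum_{j=1}^{N}\frac{i^j}{j!}\,\phi_{a,k}^{(j)}\,\sF V_2^{(j)}\sF^{-1}u,
\]
with $\|\opR_{N+1,k}u\| \ls \delta_a^{-(N+1)}\|u\| \approx a^{-(N+1)/2}\|u\|$ by \eqref{eq:R_N.est}, \eqref{eq:phia} and $\delta_a=\delta\xi_a$ (the constant in \eqref{eq:R_N.est} being independent of $a$, as $V_2$ is fixed).

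Substituting $\widehat\opH_a\phi_{a,k}u = \phi_{a,k}\widehat\opH_a u + M_ku + \opR_{N+1,k}u$ into $\langle A,B\rangle$ and expanding into nine inner products, the four terms not containing a remainder are each of the form $\langle\psi_1,\psi_2\rangle$ with $\supp\psi_1\subset\supp\phi_{a,1}$ and $\supp\psi_2\subset\supp\phi_{a,2}$ (note $\supp M_ku\subset\supp\phi_{a,k}$), hence vanish by disjointness. The five remaining terms each carry a factor $\opR_{N+1,1}u$ or $\opR_{N+1,2}u$; estimating them by Cauchy--Schwarz, using $\|\phi_{a,k}\widehat\opH_a u\|\le\|\widehat\opH_a u\|$, $\|M_ku\|\le\|[\opwV,\phi_{a,k}]u\|+\|\opR_{N+1,k}u\|\ls a^{-1/2}t_a^{-1}\|\widehat\opH_a u\|+\Theta(a,\eps)\|u\|$ from Lemma~\ref{lem:Vhat.phiak.commutator}, and the remainder bound above, I would obtain
\[
|\langle A,B\rangle| \ls a^{-\frac{N+1}{2}}\|u\|\,\|\widehat\opH_a u\| + a^{-\frac{N+1}{2}}\|u\|\big(a^{-\frac12}t_a^{-1}\|\widehat\opH_a u\| + \Theta(a,\eps)\|u\|\big) + a^{-(N+1)}\|u\|^2 .
\]
Young's inequality rewrites the dominant term $a^{-(N+1)/2}\|u\|\,\|\widehat\opH_a u\|$ as at most $\tfrac12 a^{-1}t_a^{-2}\|\widehat\opH_a u\|^2 + \tfrac12 a^{-N}t_a^2\|u\|^2$, and since $t_a^2\ls a$ and $t_a\to+\infty$ as $a\to+\infty$ (by \eqref{eq:ta.def}, \eqref{eq:ta.est}) and $N\ge3$, one has $a^{-N}t_a^2\ls\Theta(a,\eps)^2$ in both cases of \eqref{eq:Hahatphiak.commutator.coeff}; the other two remainder groups are absorbed in the same way, which gives the required bound on $|\langle A,B\rangle|$.

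I expect the only genuine obstacle to be the bookkeeping behind the choice of $N$ in \eqref{eq:phia}: it must be large enough both for Lemma~\ref{lem:pdo.comp} to apply (so $N>\beta$) and for the leftover $a^{-N}t_a^2\|u\|^2$ produced by Young's inequality to be dominated by $\Theta(a,\eps)^2\|u\|^2$ (so $N\ge3$), which is exactly why one takes $N=\max\{\lceil\beta\rceil+1,3\}$. Beyond that, the argument is just the disjoint-support cancellation of the principal commutator terms combined with Cauchy--Schwarz and the commutator estimate of Lemma~\ref{lem:Vhat.phiak.commutator}.
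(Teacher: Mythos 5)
Your proposal is correct and follows essentially the same strategy as the paper's proof: split $\widehat\opH_a\phi_{a,k}u$ via the composition formula of Lemma~\ref{lem:pdo.comp} into a part supported in $\Omega_{a,\pm}$ plus a small remainder $\opR_{N+1,k}u$, observe that the disjoint-support parts are $L^2$-orthogonal so their cross inner products vanish, and absorb the remainder cross terms via Cauchy--Schwarz, the commutator bound from Lemma~\ref{lem:Vhat.phiak.commutator}, and Young's inequality into the error $\BigO(a^{-1/2}t_a^{-1}\|\widehat\opH_a u\| + \Theta(a,\eps)\|u\|)$. The paper groups $\phi_{a,k}\widehat\opH_a u$ and the finite commutator sum into a single term $\opB_{N,k}u$ and bounds $|\sqrt p-\sqrt q|$ by a sum of half-power products of norms, whereas you first reduce to $|\langle A,B\rangle|$ and apply Young's inequality afterwards; these are cosmetic variants of the same bookkeeping, and your verification of the support condition $N\ge\max\{\lceil\beta\rceil+1,3\}$ and of $a^{-N}t_a^2\ls\Theta(a,\eps)^2$ matches the paper's.
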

\begin{proof}
	Let $u \in \SchwR$ and $u_k := \phi_{a,k} u$ with $k \in \{1,2\}$. Applying \eqref{eq:compositionformula} with $F = V_2$ and $\phi = \phi_{a,k}$, we have for $k \in \{1, 2\}$
	\begin{equation*}
		\widehat \opH_a u_k = \phi_{a,k} \widehat \opH_a u + [\widehat V, \phi_{a,k}] u = \opB_{N,k} u + \opR_{N+1,k} u
	\end{equation*}
	with
	\begin{equation*}
		\opB_{N,k} u := \phi_{a,k} \widehat \opH_{a} u + \sum_{j=1}^{N} \frac{i^j}{j!} \phi_{a,k}^{(j)} \widehat V^{(j)} u
	\end{equation*}
	and $\opR_{N+1,k} u$ as in \eqref{eq:comp.operator.remainder}, \eqref{eq:comp.symbol.remainder.1} and \eqref{eq:comp.symbol.remainder.2}. Noting that $\supp (\opB_{N,1} u) \subset \Omega_{a,+}$ and $\supp (\opB_{N,2}) u \subset \Omega_{a,-}$, and consequently $\opB_{N,1} u \perp \opB_{N,2} u$ in $L^2$, we get
	\begin{equation*}
		\begin{aligned}
			\| \widehat \opH_{a} (u_1 + u_2) \|^2 &= \| \widehat \opH_{a} u_1 \|^2 + \| \widehat \opH_{a} u_2 \|^2 + 2 \Re \langle \widehat \opH_{a} u_1, \widehat \opH_{a} u_2 \rangle\\
			&= \| \widehat \opH_{a} u_1 \|^2 + \| \widehat \opH_{a} u_2 \|^2 + 2 \Re \langle \opB_{N,1} u, \opR_{N+1,2} u \rangle\\
			&\quad + 2 \Re \langle \opR_{N+1,1} u, \opB_{N,2} u \rangle + 2 \Re \langle \opR_{N+1,1} u, \opR_{N+1,2} u \rangle.
		\end{aligned}
	\end{equation*}
	Hence
	\begin{equation}
		\label{eq:Hhat.u1u2.quasiorth.est.1}
		\begin{aligned}
			|(\| \widehat \opH_{a} u_1 \|^2 + \| \widehat \opH_{a} u_2 \|^2)^{\frac12} - \| \widehat \opH_{a} (u_1 + u_2) \||  &\ls \| \opB_{N,1} u \|^{\frac12} \| \opR_{N+1,2} u \|^{\frac12}\\
			&\quad + \| \opR_{N+1,1} u \|^{\frac12} \| \opB_{N,2} u \|^{\frac12}\\
			&\quad + \| \opR_{N+1,1} u \|^{\frac12} \| \opR_{N+1,2} u \|^{\frac12}.
		\end{aligned}
	\end{equation}
	Applying \eqref{eq:R_N.est} and \eqref{eq:phia} and recalling $N \ge 3$, we find $\| \opR_{N+1,k} u \| \ls a^{-2} \| u \|$ for $k \in \{1, 2\}$ and $a \to +\infty$. Moreover
	\begin{equation}
		\label{eq:Bnk.comm.expansion}
		\| \opB_{N,k} u \| \le \| \widehat \opH_{a} u \| + \| \phi_{a,k}' \sF V_2' \sF^{-1} u \| + \sum_{j=2}^{N} \frac1{j!} \| \phi_{a,k}^{(j)} \widehat V^{(j)} u \|,
	\end{equation}
	for $k \in \{1, 2\}$. The second and higher order terms in the right-hand side of the above inequality have already been estimated in Lemma~\ref{lem:Vhat.phiak.commutator} (see \eqref{eq:FV'F.phi.est}, \eqref{eq:commutator.Vphi.betalt2.est}, \eqref{eq:phiakp.Vphat.est} and \eqref{eq:Vhat.phiak.commutator.est2}); we have for $k \in \{1, 2\}$ and any arbitrarily small $\eps > 0$
	\begin{equation*}
		\| \opB_{N,k} u \| \le (1 + \BigO(a^{-\frac12} t_a^{-1})) \| \widehat \opH_{a} u \| + \BigO(\Theta(a, \eps)) \| u \|, \quad a \to +\infty.
	\end{equation*}
	We proceed to estimate the first term in the right-hand side of \eqref{eq:Hhat.u1u2.quasiorth.est.1} as $a \to +\infty$
	\begin{equation*}
		\begin{aligned}
			\| \opB_{N,1} u \|^{\frac12} \| \opR_{N+1,2} u \|^{\frac12} &\le a^{-\frac12} t_a^{-1} \| \opB_{N,1} u \| + a^{\frac12} t_a \| \opR_{N+1,2} u \|\\
			&\ls a^{-\frac12} t_a^{-1} \| \widehat \opH_{a} u \| + a^{-\frac12} t_a^{-1} \Theta(a, \eps) \| u \| + a^{-\frac32} t_a \| u \|\\
			&\ls a^{-\frac12} t_a^{-1} \| \widehat \opH_{a} u \| + \Theta(a, \eps) \| u \|,
		\end{aligned}
	\end{equation*}
	using the fact that $a^{-\frac12} t_a = 2 V_2(t_a)^{-1} \to 0$ as $a \to +\infty$ in the last step. A similar estimate can be derived for $\| \opB_{N,2} u \|^{\frac12} \| \opR_{N+1,1} u \|^{\frac12}$. Applying both of them and $\| \opR_{N+1,1} u \|^{\frac12} \| \opR_{N+1,2} u \|^{\frac12} \ls a^{-2} \| u \|$ in \eqref{eq:Hhat.u1u2.quasiorth.est.1} yields the desired result.
\end{proof}

\begin{proof}[Proof of Theorem~\ref{thm:R}]
	Let $0 \ne u \in \SchwR \subset \Dom(\widehat \opH)$ and let us write $u = u_0 + u_1 + u_2$, where $u_k := \phi_{a,k} u$ with $k \in \{0,1,2\}$ and $\phi_{a,k}$ as in \eqref{eq:phiak}. Then we have
	\begin{equation*}
		\widehat \opH_a u_k = \phi_{a,k} \widehat \opH_a u + [\opwV, \phi_{a,k}] u, \quad k \in \{0,1,2\},
	\end{equation*}
	and therefore, noting that $\supp \phi_{a,1} \cap \supp \phi_{a,2} = \emptyset$ and using Lemma~\ref{lem:Vhat.phiak.commutator}, we obtain as $a \to +\infty$
	\begin{equation}
		\label{eq:hauk.norm.est}
		\begin{aligned}
			\| \widehat \opH_a u_0 \| &\le (1 + \BigO(a^{-\frac12} t_a^{-1})) \| \widehat \opH_a u \| + \BigO(\Theta(a, \eps)) \| u \|,\\
			\| \widehat \opH_a (u_1 + u_2) \| &\le (1 + \BigO(a^{-\frac12} t_a^{-1})) \| \widehat \opH_a u \| + \BigO(\Theta(a, \eps)) \| u \|,
		\end{aligned}
	\end{equation}
	with small $\eps > 0$ and $\Theta(a, \eps)$ as in \eqref{eq:Hahatphiak.commutator.coeff}.

	Firstly, note that $\supp u_1 \subset \Omega_{a,+}$ and $\supp u_2 \subset \Omega_{a,-}$ and therefore $u_1 \perp u_2$. Moreover, by Proposition~\ref{prop:local.R} and Lemma~\ref{lem:Hhat.u1u2.quasiorthogonal}, as $a \to +\infty$
	\begin{equation}
		\label{eq:uk.local.est1}
		\begin{aligned}
			V_2(t_a) \| u_1 + u_2 \| &\le \| \opA_{\beta}^{-1}\| ( 1 + \BigO ( \iota(t_a) + a^{-\frac12} t_a^{-1} ) ) (\| \widehat \opH_a u_1 \|^2 + \| \widehat \opH_a u_2 \|^2)^{\frac12}\\
			&\le \| \opA_{\beta}^{-1}\| ( 1 + \BigO ( \iota(t_a) + a^{-\frac12} t_a^{-1} ) ) \| \widehat \opH_a (u_1 + u_2) \|\\
			&\quad + \BigO (a^{-\frac12} t_a^{-1}) \| \widehat \opH_a u \| + \BigO (\Theta(a, \eps)) \| u \|.
		\end{aligned}
	\end{equation}
	Thus by \eqref{eq:hauk.norm.est}, \eqref{eq:ta.def} and Lemma~\ref{lem:reg.var.conv}, we have as $a \to +\infty$
	\begin{equation}
		\label{eq:uk.local.est}
		V_2(t_a) \| u_1 + u_2 \| \le \| \opA_{\beta}^{-1}\| ( 1 + \BigO ( \iota(t_a) + a^{-\frac12} t_a^{-1} ) )  \| \widehat \opH_a u \| + \BigO(\Theta(a, \eps)) \| u \|.
	\end{equation}

	Secondly, since $\supp u_0 \cap (\Omega'_{a,+} \cup \Omega'_{a,-}) = \emptyset$, then by Proposition~\ref{prop:away.R}
	\begin{equation*}
		a \| u_0 \| \ls \| \widehat{\opH}_a u_0 \|, \quad a \to +\infty,
	\end{equation*}
	and applying \eqref{eq:hauk.norm.est} and \eqref{eq:ta.def}, we have as $a \to +\infty$
	\begin{equation}
		\label{eq:uk.away.est}
		V_2(t_a) \| u_0 \| \ls a ^{-\frac12} t_a^{-1} \| \widehat{\opH}_a u_0 \| \ls a ^{-\frac12} t_a^{-1} \| \widehat{\opH}_a u \| + a ^{-\frac12} t_a^{-1} \Theta(a, \eps) \| u \|.
	\end{equation}

	Combining \eqref{eq:uk.local.est} and \eqref{eq:uk.away.est}, we find that as $a \to +\infty$
	\begin{align*}
		V_2(t_a) \| u \| &\le V_2(t_a) \left(\| u_0 \| + \| u_1 + u_2 \| \right)\\
		&\le \| \opA_{\beta}^{-1} \| ( 1 + \BigO ( \iota(t_a) + a^{-\frac12} t_a^{-1} ) )  \| \widehat \opH_a u \| + \BigO(\Theta(a, \eps)) \| u \|.
	\end{align*}
	Using \eqref{eq:Hahatphiak.commutator.decay}, we arrive at
	\begin{equation}
		\label{eq:Hb.est.R}
		\| u \| \le \| \opA_{\beta}^{-1} \| (V_2(t_a))^{-1} ( 1 + \BigO ( \iota(t_a) + (a^{\frac12} t_a)^{-1 + \eps} ) ) \| \widehat \opH_a u \|.
	\end{equation}
	Since $\SchwR$ is a core for $\opH$ and, equivalently, for $\widehat{\opH}$, we can extend the above estimate to any $u \in \Dom(\widehat{\opH})$ using a standard approximation argument. The proof of the theorem follows by an appeal to Proposition~\ref{prop:lbound.R} and the use of the inverse Fourier transform to take the result back to $x$-space.
\end{proof}

%\newpage

\section{Extensions and further remarks}
\label{sec:cor}
\subsection{The norm of the resolvent inside the numerical range}
\label{ssec:inside.Num}

A simple application of the triangle inequality allows us to obtain estimates for the resolvent norm in regions adjacent to the imaginary and real axes as well as to include further bounded perturbations. 

In detail, for an operator $H$ (as in Sections~\ref{sec:iR}, \ref{sec:R}), $\la, \mu \in \C$ and a bounded operator $W$, we get 
\begin{equation}\label{eq:triang}
\|(H-\la -\mu + W) u \| \geq \|(H-\la)u\| - |\mu| \|u\| - \|W \| \|u\|, \quad u \in \Dom(\opH). 	
\end{equation}

In particular, for $H$ as in Section \ref{sec:iR} with purely imaginary $V$ satisfying Assumption~\ref{asm:iR}, Theorem~\ref{thm:iR} and \eqref{eq:triang} with $\la = ib$, $\mu =a\geq 0$, $W=0$ yield 
\begin{equation}
	\label{eq:iR.criticalregion}
	\| (\opH - a - i b) u \| \geq  \left(\| \opA_{1,\frac\pi 2}^{-1} \|^{-1} ( V_2'(x_b) )^{\frac{2}{3}} ( 1 - \BigO ( \Upsilon(x_b) ) ) - a \right) \| u \|
\end{equation}
as $b \to +\infty$. Thus assuming that $V_2$ does not grow too slowly (\eg~$V_2'$ is bounded below by a strictly positive constant), that $b$ is large enough and that $\eps, \eps' > 0$ are sufficiently small, the region in the first quadrant of $\C$ (which contains the numerical range of the operator and its spectrum, if any) determined by
\begin{equation}
	\label{eq:res.bounded.region.iR}
	0 \leq a < \| \opA_{1,\frac\pi 2}^{-1} \|^{-1} ( V_2'(x_b) )^{\frac{2}{3}} (1 - \eps') - \eps, \quad b \to +\infty,
\end{equation}
with $V_2(x_b) = b$, is non-empty and unbounded. Moreover, the resolvent satisfies
$\| (\opH - a-i b)^{-1} \| \le 1/\eps$ inside this region.

For $H$ as in Section~\ref{sec:R}, one obtains from Theorem~\ref{thm:R} and \eqref{eq:triang} with $\la = a$, $\mu =ib$, $b > 0$, $W=0$ that
\begin{equation}
\| (\opH - a - i b) u \| \geq  \left( \| \opA_{\beta}^{-1} \|^{-1}  V_2(t_a) ( 1 - \BigO ( \iota(t_a) + (a^{\frac 12} t_a)^{-l_{\beta, \eps}} ) - b \right) \|u\|
\end{equation}
 as $a \to + \infty$. Thus the resolvent satisfies $\| (\opH - a-i b)^{-1} \| \le 1/\eps$ for
\begin{equation}\label{eq:res.bounded.region.R}
	0\leq b < \| \opA_{\beta}^{-1} \|^{-1} V_2(t_a)  (1 - \eps') - \eps, \quad a \to +\infty.
\end{equation}

In both cases, bounded perturbations $W$ can be included in an analogous way.

\subsubsection{The norm of the resolvent along curves adjacent to the imaginary axis}
\label{sssec:resnorm.adj.iR}
A more precise examination of the proof of Theorem~\ref{thm:iR} reveals that it is in fact possible to estimate $\| (\opH - \la)^{-1} \|$ along curves 
\begin{equation}\label{eq:lab.def}
\la_b := a(b) + ib, \quad b \to \infty,	
\end{equation}
even outside the region determined by \eqref{eq:res.bounded.region.iR}.
Let for simplicity $V = i V_2$ obey Assumption~\ref{asm:iR} and, with $\rho$ and $\Upsilon$ as defined in \eqref{eq:rho.def} and in Assumption~\ref{asm:iR}~\ref{itm:imvgrowth}, respectively, let $a: \Rplus \to \Rplus$ satisfy
\begin{equation}
	\label{eq:Phib.asm}
	\Phi_b := \langle \mu_b \rangle^2 \| (\opA_{1,\frac{\pi}{2}} - \mu_b)^{-1} \| \Upsilon(x_b) = o(1), \quad b \to +\infty,
\end{equation}
where (with $a_b \equiv a(b)$)
\begin{equation}
	\label{eq:mub.def}
	\mu_b := \rho^2 a_b = (V_2'(x_b))^{-\frac23} a_b.
\end{equation}
In our analysis, we shall be mainly concerned with two types of curves:
\begin{enumerate}[\upshape (1)]
	\item $\la_b$ with $a_b \ls V_2'(x_b)^\frac23$ for $b \to +\infty$, corresponding asymptotically to the critical region \eqref{eq:res.bounded.region.iR}, i.e. where $\mu_b$ satisfies
	\begin{equation}
		\label{eq:gral.cuves.crit.in}
		\mu_b \ls 1, \quad b \to +\infty;
	\end{equation}
	\item $\la_b$ with $V_2'(x_b)^\frac23 = o(a_b)$, $b \to +\infty$, and therefore $\la_b$ grows away from the critical region, i.e. where $\mu_b$ satisfies
	\begin{equation}
		\label{eq:gral.cuves.crit.out}
		\mu_b \to +\infty, \quad b \to +\infty.
	\end{equation}
\end{enumerate}
Note that, in the first case, we have $\Phi_b = \BigO(\Upsilon(x_b))$ due to the fact that $\|(\opA_{1,\pi/2} - z)^{-1}\|$ is bounded on compact sets in $\C$ and therefore, by Assumption~\ref{asm:iR}~\ref{itm:imvgrowth}, condition \eqref{eq:Phib.asm} holds automatically.

We further observe that, for any $z \in \C$, it can be shown that $\| (\opA_{1,\frac{\pi}{2}} - z)^{-1} \| = \| (\opA_{1,\frac{\pi}{2}} - \Re z)^{-1} \|$ (see \cite[Sec.~14.3.1]{Helffer-2013-book}) and that there exists a precise asymptotic estimate for $z \in \C_+$ (see \cite[Cor.~1.4]{BordeauxMontrieux-2013})
\begin{equation}
	\label{eq:airy.res.bm}
	\begin{aligned}
		\| (\opA_{1,\frac{\pi}{2}} - \Re z)^{-1} \| &= \sqrt{\frac{\pi}{2}} (\Re z)^{-\frac14} \exp\left(\frac43 (\Re z)^{\frac32} \right) \left(1 + \BigO((\Re z)^{-\frac32})\right)\\
		&\qquad + \BigO((\Re z)^{-\frac14}), \quad \Re z \to +\infty.
	\end{aligned}
\end{equation}
For any $\mu \ge 0$, applying standard arguments it is also possible to extend the graph-norm estimate \eqref{eq:airyinequality}
\begin{equation}
	\label{eq:airyinequality.ext}
	\| (\opA_{1,\frac{\pi}{2}} - \mu) u \|^2 + (1 + \mu^2) \| u \|^2 \gs  \| u'' \|^2 + \| x u \|^2 + \| u \|^2, \quad u \in \Dom(\opA_{1,\frac{\pi}{2}}),
\end{equation}
and to deduce from this (see e.g.~\eqref{eq:x.A}, \eqref{eq:Sb.gn})
\begin{equation}
	\label{eq:airyinequality.ext.cor}
	\begin{aligned}
		&\| \Dtp (\opA_{1,\frac{\pi}{2}} - \mu)^{-1} \| + \| (\opA_{1,\frac{\pi}{2}} - \mu)^{-1} \Dtp \| 
		\\ & \qquad \qquad + \| x (\opA_{1,\frac{\pi}{2}} - \mu)^{-1} \| + \| (\opA_{1,\frac{\pi}{2}} - \mu)^{-1} x \| \ls \langle \mu \rangle \| (\opA_{1,\frac{\pi}{2}} - \mu)^{-1} \|.
	\end{aligned}
\end{equation}
\begin{proposition}\label{prop:gral.curves}
Let $V = i V_2$ satisfy Assumption~\ref{asm:iR}, let $\opH$ be the Schr\"odinger operator \eqref{eq:H.def} in $\Lt(\Rplus)$, let $\la_b$ be as in \eqref{eq:lab.def} and let \eqref{eq:Phib.asm} hold with $\mu_b$ satisfying either \eqref{eq:gral.cuves.crit.in} or \eqref{eq:gral.cuves.crit.out}. Then 
\begin{equation}
	\label{eq:resnorm.iR.gralcurve}
	\|(\opH - \la_b)^{-1} \| = \| (\opA_{1,\frac{\pi}{2}} - \mu_b)^{-1} \| (V_2'(x_b))^{-\frac 23} (1 + \BigO(\Phi_b)), \quad b \to + \infty. 
\end{equation}
\end{proposition}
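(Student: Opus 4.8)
The plan is to re-run, almost verbatim, the four-step argument used to prove Theorem~\ref{thm:iR} in the purely imaginary case $\Re V = 0$ (so that $r_b = 1$, $\theta_b = \pi/2$, $\kappa_b = 0$), but with the shifted operator $\opH_b = \opH - ib$ replaced by $\opH - \la_b = \opH_b - a_b$ and with the limit operator $\opA_{1,\frac{\pi}{2}}$ replaced by $\opA_{1,\frac{\pi}{2}} - \mu_b$, where $\mu_b = \rho^2 a_b$ as in \eqref{eq:mub.def} and $\rho = (V_2'(x_b))^{-\frac13}$ as in \eqref{eq:rho.def}. The only genuinely new feature is that the target operator now has a spectral parameter $\mu_b$ that may run off to infinity, so every estimate involving $\opA_{1,\frac{\pi}{2}} - \mu_b$ must be made quantitative in $\mu_b$; this is exactly what the extended graph-norm bounds \eqref{eq:airyinequality.ext}--\eqref{eq:airyinequality.ext.cor} are for, and all the errors so produced will organise themselves into the single quantity $\Phi_b$ of \eqref{eq:Phib.asm}. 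A recurring tool will be the observation that \eqref{eq:Phib.asm}, together with the super-polynomial growth of $\mu \mapsto \| (\opA_{1,\frac{\pi}{2}} - \mu)^{-1} \|$ recorded in \eqref{eq:airy.res.bm}, forces $\mu_b^{\,k}\,\Upsilon(x_b) = o(1)$ for every fixed $k$ and $\| (\opA_{1,\frac{\pi}{2}} - \mu_b)^{-1} \|\,\Upsilon(x_b) = o(1)$; both are trivially true in case \eqref{eq:gral.cuves.crit.in}.

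For Step~1, the argument of Proposition~\ref{prop:away.iR} goes through unchanged: testing $(\opH - \la_b)u$ against $\chi_b u$ with $\chi_b = \sgn(V_2 - b)$ and taking the imaginary part annihilates the now real term $-a_b\langle u, \chi_b u\rangle$, so one still gets $\langle |V_2 - b| u, u\rangle \le \| (\opH - \la_b) u \| \|u\|$ and hence $\| (\opH - \la_b) u \| \gs \delta\,(V_2'(x_b))^{\frac23}(\Upsilon(x_b))^{-1}\|u\|$ for $\supp u \cap \Omega'_b = \emptyset$; since $\| (\opA_{1,\frac{\pi}{2}} - \mu_b)^{-1} \|\,\Upsilon(x_b) = o(1)$, this bound still dominates the local scale $(V_2'(x_b))^{\frac23}\| (\opA_{1,\frac{\pi}{2}} - \mu_b)^{-1} \|^{-1}$. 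For Step~2, after the Taylor expansion, the shift, the rescaling by $\opU_{b,\rho}$ and multiplication by $\rho^2$, the localised realisation of $\opH - \la_b$ becomes $\opS_b - \mu_b$, with $\opS_b$ as in \eqref{eq:Sb.def}, $V_b(x) = ix + R_b(x)$ and $R_b$ obeying \eqref{eq:Rbdecay}. Writing $\opS_b - \mu_b = (\opA_{1,\frac{\pi}{2}} - \mu_b) + R_b$, decomposing $R_b = x\cdot (x^{-2}R_b)\cdot x$, and using $\| x^{-2} R_b \|_\infty \ls \Upsilon(x_b)$ together with $\| x (\opA_{1,\frac{\pi}{2}} - \mu_b)^{-1} \|$ and $\| (\opS_b - \mu_b)^{-1} x \|$, both $\ls \langle \mu_b \rangle \| (\opA_{1,\frac{\pi}{2}} - \mu_b)^{-1} \|$ by \eqref{eq:airyinequality.ext.cor} (the bound for $\opS_b$ following from the one for $\opA_{1,\frac{\pi}{2}} - \mu_b$ by a Neumann series once $\delta$ is small), the second resolvent identity yields
\[ \| (\opS_b - \mu_b)^{-1} - (\opA_{1,\frac{\pi}{2}} - \mu_b)^{-1} \| \ls \langle \mu_b \rangle^2 \| (\opA_{1,\frac{\pi}{2}} - \mu_b)^{-1} \|^2\,\Upsilon(x_b) = \| (\opA_{1,\frac{\pi}{2}} - \mu_b)^{-1} \|\,\Phi_b, \]
so $\| (\opS_b - \mu_b)^{-1} \| = \| (\opA_{1,\frac{\pi}{2}} - \mu_b)^{-1} \|(1 + \BigO(\Phi_b))$ and, undoing the scaling, one obtains the local lower bound $\| (\opH - \la_b) u \| \ge (V_2'(x_b))^{\frac23}\| (\opA_{1,\frac{\pi}{2}} - \mu_b)^{-1} \|^{-1}(1 - \BigO(\Phi_b))\|u\|$ for $\supp u \subset \Omega_b$.

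For Step~3, I would follow Proposition~\ref{prop:lbound.iR}: take $g_b$ with $\|g_b\| = 1$ and $\| (\opS_b - \mu_b) g_b \| = \| (\opS_b - \mu_b)^{-1} \|^{-1}$, multiply by the cut-off $\psi_b$ of \eqref{eq:psib'}, and estimate $(\psi_b - 1)(\opS_b - \mu_b) g_b$ and $[\opS_b - \mu_b, \psi_b] g_b$; the only change is that $\| x ((\opS_b - \mu_b)^*)^{-1} \|$ and $\| \Ntp (\opS_b - \mu_b)^{-1} \|$ now carry a factor $\langle \mu_b \rangle \| (\opA_{1,\frac{\pi}{2}} - \mu_b)^{-1} \|$, so each error term is $\BigO(\langle \mu_b \rangle \Upsilon(x_b))$ relative to $\| (\opA_{1,\frac{\pi}{2}} - \mu_b)^{-1} \|^{-1}$, which is $\BigO(\Phi_b)$ because $\langle \mu_b \rangle \| (\opA_{1,\frac{\pi}{2}} - \mu_b)^{-1} \| \gs 1$; this gives $u_b = \opU_{b,\rho}^{-1}\psi_b g_b$ realising the bound. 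For Step~4, Lemma~\ref{lem:Hb.phibk.commutator} survives with $\opH_b$ replaced by $\opH - \la_b$: the extra term $-a_b\|\phi_{b,k}' u\|^2$ in $\Re\langle (\opH - \la_b)u, \phi_{b,k}'^2 u\rangle$ is bounded by $\BigO(\mu_b\Upsilon(x_b))\,\langle |V_2 - b| u, \phi_{b,k}'^2 u\rangle$ on $\supp \phi_{b,k}'$ (using $|V_2 - b| \gs \delta (V_2'(x_b))^{\frac23}(\Upsilon(x_b))^{-1}$ there and $a_b = \mu_b (V_2'(x_b))^{\frac23}$) and $\mu_b\Upsilon(x_b) = o(1)$, so it is absorbed exactly as the $(V_1 - a)$-term was. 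Splitting $u = \phi_{b,0}u + \phi_{b,1}u$ and feeding the away and local estimates into $\|\phi_{b,0}u\|$ and $\|\phi_{b,1}u\|$ as in the proof of Theorem~\ref{thm:iR}, all residual $\|u\|$-terms are $o(1)$ times the leading coefficient $\| (\opA_{1,\frac{\pi}{2}} - \mu_b)^{-1} \|(V_2'(x_b))^{-\frac23}$ precisely because $\Phi_b = o(1)$; combined with the lower bound of Step~3 this gives \eqref{eq:resnorm.iR.gralcurve}, and in particular $\la_b \in \rho(\opH)$ for large $b$.

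I expect the main obstacle to be exactly this bookkeeping of the $\mu_b$-dependence: since $\| (\opA_{1,\frac{\pi}{2}} - \mu_b)^{-1} \|$ may grow (when $\mu_b \to +\infty$, cf.\ \eqref{eq:gral.cuves.crit.out} and \eqref{eq:airy.res.bm}) while $\Upsilon(x_b) \to 0$, one has to make sure that every error introduced by the second resolvent identity, the commutators and the cut-offs is of the form $\langle \mu_b \rangle^{\le 2}\| (\opA_{1,\frac{\pi}{2}} - \mu_b)^{-1} \|\,\Upsilon(x_b) = \BigO(\Phi_b)$, or else a fixed power of $\Upsilon(x_b)$ times a power of $\mu_b$, which is $o(1)$, and that all of these survive hypothesis \eqref{eq:Phib.asm}. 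The second-resolvent-identity step in Step~2, where two factors $\langle \mu_b \rangle \| (\opA_{1,\frac{\pi}{2}} - \mu_b)^{-1} \|$ appear, is what forces the precise exponent $\langle \mu_b \rangle^2$ in the definition of $\Phi_b$, and verifying that this exponent is also enough to control the commutator and lower-bound errors (which a priori only need a single $\langle \mu_b \rangle$) is the crux of the argument.
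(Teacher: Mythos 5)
Your overall plan matches the paper's proof step by step, and Steps~1, 3, 4 are sound modulo routine bookkeeping (your Step~4 absorbs the $a_b\|\phi_{b,k}'u\|^2$ term via the $|V_2-b|$ lower bound instead of the paper's direct estimate $a_b^{1/2}\|\phi_{b,k}'u\| \ls a_b^{1/2}x_b^\nu\|u\|$; both work). The problem is Step~2, specifically the claim, in parentheses, that the bound $\|(\opS_b-\mu_b)^{-1}x\| \ls \langle\mu_b\rangle\|(\opA_{1,\pi/2}-\mu_b)^{-1}\|$, and with it the invertibility of $\opS_b-\mu_b$, ``follow from the one for $\opA_{1,\pi/2}-\mu_b$ by a Neumann series once $\delta$ is small.'' That Neumann series requires something like $\|R_b(\opA_{1,\pi/2}-\mu_b)^{-1}\| < 1$, and the best available estimate is
\begin{equation*}
\|R_b(\opA_{1,\pi/2}-\mu_b)^{-1}\| \le \|x^{-1}R_b\|_\infty \,\|x(\opA_{1,\pi/2}-\mu_b)^{-1}\| \ls \delta\,\langle\mu_b\rangle\,\|(\opA_{1,\pi/2}-\mu_b)^{-1}\|.
\end{equation*}
In case~\eqref{eq:gral.cuves.crit.out}, where $\mu_b\to+\infty$, the quantity $\langle\mu_b\rangle\|(\opA_{1,\pi/2}-\mu_b)^{-1}\|$ grows super-exponentially by~\eqref{eq:airy.res.bm}, so no fixed choice of $\delta>0$ keeps this below $1$ for all large $b$; the Neumann series simply does not converge uniformly, and the invertibility of $\opS_b-\mu_b$ (and hence everything downstream) is left unjustified.

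The paper avoids this by refusing to compare $\opS_b-\mu_b$ with $\opA_{1,\pi/2}-\mu_b$ directly. It first compares $\opS_b$ with $\opS_\infty=\opA_{1,\pi/2}$ at $\mu=0$, where the perturbation is only $\delta$-small and the Neumann series from Proposition~\ref{prop:local.iR} is uniform (that is where $\delta$ does its job), obtaining $\|\opS_b^{-1}-\opS_\infty^{-1}\| \ls \Upsilon(x_b)$. It then transfers to $\mu_b>0$ through the bounded operators $\opK_{b,\infty}=\opI-\mu_b\opS_\infty^{-1}$ (always invertible, since $\sigma(\opA_{1,\pi/2})=\emptyset$, with $\|\opK_{b,\infty}^{-1}\| \ls \langle\mu_b\rangle\|(\opS_\infty-\mu_b)^{-1}\|$) and $\opK_b=\opI-\mu_b\opS_b^{-1}$, writing $\opK_b=\opK_{b,\infty}(\opI-\mu_b\opK_{b,\infty}^{-1}(\opS_b^{-1}-\opS_\infty^{-1}))$. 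The Neumann series here needs $\|\mu_b\opK_{b,\infty}^{-1}(\opS_b^{-1}-\opS_\infty^{-1})\| \ls \mu_b\langle\mu_b\rangle\|(\opS_\infty-\mu_b)^{-1}\|\Upsilon(x_b) \ls \Phi_b = o(1)$, so its convergence rests precisely on the hypothesis~\eqref{eq:Phib.asm}, not on the size of $\delta$. Once $\opK_b$ is invertible, $\opS_b-\mu_b=\opK_b\opS_b=\opS_b\opK_b$ is invertible and the weighted bounds~\eqref{eq:xSb.dxSb.normres.gralcurves} follow. This $\opK_b$-detour is the piece your sketch is missing, and without it the invertibility in case~\eqref{eq:gral.cuves.crit.out} is an open gap. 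Your resolvent-difference estimate and the rest of the argument are correct once that is repaired.
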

\begin{proof}[Sketch of proof]
We shall sketch the proof of this result by closely following the steps in Sub-section~\ref{ssec:proof.iR}, keeping the notation introduced there but omitting details whenever the arguments used earlier remain valid.	

\underline{Step 1}

Repeating the reasoning in Proposition~\ref{prop:away.iR} (replacing $\opH_b$ with $\opH_b - a_b = \opH - \la_b$), we find that for all $u \in \Dom(\opH)$ such that $\supp u \cap \Omega_b' = \emptyset$
\begin{equation}
	\label{eq:resnorm.away.iR.gralcurve}
	\delta \left( V_2'(x_b) \right)^{\frac{2}{3}} (\Upsilon(x_b))^{-1} \| u \| \lesssim \| (\opH_b - a_b) u \|, \quad b \to +\infty.
\end{equation}

\underline{Step 2}

With $\widetilde \opH_b$ and $\opS_b$ as in Proposition~\ref{prop:local.iR}, it is clear that (recall $\opS_\infty = \opA_{1,\pi/2}$)
\begin{equation}
	\label{eq:Hbhat.Sb.gralcurves}
	\rho^2 \opU_{b,\rho} (\widetilde \opH_b - a_b) \opU_{b,\rho}^{-1} = \opS_b - \mu_b = \opS_{\infty} - \mu_b + \opR_b.
\end{equation}

We shall prove next that $\mu_b \in \rho(\opS_b)$ as $b \to +\infty$. For any $\mu_b > 0$, the operator $\opK_{b,\infty} := \opI - \mu_b \opS_{\infty}^{-1} = \opS_{\infty}^{-1} (\opS_{\infty} - \mu_b) = (\opS_{\infty} - \mu_b) \opS_{\infty}^{-1}$ is bounded and invertible and moreover by \eqref{eq:airyinequality.ext.cor} we have
\begin{equation}
	\label{eq:Kbiminus1.est}
	\| \opK_{b,\infty}^{-1} \| \ls \langle \mu_b \rangle \| (\opS_{\infty} - \mu_b)^{-1} \|.
\end{equation}
Recalling from Proposition~\ref{prop:local.iR} that $0 \in \rho(\opS_b)$ for large enough $b$ and defining $\opK_b := \opI - \mu_b \opS_b^{-1} = \opS_b^{-1} (\opS_b - \mu_b) = (\opS_b - \mu_b) \opS_b^{-1}$, we find
\begin{equation*}
	\begin{aligned}
		\opK_b = \opK_{b,\infty} (\opI - \mu_b \opK_{b,\infty}^{-1} (\opS_b^{-1} - \opS_\infty^{-1})).
	\end{aligned}
\end{equation*}
Moreover, by \eqref{eq:Kbiminus1.est}, \eqref{eq:Sb.Sinf} and \eqref{eq:Phib.asm}, we have
\begin{equation*}
	\| \mu_b \opK_{b,\infty}^{-1} (\opS_b^{-1} - \opS_\infty^{-1}) \| \ls \Phi_b = o(1), \quad b \to +\infty.
\end{equation*}
It follows that $\opK_b$ is invertible and $\| \opK_b^{-1} \| \ls \| \opK_{b,\infty}^{-1} \|$ as $b \to +\infty$. Since $\opS_b - \mu_b = \opK_b \opS_b = \opS_b \opK_b$, we conclude that $\mu_b \in \rho(\opS_b)$ for $b \to +\infty$, as claimed. Moreover, $(\opS_b - \mu_b)^{-1} = \opS_b^{-1} \opK_b^{-1} = \opK_b^{-1} \opS_b^{-1}$ and, using \eqref{eq:normestimateSb}, \eqref{eq:Sb.gn} and \eqref{eq:Kbiminus1.est}, we deduce as $b \to +\infty$
\begin{equation}
	\label{eq:xSb.dxSb.normres.gralcurves}
	\| x (\opS_b - \mu_b)^{-1} \| + \| x (\opS_b^* - \mu_b)^{-1} \| + \| \Ntp (\opS_b - \mu_b)^{-1} \| \ls \langle \mu_b \rangle \| (\opS_\infty - \mu_b)^{-1} \|.
\end{equation}

Furthermore, we have
\begin{equation}
	\label{eq:gral.curves.step2.res.id}
	\begin{aligned}
		((\opS_b - \mu_b)^{-1} - (\opS_\infty - \mu_b)^{-1}) \opK_b &= \opS_b^{-1} - (\opS_\infty - \mu_b)^{-1} \opK_b\\
		&= \opS_b^{-1} - (\opS_\infty - \mu_b)^{-1} (\opK_{b,\infty} - \mu_b (\opS_b^{-1} - \opS_\infty^{-1}))\\
		&= \opS_b^{-1} - \opS_{\infty}^{-1} + \mu_b (\opS_{\infty} - \mu_b)^{-1} (\opS_b^{-1} - \opS_\infty^{-1})\\
		&= \opK_{b,\infty}^{-1} (\opS_b^{-1} - \opS_{\infty}^{-1}).
	\end{aligned}
\end{equation}
Hence
\begin{equation*}
	\begin{aligned}
		(\opS_b - \mu_b)^{-1} - (\opS_\infty - \mu_b)^{-1} &= \opK_{b,\infty}^{-1} (\opS_b^{-1} - \opS_{\infty}^{-1}) \opK_b^{-1}, \quad b \to +\infty,
	\end{aligned}
\end{equation*}
and therefore by \eqref{eq:Sb.Sinf} and \eqref{eq:Kbiminus1.est} and using the fact that $\mu_b$ satisfies \eqref{eq:gral.cuves.crit.in} or \eqref{eq:gral.cuves.crit.out}
\begin{equation*}
	\begin{aligned}
		\| (\opS_b - \mu_b)^{-1} - (\opS_\infty - \mu_b)^{-1} \| &\ls \langle\mu_b \rangle^2 \| (\opS_\infty - \mu_b)^{-1} \|^2 \Upsilon(x_b)
		\\
		&\ls \| (\opS_\infty - \mu_b)^{-1} \| \Phi_b, \quad b \to +\infty.
	\end{aligned}
\end{equation*}
It follows that
\begin{equation}
	\label{eq:Sb.Sinf.resnorms.gralcurves}
	\| (\opS_b - \mu_b)^{-1} \| = \| (\opS_\infty - \mu_b)^{-1} \| (1 + \BigO(\Phi_b)), \quad b \to +\infty,
\end{equation}
and hence from \eqref{eq:Hbhat.Sb.gralcurves} as $b \to +\infty$
\begin{equation*}
	\rho^{-2} \| (\widetilde \opH_b - a_b)^{-1} \| = \| (\opS_b - \mu_b)^{-1} \| = \| (\opS_\infty - \mu_b)^{-1} \| (1 + \BigO(\Phi_b)).
\end{equation*}
Arguing as in the last stage of Proposition~\ref{prop:local.iR}, this yields as $b \to +\infty$
\begin{equation}
	\label{eq:resnorm.local.iR.gralcurve}
	\begin{aligned}
		&\| (\opS_\infty - \mu_b)^{-1} \|^{-1} (V_2'(x_b))^{\frac{2}{3}} (1 - \BigO(\Phi_b))\\
		& \qquad  \qquad \le \inf \left\{ \frac{\left\| (\opH_b - a_b) u \right\|}{\|u\|}: \; 0 \neq u \in \Dom(\opH), \; \supp u \subset \Omega_b \right\}.
	\end{aligned}	
\end{equation}

\underline{Step 3}

We follow the proof of Proposition~\ref{prop:lbound.iR}, replacing $\opS_b$ with $\opS_b - \mu_b$, to find $g_b \in \Dom((\opS_b^* - \mu_b) (\opS_b - \mu_b))$ such that
\begin{equation*}
	\| (\opS_b - \mu_b) g_b \| = \varsigma_b^{-1} = \| (\opS_b - \mu_b)^{-1} \|^{-1}, \quad b \to +\infty.
\end{equation*}
Moreover, with $\varsigma_{b,\infty} := \| (\opS_\infty - \mu_b)^{-1} \|$, we have (see \eqref{eq:Sb.Sinf.resnorms.gralcurves})
\begin{equation}
	\label{eq:lab.conv.gralcurve}
	\varsigma_b = \varsigma_{b,\infty} (1 + \BigO(\Phi_b)), \quad b \to +\infty.
\end{equation}

Recalling the cut-off functions $\psi_b$, we write
\begin{equation*}
	(S_b - \mu_b) \psi_b g_b = (S_b - \mu_b) g_b + (\psi_b-1) (S_b - \mu_b) g_b + [S_b, \psi_b] g_b
\end{equation*}
and, applying \eqref{eq:xSb.dxSb.normres.gralcurves} and \eqref{eq:lab.conv.gralcurve} (refer also to \eqref{eq:psib'} and \eqref{eq:db.rho}), we deduce
\begin{align*}
	\| (\psi_b-1) (S_b - \mu_b) g_b \| &\ls \|(\psi_b-1) x^{-1}\|_\infty \|x (S_b^* - \mu_b)^{-1}\| \|(S_b^* - \mu_b) (S_b - \mu_b) g_b \|\\
	&\ls \Upsilon(x_b) \langle\mu_b \rangle \varsigma_{b,\infty}^{-1} \ls \Phi_b \langle \mu_b \rangle^{-1} \varsigma_{b,\infty}^{-2},
	\\
	\|[S_b, \psi_b] g_b \| &\ls  \| \psi_b' \|_{\infty} \|\partial_x (S_b - \mu_b)^{-1} (S_b - \mu_b) g_b \| + \| \psi_b'' \|_{\infty} \|g_b\|\\
	&\ls \Upsilon(x_b) \langle \mu_b \rangle + \Upsilon(x_b)^2 \ls \Upsilon(x_b) \langle \mu_b \rangle \ls \Phi_b \langle \mu_b \rangle^{-1} \varsigma_{b,\infty}^{-1},
\end{align*}
as $b \to +\infty$. Hence, noting that $\varsigma_{b,\infty}$ is bounded below by a positive constant when $\mu_b \in \Rplus$, we have
\begin{equation*}
	\| (S_b - \mu_b) \psi_b g_b \| = \varsigma_b^{-1} + \BigO(\varsigma_{b,\infty}^{-1} \Phi_b), \quad b \to +\infty.
\end{equation*}
Similarly $\| \psi_b g_b \| = 1 + \BigO(\varsigma_{b,\infty}^{-1} \Phi_b)$ as $ b \to +\infty$ and consequently
\begin{equation*}
	\left|	\frac{\| (S_b - \mu_b) \psi_b g_b \|}{\| \psi_b g_b \|} - \frac1 {\varsigma_{b,\infty}} \right| = \BigO(\varsigma_{b,\infty}^{-1} \Phi_b), \quad b \to +\infty.
\end{equation*}

As before, we set $u_b := \opU_{b,\rho}^{-1} \psi_b g_b \in \Dom(\opH)$. Then $\supp u_b \subset \Omega_b$ and
\begin{equation}
	\label{eq:resnorm.lbound.iR.gralcurve}
	\frac{\| (\opH_b - a_b) u_b\|}{\| u_b \|} = \| (\opS_\infty - \mu_b)^{-1} \|^{-1} (V_2'(x_b))^{\frac23} (1 + \BigO(\Phi_b)), \quad b \to +\infty.
\end{equation}

\underline{Step 4}

Repeating the commutator calculations in the proof of Lemma~\ref{lem:Hb.phibk.commutator} for $\opH_b - a_b$, we find for all $u \in \Dom(\opH)$ and $k \in \{0, 1\}$
\begin{equation*}
	\Re \langle (\opH_b - a_b) u, \phi_{b,k}'^2 u \rangle = 2 \Re \langle \phi_{b,k}' u', \phi_{b,k}'' u \rangle + \| \phi_{b,k}' u' \|^2 - a_b \| \phi_{b,k}' u \|^2
\end{equation*}
which we use to estimate (with small $\eps > 0$ and $b \to +\infty$)
\begin{equation*}
	\begin{aligned}
		\| \phi_{b,k}' u' \| &\ls \| (\opH_b - a_b) u \|^{\frac12} \| \phi_{b,k}'^2 u \|^{\frac12} + \| \phi_{b,k}' u' \|^{\frac12} \| \phi_{b,k}'' u \|^{\frac12} + a_b^{\frac12} \| \phi_{b,k}' u \|\\
		&\ls \Upsilon(x_b) \| (\opH_b - a_b) u \| + x_b^{2 \nu} (\Upsilon(x_b))^{-1} \| u \| + \eps \| \phi_{b,k}' u' \| + \eps^{-1} x_b^{2 \nu} \| u \|\\
		&\quad + a_b^{\frac12} x_b^{\nu} \| u \| \implies\\
		\| \phi_{b,k}' u' \| &\ls \Upsilon(x_b) \| (\opH_b - a_b) u \| + (x_b^{2 \nu} (\Upsilon(x_b))^{-1} + x_b^{\nu} a_b^{\frac12}) \| u \|.
	\end{aligned}
\end{equation*}
Hence
\begin{equation*}
	\| [\opH_b - a_b, \phi_{b,k}] u \| \ls \Upsilon(x_b) \| (\opH_b - a_b) u \| + (x_b^{2 \nu} (\Upsilon(x_b))^{-1} + x_b^{\nu} a_b^{\frac12}) \| u \|, \quad b \to +\infty.
\end{equation*}
Then for any $u \in \Dom(\opH)$, $u = u_0 + u_1$, we have for $k \in \{0, 1\}$
\begin{equation*}
	(\opH_b - a_b) u_k = \phi_{b,k} (\opH_b - a_b) u + [\opH_b - a_b, \phi_{b,k}] u,
\end{equation*}
and therefore as $b \to +\infty$
\begin{equation*}
	\| (\opH_b - a_b) u_k \| \leq ( 1 + \BigO \left( \Upsilon(x_b) \right) ) \| (\opH_b - a_b) u \| + \BigO ( x_b^{2 \nu} (\Upsilon(x_b))^{-1} + x_b^{\nu} a_b^{\frac12} ) \|u\|.
\end{equation*}	

As in the proof of Theorem~\ref{thm:iR}, we separately consider $u_1$, $\supp u_1 \subset \Omega_b$, and $u_0$, $\supp u_0 \cap \Omega_b' = \emptyset$, respectively applying \eqref{eq:resnorm.local.iR.gralcurve} and \eqref{eq:resnorm.away.iR.gralcurve}
\begin{align*}
	\| u_1 \| &\le \| (\opS_\infty - \mu_b)^{-1} \| (V_2'(x_b))^{-\frac{2}{3}} (1 + \BigO(\Phi_b) \| (\opH_b - a_b) u_1 \|\\
	&\le \| (\opS_\infty - \mu_b)^{-1} \| (V_2'(x_b))^{-\frac{2}{3}} (1 + \BigO(\Phi_b)) \| (\opH_b - a_b) u \| + \BigO(\Phi_b) \| u \|,\\
	\| u_0 \| &\ls (V_2'(x_b))^{-\frac{2}{3}} \Upsilon(x_b) \| (\opH_b - a_b) u_0 \|\\
	&\ls (V_2'(x_b))^{-\frac{2}{3}} \Upsilon(x_b) \| (\opH_b - a_b) u \| + \Upsilon(x_b)^2 (1 + \mu_b^{\frac12}) \| u \|,
\end{align*}
as $b \to +\infty$. Combining these estimates, we get as $b \to +\infty$ 
\begin{equation*}
	\begin{aligned}
		\| u \| &\le \| u_0 \| + \| u_1 \| 
		\\ & \le \| (\opS_\infty - \mu_b)^{-1} \| (V_2'(x_b))^{-\frac{2}{3}} (1 + \BigO(\Phi_b)) \| (\opH_b - a_b) u \| + \BigO(\Phi_b) \| u \|,
		\end{aligned}
\end{equation*}
and hence
\begin{equation*}
	\| u \| \le \| (\opS_\infty - \mu_b)^{-1} \| (V_2'(x_b))^{-\frac{2}{3}} (1 + \BigO(\Phi_b) \| (\opH_b - a_b) u \|, \quad b \to +\infty.
\end{equation*}

This last inequality and \eqref{eq:resnorm.lbound.iR.gralcurve} yield \eqref{eq:resnorm.iR.gralcurve}.
\end{proof}

We conclude this subsection with a general construction for the level curves of the resolvent (some examples will be shown in Section~\ref{sec:examples}). Letting $\zeta_b := \mu_b^{\frac74} \| (\opA_{1,\frac{\pi}{2}} - \mu_b)^{-1} \|$, we note (see \eqref{eq:airy.res.bm}) that $\zeta_b \to +\infty$ as $\mu_b \to +\infty$, i.e. when $\la_b$ lies outside the region \eqref{eq:res.bounded.region.iR}. Applying \eqref{eq:airy.res.bm} again, we find
\begin{equation*}
	\frac43 \mu_b^{\frac32} \exp\left(\frac43 \mu_b^{\frac32}\right) = \frac43 \sqrt{\frac2{\pi}} \zeta_b (1 + o(1)), \quad b \to +\infty.
\end{equation*}
The above equation can be rewritten as
\begin{equation*}
	\frac43 \mu_b^{\frac32} = W_0\left(\frac43 \sqrt{\frac2{\pi}} \zeta_b (1 + o(1))\right), \quad b \to +\infty,
\end{equation*}
where $W_0(x)$ is the Lambert function that solves $y \exp(y) = x$ for $x \ge 0$. Although $W_0(x)$ cannot be written in terms of elementary functions, the following bounds have been found for $x \in [e, \infty)$ (see \cite[Thm.~2.7]{Hoorfar-2008-9})
\begin{equation*}
	\log x - \log\log x + \frac12 \frac{\log\log x}{\log x} \le W_0(x) \le \log x - \log\log x + \frac{e}{e-1} \frac{\log\log x}{\log x},
\end{equation*}
and thus we deduce
\begin{equation}
	\label{eq:gralcurve.est.iR}
	\mu_b = \left(\frac34\right)^{\frac23} (\log(\| (\opA_{1,\frac{\pi}{2}} - \mu_b)^{-1} \|))^{\frac23} (1 + o(1)), \quad b \to +\infty.
\end{equation}

From \eqref{eq:resnorm.iR.gralcurve}, we have $\| (\opA_{1,\frac{\pi}{2}} - \mu_b)^{-1} \| = \rho^{-2} \| (\opH - \la_b)^{-1} \| (1 + o(1))$ and hence
\begin{equation*}
	\mu_b = \left(\frac34\right)^{\frac23} (\log(\rho^{-2} \| (\opH - \la_b)^{-1} \|))^{\frac23} (1 + o(1)), \quad b \to +\infty.
\end{equation*}
Substituting $\| (\opH - \la_b)^{-1} \| = \eps^{-1}$, with $\eps > 0$, we obtain
\begin{equation}
	\label{eq:resnorm.iR.levelcurves}
	a_b = \left(\frac34\right)^{\frac23} \rho^{-2} (\log(\rho^{-2} \eps^{-1}))^{\frac23} (1 + o(1)), \quad b \to +\infty.
\end{equation}
We remark that as expected formula \eqref{eq:resnorm.iR.levelcurves} indicates that the level curves of a sub-linear potential (where $\rho^{-2} \to 0$ as $b \to +\infty$) will cross the imaginary axis into $\C_-$. We also note that, when $V_2(x) = x^2$ (\ie~$\opH$ is the Davies operator), then $x_b = b^{\frac12}$ and the above equation becomes
\begin{equation*}
	a_b = \left(\frac32\right)^{\frac23} b^{\frac13} \left(\log(b^{\frac13} \eps^{-1})\right)^{\frac23} (1 + o(1)), \quad a \to +\infty,
\end{equation*}
(compare these curves with \eqref{eq:resnorm.iR.levelcurves.powerlike} for $n = 2$ and with the known formulas derived in \cite[Prop.~4.6]{BordeauxMontrieux-2013}).

\subsubsection{The norm of the resolvent along curves adjacent to the real axis}
\label{sssec:resnorm.adj.R}
We can similarly estimate $\| (\opH - \la)^{-1} \|$, for $\opH$ as in \eqref{eq:H.real.def} and potential $V := i V_2$ satisfying Assumption~\ref{asm:R}, along general curves adjacent to the real axis
\begin{equation}
	\label{eq:laa.def}
	\la_a :=  a + i b(a), \quad a \to +\infty,
\end{equation}
with $b : \Rplus \to \Rplus$ satisfying
\begin{gather}
	\label{eq:gral.curves.real.adj.asm} b_a a^{-1} = o(1), \quad a \to +\infty,\\
	\label{eq:gral.curves.real.Phia.asm} \Phi_a := \langle \mu_a \rangle^2 \| (\opA_{\beta} - \mu_a)^{-1} \| (\iota(t_a) + (a^{\frac12} t_a)^{-l_{\beta, \eps}}) = o(1), \quad a \to +\infty,
\end{gather}
where
\begin{equation*}
	\mu_a := b_a (V_2(t_a))^{-1},
\end{equation*}
$b_a \equiv b(a)$, and $\opA_{\beta}$, $\iota$ and $l_{\beta, \eps}$ are as defined in \eqref{eq:A.beta.def}, \eqref{eq:iota.def} and \eqref{eq:lbeta.def}, respectively. We are interested in two types of curves:
\begin{enumerate}[\upshape (1)]
	\item $\la_a$ with $b_a \ls V_2(t_a)$ for $a \to +\infty$, corresponding asymptotically to the critical region \eqref{eq:res.bounded.region.R}, \ie~where $\mu_a$ satisfies
	\begin{equation}
		\label{eq:gral.cuves.real.crit.in}
		\mu_a \ls 1, \quad a \to +\infty;
	\end{equation}
	\item $\la_a$ with $V_2(t_a) = o(b_a)$, $a \to +\infty$, and therefore $\la_a$ grows away from the critical region, \ie~where $\mu_a$ satisfies
	\begin{equation}
		\label{eq:gral.cuves.real.crit.out}
		\mu_a \to +\infty, \quad a \to +\infty.
	\end{equation}
\end{enumerate}
Note that, in the first type above, we have $\Phi_a = \BigO(\iota(t_a) + (a^{\frac12} t_a)^{-l_{\beta, \eps}})$ due to the fact that $\|(\opA_{\beta} - z)^{-1}\|$ is bounded on compact sets in $\C$ and therefore, by Lemma~\ref{lem:reg.var.conv} and the fact that $t_a \to +\infty$ as $a \to +\infty$, condition \eqref{eq:gral.curves.real.Phia.asm} holds automatically.

In \cite[Ex.~4.3]{ArSi-generalised-2022}, the following asymptotic estimate was found
\begin{equation}
	\label{eq:Abeta.resnorm.ex.poly}
	\| (\opA_{\beta} - \mu)^{-1} \| = \sqrt{\frac{\pi}{\beta}} \mu^{\frac{1 - \beta}{2 \beta}} \exp\left(\frac{2 \beta}{\beta + 1} \mu^{\frac{1 + \beta}{\beta}}\right) (1 + o(1)), \quad \mu \to +\infty.
\end{equation}

Before formulating our result, we also note that, for any $\mu \ge 0$, it is possible to extend the graph-norm estimates \eqref{eq:genAiry.graph.norm} applying standard arguments to
\begin{equation}
	\label{eq:genAiry.graph.norm.ext}
	\| (\opA_{\beta} - \mu) u \|^2 + (1 + \mu^2) \| u \|^2 \gs  \| u' \|^2 + \| |x|^{\beta} u \|^2 + \| u \|^2, \quad u \in \Dom(\opA_{\beta}),
\end{equation}
and it follows (see \eg~\eqref{eq:x.A}, \eqref{eq:Sb.gn}, \eqref{eq:Sthatminus1.realline.ubound1})
\begin{equation}
	\label{eq:genAiry.ineq.ext.cor}
	\begin{aligned}
		&\| \Ntp (\opA_{\beta} - \mu)^{-1} \| + \| (\opA_{\beta} - \mu)^{-1} \Ntp \| 
		\\ & \qquad \qquad + \| |x|^{\beta} (\opA_{\beta} - \mu)^{-1} \| + \| (\opA_{\beta} - \mu)^{-1} |x|^{\beta} \| \ls \langle \mu \rangle \| (\opA_{\beta} - \mu)^{-1} \|.
	\end{aligned}
\end{equation}
\begin{proposition}
	\label{prop:gral.curves.real}
	Let $V = i V_2$ satisfy Assumption~\ref{asm:R}, let $\opH$ be the Schr\"odinger operator \eqref{eq:H.real.def} in $\Lt(\R)$, let $\la_a$ be as in \eqref{eq:laa.def} and let \eqref{eq:gral.curves.real.adj.asm}-\eqref{eq:gral.curves.real.Phia.asm} hold with $\mu_a$ satisfying either \eqref{eq:gral.cuves.real.crit.in} or \eqref{eq:gral.cuves.real.crit.out}. Then
	\begin{equation}
		\label{eq:resnorm.R.gralcurve}
		\|(\opH - \la_a)^{-1} \| = \| (\opA_{\beta} - \mu_a)^{-1} \| (V_2(t_a))^{-1} (1 + \BigO(\Phi_a)), \quad a \to + \infty. 
	\end{equation}
\end{proposition}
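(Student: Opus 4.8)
The plan is to mirror, on the Fourier side, the four-step proof of Theorem~\ref{thm:R}, incorporating a \emph{real} spectral shift into the generalised Airy operator exactly as $\mu_b$ was built into the proof of Theorem~\ref{thm:iR} to obtain Proposition~\ref{prop:gral.curves}. I would keep all the notation of Sub-section~\ref{ssec:proof.R} and of Proposition~\ref{prop:local.R}, observing first that the unitary conjugation $-i\,\sF(\,\cdot\,)\sF^{-1}$ turns $\opH-\la_a$ into $i(\widehat{\opH}_a-b_a)$ (since $\widehat{\opH}_a=\widehat V-i(\xi^2-a)$ with $\widehat V=\sF V_2\sF^{-1}\ge 0$, so subtracting $ib_a$ in $x$-space becomes subtracting the scalar $b_a$ from the real part $\widehat V$). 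Hence $\|(\opH-\la_a)^{-1}\|=\|(\widehat{\opH}_a-b_a)^{-1}\|$ and $\|(\opH-\la_a)\check u\|=\|(\widehat{\opH}_a-b_a)u\|$, so the whole problem becomes one about the real shift $b_a$ of $\widehat{\opH}_a$, which after the scaling near $\pm\xi_a$ becomes the shift $\mu_a=b_a(V_2(t_a))^{-1}$ of $\widehat{\opS}_a$.

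First I would redo the ``away'' estimate of Proposition~\ref{prop:away.R}: since $\|(\widehat{\opH}_a-b_a)u\|\ge\|\widehat{\opH}_a u\|-b_a\|u\|$ and $b_a a^{-1}=o(1)$ by \eqref{eq:gral.curves.real.adj.asm}, the conclusion $a\lesssim\|(\widehat{\opH}_a-b_a)u\|/\|u\|$ still holds for $u$ supported outside $\Omega'_{a,+}\cup\Omega'_{a,-}$. Next, for the local estimate I would repeat the scaling of Proposition~\ref{prop:local.R}, which produces $V_2(t_a)^{-1}\opU_{a,t_a}(\widetilde{\opH}_a-b_a)\opU_{a,t_a}^{-1}=\widehat{\opS}_a-\mu_a=(\widehat{\opS}_\infty-\mu_a)+(\widehat{\opS}^0_a-\widehat{\opS}_\infty+\widehat R_a)$, and then argue $\mu_a\in\rho(\widehat{\opS}_a)$ exactly as in Step~2 of Proposition~\ref{prop:gral.curves}: set $\opK_{a,\infty}:=\opI-\mu_a\widehat{\opS}_\infty^{-1}$, use \eqref{eq:genAiry.ineq.ext.cor} to get $\|\opK_{a,\infty}^{-1}\|\lesssim\langle\mu_a\rangle\|(\opA_\beta-\mu_a)^{-1}\|$, factor $\opK_a=\opK_{a,\infty}(\opI-\mu_a\opK_{a,\infty}^{-1}(\widehat{\opS}_a^{-1}-\widehat{\opS}_\infty^{-1}))$, and invoke the norm--resolvent convergence $\|\widehat{\opS}_a^{-1}-\widehat{\opS}_\infty^{-1}\|\lesssim\iota(t_a)+(a^{1/2}t_a)^{-1}$ established in the proof of Theorem~\ref{thm:R}, together with \eqref{eq:gral.curves.real.Phia.asm}. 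Copying the resolvent identity \eqref{eq:gral.curves.step2.res.id} then gives $\|(\widehat{\opS}_a-\mu_a)^{-1}\|=\|(\opA_\beta-\mu_a)^{-1}\|(1+\BigO(\Phi_a))$, while \eqref{eq:wSt0.sep} combined with \eqref{eq:genAiry.ineq.ext.cor} supplies the mapping bounds $\|\xi(\widehat{\opS}_a-\mu_a)^{-1}\|+\|(\widehat{\opS}_a-\mu_a)^{-1}\xi\|+\|\xi(\widehat{\opS}_a^*-\mu_a)^{-1}\|\lesssim\langle\mu_a\rangle\|(\opA_\beta-\mu_a)^{-1}\|$; unscaling yields the local lower bound on $\|(\opH-\la_a)u\|/\|u\|$ for $\supp u\subset\Omega_{a,\pm}$.

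For the lower-estimate step I would follow Proposition~\ref{prop:lbound.R} with $\widehat{\opS}_a$ replaced by $\widehat{\opS}_a-\mu_a$: take the top singular function $g_a$ of $(\widehat{\opS}_a-\mu_a)^*(\widehat{\opS}_a-\mu_a)$, cut it off by $\psi_a$, verify $\psi_a g_a\in\Dom(\widehat{\opS}_a)$ via the pseudo-differential commutator Lemma~\ref{lem:pdo.comp} as before, and estimate the $(\psi_a-1)$- and commutator-terms; these now carry an extra factor $\langle\mu_a\rangle$ coming from \eqref{eq:genAiry.ineq.ext.cor}, so they are $\BigO(\varsigma_{a,\infty}^{-1}\Phi_a)$ with $\varsigma_{a,\infty}:=\|(\opA_\beta-\mu_a)^{-1}\|$, thanks to \eqref{eq:gral.curves.real.Phia.asm}. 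Finally, the partition-of-unity step requires redoing the commutator bounds of Lemmas~\ref{lem:Vhat.phiak.commutator} and~\ref{lem:Hhat.u1u2.quasiorthogonal} for $\widehat{\opH}_a-b_a$; since $[\widehat V,\phi_{a,k}]$ is insensitive to the shift and $\widehat{\opH}_a u=(\widehat{\opH}_a-b_a)u+b_a u$, the only new contributions are of the form $a^{-1/2}t_a^{-1}b_a\|u\|=2\mu_a t_a^{-2}\|u\|$ (using \eqref{eq:ta.def}), and these are absorbed because $\langle\mu_a\rangle^2\|(\opA_\beta-\mu_a)^{-1}\|=o(a^{1/2}t_a)$ is a consequence of \eqref{eq:gral.curves.real.Phia.asm} (recall $l_{\beta,\eps}<1$). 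Gluing $u=u_0+u_1+u_2$ and combining the away bound, the local bound and the quasi-orthogonality of $u_1,u_2$ exactly as in the proof of Theorem~\ref{thm:R} yields $\|u\|\le\|(\opA_\beta-\mu_a)^{-1}\|(V_2(t_a))^{-1}(1+\BigO(\Phi_a))\|(\opH-\la_a)u\|$, which with the lower bound gives \eqref{eq:resnorm.R.gralcurve}.

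The hard part will be the bookkeeping of the weights $\langle\mu_a\rangle^k$ through Steps~2--4 in the regime $\mu_a\to+\infty$ (when $\mu_a=\BigO(1)$ all $\langle\mu_a\rangle$-factors are harmless and one is simply adapting the proof of Theorem~\ref{thm:R}). Because $\|(\opA_\beta-\mu_a)^{-1}\|$ grows super-exponentially as $\mu_a\to+\infty$ (\cf~\eqref{eq:Abeta.resnorm.ex.poly}), every remainder produced by the resolvent identities and, above all, by the pseudo-differential commutator estimates of Step~4 comes with powers of $\mu_a$ that must be shown negligible against the leading term $\|(\opA_\beta-\mu_a)^{-1}\|(V_2(t_a))^{-1}$; the hypothesis \eqref{eq:gral.curves.real.Phia.asm}, with its $\langle\mu_a\rangle^2$ prefactor, is precisely what makes this possible, and checking that each error is genuinely $\BigO(\Phi_a)$ rather than, say, $\BigO(\langle\mu_a\rangle^2\Phi_a)$ is the most delicate accounting in the argument.
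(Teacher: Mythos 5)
Your proposal is correct and mirrors the paper's own proof closely: all four steps are the same, the $\opK$-operator factorization in Step~2 and the singular-function argument in Step~3 are exactly what the paper does, and the final combining argument is identical.

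The one place you depart from the paper is Step~4. The paper \emph{re-derives} the commutator estimates from scratch for $\widehat\opH_{\la_a}$, using the updated inequalities \eqref{eq:HlaaV.est} and \eqref{eq:HlaaPVF.est} (the latter now carrying an explicit $b_a\|u\|^2$ term) and re-running the $\beta<2$ / $\beta\ge2$ case analysis; this yields a new weight $\widetilde\Theta(a,\eps)$, e.g.~$a^{-1}+b_a a^{-1}t_a^{-2}$ for $\beta<2$, and the quasi-orthogonality lemma is also reproved with $\widehat\opH_{\la_a}$ in place of $\widehat\opH_a$ and $N\ge5$. You instead propose the shortcut of applying Lemma~\ref{lem:Vhat.phiak.commutator} as stated and substituting $\widehat\opH_a u=\widehat\opH_{\la_a}u+b_a u$ afterwards, which produces the looser new error term $a^{-1/2}t_a^{-1}b_a\|u\|$ rather than the paper's $b_a a^{-1}t_a^{-2}\|u\|$ (the former is larger by a factor $a^{1/2}t_a$). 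Your shortcut still closes, because after multiplying by $(V_2(t_a))^{-1}\|(\opA_\beta-\mu_a)^{-1}\|$ the resulting quantity is $(a^{1/2}t_a)^{-1}\mu_a\|(\opA_\beta-\mu_a)^{-1}\|$, which is $\ls\Phi_a$ since $l_{\beta,\eps}<1$ and $\mu_a\le\langle\mu_a\rangle^2$; this also means your Step~1 only needs the scalar bound $\|\widehat\opH_{\la_a}u\|\gs a\|u\|$ from the triangle inequality and does not require the full component-wise version \eqref{eq:Hlaahat.away.est.R} that the paper proves. One small caveat: for the quasi-orthogonality Lemma~\ref{lem:Hhat.u1u2.quasiorthogonal} the substitution $\widehat\opH_a\mapsto\widehat\opH_{\la_a}+b_a$ cannot be done purely at the level of its \emph{statement} (both sides involve $\widehat\opH_a$ acting on different functions), so you genuinely have to re-run its proof, observing only that the scalar shift commutes with $\phi_{a,k}$ so that $\opB_{N,k}^{\la}u:=\phi_{a,k}\widehat\opH_{\la_a}u+\sum_j(i^j/j!)\phi_{a,k}^{(j)}\widehat V^{(j)}u$ still has support in $\Omega_{a,\pm}$ — which you implicitly acknowledge when you say ``redoing the commutator bounds of Lemmas~\ref{lem:Vhat.phiak.commutator} and~\ref{lem:Hhat.u1u2.quasiorthogonal}''.
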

\begin{proof}[Sketch of proof]
	We shall sketch the proof of this result by closely following the steps in Sub-section~\ref{ssec:proof.R}, keeping the notation introduced there but omitting details whenever the arguments used earlier remain valid. We introduce the operators (refer also to \eqref{eq:Hhat.P.def} and \eqref{eq:Ha.def})
	\begin{equation}
		\label{eq:Hlahat.P.def}
		\opH_{\la_a} := \opH - \la_a = \opH_{a} - i b_a,\quad \widehat\opH_{\la_a} :=  -i \, \sF \, \opH_{\la_a} \, \sF^{-1} = \widehat\opH_{a} - b_a.
	\end{equation}

	\underline{Step 1}
	
	Repeating the reasoning in Proposition~\ref{prop:away.R} (replacing $\widehat\opH_a$ with $\widehat\opH_{\la_a}$) and applying \eqref{eq:Hahat.away.est.R}, we find that for all $u \in \Dom(\widehat\opH)$ such that $\supp u \cap (\Omega_{a,+}' \cup \Omega_{a,-}') = \emptyset$ (with $\eps > 0$ arbitrarily small and some $C_{\eps} > 0$)
	\begin{equation}
		\label{eq:Hlaahat.away.est.R}
		\begin{aligned}
			\| \widehat\opH_{\la_a} u \|^2 &= \| \widehat\opH_a u \|^2 + b_a^2 \| u \|^2 - 2 b_a \Re \langle \widehat V u, u \rangle\\
			&\ge \| \widehat\opH_a u \|^2 + b_a^2 \| u \|^2 - \eps \| \widehat V u \|^2 - C_{\eps} b_a^2 \| u \|^2\\
			&\ge (C_{\delta} - \eps) (\| \xi^2 u \|^2 + \| \widehat V u \|^2) + C_{\delta} a^2 \left(1 - \frac{C_{\eps}}{C_{\delta}} \left(\frac{b_a}{a}\right)^2\right) \| u \|^2\\
			&\gs \| \xi^2 u \|^2 + \| \widehat V u \|^2 + a^2 \| u \|^2, \quad a \to +\infty,
		\end{aligned}
	\end{equation}
	using assumption \eqref{eq:gral.curves.real.adj.asm} in the last step. This proves that \eqref{eq:away.est} continues to hold when we replace $\widehat\opH_a$ with $\widehat\opH_{\la_a}$.
	
	\underline{Step 2}
	
	We retain the notation in Proposition~\ref{prop:local.R}. From \eqref{eq:Ha.til.def}, \eqref{eq:Wahat.def} and \eqref{eq:whSt.def}, we have
	\begin{equation}
		\label{eq:Hlaahat.Sahat.gralcurves}
		\frac{1}{V_2(t_a)} \opU_{a,t_a} \widetilde\opH_{\la_a} \opU_{a,t_a}^{-1} = \widehat\opS_a^0 - \mu_a + \widehat\opR_a = \widehat\opS_a - \mu_a.
	\end{equation}
	Our next aim is to prove that $\mu_a \in \rho(\widehat\opS_a)$ as $a \to +\infty$. To do this, we argue as in Step 2 of Proposition~\ref{prop:gral.curves}. For any $\mu_a > 0$, the operator $\widehat\opK_{a,\infty} := \opI - \mu_a \widehat\opS_{\infty}^{-1} = \widehat\opS_{\infty}^{-1} (\widehat\opS_{\infty} - \mu_a) = (\widehat\opS_{\infty} - \mu_a) \widehat\opS_{\infty}^{-1}$ is bounded and invertible and moreover by \eqref{eq:genAiry.ineq.ext.cor} we have
	\begin{equation}
		\label{eq:Kahatiminus1.est}
		\| \widehat\opK_{a,\infty}^{-1} \| \ls \langle \mu_a \rangle \| (\opA_{\beta} - \mu_a)^{-1} \|.
	\end{equation}
	Recalling from Proposition~\ref{prop:local.R} that $0 \in \rho(\widehat\opS_a)$ for large enough $a$ and defining $\widehat\opK_a := \opI - \mu_a \widehat\opS_a^{-1} = \widehat\opS_a^{-1} (\widehat\opS_a - \mu_a) = (\widehat\opS_a - \mu_a) \widehat\opS_a^{-1}$, we find
	\begin{equation*}
		\widehat\opK_a = \widehat\opK_{a,\infty} (\opI - \mu_a \widehat\opK_{a,\infty}^{-1} (\widehat\opS_a^{-1} - \widehat\opS_\infty^{-1})).
	\end{equation*}
	Moreover, by \eqref{eq:hSt.conv}, \eqref{eq:Kahatiminus1.est} and \eqref{eq:gral.curves.real.Phia.asm}, we have
	\begin{equation*}
		\| \mu_a \widehat\opK_{a,\infty}^{-1} (\widehat\opS_a^{-1} - \widehat\opS_\infty^{-1}) \| \ls \Phi_a = o(1), \quad a \to +\infty.
	\end{equation*}
	It follows that $\widehat\opK_a$ is invertible and $\| \widehat\opK_a^{-1} \| \ls \| \widehat\opK_{a,\infty}^{-1} \|$ as $a \to +\infty$. Since $\widehat\opS_a - \mu_a = \widehat\opK_a \widehat\opS_a = \widehat\opS_a \widehat\opK_a$, we conclude that $\mu_a \in \rho(\widehat\opS_a)$ for $a \to +\infty$, as claimed. Moreover, $(\widehat\opS_a - \mu_a)^{-1} = \widehat\opS_a^{-1} \widehat\opK_a^{-1} = \widehat\opK_a^{-1} \widehat\opS_a^{-1}$ and, using \eqref{eq:Sahatminus1.realline.ubound} and \eqref{eq:Kahatiminus1.est}, we deduce as $a \to +\infty$
	\begin{equation}
		\label{eq:xiSahat.normres.gralcurves}
		\| \xi (\widehat\opS_a - \mu_a)^{-1} \| + \| \xi (\widehat\opS_a^* - \mu_a)^{-1} \| \ls \langle \mu_a \rangle \| (\opA_{\beta} - \mu_a)^{-1} \|.
	\end{equation}

	Furthermore, we have (see the argument in \eqref{eq:gral.curves.step2.res.id})
	\begin{equation*}
		((\widehat\opS_a - \mu_a)^{-1} - (\widehat\opS_\infty - \mu_a)^{-1}) \widehat\opK_a = \widehat\opK_{a,\infty}^{-1} (\widehat\opS_a^{-1} - \widehat\opS_{\infty}^{-1}).
	\end{equation*}
	Hence
	\begin{equation*}
		(\widehat\opS_a - \mu_a)^{-1} - (\widehat\opS_\infty - \mu_a)^{-1} = \widehat\opK_{a,\infty}^{-1} (\widehat\opS_a^{-1} - \widehat\opS_{\infty}^{-1}) \widehat\opK_a^{-1}, \quad a \to +\infty,
	\end{equation*}
	and therefore by \eqref{eq:hSt.conv} and \eqref{eq:Kahatiminus1.est} and using the fact that $\mu_a$ satisfies \eqref{eq:gral.cuves.real.crit.in} or \eqref{eq:gral.cuves.real.crit.out}
	\begin{equation*}
		\| (\widehat\opS_a - \mu_a)^{-1} - (\widehat\opS_\infty - \mu_a)^{-1} \| \ls \| (\opA_{\beta} - \mu_a)^{-1} \| \Phi_a, \quad a \to +\infty.
	\end{equation*}
	It follows that
	\begin{equation}
		\label{eq:Sahat.Sinfhat.resnorms.gralcurves}
		\| (\widehat\opS_a - \mu_a)^{-1} \| = \| (\opA_{\beta} - \mu_a)^{-1} \| (1 + \BigO(\Phi_a)), \quad a \to +\infty,
	\end{equation}
	and hence from \eqref{eq:Hlaahat.Sahat.gralcurves} and \eqref{eq:Sahat.Sinfhat.resnorms.gralcurves} as $a \to +\infty$
	\begin{equation*}
		V_2(t_a) \| \widetilde\opH_{\la_a}^{-1} \| = \| (\widehat\opS_a - \mu_a)^{-1} \| = \| (\opA_{\beta} - \mu_a)^{-1} \| (1 + \BigO(\Phi_a)).
	\end{equation*}
	Arguing as in the last stage of Proposition~\ref{prop:local.R}, this yields as $a \to +\infty$
	\begin{equation}
		\label{eq:resnorm.local.R.gralcurve}
		\begin{aligned}
			&\| (\opA_{\beta} - \mu_a)^{-1} \|^{-1} V_2(t_a) (1 - \BigO(\Phi_a))\\
			& \qquad  \qquad \le \inf \left\{ \frac{\left\| \widehat\opH_{\la_a} u \right\|}{\|u\|}: \; 0 \neq u \in \Dom(\widehat\opH), \; \supp u \subset \Omega_{a,\pm} \right\}.
		\end{aligned}	
	\end{equation}

	\underline{Step 3}
	
	We follow the proof of Proposition~\ref{prop:lbound.R}, replacing $\widehat\opS_a$ with $\widehat\opS_a - \mu_a$, to find $g_a \in \Dom((\widehat\opS_a^* - \mu_a) (\widehat\opS_a - \mu_a))$ such that
	\begin{equation}
		\label{eq:varsiga.def}
		\| (\widehat\opS_a - \mu_a) g_a \| = \varsigma_a^{-1} = \| (\widehat\opS_a - \mu_a)^{-1} \|^{-1}, \quad a \to +\infty.
	\end{equation}
	Moreover, with $\varsigma_{a,\infty} := \| (\opA_{\beta} - \mu_a)^{-1} \|$, we have (see \eqref{eq:Sahat.Sinfhat.resnorms.gralcurves})
	\begin{equation}
		\label{eq:laa.conv.gralcurve}
		\varsigma_a = \varsigma_{a,\infty} (1 + \BigO(\Phi_a)), \quad a \to +\infty.
	\end{equation}

	Recalling the cut-off functions $\psi_a$, we write
	\begin{equation}
		\label{eq:Sahat.mua.psia.ga}
		\begin{aligned}
			(\widehat\opS_{a} - \mu_a) \psi_a g_a &= (\widehat\opS_{a} - \mu_a) g_a + (\psi_a - 1) (\widehat\opS_{a} - \mu_a) g_a\\
			&\quad + [\sF \, \tilde \phi \, W_{a} \sF^{-1}, \psi_a] g_a + \sF \, \phi \, W_{a} \sF^{-1} (\psi_a - 1) g_a\\
			&\quad - (\psi_a - 1) \sF \, \phi \, W_{a} \sF^{-1} g_a
		\end{aligned}
	\end{equation}
	and we proceed to estimate the terms in the right-hand side, except the first one, as we did in Proposition~\ref{prop:lbound.R}. Applying \eqref{eq:xiSahat.normres.gralcurves}, \eqref{eq:varsiga.def} and \eqref{eq:laa.conv.gralcurve}, we have as $a \to +\infty$
	\begin{equation}
		\label{eq:Sahat.mua.psia.ga.est.1}
		\begin{aligned}
			\| (\psi_a - 1) (\widehat\opS_a - \mu_a) g_a \| &\ls \| (\psi_a - 1) \xi^{-1} \|_\infty \| \xi (\widehat\opS_a^* - \mu_a)^{-1} \| \| (\widehat\opS_a^* - \mu_a) (\widehat\opS_a - \mu_a) g_a \|\\
			&\ls (\xi_a t_a)^{-1} \langle \mu_a \rangle \| (\opA_{\beta} - \mu_a)^{-1} \| \varsigma_{a}^{-2} \ls (\xi_a t_a)^{-1} \langle \mu_a \rangle \varsigma_{a,\infty}^{-1}.
		\end{aligned}
	\end{equation}
	From the proof of Proposition~\ref{prop:lbound.R}, the fact that $\widehat\opS_a = \widehat\opK_a^{-1} (\widehat\opS_a - \mu_a)$ for large enough $a > 0$ (refer to Step 2 above, noting also \eqref{eq:Kahatiminus1.est}), \eqref{eq:varsiga.def} and \eqref{eq:laa.conv.gralcurve}, we find as $a \to +\infty$
	\begin{equation}
		\label{eq:Sahat.mua.psia.ga.est.2}
		\begin{aligned}
			\|[\sF \, \tilde \phi \, W_{a} \sF^{-1}, \psi_a] g_a\| &\ls (\xi_a t_a)^{-1} (\| \widehat\opS_a g_a \| + \|g_a\|)\\
			&\ls (\xi_a t_a)^{-1} (\| \widehat\opK_a^{-1} \| \| (\widehat\opS_a - \mu_a) g_a \| + 1)\\
			&\ls (\xi_a t_a)^{-1} (\langle \mu_a \rangle \| (\opA_{\beta} - \mu_a)^{-1} \| \varsigma_a^{-1} + 1)\\
			&\ls (\xi_a t_a)^{-1} \langle \mu_a \rangle.
		\end{aligned}
	\end{equation}
	Using again the proof of Proposition~\ref{prop:lbound.R}, estimate \eqref{eq:Sahatminus1.realline.ubound} and the above arguments, we have as $a \to +\infty$
	\begin{equation}
		\label{eq:Sahat.mua.psia.ga.est.3}
		\begin{aligned}
			\|\sF \, \phi \, W_{a} \sF^{-1} (\psi_a - 1) g_a\| & \ls \| (\psi_a - 1) \xi^{-1} \|_\infty \| \xi \widehat\opS_a^{-1} \| \| \widehat\opS_a g_a \|\\
			&\ls (\xi_a t_a)^{-1} \| \widehat\opK_a^{-1} \| \| (\widehat\opS_a - \mu_a) g_a \| \ls (\xi_a t_a)^{-1} \langle \mu_a \rangle.
		\end{aligned}
	\end{equation}
	To estimate the last term in the right-hand side of \eqref{eq:Sahat.mua.psia.ga}, we adapt the corresponding section of the proof of Proposition~\ref{prop:lbound.R}.
	\begin{equation}
		\label{eq:Sahat.mua.psia.ga.est.4}
		\|(\psi_a-1) \sF \, \phi \, W_{a} \sF^{-1} g_a\| \ls \iota(t_a) + \|(\psi_a-1) \sF \, \phi \, \omega_\beta \sF^{-1} g_a\|.
	\end{equation}
	For $\beta > 1/2$, using \eqref{eq:wSt0.sep} and arguing as in \eqref{eq:Sahat.mua.psia.ga.est.2}, we have as $a \to +\infty$
	\begin{equation*}
		\begin{aligned}
			\|(\psi_a-1) \sF \, \phi \, \omega_{\beta} \sF^{-1} g_a\| &\ls \|(\psi_a-1) \xi^{-1}\|_\infty \| \xi \sF \, \phi \, \omega_\beta \sF^{-1} g_a\|\\
			&\ls (\xi_a t_a)^{-1} \| (\phi \, \omega_\beta \check g_a)' \| \ls (\xi_a t_a)^{-1} (\| \check g_a \|_{\infty} + \| \check g_a' \|)\\
			&\ls (\xi_a t_a)^{-1} \| \langle \xi_a \rangle g_a \| \ls (\xi_a t_a)^{-1} (\| \widehat\opS_a g_a \| + \| g_a \|)\\
			&\ls (\xi_a t_a)^{-1} \langle \mu_a \rangle.
		\end{aligned}
	\end{equation*}
	For $\beta \in (0, 1/2]$, using $(\phi \omega_{\beta} \check g_a)' = \phi' \omega_{\beta} \check g_a + \phi \omega_{\beta}' \check g_a + \phi \omega_{\beta} \check g_a'$, estimating each term in turn as in the proof of Proposition~\ref{prop:lbound.R} and then proceeding as in the $\beta > 1/2$ case, we obtain as $a \to +\infty$
	\begin{equation*}
		\begin{aligned}
			\|(\psi_a-1) \sF \, \phi \, \omega_\beta \sF^{-1} g_a \| &\ls (\xi_a t_a)^{-l_{\beta,\eps}} \left( 1 + \| \chi_{\{|\xi| \ge 1\}} \langle \xi \rangle^{l_{\beta,\eps}} \sF \, \phi \, \omega_\beta \check g_a \| \right)\\
			&\ls (\xi_a t_a)^{-l_{\beta,\eps}} \left( 1 + \| \chi_{\{|\xi| \ge 1\}} \langle \xi \rangle^{l_{\beta,\eps} - 1} \sF \, (\phi \, \omega_\beta \check g_a)' \| \right)\\
			&\ls (\xi_a t_a)^{-l_{\beta,\eps}} \left( 1 + \| \check g_a \| + \| \check g_a \|_{\infty} + \| \check g_a' \| \right)\\
			&\ls (\xi_a t_a)^{-l_{\beta,\eps}} \langle \mu_a \rangle.
		\end{aligned}
	\end{equation*}
	Hence, going back to \eqref{eq:Sahat.mua.psia.ga.est.4}, we have as $a \to +\infty$
	\begin{equation}
		\label{eq:Sahat.mua.psia.ga.est.5}
		\|(\psi_a-1) \sF \, \phi \, W_{a} \sF^{-1} g_a\| \ls \iota(t_a) + (\xi_a t_a)^{-l_{\beta}} \langle \mu_a \rangle
	\end{equation}
	and therefore, returning to \eqref{eq:Sahat.mua.psia.ga} with the individual term estimates, we obtain
	\begin{equation}
		\label{eq:Sahat.mua.psia.ga.est.final}
		\| (\widehat\opS_{a} - \mu_a) \psi_a g_a \| = \varsigma_a^{-1} + \BigO(\varsigma_{a,\infty}^{-1} \Phi_a), \quad a \to +\infty.
	\end{equation}
	Writing $\psi_a g_a = g_a + (\psi_a - 1) g_a$, we deduce that $\| \psi_a g_a \| = 1 + \BigO(\varsigma_{a,\infty}^{-1} \Phi_a)$ as $a \to +\infty$ (see \eqref{eq:Sahat.mua.psia.ga.est.3}) and hence, applying \eqref{eq:Sahat.mua.psia.ga.est.final} and \eqref{eq:laa.conv.gralcurve}, we arrive at
	\begin{equation*}
		\left| \frac{\left\| (\widehat\opS_{a} - \mu_a) \psi_a g_a \right\|}{\| \psi_a g_a\|} - \frac1{\varsigma_{a,\infty}} \right| = \BigO(\varsigma_{a,\infty}^{-1} \Phi_a), \quad a \to +\infty.
	\end{equation*}
	Recalling that $\widehat\opS_a - \mu_a = (V_2(t_a))^{-1} \, \opU_{a,{t_a}} \, \widetilde\opH_{\la_a} \, \opU_{a, {t_a}}^{-1}$ (see \eqref{eq:Hlaahat.Sahat.gralcurves}) and letting $u_a := \opU_{a, {t_a}}^{-1} \psi_a g_a$, then $u_a \in \Dom(\widehat \opH)$ with $\supp u_a \subset \Omega_{a,+}$. It follows
	\begin{equation*}
		\left| (V_2(t_a))^{-1} \frac{\| \widehat \opH_{\la_a} u_a \|}{\| u_a \|} - \frac1{\varsigma_{a,\infty}} \right| = \BigO(\varsigma_{a,\infty}^{-1} \Phi_a), \quad a \to +\infty
	\end{equation*}
	and hence
	\begin{equation}
		\label{eq:Hlaahat.low.bound.gral.curves}
		\frac{\| \widehat \opH_{\la_a} u_a \|}{\| u_a \|} = \| (\opA_{\beta} - \mu_a)^{-1} \|^{-1} V_2(t_a) (1 + \BigO(\Phi_a)), \quad a \to +\infty.
	\end{equation}

	\underline{Step 4}
	
	We begin by updating the commutator estimate \eqref{eq:Hahatphiak.commutator.norm.est} provided in Lemma~\ref{lem:Vhat.phiak.commutator}. Starting with \eqref{eq:HaV.est} and applying \eqref{eq:Hlahat.P.def} followed by \eqref{eq:gral.curves.real.adj.asm}, we find
	\begin{equation}
		\label{eq:HlaaV.est}
		\| \opwV u \| \ls \| \widehat\opH_a u \| + a \| u \| \ls \| \widehat\opH_{\la_a} u \| + (a + b_a) \| u \| \ls \| \widehat\opH_{\la_a} u \| + a \| u \|.
	\end{equation}
	Furthermore
	\begin{equation}
		\label{eq:HlaaPVF.est}
		\| V_2^{\frac12}  \check{u} \| = \langle \opwV  u, u \rangle^{\frac12} 
		= (\Re  \langle \widehat\opH_{\la_a}  u, u \rangle + b_a \| u \|^2)^{\frac12} \le (\| \widehat\opH_{\la_a} u \| + b_a \| u \|)^{\frac12} \| u \|^{\frac12}.
	\end{equation}

	As in the proof of Lemma~\ref{lem:Vhat.phiak.commutator}, we consider firstly the case $\beta < 2$. The initial commutator estimate remains valid
	\begin{equation}
		\label{eq:Vhat.phiak.gral.curve.com.est.1}
		\| [\opwV, \phi_{a,k}] u \| \ls \| \sF V_2' \sF^{-1} \phi_{a,k}' u \| + a^{-1} \| u \|.
	\end{equation}
	Turning to the first term in the right-hand side and applying \eqref{eq:V.symb} with $n = 1$, \eqref{eq:HlaaPVF.est}, \eqref{eq:Hlaahat.away.est.R} from Step 1, \eqref{eq:gral.curves.real.adj.asm} and the fact that $t_a^2 a^{-1} = o(1)$ as $a \to +\infty$
	\begin{equation*}
		\begin{aligned}
			\| \sF V_2' \sF^{-1} \phi_{a,k}' u \| &\ls \| \phi_{a,k}' u \| + \| V_2^{\frac12} \sF^{-1} \phi_{a,k}' u \|\\
			&\ls t_a^{-1} \| \widehat\opH_{\la_a} \phi_{a,k}' u \| + (t_a^{-1} b_a + t_a) \| \phi_{a,k}' u \|\\
			&\ls (t_a^{-1} + t_a^{-1} b_a a^{-1} + t_a a^{-1}) \| \widehat\opH_{\la_a} \phi_{a,k}' u \|\\
			&\ls t_a^{-1} \| \widehat\opH_{\la_a} \phi_{a,k}' u \| \ls t_a^{-1} (a^{-\frac12} \| \widehat\opH_{\la_a} u \| + \| [\opwV, \phi_{a,k}'] u \|).
		\end{aligned}
	\end{equation*}
	Furthermore using \eqref{eq:compositionformula}, \eqref{eq:V.symb} with $n = 1$ and \eqref{eq:HlaaPVF.est}, we have
	\begin{equation*}
		\| [\opwV, \phi_{a,k}'] u \| \ls a^{-1} (\| u \| + \| V_2^{\frac12} \check{u} \|) + a^{-\frac32} \| u \| \ls a^{-1} (t_a^{-1} \| \widehat\opH_{\la_a} u \| + (b_a t_a^{-1} + t_a) \| u \|).
	\end{equation*}
	Hence
	\begin{equation*}
		\| \sF V_2' \sF^{-1} \phi_{a,k}' u \| \ls a^{-\frac12} t_a^{-1} \| \widehat\opH_{\la_a} u \| + a^{-1} (b_a t_a^{-2} + 1) \| u \|
	\end{equation*}
	and, returning to \eqref{eq:Vhat.phiak.gral.curve.com.est.1}, this yields the estimate
	\begin{equation*}
		\| [\opwV, \phi_{a,k}] u \| \ls a^{-\frac12} t_a^{-1} \| \widehat\opH_{\la_a} u \| + (a^{-1} + b_a a^{-1} t_a^{-2}) \| u \|.
	\end{equation*}
	We already established that $a^{-1} (V_2(t_a))^{-1} a^{\frac12} t_a = o(1)$ as $a \to +\infty$. Moreover by \eqref{eq:gral.curves.real.Phia.asm}
	\begin{equation*}
		b_a a^{-1} t_a^{-2} (V_2(t_a))^{-1} a^{\frac12} t_a = \mu_a (a^{\frac12} t_a)^{-1} \ls \Phi_a = o(1), \quad a \to +\infty.
	\end{equation*}

	We note that estimate \eqref{eq:HaPVF.est} plays no role in the proof of Lemma~\ref{lem:Vhat.phiak.commutator} for the case $\beta \ge 2$. On the other hand, estimate \eqref{eq:HaV.est} is indeed used but its replacement here \eqref{eq:HlaaV.est} is simply a matter of substituting $\| \widehat\opH_a u \|$ with $\| \widehat\opH_{\la_a} u \|$; the same substitution happens between \eqref{eq:Hahat.away.est.R} (from Proposition~\ref{prop:away.R}), which is also used in Lemma~\ref{lem:Vhat.phiak.commutator} when $\beta \ge 2$, and \eqref{eq:Hlaahat.away.est.R} (Step 1 above). We can therefore repeat the proof for $\beta \ge 2$ to derive
	\begin{equation*}
		\| [\opwV, \phi_{a,k}] u \| \ls a^{-\frac12} t_a^{-1} \| \widehat\opH_{\la_a} u \| + a^{-\frac12} t_a^{\beta - 1 + \BigO(\eps)} \| u \|.
	\end{equation*}

	Combining both cases, we have
	\begin{equation*}
		\| [\opwV, \phi_{a,k}] u \| \ls a^{-\frac12} t_a^{-1} \| \widehat\opH_{\la_a} u \| + \widetilde\Theta(a,\eps) \| u \|, \quad a \to +\infty,
	\end{equation*}
	where for any arbitrarily small $\eps > 0$
	\begin{equation}
		\label{eq:Hahatphiak.gral.curves.commutator.coeff}
		\widetilde\Theta(a, \eps) := 
		\begin{cases}
			a^{-1} + b_a a^{-1} t_a^{-2}, &  \beta < 2, \\[1mm]
			a^{-\frac12} t_a^{\beta - 1 + \eps}, &  \beta \ge 2.
		\end{cases}
	\end{equation}
	Moreover
	\begin{equation}
		\label{eq:Hahatphiak.gral.curves.commutator.decay}
		\widetilde\Theta(a, \eps) (V_2(t_a))^{-1} (a^{\frac12} t_a)^{1 - \eps} = o(1), \quad a \to +\infty.
	\end{equation}

	We also need to revise estimate \eqref{eq:Hhat.u1u2.quasiorth.norm.est} in Lemma~\ref{lem:Hhat.u1u2.quasiorthogonal}. We repeat the original argument, with $\widehat\opH_{\la_a}$ instead of $\widehat\opH_a$, to arrive at \eqref{eq:Hhat.u1u2.quasiorth.est.1}. Taking $N \ge 5$ in \eqref{eq:phia}, we have $\| \opR_{N+1,k} u \| \ls a^{-3} \| u \|$ for $k \in \{1, 2\}$ and $a \to +\infty$. Moreover, expanding $\| \opB_{N,k} u \|$ as in \eqref{eq:Bnk.comm.expansion} and using our new commutator estimates, we have for any arbitrarily small $\eps > 0$ and $k \in \{1, 2\}$
	\begin{equation*}
		\| \opB_{N,k} u \| \le (1 + \BigO(a^{-\frac12} t_a^{-1})) \| \widehat\opH_{\la_a} u \| + \BigO(\widetilde\Theta(a, \eps)) \| u \|, \quad a \to +\infty.
	\end{equation*}
	We can now estimate the first term in the right-hand side of \eqref{eq:Hhat.u1u2.quasiorth.est.1} as $a \to +\infty$
	\begin{equation*}
		\begin{aligned}
			\| \opB_{N,1} u \|^{\frac12} \| \opR_{N+1,2} u \|^{\frac12} &\ls a^{-\frac12} t_a^{-1} \| \widehat\opH_{\la_a} u \| + (a^{-\frac12} t_a^{-1} \widetilde\Theta(a, \eps) + a^{-\frac52} t_a) \| u \|\\
			&\ls a^{-\frac12} t_a^{-1} \| \widehat\opH_{\la_a} u \| + \widetilde\Theta(a, \eps) \| u \|,
		\end{aligned}
	\end{equation*}
	where in the last step we have used the fact that as $a \to +\infty$ we have
	\begin{align*}
		&\beta < 2, \qquad \widetilde\Theta(a,\eps) a^{\frac52} t_a^{-1} = a^{\frac32} t_a^{-1} + b_a a^{\frac32} t_a^{-3} \approx a V_2(t_a) + \mu_a (V_2(t_a))^4 \to +\infty,\\
		&\beta \ge 2, \qquad \widetilde\Theta(a,\eps) a^{\frac52} t_a^{-1} = a^2 t_a^{\beta - 2 + \eps} \to +\infty,
	\end{align*}
	(refer back to the definition \eqref{eq:Hahatphiak.gral.curves.commutator.coeff}). We can similarly estimate the other terms in the right-hand side of \eqref{eq:Hhat.u1u2.quasiorth.est.1} to find as $a \to +\infty$
	\begin{equation}
		\label{eq:Hlaahat.u1u2.quasiorth.norm.est}
		\begin{aligned}
			(\| \widehat\opH_{\la_a} \phi_{a,1} u \|^2 + \| \widehat\opH_{\la_a} \phi_{a,2} u \|^2)^{\frac12} &= \| \widehat\opH_{\la_a} (\phi_{a,1} + \phi_{a,2}) u \|\\
			&\quad + \BigO(a^{-\frac12} t_a^{-1} \| \widehat\opH_{\la_a} u \| + \widetilde\Theta(a,\eps) \| u \|).
		\end{aligned}
	\end{equation}

	Finally, we combine the above results to prove the proposition. As in the proof of Theorem~\ref{thm:R}, we have as $a \to +\infty$
	\begin{equation}
		\label{eq:Hlaahat.uk.norm.est}
		\begin{aligned}
			\| \widehat\opH_{\la_a} u_0 \| &\le (1 + \BigO(a^{-\frac12} t_a^{-1})) \| \widehat\opH_{\la_a} u \| + \BigO(\widetilde\Theta(a, \eps)) \| u \|,\\
			\| \widehat\opH_{\la_a} (u_1 + u_2) \| &\le (1 + \BigO(a^{-\frac12} t_a^{-1})) \| \widehat\opH_{\la_a} u \| + \BigO(\widetilde\Theta(a, \eps)) \| u \|,
		\end{aligned}
	\end{equation}
	with small $\eps > 0$ and $\widetilde\Theta(a, \eps)$ as in \eqref{eq:Hahatphiak.gral.curves.commutator.coeff}.
	
	By \eqref{eq:resnorm.local.R.gralcurve}, \eqref{eq:Hlaahat.u1u2.quasiorth.norm.est} and \eqref{eq:Hlaahat.uk.norm.est}, we obtain for $a \to +\infty$
	\begin{equation}
		\label{eq:uk.gral.curves.local.est}
		\begin{aligned}
			V_2(t_a) \| u_1 + u_2 \| &\le \| (\opA_{\beta} - \mu_a)^{-1} \| (1 + \BigO(\Phi_a)) \| \widehat\opH_{\la_a} (u_1 + u_2) \|\\
			&\quad + \BigO(\| (\opA_{\beta} - \mu_a)^{-1} \| a^{-\frac12} t_a^{-1}) \| \widehat\opH_{\la_a} u \|\\
			&\quad + \BigO(\| (\opA_{\beta} - \mu_a)^{-1} \| \widetilde\Theta(a,\eps)) \| u \|\\
			&\le \| (\opA_{\beta} - \mu_a)^{-1} \| (1 + \BigO(\Phi_a)) \| \widehat\opH_{\la_a} u \|\\
			&\quad + \BigO(\| (\opA_{\beta} - \mu_a)^{-1} \| \widetilde\Theta(a,\eps)) \| u \|.
		\end{aligned}
	\end{equation}

	By \eqref{eq:Hlaahat.away.est.R} and \eqref{eq:Hlaahat.uk.norm.est}, we have as $a \to +\infty$
	\begin{equation}
		\label{eq:uk.gral.curves.away.est}
		V_2(t_a) \| u_0 \| \ls a ^{-\frac12} t_a^{-1} \| \widehat\opH_{\la_a} u \| + a ^{-\frac12} t_a^{-1} \widetilde\Theta(a, \eps) \| u \|.
	\end{equation}

	Combining \eqref{eq:uk.gral.curves.local.est} and \eqref{eq:uk.gral.curves.away.est}, we find that as $a \to +\infty$
	\begin{equation*}
		\begin{aligned}
			V_2(t_a) \| u \| &\le V_2(t_a) \left(\| u_0 \| + \| u_1 + u_2 \| \right)\\
			&\le \| (\opA_{\beta} - \mu_a)^{-1} \| (1 + \BigO(\Phi_a)) \| \widehat\opH_{\la_a} u \| + \BigO(\| (\opA_{\beta} - \mu_a)^{-1} \| \widetilde\Theta(a,\eps)) \| u \|
		\end{aligned}
	\end{equation*}
	and hence
	\begin{equation*}
		\begin{aligned}
			&V_2(t_a) (1 - \BigO(\| (\opA_{\beta} - \mu_a)^{-1} \| \widetilde\Theta(a,\eps) (V_2(t_a))^{-1})) \| u \|\\
			&\hspace{5cm} \le \| (\opA_{\beta} - \mu_a)^{-1} \| (1 + \BigO(\Phi_a)) \| \widehat\opH_{\la_a} u \|.
		\end{aligned}			
	\end{equation*}
	Recalling \eqref{eq:Hahatphiak.gral.curves.commutator.coeff} and $a^{-1} t_a^2 = o(1)$, we find as $a \to +\infty$
	\begin{align*}
		\beta < 2, \qquad &\| (\opA_{\beta} - \mu_a)^{-1} \| \widetilde\Theta(a,\eps) (V_2(t_a))^{-1}\\
		&\approx \| (\opA_{\beta} - \mu_a)^{-1} \| a^{-\frac32} t_a + \mu_a \| (\opA_{\beta} - \mu_a)^{-1} \| (a^{\frac12} t_a)^{-2}\\
		&\approx \| (\opA_{\beta} - \mu_a)^{-1} \| (a^{\frac12} t_a)^{-1} a^{-1} t_a^2 + \mu_a \| (\opA_{\beta} - \mu_a)^{-1} \| (a^{\frac12} t_a)^{-2}\\
		&\ls \Phi_a,\\
		\beta \ge 2, \qquad &\| (\opA_{\beta} - \mu_a)^{-1} \| \widetilde\Theta(a,\eps) (V_2(t_a))^{-1}\\
		&= \| (\opA_{\beta} - \mu_a)^{-1} \| (a^{\frac12} t_a)^{-1 + \eps} \widetilde\Theta(a,\eps) (V_2(t_a))^{-1} (a^{\frac12} t_a)^{1 - \eps}\\
		&\ls \Phi_a,
	\end{align*}
	where in the last line we have applied \eqref{eq:Hahatphiak.gral.curves.commutator.decay}. It follows
	\begin{equation*}
		\| u \| \le \| (\opA_{\beta} - \mu_a)^{-1} \| (V_2(t_a))^{-1} (1 + \BigO(\Phi_a)) \| \widehat\opH_{\la_a} u \|.
	\end{equation*}
	This result combined with the lower bound \eqref{eq:Hlaahat.low.bound.gral.curves} yields \eqref{eq:resnorm.R.gralcurve}.
\end{proof}

We conclude this sub-section with a general construction for the level curves of the resolvent similar to that in Sub-section~\ref{sssec:resnorm.adj.iR} but considering now those adjacent to the real axis. Letting $\zeta_a := \mu_a^{\frac{3 \beta + 1}{2 \beta}} \| (\opA_{\beta} - \mu_a)^{-1} \|$, we re-write \eqref{eq:Abeta.resnorm.ex.poly} as
\begin{equation*}
	\frac{2 \beta}{\beta + 1} \mu_a^{\frac{\beta + 1}{\beta}} \exp\left(\frac{2 \beta}{\beta + 1} \mu_a^{\frac{\beta + 1}{\beta}}\right) = \frac{2 \beta}{\beta + 1} \sqrt{\frac{\beta}{\pi}} \zeta_a (1 + o(1)), \quad a \to +\infty.
\end{equation*}
This is an equation in $\mu_a$ which we can solve using the Lambert function $W_0(x)$ (refer to Sub-section~\ref{sssec:resnorm.adj.iR} for further details)
\begin{equation*}
	\frac{2 \beta}{\beta + 1} \mu_a^{\frac{\beta + 1}{\beta}} = W_0\left(\frac{2 \beta}{\beta + 1} \sqrt{\frac{\beta}{\pi}} \zeta_a (1 + o(1))\right), \quad a \to +\infty,
\end{equation*}
to deduce
\begin{equation*}
	\mu_a = \left(\frac{\beta + 1}{2 \beta}\right)^{\frac{\beta}{\beta + 1}} \left(\log(\| (\opA_{\beta} - \mu_a)^{-1} \|)\right)^{\frac{\beta}{\beta + 1}} (1 + o(1)), \quad a \to +\infty.
\end{equation*}
Using \eqref{eq:resnorm.R.gralcurve} and substituting $\| (\opH - \la_a)^{-1} \| = \eps^{-1}$, with $\eps > 0$, we obtain (recall $t_a V_2(t_a) = 2 a^{\frac12}$)
\begin{equation}
	\label{eq:resnorm.R.levelcurves}
	b_a = \left(\frac{\beta + 1}{2 \beta}\right)^{\frac{\beta}{\beta + 1}} V_2(t_a) \left(\log(V_2(t_a) \eps^{-1})\right)^{\frac{\beta}{\beta + 1}} (1 + o(1)), \quad a \to +\infty.
\end{equation}
Finally, we note that, when $V_2(x) = x^2$ (\ie~$\opH$ is the Davies operator), then $t_a = 2^{\frac13} a^{\frac16}$ and the above equation becomes
\begin{equation}
	\label{eq:resnorm.R.levelcurves.davies}
	b_a = \left(\frac32\right)^{\frac23} a^{\frac13} \left(\log(a^{\frac13} \eps^{-1})\right)^{\frac23} (1 + o(1)), \quad a \to +\infty,
\end{equation}
(compare these curves with \eqref{eq:resnorm.R.levelcurves.powerlike} for $n = 1$ and with the known formulas derived in \cite[Prop.~4.6]{BordeauxMontrieux-2013}).

\subsection{Optimality of the pseudomode construction in \cite{Krejcirik-2019-276}}
\label{ssec:optimality.pseudomodes}
In this paper, the curves in $\C$ along which the norm of the resolvent diverges are found by a non-semi-classical pseudomode construction. As a corollary of \eqref{eq:res.bounded.region.iR}, using Assumption~\ref{asm:iR}~\ref{itm:nuasm}, we find that for any $\eps>0$, the norm of the resolvent is uniformly bounded inside the region determined by 
$a \ls b^{\frac{2}{3}} x_b^{\frac{2}{3} \nu} - \eps$.
This shows that the lower edge (i.e.~the left-hand side) of the condition \cite[Eq.~(5.5)]{Krejcirik-2019-276} is optimal.

Similarly using \eqref{eq:res.bounded.region.R} we obtain optimality of the upper edge of the condition \cite[Eq.~(5.5)]{Krejcirik-2019-276} (with $\nu =-1)$. Denoting the regular variation index of $V_2$ by $\beta>0$, we obtain from \eqref{eq:ta.def} and \eqref{eq:V.regvarying.decomposition} that
\begin{equation}
	\label{eq:ta.a.Lta}
	t_a = (2 a^\frac12)^\frac{1}{1+\beta} L(t_a)^{-\frac{1}{1+\beta}}.
\end{equation}
Hence, recalling that $t_a \to +\infty$ as $a \to + \infty$ and using \eqref{eq:slowvarying.estimates}, we get (with any $\gamma>0$)
\begin{equation}\label{eq:ta.est.2}
(2 a^\frac12)^{\frac{1}{1+\beta}-\gamma} \leq 	t_a \leq  (2 a^\frac12)^{\frac{1}{1+\beta}+\gamma}, \quad a \to + \infty.
\end{equation}
Similarly from $V(x_b) = b$ we arrive at (with any $\gamma>0$)
\begin{equation}\label{eq:xb.est}
b^{\frac 1 \beta - \gamma} \leq	x_b \leq b^{\frac 1 \beta + \gamma}, \quad b \to +\infty.
\end{equation}
Then, using \eqref{eq:ta.a.Lta}, inequality \eqref{eq:res.bounded.region.R} can be rewritten as (the constant $C_{\beta,\eps'}>0$ can be given explicitly)
\begin{equation}\label{eq:a.b.cond.1}
	a> C_{\beta,\eps'} (b+\eps)^{2 +\frac2 \beta} L(t_a)^{-\frac2{\beta}}.
\end{equation}
Finally, employing \eqref{eq:ta.a.Lta}, \eqref{eq:ta.est.2} and \eqref{eq:xb.est}, the condition \eqref{eq:a.b.cond.1} is satisfied if $a \gs b^2 x_b^{2 - \gamma'}$ with  some $\gamma'>0$ which complements \cite[Eq.~(5.5)]{Krejcirik-2019-276}.

\subsection{Extension of Theorem~\ref{thm:iR} to operators in $L^2(\R)$}
\label{ssec:iR.R}
We outline a procedure to extend Theorem~\ref{thm:iR} to operators defined on the real line.
\begin{asm-sec}
	\label{asm:iR.R}
	Suppose that $V := i V_2$ with $V_2:\R \rightarrow \R$, $V_2 \in \LilocR \cap C^2((-\infty, -x_0) \cup (x_0, \infty))$ for some $x_0 \geq 0$ and let $V_{2,\pm} := V_2 \chi_{\R_\pm}$, $V_{\pm} := i V_{2,\pm}$. Assume further that the following conditions are satisfied:
	\begin{enumerate} [\upshape (i)]
		\item \label{itm:incr.unbd.iR.R} $V_2$ is unbounded and eventually increasing (in $\Rplus$)/decreasing (in $\Rminus$):
		\begin{align*}
			&\lim_{x \to +\infty} V_{2,+}(x) = +\infty, \quad V_{2,+}'(x) > 0, \quad x > x_0,\\
			&\lim_{x \to -\infty} V_{2,-}(x) = +\infty, \quad V_{2,-}'(x) < 0, \quad x < -x_0;
		\end{align*}
		\item \label{itm:nuasm.iR.R} $V_2$ has controlled derivatives: there exist $\nu_{+}, \nu_{-} \in [-1,+\infty)$ such that
		\begin{align*}
			&V_{2,+}'(x) \approx V_{2,+}(x) \: x^{\nu_{+}},& |V_{2,+}''(x)| &\lesssim V_{2,+}'(x) \: x^{\nu_{+}}, & x > x_0,\\
			&|V_{2,-}'(x)| \approx V_{2,-}(x) \: |x|^{\nu_{-}},& |V_{2,-}''(x)| &\lesssim |V_{2,-}'(x)| \: |x|^{\nu_{-}}, & x < -x_0;
		\end{align*}
		\item \label{itm:imvgrowth.iR.R}$V_2$ grows sufficiently fast: we have
		\begin{equation*}
			\Upsilon(x) = o(1), \qquad |x| \rightarrow +\infty,
		\end{equation*}
		where
		\begin{equation}\label{eq:Ups.R.def}
			\Upsilon(x) := 
			\begin{cases}
				x^{\nu_{+}} (V_{2,+}'(x))^{-\frac13}, &  x > x_0, \\[1mm] 
				|x|^{\nu_{-}} |V_{2,-}'(x)|^{-\frac13}, &  x < -x_0.
			\end{cases}
		\end{equation}
	\end{enumerate}
\end{asm-sec}
With $V$ satisfying Assumption~\ref{asm:iR.R}, we consider the Schr\"{o}dinger operator 
\begin{equation}
	\label{eq:H.iR.R.def}
	H = \Dt + V,
	\quad \Dom(H) = \WttR \cap \Dom(V)
\end{equation}
in $\Lt(\R)$ (refer to Sub-section~\ref{ssec:Schr.prelim} for details). Moreover, for sufficiently large $b > 0$, we define the turning points $x_{b,\pm}$ by
\begin{equation}\label{eq:xb.pm.def}
	V_2(x_{b,\pm}) = b, \quad \text{with} \quad x_{b,+} > x_0, \ x_{b,-} < -x_0.
\end{equation}
In the following, we use the notation $\max\{a_\pm\}:=\max\{a_+,a_-\}$, $\min\{a_\pm\}:=\min\{a_+,a_-\}$.
\begin{proposition}\label{prop:whole.R}
Let $V$ satisfy Assumption~\ref{asm:iR.R}, let $\opH$ be the Schr\"odinger operator \eqref{eq:H.iR.R.def} in $\Lt(\R)$ and let $\opA_{1,\pi/2}$ be the Airy operator \eqref{eq:airyde}. Let $b$, $x_{b,\pm}$ be as in \eqref{eq:xb.pm.def} and let $\Upsilon$ be as in \eqref{eq:Ups.R.def}. Then
\begin{equation}
\|(\opH - i b)^{-1} \|  =
\| \opA_{1,\frac \pi 2}^{-1} \| \max \{ |V_{2,\pm}'(x_{b,\pm})|^{-\frac23} \}  ( 1 + \BigO (\max \{\Upsilon(x_{b,\pm})\}) ).
\end{equation}
\end{proposition}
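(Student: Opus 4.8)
The strategy is to reduce the whole-line problem to two copies of the half-line result of Theorem~\ref{thm:iR} (specialised to purely imaginary $V$, so that $r=1$, $\theta=\pi/2$ and $\opA_{r,\theta}=\opA_{1,\pi/2}$), glued together by a partition of unity. First I would fix large $b>0$ and localise: introduce cut-off functions supported near the two turning points $x_{b,\pm}$ on scales $\delta_{b,\pm}:=\delta\,|x_{b,\pm}|^{-\nu_\pm}$, together with a third cut-off supported away from both. On the piece supported in $\Rplus$ near $x_{b,+}$, the operator $\opH-ib$ restricted there agrees (after the natural embedding $\Lt(\Rplus)\hookrightarrow\Lt(\R)$, and vice versa) with the half-line operator of Section~\ref{sec:iR} associated to the potential $V_{2,+}$; similarly the piece near $x_{b,-}$ reduces, after the reflection $x\mapsto-x$, to a half-line operator with potential $x\mapsto V_{2,-}(-x)$, which again satisfies Assumption~\ref{asm:iR}. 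Thus Propositions~\ref{prop:local.iR} and~\ref{prop:lbound.iR} apply verbatim on each side and yield, as $b\to+\infty$,
\begin{equation*}
\inf\left\{\frac{\|(\opH-ib)u\|}{\|u\|}:\ 0\neq u\in\Dom(\opH),\ \supp u\subset\Omega_{b,\pm}\right\}
=\|\opA_{1,\frac\pi2}^{-1}\|^{-1}\,|V_{2,\pm}'(x_{b,\pm})|^{\frac23}\bigl(1+\BigO(\Upsilon(x_{b,\pm}))\bigr),
\end{equation*}
together with near-optimal trial functions $u_{b,\pm}$ localised in $\Omega_{b,\pm}$. Note $\kappa_b\equiv0$ here because $V$ is purely imaginary, which is why only $\Upsilon$ appears in the remainder.

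Next I would handle the region away from the turning points. Repeating the argument of Proposition~\ref{prop:away.iR} with $\chi_b(x):=\sgn(V_2(x)-b)$ on all of $\R$ — using that $V_{2,+}$ is increasing on $(x_0,\infty)$, $V_{2,-}$ is decreasing on $(-\infty,-x_0)$, and $V_2$ is bounded on $[-x_0,x_0]$ — gives $|V_2(x)-b|\gtrsim\min\{|V_{2,\pm}'(x_{b,\pm})|^{\frac23}(\Upsilon(x_{b,\pm}))^{-1}\}$ on the complement of the two intervals, hence a lower bound of order $\delta\min\{|V_{2,\pm}'(x_{b,\pm})|^{\frac23}(\Upsilon(x_{b,\pm}))^{-1}\}$ for $\|(\opH-ib)u\|/\|u\|$ when $\supp u$ avoids both turning-point neighbourhoods. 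Since $\Upsilon(x_{b,\pm})=o(1)$, this is asymptotically larger than both local lower bounds, so the away-part is negligible. The commutator estimate of Lemma~\ref{lem:Hb.phibk.commutator} carries over to each side with $\nu$ replaced by the appropriate $\nu_\pm$, giving $\|[\opH-ib,\phi_{b,k}]u\|\lesssim\max\{\Upsilon(x_{b,\pm})\}\|(\opH-ib)u\|+\max\{|x_{b,\pm}|^{2\nu_\pm}(\Upsilon(x_{b,\pm}))^{-1}\}\|u\|$, which feeds into the partition-of-unity argument exactly as in the proof of Theorem~\ref{thm:iR}.

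Assembling these, write $u=u_-+u_0+u_+$ with $u_\pm:=\phi_{b,\pm}u$, $u_0:=(1-\phi_{b,+}-\phi_{b,-})u$. Applying the local upper bound on $u_\pm$, the away upper bound on $u_0$, and the commutator estimate to absorb the cross terms, one obtains
\begin{equation*}
\|u\|\le\|\opA_{1,\frac\pi2}^{-1}\|\max\{|V_{2,\pm}'(x_{b,\pm})|^{-\frac23}\}\bigl(1+\BigO(\max\{\Upsilon(x_{b,\pm})\})\bigr)\|(\opH-ib)u\|,
\end{equation*}
using crucially that $(V_2'(x_{b,+}))^{-2/3}$ and $|V_2'(x_{b,-})|^{-2/3}$ combine into their \emph{maximum} (the larger resolvent contribution dominates the sum $\|u_+\|+\|u_-\|$ up to the stated remainder). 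The matching lower bound comes from testing with whichever of $u_{b,+}$, $u_{b,-}$ realises the maximum, exactly as in Proposition~\ref{prop:lbound.iR}. The one genuinely new point — and the place I expect to spend the most care — is the bookkeeping that turns the two one-sided estimates into a single clean expression with the $\max$ and a uniform $\BigO(\max\{\Upsilon(x_{b,\pm})\})$ remainder: one must check that the ratio $|V_2'(x_{b,+})|/|V_2'(x_{b,-})|$ (which can be unbounded or vanish) never spoils the error terms, and that the cross-term contributions $\langle\opH_bu_+,\opH_bu_-\rangle$ are controlled — here they vanish identically since $u_+$ and $u_-$ have disjoint supports and $\opH_b u_\pm$ remains supported in $\Omega_{b,\pm}$ up to the commutator remainder, just as in Lemma~\ref{lem:Hhat.u1u2.quasiorthogonal}. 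All other ingredients are quotations of results already proved for the half-line.
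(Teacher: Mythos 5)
Your plan mirrors the paper's proof: localise near $x_{b,\pm}$, feed each piece into the half-line machinery of Section~\ref{sec:iR} (the reflection $x\mapsto -x$ handling the negative turning point), control the away part as in Proposition~\ref{prop:away.iR} using the two-sided estimate $V_{2,\pm}'(x)\approx V_{2,\pm}(x)|x|^{\nu_\pm}$ to get $|V_2(x)-b|\gtrsim b$ off the bumps, and glue with the commutator estimate of Lemma~\ref{lem:Hb.phibk.commutator}; the lower bound likewise comes from the trial functions of Proposition~\ref{prop:lbound.iR} on the dominant side. The paper implements the delicate $\max$ step by forming the weighted combination $\Lambda_{b,+}u_++\Lambda_{b,-}u_-$ with $\Lambda_{b,\pm}:=\alpha_{b,\pm}(1+\gamma_{b,\pm})$, which, together with the two orthogonalities $u_+\perp u_-$ and $\opH_bu_+\perp\opH_bu_-$ that you correctly flag, yields $\|u_++u_-\|\le\|\opA_{1,\pi/2}^{-1}\|\,\|\opH_b(\Lambda_{b,+}u_++\Lambda_{b,-}u_-)\|$ and hence the $\max\{\Lambda_{b,\pm}\}$ factor cleanly (no spurious constant from summing the two estimates); your description of "the larger resolvent contribution dominates the sum" is a looser way of saying the same thing, and it only works via that orthogonality, so be careful to bound the sum of squares rather than the sum of norms when you write it out.
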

\begin{proof}[Sketch of proof]
To justify the above claim, we introduce a partition of unity. For $\delta_{b,\pm} := \delta |x_{b,\pm}|^{-\nu_{\pm}}$, $0 < \delta < 1/4$, with $b$ large enough so that $x_{b,+} - 2\delta_{b,+} > x_0$ and $x_{b,-} + 2\delta_{b,-} < -x_0$, let $\phi_{b,0}, \: \phi_{b,\pm} \in \CiR$, $0 \le \phi_{b,0}, \: \phi_{b,\pm} \le 1$, satisfying
\begin{align*}
	\phi_{b,+}(x) &:= 1, \; x \in [x_{b,+} - \delta_{b,+}, \infty), & \supp \phi_{b,+} &\subset [x_{b,+} - 2\delta_{b,+}, \infty),\\
	\phi_{b,-}(x) &:= 1, \; x \in (-\infty, x_{b,-} + \delta_{b,-}],& \supp \phi_{b,-} &\subset (-\infty, x_{b,-} + 2 \delta_{b,-}],\\
	\phi_{b,0} &:= 1 - (\phi_{b,+} + \phi_{b,-}), &\|\phi_{b,\pm}^{(j)}\|_{\infty}& \lesssim \delta_{b,\pm}^{-j}, \; j \in \{ 1, 2 \}.
\end{align*}		

For convenience, we shall denote $\alpha_{b,\pm} = |V_{2,\pm}'(x_{b,\pm})|^{-\frac23}$, $\gamma_{b,\pm} = \BigO ( \Upsilon(x_{b,\pm}) )$, $\Lambda_{b,\pm} = \alpha_{b,\pm} (1 + \gamma_{b,\pm})$ and $H_b=H-ib$. For $u \in \Dom(\opH)$, we write $u = u_0 + u_{+} + u_{-}$, with $u_0 := \phi_{b,0} u$, $u_{\pm} := \phi_{b,\pm} u$, and introduce the operators in $\Lt(\R_{\pm})$
\begin{equation*}
	\opH_{\pm} = \Dt + V_{\pm},
	\quad \Dom(\opH_{\pm}) = W^{2,2}(\R_{\pm}) \cap W_0^{1,2}(\R_\pm) \cap \Dom(V_{\pm}).
\end{equation*}
Since $V_{+}$ satisfies Assumption~\ref{asm:iR} and $u_{+} \in \Dom(\opH_{+})$, we have by \eqref{eq:Hb.est.iR}
\begin{equation}
	\label{eq:Hla.up.below.iR.R}
	\| \opH_b u_{+} \| = \| (\opH_{+} - ib) u_{+} \| \ge \| \opA_{1,\frac \pi 2}^{-1} \|^{-1} \alpha_{b,+}^{-1} (1 - \gamma_{b,+}) \| u_{+} \|, \quad b \to + \infty.
\end{equation}
Moreover, with the isometry $(Uu)(x):=u(-x)$ in $L^2(\R)$, it is easy to see
\begin{equation}
	\label{eq:Hla.um.below.iR.R}
	\| \opH_b  u_{-} \| \ge \| \opA_{1,\frac \pi 2}^{-1} \|^{-1} \alpha_{b,-}^{-1} (1 - \gamma_{b,-}) \| u_{-} \|, \quad b \to + \infty.
\end{equation}
Since $u_{+} \perp u_{-}$ and $\opH_b u_{+} \perp \opH_b u_{-}$ in $L^2$, by combining \eqref{eq:Hla.up.below.iR.R} and \eqref{eq:Hla.um.below.iR.R} we find
\begin{align*}
	\| u_{+} + u_{-}\|^2 &= \| u_{+} \|^2 + \| u_{-} \|^2 \le \| \opA_{1,\frac \pi 2}^{-1} \|^2 (\Lambda_{b,+}^2 \| \opH_b u_{+} \|^2 + \Lambda_{b,-}^2 \| \opH_b u_{-} \|^2)\\
	&= \| \opA_{1,\frac \pi 2}^{-1} \|^2 \| \opH_b (\Lambda_{b,+} u_{+} + \Lambda_{b,-} u_{-}) \|^2
\end{align*}
and therefore
\begin{equation}
	\label{eq:Hla.upum.below.iR.R}
	\| \opA_{1,\frac \pi 2}^{-1} \| \| \opH_b (\Lambda_{b,+} u_{+} + \Lambda_{b,-} u_{-}) \| \ge \| u_{+} + u_{-}\|.
\end{equation}

Since $\supp u_{0} \subset [x_{b,-} + \delta_{b,-}, x_{b,+} - \delta_{b,+}]$, then arguing as in Proposition~\ref{prop:away.iR} we deduce that for large enough $b$
\begin{equation*}
	\langle |V_2 - b| u_0, u_0 \rangle \le \| \opH_b u_0 \| \| u_0 \|.
\end{equation*}
It follows that for $x \in [x_{b,-} + \delta_{b,-}, x_{b,+} - \delta_{b,+}]$ and sufficiently large $b$
\begin{equation*}
	|V_2(x) - b| \gs \min\{|V_{2,\pm}'(x_{b,\pm})| \delta_{b,\pm}\} \approx b,
\end{equation*}
reasoning as in the proof of Proposition~\ref{prop:away.iR} and applying Assumption~\ref{asm:iR.R}~\ref{itm:nuasm.iR.R}. Hence
\begin{equation}
	\label{eq:Hla.u0.below.iR.R}
	b^{-1} \| \opH_b u_0 \| \gs \| u_0 \|, \quad b \to +\infty.
\end{equation}

Furthermore, arguing as in the proof of \eqref{eq:Hbphikb.commutator.norm.est}, we are able to derive upper estimates as  $b \to +\infty$
\begin{equation}
	\label{eq:Hla.upum.above.iR.R}
	\begin{aligned}
		&\| \opH_b (\Lambda_{b,+} u_{+} + \Lambda_{b,-} u_{-}) \|\\
		&\quad = \| \opH_b (\Lambda_{b,+} \phi_{b,+} + \Lambda_{b,-} \phi_{b,-}) u \|\\
		&\quad \le \| (\Lambda_{b,+} \phi_{b,+} + \Lambda_{b,-} \phi_{b,-}) \opH_b u \| + 2 \| (\Lambda_{b,+} \phi_{b,+}' + \Lambda_{b,-} \phi_{b,-}') u' \|\\
		&\quad\quad + \| (\Lambda_{b,+} \phi_{b,+}'' + \Lambda_{b,-} \phi_{b,-}'') u \|\\
		&\quad \le \max\{\Lambda_{b,\pm}\} \| \opH_b u \| + (\Lambda_{b,+} \gamma_{b,+} + \Lambda_{b,-} \gamma_{b,-}) \| \opH_b u \|\\
		&\quad\quad + (\Lambda_{b,+} x_{b,+}^{2\nu_{+}} \gamma_{b,+}^{-1} + \Lambda_{b,-} |x_{b,-}|^{2\nu_{-}} \gamma_{b,-}^{-1}) \| u \|\\	&\quad\quad + (\Lambda_{b,+} x_{b,+}^{2\nu_{+}} + \Lambda_{b,-} |x_{b,-}|^{2\nu_{-}}) \| u \|\\
		&\quad \le \max\{\Lambda_{b,\pm}\} (1 + \gamma_{b,+} + \gamma_{b,-}) \| \opH_b u \| + (\gamma_{b,+} + \gamma_{b,-}) \| u \|
	\end{aligned}
\end{equation}
and
\begin{equation*}
	\begin{aligned}
		\| \opH_b u_0 \| &= \| \opH_b \phi_{b,0} u \| \le \| \phi_{b,0} \opH_b \| + 2 \| \phi_{b,0}' u' \| + \| \phi_{b,0}'' u \|\\
		&\ls \| \opH_b u \| + (x_{b,+}^{2\nu_{+}} + |x_{b,-}|^{2\nu_{-}}) \| u \|.
	\end{aligned}
\end{equation*}
By Assumption~\ref{asm:iR.R}~\ref{itm:nuasm.iR.R}, we have $b^{-1} \approx \alpha_{b,\pm} \gamma_{b,\pm}$, and therefore
\begin{equation}
	\label{eq:Hla.u0.above.iR.R}
	b^{-1} \| \opH_b u_0 \| \ls b^{-1} \| \opH_b u \| + (\gamma_{b,+}^3 + \gamma_{b,-}^3) \| u \|.
\end{equation}	

Combining the lower and upper estimates \eqref{eq:Hla.upum.below.iR.R}, \eqref{eq:Hla.u0.below.iR.R}, \eqref{eq:Hla.upum.above.iR.R} and \eqref{eq:Hla.u0.above.iR.R} and noting as above $b^{-1} \approx \alpha_{b,\pm} \gamma_{b,\pm}$, we have as $b \to +\infty$
\begin{align*}
	\| u \| &\le \| u_0 \| + \| u_{+} + u_{-} \|\\
	&\le (\| \opA_{1,\frac \pi 2}^{-1} \| \max \{\Lambda_{b,\pm}\} (1 + \gamma_{b,+} + \gamma_{b,-}) + \BigO(b^{-1})) \| \opH_b u \|
	+ (\gamma_{b,+} + \gamma_{b,-}) \| u \|\\
	&\le \| \opA_{1,\frac \pi 2}^{-1} \| \max \{\alpha_{b,\pm}\} (1 + \gamma_{b,+} + \gamma_{b,-}) \| \opH_b u \| + (\gamma_{b,+} + \gamma_{b,-}) \| u \|.
\end{align*}
Hence, by Assumption~\ref{asm:iR.R}~\ref{itm:imvgrowth.iR.R}, for any $u \in \Dom(\opH)$ we obtain 
\begin{equation*}
	\| \opH_b^{-1} \| \le \| \opA_{1,\frac \pi 2}^{-1} \| \max \{\alpha_{b,\pm}\} (1 + \BigO(\max \{\Upsilon(x_{b,\pm})\})), \quad b \to +\infty.
\end{equation*}
If $\max \{\alpha_{b,\pm}\} = \alpha_{b,+}$, using Proposition~\ref{prop:lbound.iR} we can find a family of functions $u_b \in \Dom(\opH)$ such that as $b \to +\infty$
\begin{equation*}
	\| u_b \| =  \| \opA_{1,\frac \pi 2}^{-1} \| \alpha_{b,+} (1 + \gamma_{b,+}) \| \opH_b u_b \|
\end{equation*}
and it therefore follows as $b \to +\infty$
\begin{equation*}
	\| \opH_b^{-1} \| \ge \| \opA_{1,\frac \pi 2}^{-1} \| \max \{\alpha_{b,\pm}\} (1 - \BigO(\max \{\Upsilon(x_{b,\pm})\})).
\end{equation*}
We can similarly argue when $\max \{\alpha_{b,\pm}\} = \alpha_{b,-}$.
\end{proof}
Our strategy to prove Theorem~\ref{thm:iR} can be re-deployed, with minimal and obvious changes, when Assumption~\ref{asm:iR.R}~\ref{itm:incr.unbd.iR.R} is replaced with 
\begin{align*}
	&\lim_{x \to +\infty} V_{2,+}(x) = +\infty, \quad V_{2,+}'(x) > 0, \quad x > x_0,\\
	&\lim_{x \to -\infty} V_{2,-}(x) = -\infty, \quad V_{2,-}'(x) > 0, \quad x < -x_0
\end{align*}
and $V_{2,+}(x_{b,+}) = b$, $V_{2,-}(x_{b,-}) = -b$, to prove that as $b \to +\infty$
\begin{equation}
	\label{eq:res.norm.iR.R.posneg}
	\|(\opH - i (\pm b))^{-1} \| = \| \opA_{1,\frac \pi 2}^{-1} \| \left( V_{2,\pm}'(x_{b,\pm}) \right)^{-\frac{2}{3}} \left( 1 + \BigO( \Upsilon(x_{b,\pm})) \right),
\end{equation}
where we have used the fact that $\opA_{1,-\pi/2} = \opA_{1,\pi/2}^*$ and therefore $\| \opA_{1,-\pi/2}^{-1} \| = \| \opA_{1,\pi/2}^{-1} \|$. One can analogously treat the case where the potential is bounded on one of the half-lines and unbounded on the other one.

Finally, without going into details, we remark that our analysis for general curves in the numerical range (see Subsection~\ref{ssec:inside.Num}) can be extended, using the above methodology, to the whole line. For example, with $V$ satisfying Assumption~\ref{asm:iR.R}, and $\rho_{\pm} = |V_2'(x_{b,\pm})|^{-1/3}$, $\mu_{b,\pm} = \rho_{\pm}^2 a$, $\Phi_{b,\pm} = \langle\mu_{b,\pm}\rangle^2 \| (\opA_{1,\pi/2} - \mu_{b,\pm})^{-1} \| \Upsilon(x_{b,\pm})$, and assuming
\begin{equation*}
	\Phi_{b,\pm} = o(1), \quad b \to +\infty,
\end{equation*}
we find as $b \to +\infty$
\begin{equation}
	\label{eq:resnorm.iR.R.gralcurve}
	\|(\opH - \la_b)^{-1} \| = \max\{\| (\opA_{1,\frac{\pi}{2}} - \mu_{b,\pm})^{-1} \| |V_2'(x_{b,\pm})|^{-\frac{2}{3}} \} (1 + \BigO(\Phi_{b,\pm})).
\end{equation}

\subsection{Extension of Theorem~\ref{thm:R} to operators in $L^2(\R_+)$}
\label{ssec:R.Rp}
We briefly indicate how Theorem~\ref{thm:R} can be adapted for operators $H_+=-\partial_x^2+ V_+$ in $L^2(\Rplus)$ subject to a Dirichlet boundary condition at $0$ and with $V_+ := i V_{2,+}$ satisfying the conditions in Assumption~\ref{asm:R} for $x>0$. The even extension $V_2$ of $V_{2,+}$ to $\R$, and the corresponding complex potential $V := i V_2$, satisfy Assumption~\ref{asm:R} up to a possible lack of smoothness at $0$, which can however be removed by a compactly supported perturbation $W$, similarly as in Subsection~\ref{ssec:inside.Num}. Then Theorem~\ref{thm:R} can be applied to $H=-\partial_x^2 + V$ in $L^2(\R)$. Since the odd extension of each $u_{+} \in \Dom(H_+)$ belongs to $\Dom(H)$ and for each odd $u \in \Dom(H)$, we have $(H u)_{\restriction \Rplus} = H_+ (u)_{\restriction \Rplus}$, we arrive at the desired inequality for any $u_+ \in \Dom(H_+)$ (see \eqref{eq:Hb.est.R} in the proof of Theorem~\ref{thm:R})
\begin{equation*}
 \| \opA_{\beta}^{-1} \| (V_{2,+}(t_a))^{-1} ( 1 + \BigO ( \iota(t_a) + (a^{\frac12} t_a)^{-1 + \eps} ) ) \| (\opH_+-a) u_+ \| \geq \| u_+ \|.
\end{equation*}

\subsection{Extension of Theorem~\ref{thm:iR} to radial operators}
\label{ssec:rad}
Let $v: \Rplus \to \Rplus$ and consider the Schr\"odinger operator in $L^2(\Rd)$ with  $d \ge 2$
\begin{equation}\label{eq:H.rad.def}
	\opH = -\Delta + i v(|x|), \quad \Dom(\opH) = W^{2,2}(\Rd) \cap \Dom(v(|\cdot|)).
\end{equation}
We justify below that the claim of Theorem~\ref{thm:iR} remains valid in this case;  a relatively small real part of the potential (in the sense of Assumption~\ref{asm:iR}) can be added in a straightforward way but we omit the details. 
\begin{proposition}\label{prop:rad}
Let $H$ be the radial Schr\"odinger operator in $L^2(\Rd)$ as in \eqref{eq:H.rad.def} with  $d \ge 2$ and with $v$ such that $V:=i v$ satisfies Assumption~\ref{asm:iR}. Then
\begin{equation}\label{eq:H.rad.claim}
	\|(\opH - i b)^{-1} \| = \| \opA_{1,\frac{\pi}{2}}^{-1} \| v'(x_b)^{-\frac{2}{3}} \left( 1 + \BigO \left(\Upsilon(x_b)\right) \right), \quad b \to +\infty,
\end{equation}
where $x_b>0$ is defined by the equation $v(x_b)=b$ for all sufficiently large $b$.
\end{proposition}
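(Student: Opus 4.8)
The plan is to reduce the $d$-dimensional radial problem \eqref{eq:H.rad.def} to the one-dimensional half-line problem already settled in Theorem~\ref{thm:iR}, via the standard decomposition of $L^2(\Rd)$ into spherical harmonics. Writing $L^2(\Rd) \cong \bigoplus_{\ell \ge 0} L^2(\Rplus, r^{d-1}\,\dd r) \otimes \mathcal{Y}_\ell$, where $\mathcal{Y}_\ell$ is the (finite-dimensional) eigenspace of the Laplace--Beltrami operator on $S^{d-1}$ with eigenvalue $c_\ell = \ell(\ell+d-2)$, the operator $\opH$ is reduced by this decomposition into a direct sum of half-line operators. After the unitary substitution $u(r) = r^{-(d-1)/2} w(r)$, which turns $L^2(\Rplus, r^{d-1}\,\dd r)$ into $L^2(\Rplus, \dd r)$, each fibre becomes
\begin{equation*}
	\opH_\ell = -\partial_r^2 + \frac{\gamma_\ell}{r^2} + i v(r), \qquad \gamma_\ell := c_\ell + \frac{(d-1)(d-3)}{4} \ge -\tfrac14,
\end{equation*}
acting in $L^2(\Rplus)$ with a Dirichlet-type condition at $0$ (the form domain being the natural one; for $\gamma_\ell \ge 3/4$, i.e.\ $\ell\ge1$, this is unambiguous, and $\ell=0$, $d\in\{2,3\}$ needs the usual care but is covered by the limit-point analysis at $0$). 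Then $\|(\opH - ib)^{-1}\| = \sup_{\ell \ge 0} \|(\opH_\ell - ib)^{-1}\|$, and it suffices to show that each $\|(\opH_\ell - ib)^{-1}\|$ obeys \eqref{eq:H.rad.claim} with constants \emph{uniform in $\ell$}, the worst fibre being $\ell = 0$.

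\textbf{Key steps.} First I would set up the spherical-harmonics reduction and the radial substitution, recording that $\gamma_\ell \ge \gamma_0 > -1/4$ (so the centrifugal term is, at worst, a form-bounded perturbation with bound $<1$) and that $\gamma_\ell \to +\infty$ as $\ell \to \infty$. Second, I would run the four-step argument of Subsection~\ref{ssec:proof.iR} for $\opH_\ell$, treating $V_\ell := \gamma_\ell r^{-2} + i v(r)$ in place of $V$: since $v = V_2$ satisfies Assumption~\ref{asm:iR} and, by \eqref{eq:xb.db}, $x_b \to +\infty$ with $x_b - 2\delta_b \gtrsim x_b$, the extra real term $\gamma_\ell r^{-2}$ is $O(\gamma_\ell x_b^{-2})$ on the neighbourhood $\Omega_b$ of the turning point $x_b$ (which is $\ell$-independent, being defined by $v(x_b)=b$); crucially, on $\Omega_b$ one has $\gamma_\ell x_b^{-2} = o\big(V_2'(x_b)^{2/3}\big)$ because, by Assumption~\ref{asm:iR}~\ref{itm:nuasm}--\ref{itm:imvgrowth}, $x_b^{-2} V_2'(x_b)^{-2/3} = x_b^{-2-\nu}\,\Upsilon(x_b) \cdot x_b^{\nu} \cdot (\ldots) \to 0$ — more simply, $r^{-2} \lesssim \Upsilon(x_b)^3 x_b^{3\nu}/\ldots$, so one checks directly that the centrifugal term is absorbed into the $O(\Upsilon(x_b))$ remainder in Propositions~\ref{prop:away.iR}, \ref{prop:local.iR} and \ref{prop:lbound.iR}. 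In Step~1 (away from $x_b$) the term $\gamma_\ell r^{-2} \ge -\tfrac14 r^{-2}$ only helps or is negligible; in Step~2 (Taylor comparison with $\opA_{1,\pi/2}$), after the shift-and-scale $\opU_{b,\rho}$ with $\rho = V_2'(x_b)^{-1/3}$, the centrifugal term becomes $\rho^2 \gamma_\ell (\rho x + x_b)^{-2}$, which is bounded by $\gamma_\ell \rho^2 x_b^{-2} (1 + o(1))$ on $\Omega_{b,\rho}$ and hence contributes $o(1)$ to the perturbation $\widehat R_b$ controlling the norm-resolvent convergence $\opS_b \xrightarrow{nrc} \opA_{1,\pi/2}$; Step~3 uses the same scaled eigenfunctions, and Step~4's commutator estimate (Lemma~\ref{lem:Hb.phibk.commutator}) goes through verbatim since $[\,\gamma_\ell r^{-2}, \phi_{b,k}\,] = 0$. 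Finally, the lower bound (Proposition~\ref{prop:lbound.iR} applied to $\opH_0$, where the centrifugal term is smallest) shows the estimate cannot be improved, and the $\ell=0$ fibre already saturates it, so the supremum over $\ell$ equals the $\ell=0$ value up to the stated remainder.

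\textbf{Main obstacle.} The delicate point is the \emph{uniformity in $\ell$}: one must verify that the implicit constants in the $O(\Upsilon(x_b))$ remainders of Propositions~\ref{prop:away.iR}--\ref{prop:lbound.iR} do not degrade as $\ell \to \infty$. This is handled by observing that, on the relevant neighbourhood $\Omega_b$ of the fixed turning point $x_b$, the centrifugal coefficient satisfies $\gamma_\ell r^{-2} \le \gamma_\ell (x_b - 2\delta_b)^{-2} \lesssim \gamma_\ell x_b^{-2}$, and for large $\ell$ either this quantity is still $o(V_2'(x_b)^{2/3})$ — in which case the fibre behaves like a small perturbation of $\opA_{1,\pi/2}$ exactly as for $\ell=0$ — or $\gamma_\ell x_b^{-2}$ dominates $V_2'(x_b)^{2/3}$, and then a crude lower bound $\|(\opH_\ell - ib)u\| \gtrsim \big(\min_{r\in\supp u}|\gamma_\ell r^{-2} + i v(r) - ib|\big)\|u\| \gtrsim \big(\gamma_\ell x_b^{-2} \wedge \delta V_2'(x_b)^{2/3}\Upsilon(x_b)^{-1}\big)\|u\|$ (splitting again into support near or away from $x_b$, and using that the imaginary part $v(r)-b$ is large off $\Omega_b'$ while the real part $\gamma_\ell r^{-2}$ is large on $\Omega_b'$ once $\gamma_\ell x_b^{-2} \gg V_2'(x_b)^{2/3}$) shows $\|(\opH_\ell - ib)^{-1}\| = o\big(V_2'(x_b)^{-2/3}\big)$, so such high-$\ell$ fibres never attain the supremum. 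Combining, $\sup_\ell \|(\opH_\ell-ib)^{-1}\|$ is realised (up to the remainder) by the finitely many low-$\ell$ fibres, each obeying \eqref{eq:H.rad.claim}, and in particular by $\ell=0$, which yields the claim.
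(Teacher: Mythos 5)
The overall architecture is right and matches the paper: decompose into spherical harmonics, pass to half-line fibres $H_{b,\ell}$ with centrifugal term, run the shift-and-scale comparison with $\opA_{1,\pi/2}$ fibre by fibre, and use the fact that the centrifugal term commutes with the cut-offs. The gap is in your treatment of the uniformity in $\ell$. You propose a dichotomy: either the scaled centrifugal shift $\la_{b,\ell} := \gamma_\ell \rho^2 x_b^{-2}$ is $o(1)$ (small perturbation of $\opA_{1,\pi/2}$), or $\gamma_\ell x_b^{-2}$ dominates $V_2'(x_b)^{2/3}$ (crude numerical-range bound gives a fibre resolvent that is $o(V_2'(x_b)^{-2/3})$). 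This dichotomy is incomplete and the first horn does not deliver what you need.

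For the incompleteness: at a fixed $b$, as $\ell$ ranges over $\N_0$ the ratio $\gamma_\ell x_b^{-2}/V_2'(x_b)^{2/3}$ sweeps through all positive values. For $\ell$ with $\la_{b,\ell} = \Theta(1)$ (say $\la_{b,\ell}\approx c$ for some $c \in (0,1/\|\opA_{1,\pi/2}^{-1}\|)$) the crude numerical-range estimate only yields $\|(\opH_\ell - ib)^{-1}\| \lesssim c^{-1}V_2'(x_b)^{-2/3}$, which is \emph{larger} than $\|\opA_{1,\pi/2}^{-1}\|V_2'(x_b)^{-2/3}$ and therefore useless for the upper bound. For the first horn: even when $\la_{b,\ell}=o(1)$, one generally has $\la_{b,\ell}\gg \Upsilon(x_b)$ for $\ell$ growing moderately with $b$, so a plain perturbative comparison $\|(\opA + \la_{b,\ell})^{-1}\| = \|\opA^{-1}\|(1+O(\la_{b,\ell}))$ would give a remainder worse than $O(\Upsilon(x_b))$.

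The paper closes both gaps at once by a single one-sided observation that your proposal does not use. After shift and scale, the fibre becomes $S_{b,\ell} = \opA_{1,\pi/2} + \la_{b,\ell} + (\text{centrifugal fluctuation}) + R_b$, where the fluctuation and $R_b$ are $O(\Upsilon(x_b))$ uniformly in $\ell$ and $\la_{b,\ell}\geq 0$ for $\ell\geq l_d$. Since $\opA_{1,\pi/2}$ is m-accretive, the identity $\| (\opA_{1,\pi/2} + \la)u\|^2 = \|\opA_{1,\pi/2} u\|^2 + 2\la\|u'\|^2 + \la^2\|u\|^2 \geq \|\opA_{1,\pi/2} u\|^2$ holds for all $\la\geq 0$, so $\|(\opA_{1,\pi/2}+\la_{b,\ell})^{-1}\| \leq \|\opA_{1,\pi/2}^{-1}\|$ for every $\ell\geq l_d$, regardless of the size of $\la_{b,\ell}$. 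This is the crucial monotonicity: the (nonnegative) centrifugal shift can only \emph{decrease} the fibre resolvent norm, never increase it, giving the sharp uniform upper bound $\|S_{b,\ell}^{-1}\|\leq \|\opA_{1,\pi/2}^{-1}\|(1+\BigO(\Upsilon(x_b)))$ in one stroke, and avoiding any case analysis in $\ell$. The finitely many fibres with $\la_{b,\ell}<0$ (e.g.\ $\ell=0$, $d=2$) are then treated separately as small perturbations, and the lower bound comes from the bottom fibre as you say. Your proposal should be repaired by replacing the dichotomy with this monotonicity argument.
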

\begin{proof}[Sketch of proof]
The first step of the proof (see Sub-section~\ref{sssec:step.1.iR}) can be performed in the same way using the multidimensional operator $\opH$, \ie~we split $\Rd$ into $\Omega'_b = \{x \in \Rd \, : \, ||x|-x_b| \leq \delta_b\}$, with $\delta_b = \delta x_b^{-\nu}$, and the rest.

In the second step (see Sub-section~\ref{sssec:step.2.iR}), we decompose $\opH-ib$ in a standard way into a countable family (labelled by $l \in \N_0$) of one-dimensional operators in $L^2(\Rplus)$ 
\begin{equation}
	H_{b,l}	= \Dtr + \frac{c_{l,d}}{r^2} + i (v(r)-v(x_b)), \quad c_{l,d} = l(l+d-2) + \frac14(d-1)(d-3)
\end{equation}
with appropriate domains (see \eg~\cite[Chap.~18]{Weidmann-2003} for details)). The challenge is to obtain suitable estimates for all $l \in \N_0$ and all $b>b_0$ with $b_0$ independent of $l$. 

Following the same procedure (in particular shift and scaling and using the fact that $\supp u \subset \Omega_b$) as in Sub-section~\ref{sssec:step.2.iR}, we arrive at operators in $L^2(\R)$
\begin{equation}
	S_{b,l} = A + \la_{b,l} + (T_{b,l}-\la_{b,l}) \chi_{\Omega_{b,\rho}} + R_b, \quad b>0, \ l \in \N_0,
\end{equation}
where $\rho := v'(x_b)^{-\frac{1}{3}}$,  $\Omega_{b,\rho} := ( - 2 \delta_b \rho^{-1}, 2 \delta_b \rho^{-1})$, 
\begin{equation}
	A:= \Dt + ix, \quad T_{b,l}:=\frac{c_{l,d} \rho^2 }{(\rho x + x_b)^2}, \quad 
	R_b(x) := i \frac12 \frac{v''(\tilde{s} \rho x + x_b)}{ v'(x_b)} \rho x^2 \chi_{\Omega_{b,\rho}}(x)
\end{equation}
with $0 \leq \tilde s \leq 1$ (see \eqref{eq:Rb.def}) and
\begin{equation}
	\la_{b,l}:= \frac{c_{l,d} \rho^2}{x_b^{2}}  = c_{l,d} \frac{\Upsilon^{2}(x_b)}{x_b^{2+2\nu}}.
\end{equation}
Note that for a fixed $l \in \N_0$, $\la_{b,l} \to 0$ as $b \to + \infty$ and that $\la_{b,l} \geq 0$ for all $l \ge l_d \in \N_0$ ($l_d$ can be set to 0 for $d > 2$ and to 1 for $d = 2$) and all large $b$.  

An important observation is that the graph norm of $A$ satisfies (uniformly for all $l \ge l_d$ and all large $b$)
\begin{equation}
	\label{eq:airy.labl.inequality}
	\| (A + \la_{b,l})u \| + \|u\| \gs \|u''\| + \|x u\| + \la_{b,l} \|u\| + \|u\|, \quad u \in \Dom(A).
\end{equation}
To see this, it is enough to note
\begin{equation*}
	\| (A + \la_{b,l})u \|^2 = \| A u \|^2 + \la_{b,l}^2 \| u \|^2 + 2 \la_{b,l} \| u' \|^2
\end{equation*}
and to apply \eqref{eq:airyinequality}. This equation also shows that $\| (A + \la_{b,l})u \| \ge \| (A + \la_{b,l'})u \|$ for $l \ge l' \ge l_d$ and hence
\begin{equation}
	\label{eq:airy.labl.monotonicity}
	\| (A + \la_{b,l})^{-1} \| \le \| (A + \la_{b,l'})^{-1} \|, \quad l \ge l' \ge l_d, \quad b > 0.
\end{equation}
Furthermore, since $\la_{b,l_d} \to 0$ as $b \to +\infty$ and $(\opA - z)^{-1}$ is bounded on bounded sets in $\C$, we can find $b_0 > 0$ (independent of $l$) such that for all $b \ge b_0$ we have $\|(\opA + \la_{b,l_d})^{-1}\| \ls 1$. It follows from \eqref{eq:airy.labl.monotonicity} that $\|(\opA + \la_{b,l})^{-1}\| \ls 1$ for all $l \ge l_d$ and all $b \ge b_0$. Note that this last estimate combined with \eqref{eq:airy.labl.inequality} implies that $\|x (\opA + \la_{b,l})^{-1}\| \ls 1$ for all $l \ge l_d$ and all $b \ge b_0$.

The estimates of $R_b$ (see \eqref{eq:Rbdecay}) remain valid and we have (uniformly in $l$)
\begin{equation}
	\left\|  \frac{T_{b,l}-\la_{b,l}}{\la_{b,l}} \chi_{\Omega_{b,\rho}}\right\|_\infty \ls \frac{\delta}{x_b^{1+\nu}}, 
	\qquad 
	\left\|  \frac{T_{b,l}-\la_{b,l}}{\la_{b,l} x} \chi_{\Omega_{b,\rho}}\right\|_\infty \ls \frac{\Upsilon (x_b)}{x_b^{1+\nu}},
\end{equation}
as $b \to +\infty$. Thus
\begin{equation}
	S_{b,l} = \left( I + \frac{T_{b,l}-\la_{b,l}}{\la_{b,l}}  \chi_{\Omega_{b,\rho}} \la_{b,l} (A+\la_{b,l})^{-1} + \frac{R_b}{x} x (A+\la_{b,l})^{-1} \right) (A+\la_{b,l}).
\end{equation}
is invertible and its graph norm is equivalent to that of $A+\la_{b,l}$ (uniformly in $l$). Moreover, by the second resolvent identity, the previous estimates and \eqref{eq:Rbdecay}, we obtain (uniformly in $l$)
\begin{equation*}
	\begin{aligned}
		\| S_{b,l}^{-1} - (A+\la_{b,l})^{-1} \| &
		\leq \| S_{b,l}^{-1} x\| \| (\la_{b,l} x)^{-1} (T_{b,l}-\la_{b,l})  \chi_{\Omega_{b,\rho}}\|_\infty \|\la_{b,l} (A+\la_{b,l})^{-1}\|
		\\
		& \quad + \| S_{b,l}^{-1} x\| \| x^{-2 }R_b\| \|x (A+\la_{b,l})^{-1}\|
		\\
		& \ls \Upsilon(x_b) x_b^{-(1+\nu)} + \Upsilon(x_b), \quad b \to + \infty.
	\end{aligned}
\end{equation*}
Since $\la_{b,l} \geq 0$ for all $l \ge l_d$ and all large $b$ and $A$ is m-accretive, we get 
\begin{equation}
	\| S_{b,l}^{-1}\| = \|A^{-1} \| \left( 1 + \BigO(\Upsilon(x_b)) \right), \quad b \to + \infty;
\end{equation}
for finitely many $l \in \N_0$, $l<l_d$, the same claim follows by treating $T_{b,l}$ as a perturbation. The rest of the proof of this step is the same as the one in Sub-section~\ref{sssec:step.2.iR} and can be reformulated as an estimate for the full operator $\opH$.

The third step (see Sub-section~\ref{sssec:step.3.iR}) can be performed for $S_{b,l}$ with a fixed $l$ and so it requires only minor and straightforward adjustments. 

The last step (see Sub-section~\ref{sssec:step.4.iR}) is completely analogous.
\end{proof}

\subsection{Remarks on semi-classical operators}
\label{ssec:s-c}
We indicate how the strategy of Theorem~\ref{thm:iR} applies in the semi-classical case for the operator $H_h = - h^2 \partial_x^2 + V$ in $L^2(\Rplus)$ subject to Dirichlet boundary condition at $0$ with $h > 0, \; h \to 0$ and $V := i V_2$, noting that resolvent norm bounds for semi-classical Schr\"odinger operators with purely imaginary potential have been derived in \eg~\cite{Pravda-Starov-2006-73,Dencker-2004-57,BordeauxMontrieux-2013,Sjoestrand-2009,Bellis-2018-9,Bellis-2019-277,Henry-2014,Almog-2016-48}. We assume that $0 \leq V_2 \in C^2(\ov \Rplus)$ satisfies the conditions in Sub-section~\ref{ssec:Schr.prelim} so that $H_h$ is m-accretive. Suppose in addition that $x_0 \in \Rplus$ is such that $V_2'(x_0) \neq 0$ and there is $\delta_0 >0$ such that
\begin{equation}\label{eq:semi-preim}
	\begin{aligned}
	&\inf_{|x-x_0|\ge\delta_0} |V_2(x)-V_2(x_0)| \\
	& \qquad \gs \min\{|V_2(x_0-\delta_0) - V_2(x_0)|, |V_2(x_0+\delta_0) - V_2(x_0)|\}.			
	\end{aligned}
\end{equation}
We focus on the resolvent estimate for the spectral point $\la = V(x_0)$ from the range of the potential.

In Step 1 (see Sub-section~\ref{sssec:step.1.iR}), one considers functions $u \in \Dom(H_h)$ with $\supp u \cap (x_0-\delta_h, x_0+\delta_h) = \emptyset$ with a suitably selected $\delta_h \to 0+$ as $h \to 0$. Then the quadratic form estimate (see Proposition~\ref{prop:away.iR}), Taylor's theorem and \eqref{eq:semi-preim} yield (for the considered functions $u$)
\begin{equation}\label{eq:Hh.1}
	\|(H_h - \la) u \| \gs \delta_h \|u\|, \quad h \to 0.
\end{equation}

In Step 2 (see Sub-section~\ref{sssec:step.2.iR}), for functions $u$ supported in $\cI:=(x_0-2\delta_h,x_0+2\delta_h)$, taking out the factor $h^2$, the shift $x \mapsto x+x_0$, rescaling $x \mapsto \sigma x$ and Taylor's theorem lead to operators in $L^2(\R)$
\begin{equation}
	T_h = \sigma^{-2} \left( -\partial_x^2 + i h^{-2} \sigma^3 V_2'(x_0) x + i h^{-2} \frac{V_2''(\xi)}{2} \sigma^4 x^2 \chi_{\cI}(\sigma x + x_0) \right).
\end{equation}
Selecting $\sigma$ so that $\sigma^3 h^{-2}=1$, we obtain
\begin{equation}
T_h = h^{-\frac 43} \left( -\partial_x^2 + i V_2'(x_0) x + W_h(x) \right),
\end{equation}
where $\|W_h\| = \BigO(h^{-\frac23}\delta_h^2)$ as $h \to 0$. Hence choosing $\delta_h = h^{\frac13 + \eps}$ with $\eps>0$, we readily obtain the norm resolvent convergence to the Airy operator $A_{r,\theta}$, with $r = |V_2'(x_0)|$ and $\theta = \sgn(V_2'(x_0)) \pi/2$, see Sub-section~\ref{ssec:Airy.prelim},
\begin{equation}\label{eq:Th.est}
h^{\frac 43} T_h \to -\partial_x^2 + i V_2'(x_0) x, \quad h \to 0.
\end{equation}
Thus, rewritting \eqref{eq:Th.est} for $H_h$, we arrive at (for the considered functions $u$)
\begin{equation}\label{eq:Hh.2}
\|(H_h - \la) u \| \geq h^\frac23 \|A_{r,\theta}^{-1}\|^{-1} (1-\BigO(h^{-\frac 23} \delta_h^2)) \|u\|, \quad h \to 0.
\end{equation}

Following the strategy in Step 4 (see Sub-section~\ref{sssec:step.4.iR}), we combine the estimates \eqref{eq:Hh.1}, \eqref{eq:Hh.2} above. To this end we employ a cut-off $\phi$ satisfying $\phi(x) = 1$ for $x\in [x_0-\delta_h, x_0+\delta_h]$, $\phi(x) = 0$ for $x \notin (x_0-2 \delta_h, x_0+2\delta_h)$ and $\|\phi^{(j)}\|_\infty \ls \delta_h^{-j}$, $j=1,2$. Moreover, a simple estimate of the quadratic form yields $\|u'\|^2 \leq h^{-2} \| (H_h - \la) u \| \| u \|$, $u \in \Dom(H_h)$. By following the steps in Step 4, we obtain
\begin{equation}
	\label{eq:s-c.ubound.iR}
	\|(H_h - \la) u \| \geq h^\frac23 \|A_{r,\theta}^{-1}\|^{-1} (1-\BigO(h^{-\frac23} \delta_h^2)) \|u\|, \quad u \in \Dom(H_h)
\end{equation}
as $h \to 0$.

Finally, it is straightforward to adapt the reasoning in Proposition~\ref{prop:lbound.iR} (see Sub-section~\ref{sssec:step.3.iR}) to prove that the bound \eqref{eq:s-c.ubound.iR} is optimal and we omit the details.

\section{An inverse problem}
\label{sec:bbt.iR}
In \cite[Thm.~1.5]{Batty-2016-270}, the authors relate the rate of time-decay of the norm of a one-parameter semigroup to the rate of growth of the norm of the resolvent of its generator along the positive part of the imaginary axis. Inspired by the presentation on this topic in \cite{CB-CIRM}, we consider the following problem. Which assumptions must a non-negative, unbounded function $r:\Rplus \rightarrow \Rplus$ satisfy for there to exist a potential $i V_2$ such that the Schr\"{o}dinger operator $\opH = \Dt + i V_2$ verifies $\|(\opH - i b)^{-1} \| = r(b)$ as $b \to +\infty$? Theorem~\ref{thm:iR} enables us to answer this question as follows.

\begin{proposition}
	\label{prop:batty}
	Let $r \in C^1(\ov{\Rplus};\Rplus)$ and $r(y) \to +\infty$ as $y \to +\infty$. Assume furthermore that $r$ satisfies the following conditions as $y \to +\infty$:
	\begin{align}
		\label{eq:batty.prop.asm1}&\int_{0}^{y} r^{\frac32}(u) \dd u \ls y \; r^{\frac32}(y),
		\\
		\label{eq:batty.prop.asm2}&\frac{|r'(y)|}{r^{\frac52}(y)} \int_{0}^{y} r^{\frac32}(u) \dd u \ls 1,
		\\
		\label{eq:batty.prop.asm3}&\frac{r^{\frac12}(y)}{\int_{0}^{y} r^{\frac32}(u) \dd u} = o(1).
	\end{align}
	Then the potential $V := i V_2$, where $V_2$ is a real function determined by the equation
	\begin{equation}
		\label{eq:r.int.cond}
		\| \opA_{1,\frac \pi 2}^{-1}\|^{-\frac 32} \int_{0}^{V_2(x)} r^{\frac32}(u) \, \dd u = x, \quad x \geq 0,
	\end{equation}
	with $\opA_{1,\pi/2}$ as in \eqref{eq:airyde},
	satisfies Assumption~\ref{asm:iR} with $\nu = -1$ and
	\begin{equation}
		\label{eq:r.diff.cond}
		\| \opA_{1,\frac \pi 2}^{-1} \| (V_2'(x_b))^{-\frac23} = r(b), \quad b \to +\infty,
	\end{equation}
	with $x_b$ as in \eqref{eq:xb.def}.
	
	If $r \in C^1(\ov{\Rplus};\Rplus)$ is regularly varying with positive index, it is eventually increasing and it satisfies
	\begin{equation}\label{eq:r.nu.asm}
		|r'(y)| \ls r(y) y^{-1}, \quad y \to +\infty,
	\end{equation}	
	then the conditions \eqref{eq:batty.prop.asm1}--\eqref{eq:batty.prop.asm3} hold.
\end{proposition}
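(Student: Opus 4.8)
The plan is to realise $V_2$ explicitly as the inverse of the auxiliary function $\Phi(s) := \| \opA_{1,\frac \pi 2}^{-1} \|^{-3/2} \int_{0}^{s} r^{3/2}(u)\,\dd u$, $s \ge 0$, and then to translate the three size requirements of Assumption~\ref{asm:iR} (with $\nu = -1$) into \eqref{eq:batty.prop.asm1}--\eqref{eq:batty.prop.asm3} by the substitution $y = V_2(x)$. First I would check that $\Phi$ is legitimate: since $r \in C^1(\ov{\Rplus};\Rplus)$ is strictly positive, $\Phi' = \| \opA_{1,\frac \pi 2}^{-1} \|^{-3/2} r^{3/2} > 0$ on $\ov{\Rplus}$ and $\Phi \in C^2$; moreover $\Phi(0) = 0$ and $\Phi(s) \to +\infty$ as $s \to +\infty$ because $r(y) \to +\infty$ forces $\int_0^s r^{3/2} \to +\infty$. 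Hence $\Phi$ is an increasing $C^2$ bijection of $\ov{\Rplus}$ onto itself, so $V_2 := \Phi^{-1}$ is a well-defined, $C^2$, strictly increasing, unbounded function with $V_2(0) = 0$ and $V_2'(0) = 1/\Phi'(0) > 0$; this is precisely the solution of \eqref{eq:r.int.cond}. Already at this stage $V := i V_2$ lies in $\LilocRplusclosed \cap C^2((x_0,\infty))$ (any $x_0 \ge 0$), the monotonicity and unboundedness in \eqref{eq:V.unbd.incr.iR} hold, and \eqref{eq:l.def} holds with $l = 0$ since $V_1 = \Re V \equiv 0$.

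Next I would differentiate the defining relation. From $\Phi(V_2(x)) = x$ one gets the pointwise identity $V_2'(x) = \| \opA_{1,\frac \pi 2}^{-1} \|^{3/2}\, r^{-3/2}(V_2(x))$, and differentiating once more $|V''(x)| = |V_2''(x)| = \tfrac{3}{2} \| \opA_{1,\frac \pi 2}^{-1} \|^{3}\, |r'(V_2(x))|\, r^{-4}(V_2(x))$. The key move is that in each of the two inequalities of \eqref{eq:V'.nu.iR} with $\nu = -1$ (where the weight is $x^{-1}$) and in $\Upsilon(x) = x^{-1}(V_2'(x))^{-1/3}$ from \eqref{eq:upsilon.def.iR}, I would substitute these formulas together with $x = \Phi(V_2(x)) = \| \opA_{1,\frac \pi 2}^{-1} \|^{-3/2}\int_0^{V_2(x)} r^{3/2}$ and then change variable to $y = V_2(x)$, which ranges over all of $\ov{\Rplus}$ and tends to $+\infty$ with $x$. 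A short computation shows that the three resulting conditions on $y$ are exactly \eqref{eq:batty.prop.asm1}, \eqref{eq:batty.prop.asm2} and \eqref{eq:batty.prop.asm3} (the last up to the harmless constant factor $\| \opA_{1,\frac \pi 2}^{-1} \|$). The only fussy point is that \eqref{eq:batty.prop.asm1}--\eqref{eq:batty.prop.asm3} are asymptotic statements whereas Assumption~\ref{asm:iR} asks for the bounds to hold for all $x > x_0$; I would settle this by continuity and compactness on an interval $(x_0, X_0]$, together with the elementary behaviour $V_2(x) \sim V_2'(0)\,x$ and $V_2'(x) \to V_2'(0) > 0$ as $x \to 0^+$. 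Finally \eqref{eq:r.diff.cond} is immediate: evaluating the formula for $V_2'$ at $x = x_b$, so that $V_2(x_b) = b$ (\cf~\eqref{eq:xb.def}), gives $\| \opA_{1,\frac \pi 2}^{-1} \|(V_2'(x_b))^{-2/3} = \| \opA_{1,\frac \pi 2}^{-1} \|\big(\| \opA_{1,\frac \pi 2}^{-1} \|^{3/2}r^{-3/2}(b)\big)^{-2/3} = r(b)$, an exact identity valid for every $b$ in the range of $V_2$.

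For the last assertion I would proceed as follows. If $r$ is regularly varying with positive index $\alpha$, eventually increasing and satisfies \eqref{eq:r.nu.asm}, then $r^{3/2}$ is regularly varying with index $\tfrac{3}{2}\alpha > 0$ and eventually increasing, so Karamata's theorem on integrals of regularly varying functions (see \cite{Seneta-1976}) gives $\int_0^y r^{3/2}(u)\,\dd u \sim (\tfrac{3}{2}\alpha + 1)^{-1}\, y\, r^{3/2}(y)$ as $y \to +\infty$; this is \eqref{eq:batty.prop.asm1}. Feeding this asymptotic into \eqref{eq:batty.prop.asm2} collapses it to $y\,|r'(y)|/r(y) \lesssim 1$, \ie~exactly \eqref{eq:r.nu.asm}, and into \eqref{eq:batty.prop.asm3} collapses it to $(y\,r(y))^{-1} = o(1)$, which holds since $r(y) \gtrsim y^{\alpha - \eps} \to +\infty$ for small $\eps > 0$ by \eqref{eq:slowvarying.estimates}; the values of $y$ near the origin are trivial by continuity. (If one prefers to bypass Karamata's theorem, the two-sided bound $\int_0^y r^{3/2} \approx y\,r^{3/2}(y)$ follows from the locally uniform convergence $r^{3/2}(yu)/r^{3/2}(y) \to u^{3\alpha/2}$ on $[1/2,1]$ for the lower bound and from eventual monotonicity of $r^{3/2}$ for the upper bound.) I do not expect a genuine obstacle at any point: the whole proposition rests on the change-of-variable identity of the second paragraph, and the only real care needed is the bookkeeping between the asymptotic ``$\lesssim$'' in \eqref{eq:batty.prop.asm1}--\eqref{eq:batty.prop.asm3} and the ``for all $x > x_0$'' form demanded by Assumption~\ref{asm:iR}.
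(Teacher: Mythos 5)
Your proposal is correct and follows essentially the same route as the paper: invert \eqref{eq:r.int.cond}, differentiate to get $V_2'(x)=\|\opA_{1,\frac\pi2}^{-1}\|^{\frac32}r^{-\frac32}(V_2(x))$, and translate the three conditions of Assumption~\ref{asm:iR} with $\nu=-1$ into \eqref{eq:batty.prop.asm1}--\eqref{eq:batty.prop.asm3} via $y=V_2(x)$, with \eqref{eq:r.diff.cond} an exact identity. For the second statement the paper avoids Karamata, using eventual monotonicity for the upper bound in \eqref{eq:batty.prop.asm1} and the locally uniform convergence of $r(yt)/r(y)$ for the lower bound needed in \eqref{eq:batty.prop.asm3} — precisely the elementary alternative you sketch in your parenthetical.
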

\begin{proof}
	Note that \eqref{eq:r.int.cond} can be indeed solved as the left-hand side is an increasing function in $y:=V_2(x)$. It is immediate that $V_2$ determined by \eqref{eq:r.int.cond} satisfies \eqref{eq:r.diff.cond}. Moreover such $V_2$ is unbounded and increasing. It remains to verify Assumptions~\ref{asm:iR}~\ref{itm:nuasm} and \ref{itm:imvgrowth}. Firstly, by differentiating \eqref{eq:r.int.cond} and employing \eqref{eq:batty.prop.asm1}, we have
	\begin{equation*}
		\frac{V_2'(x) x}{V_2(x)} \approx \frac{x}{y  r^{\frac32}(y)} \approx \frac{\int_{0}^{y} r^{\frac32}(u) \dd u}{y r^{\frac32}(y)} \ls 1, \quad x \to +\infty.
	\end{equation*}
	Similarly using \eqref{eq:batty.prop.asm1} and \eqref{eq:batty.prop.asm2}
	\begin{equation*}
		\frac{|V_2''(x)| x}{V_2'(x)} \approx  \frac{|r'(y)| x}{ r^{\frac52}(y)} \approx \frac{|r'(y)| \int_{0}^{y} r^{\frac32}(u) \dd u}{r^{\frac52}(y)} \ls 1, \quad x \to +\infty.
	\end{equation*}
	Lastly, by \eqref{eq:batty.prop.asm3}
	\begin{equation*}
		\frac{(V_2'(x))^{-\frac13}}{x} \approx \frac{r^{\frac12}(y)}{\int_{0}^{y} r^{\frac32}(u) \dd u}  \to 0, \quad x \to +\infty.
	\end{equation*}

	As for the second statement in the Proposition, let $r$ be regularly varying with index $\beta > 0$ (see Sub-section~\ref{ssec:reg.var}) and eventually increasing and assume furthermore that it satisfies \eqref{eq:r.nu.asm}. From the facts that $r$ is bounded on compact subsets of $\ov\Rplus$ and that it is eventually increasing, we have as $y \to +\infty$
	\begin{equation*}
		\frac{\int_{0}^{y} r^{\frac32}(u) \dd u}{y \: r^{\frac32}(y)} = \frac{\int_{0}^{y_0} r^{\frac32}(u) \dd u + \int_{y_0}^{y} r^{\frac32}(u) \dd u}{y \: r^{\frac32}(y)} \ls \frac1{y \: r^{\frac32}(y)} + \frac{y - y_0}{y}\ls 1.
	\end{equation*}
	Moreover, using \eqref{eq:r.nu.asm} and the previous estimate,
	\begin{equation*}
		\frac{|r'(y)| \int_{0}^{y} r^{\frac32}(u) \dd u}{r^{\frac52}(y)} 
		\ls 
		\frac{\int_{0}^{y} r^{\frac32}(u) \dd u}{y \: r^{\frac32}(y)} \ls 1, \quad y \to +\infty.
	\end{equation*}
	Finally, calling $W_y(t) = r(y t) / r(y), \; \omega_\beta = t^\beta, \; t \ge 0,$ and arguing as in Lemma~\ref{lem:reg.var.comp}, it is possible to show that $\|( W_y - \omega_\beta ) \, \chi_{[0, 1]} \|_{\infty} \to 0$ as $y \to +\infty$, and we have
	\begin{equation*}
		\frac{r^{\frac12}(y)}{\int_{0}^{y} r^{\frac32}(u) \dd u} = \frac{\left( \int_{0}^{y} \left(\frac{r(u)}{r(y)}\right)^{\frac32} \dd u \right)^{-1}}{r(y)} = \frac{\left( \int_{0}^{1} \left(W_y(t)\right)^{\frac32} \dd t \right)^{-1}}{r(y) \: y} \ls \frac{1}{r(y) \: y} \to 0,
	\end{equation*}
	for $y \to +\infty$, as required.
\end{proof}

\begin{example}
A basic example of a function satisfying the conditions of Proposition~\ref{prop:batty} is $r(y) = \langle y\rangle^{\alpha}$ with $\alpha > 0$, which is regularly varying and increasing and for which \eqref{eq:r.nu.asm} holds. The sought-after potential satisfies $V_2(x) \approx x^{2/(2+3\alpha)}$, \ie~it is, as expected, sub-linear (see also the examples in Section~\ref{sec:examples}).
\end{example}

\begin{example}
We remark that conditions \eqref{eq:batty.prop.asm1}--\eqref{eq:batty.prop.asm3} include many others rates, growing both faster (\eg~$r(y) = \exp(y^{\alpha})$ with $\alpha > 0$) and more slowly (\eg~$r(y) = \log(e + y)$ or $r(y) = \log\log(e + y)$). For instance, consider $r(y) = \exp(y^{\alpha})$ with $\alpha > 0$. The condition \eqref{eq:batty.prop.asm1} is satisfied for any increasing $r$. To verify \eqref{eq:batty.prop.asm2}, observe that integration by parts yields, as $y \to + \infty$
\begin{equation}
	\begin{aligned}
		\int_1^y \exp(\tfrac 32 u^{\alpha}) \, \dd u &= \frac 2 {3\alpha} \left[ \frac{\exp( \tfrac 32 u^{\alpha})}{u^{\alpha-1}} \right]_1^y - \frac{2(1-\alpha)}{3 \alpha} \int_1^y \frac{\exp(\tfrac 32 u^{\alpha})}{u^{\alpha}} \, \dd u
%		\\		& 
\ls 		\frac{\exp(\frac 32 y^{\alpha})}{y^{\alpha-1}}.
	\end{aligned}
\end{equation}
Hence
\begin{equation}
	\frac{|r'(y)| \int_{0}^{y} r^{\frac32}(u) \dd u}{r^{\frac52}(y)} 
	\ls 
	\frac{y^{\alpha-1} \int_0^y \exp(\frac 32 u^{\alpha}) \, \dd u}{\exp(\frac 32 y^{\alpha})} 
	\ls 1, \quad y \to +\infty.
\end{equation}
Finally, since
\begin{equation}
	\int_1^y \exp(\tfrac 32 u^{\alpha}) \, \dd u 
	\gs 
	\frac{\int_1^y u^{\alpha-1} \exp(\tfrac 32 u^{\alpha}) \, \dd u}{\max\{1,y^{\alpha-1}\}} 
	\gs 
	\frac{\exp(\tfrac 32 y^{\alpha})}{\max\{1,y^{\alpha-1}\}}, \quad y \to +\infty, 
\end{equation}
for the condition \eqref{eq:batty.prop.asm3} we arrive at
\begin{equation}
	\frac{r^{\frac12}(y)}{\int_{0}^{y} r^{\frac32}(u) \dd u} \ls \frac{\max\{1,y^{\alpha-1}\} \exp(\tfrac 12 y^{\alpha})}{\exp(\tfrac 32 y^{\alpha})} = o(1), \quad y \to + \infty.
\end{equation}
\end{example}

\section{Examples}
\label{sec:examples}
We illustrate the behaviour of the norm of the resolvent in several examples where the numerical range, $\Num(\opH)$, and the spectrum, if any, lie in the first quadrant of the complex plane. 
In the sequel we denote
\begin{equation*}
	\Psi(\la) := \| (\opH - \la)^{-1} \|.
\end{equation*}
Recall that we have $\Psi(\la) \le 1 / \dist(\la, \overline{\Num(\opH)})$, $\la \notin \overline{\Num(\opH)}$, thus we focus on the behavior of $\Psi(\la)$ for $\la$ in the first quadrant only.

\subsection{Power-like potentials}
\label{ssec:example.power.V}
Let $\opH = \Dt + i \langle x \rangle^p$, $p > 0$, with $\Dom(\opH) = \WttR \cap \Dom(\langle x \rangle^p)$. It is routine to verify that the assumptions of Theorems~\ref{thm:iR} and \ref{thm:R} (see also the extensions in Sub-sections~\ref{ssec:iR.R}, \ref{ssec:R.Rp}) are satisfied and we thus have
\begin{equation}
	\label{eq:resnorm.japbracket.power}
	\begin{aligned}
		\Psi(i b) &= p^{-\frac23} \| \opA_{1,\frac \pi 2}^{-1} \| b^{-\frac23 (1 - \frac1{p})} (1 + \BigO(b^{-\frac2{p}})) (1 + \BigO(b^{-\frac13 (1 + \frac2{p})}))\\
		&= p^{-\frac23} \| \opA_{1,\frac \pi 2}^{-1} \| b^{-\frac23 (1 - \frac1{p})} (1 + \BigO(b^{-l_p})), \quad b \to +\infty,\\
		\Psi(a) &= 2^{-\frac{p}{p+1}} \| \opA_p^{-1} \| a^{-\frac12 \frac{p}{p+1}} (1 + \BigO(a^{-\frac1{p + 1}})) (1 + \BigO(a^{-m_p}))\\
		&= 2^{-\frac{p}{p+1}} \| \opA_p^{-1} \| a^{-\frac12 \frac{p}{p+1}} (1 + \BigO(a^{-m_p})), \quad a \to +\infty,
	\end{aligned}
\end{equation}
with the Airy operators $\opA_{1,\pi/2} = \Dt + i x$ and $\opA_p = \Nt + |x|^p$ (see \eqref{eq:airyde} and \eqref{eq:A.beta.def}, respectively) and
\begin{equation*}
	l_p := 
	\begin{cases}
		2 / p, &  p \ge 4, \\[1mm] 
		(1 + 2 / p) / 3, &  p \in (0, 4),
	\end{cases} \quad
	m_p := 
	\begin{cases}
		1 / (p + 1), &  p \ge 2, \\[1mm] 
		p / (2p + 2), &  p \in (0, 2);
	\end{cases}
\end{equation*}
note that, in this example, the remainder for $\Psi(a)$ is dominated by $\iota(t_a)$ which is independent of $\eps$.

For $V(x) = i x^{2n}$, $n \in \N$, we find similar formulas with improved remainder term for the real axis (in this case, $\iota(t_a) = 0$ and moreover we can take $\eps = 0$ in \eqref{eq:resnorm.R})
\begin{equation}
	\label{eq:resnorm.even.power}
	\begin{aligned}
		\Psi(i b) &= (2n)^{-\frac23} \| \opA_{1,\frac \pi 2}^{-1} \| b^{-\frac23 (1 - \frac1{2n})} (1 + \BigO(b^{-\frac13 (1 + \frac1{n})})), \quad b \to +\infty,\\
		\Psi(a) &= 2^{-\frac{2n}{2n+1}} \| \opA_{2n}^{-1} \| a^{-\frac{n}{2n+1}} (1 + \BigO(a^{-\frac{n+1}{2n+1}})), \quad a \to +\infty.
	\end{aligned}
\end{equation}

We can also derive estimates for odd potentials $V(x) = i x^{2n-1}$, $n \in \N$, along both the positive and negative parts of the imaginary axis (see \eqref{eq:res.norm.iR.R.posneg} in our closing remarks in Sub-section~\ref{ssec:iR.R}), namely as $b \to +\infty$,
\begin{equation}
	\label{eq:resnorm.odd.power}
	\begin{aligned}
		\Psi(i b) = (2n-1)^{-\frac23} \| \opA_{1,\frac \pi 2}^{-1} \| b^{-\frac23 \frac{2n-2}{2n-1}} (1 + \BigO(b^{-\frac13 \frac{2n+1}{2n-1}})), \quad \Psi(-ib) = \Psi(ib).
	\end{aligned}
\end{equation}

From \eqref{eq:resnorm.japbracket.power}, we note that, for power-like potentials with degree $p > 1$, $\Psi(ib)$ decays progressively faster as $p \to +\infty$ with limit $\Psi(ib) \approx b^{-\frac23}$, the decay rate for $V(x) = i e^{\langle x \rangle}$. As we consider potentials that grow super-exponentially, the asymptotic behaviour of $\Psi(ib)$ changes, and an additional factor (a negative power of $\log b$) comes into play (see Example~\ref{ssec:example.fast.V}). At the other end of the range for $p$, as $p \to 0+$, $p < 1$, we observe the growth rate of $\Psi(ib)$ along the imaginary axis increasing ever faster. The transition from power-like potentials to (more slowly growing) logarithmic ones also determines a change in asymptotics for $\Psi(ib)$, with growth along the imaginary axis becoming exponential (see Example~\ref{ssec:example.slow.V}).

Arguing as in the closing remarks in Sub-section~\ref{sssec:resnorm.adj.iR} (see \eqref{eq:resnorm.iR.levelcurves}), we find the level curves for the resolvent of $\opH$ with potential $V(x) = i x^n$, $n \in \N$. Note that $\rho = n^{-\frac13} b^{-\frac13\frac{n-1}{n}}$ and hence
\begin{equation}
	\label{eq:resnorm.iR.levelcurves.powerlike}
	a_b = \left(\frac{3n}{4}\right)^{\frac23} b^{\frac23(1 - \frac1{n})} \left(\log\left(b^{\frac23(1 - \frac1{n})} \eps^{-1}\right)\right)^{\frac23} (1 + o(1)), \quad b \to +\infty.
\end{equation}
Since we require $\rho = o(1)$, we need $n > 1$, and, for $\Phi_b \approx b^{\frac13\frac{n-4}{n}} = o(1)$, we must have $n < 4$.

We can similarly find the level curves, adjacent to the real axis, for the resolvent of $\opH$ with even potential $V(x) = i x^{2 n}$, $n \in \N$ (refer to Sub-section~\ref{sssec:resnorm.adj.R}). Note that $V_2(t_a) = 2^{\frac{2 n}{2 n + 1}} a^{\frac{n}{2 n + 1}}$ and hence an application of \eqref{eq:resnorm.R.levelcurves} yields
\begin{equation}
	\label{eq:resnorm.R.levelcurves.powerlike}
	b_a = \left(\frac{2 n + 1}{2 n}\right)^{\frac{2 n}{2 n + 1}} a^{\frac{n}{2 n + 1}} \left(\log\left(a^{\frac{n}{2 n + 1}} \eps^{-1}\right)\right)^{\frac{2 n}{2 n + 1}} (1 + o(1)), \quad a \to +\infty.
\end{equation}
Noting that $\Phi_a \approx V_2(t_a) (a^{\frac12} t_a)^{-1} \approx a^{-\frac1{2 n + 1}} = o(1)$, as $a \to +\infty$, and therefore any even monomial is admissible as a potential in this instance.

Two cases of particular interest are the operators with potentials $V(x) = i x^2$ (the Davies operator) and $V(x) = i x^3$ (the imaginary cubic oscillator). They have been studied in detail in the literature using both semi-classical and non-semi-classical methods: see e.g. \cite{Davies-1999-200,Boulton-2002-47,Dencker-2004-57,BordeauxMontrieux-2013,Krejcirik-2019-276} for the Davies example and \cite{Bender-1998-80,BordeauxMontrieux-2013,Sjoestrand-2009,Dondl-2016} for the cubic case. The behaviour of the norm of the resolvent for each of them is illustrated in Fig.~\ref{fig:examples.V} which shows the regions of uniform boundedness of $\Psi(\la)$ described in Sub-section~\ref{ssec:inside.Num} (see \eqref{eq:res.bounded.region.iR} and \eqref{eq:res.bounded.region.R}). Furthermore we observe that the level curves determined by \eqref{eq:resnorm.iR.levelcurves.powerlike} with $n = 2$ and $n = 3$ match those found using semi-classical methods in \cite[Prop.~4.6, Prop.~4.2]{BordeauxMontrieux-2013} and that, as expected, the level curves determined by \eqref{eq:resnorm.iR.levelcurves.powerlike} with $n = 2$ and those determined by \eqref{eq:resnorm.R.levelcurves.powerlike} with $n = 1$ (see \eqref{eq:resnorm.R.levelcurves.davies}) are symmetrical with respect to the bi-sector $y = x$.
\begin{figure}[h]
	\includegraphics[scale=0.65]{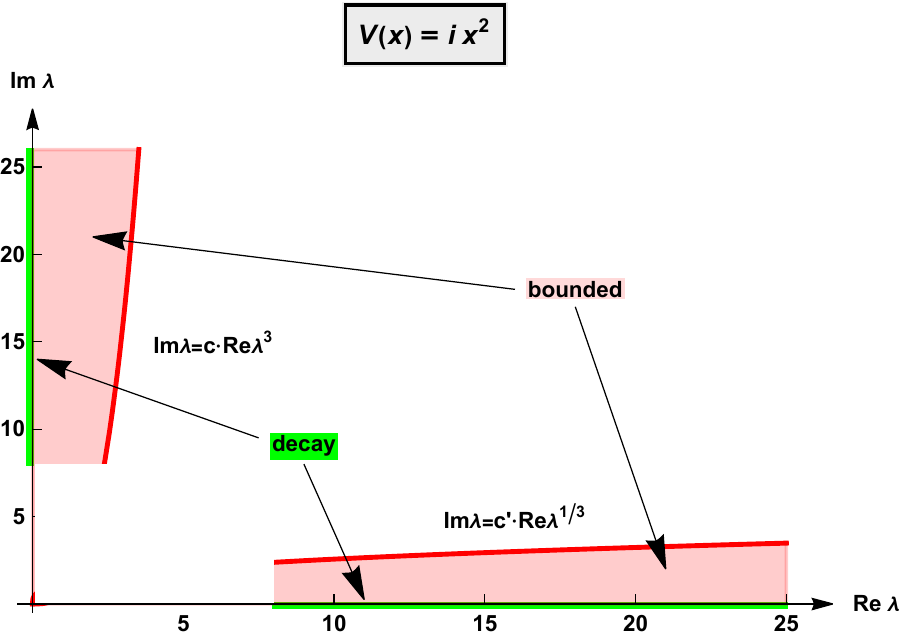}\hfill
	\includegraphics[scale=0.65]{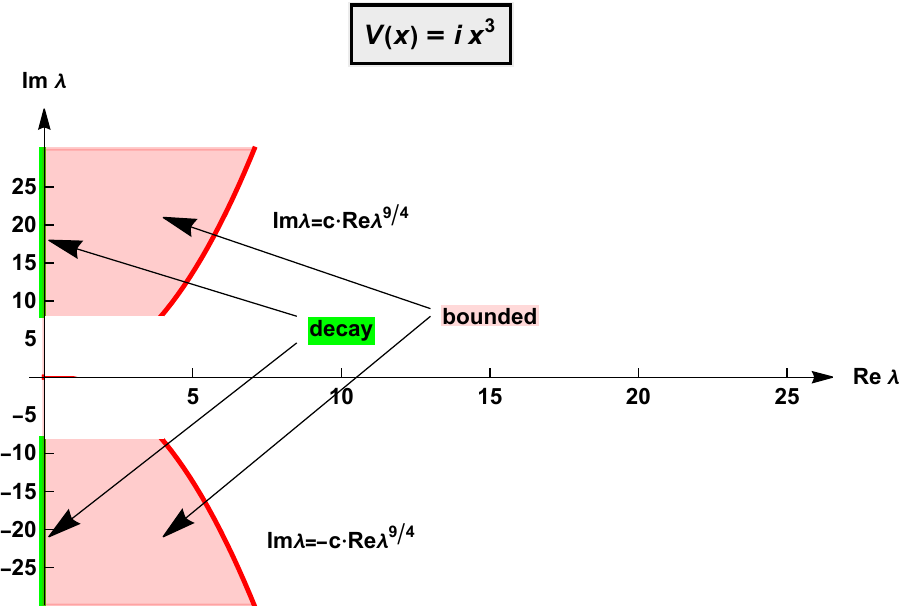}\\[\smallskipamount]
	\includegraphics[scale=0.65]{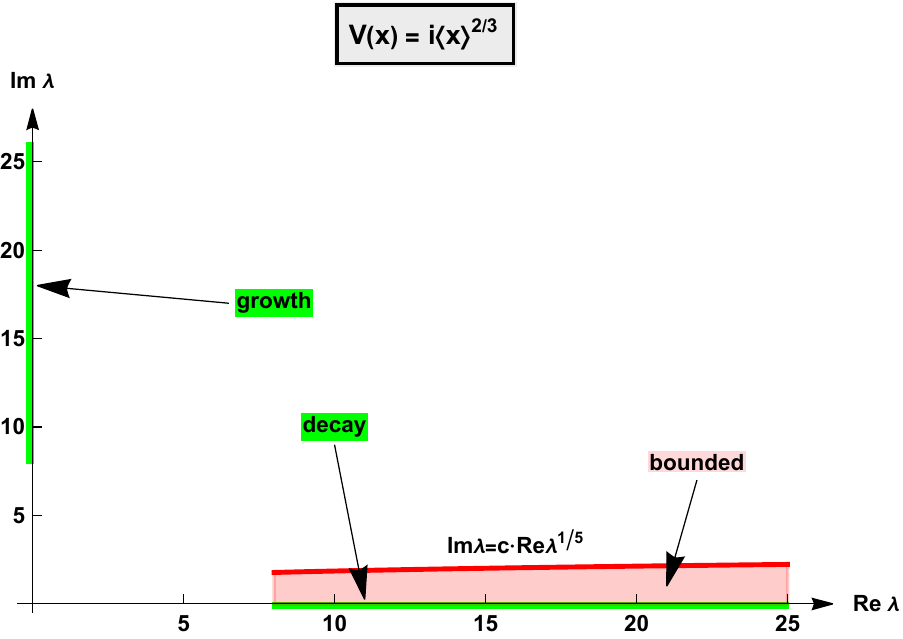}\hfill
	\includegraphics[scale=0.65]{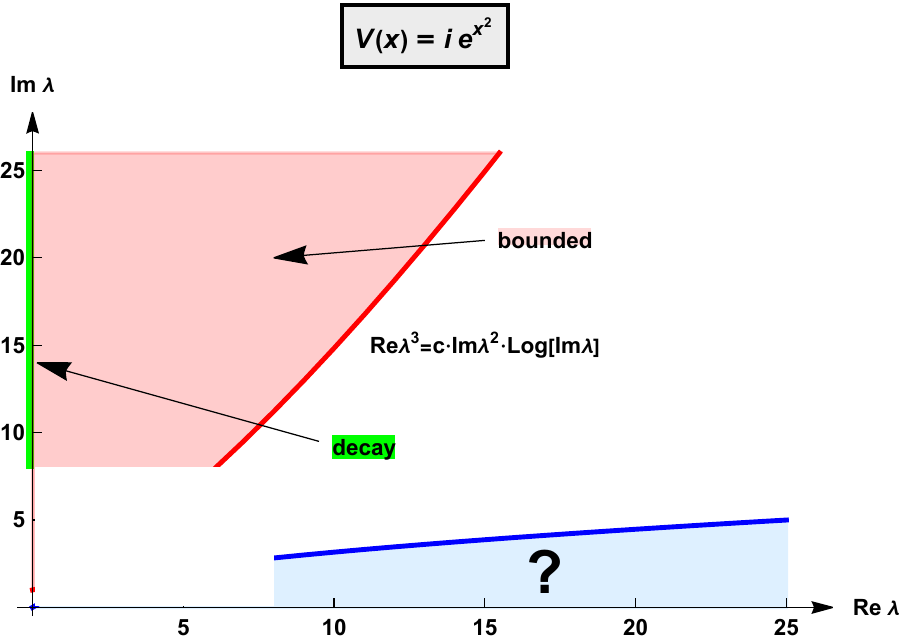}
	\caption{Schematic behaviour of $\Psi(\la)$ for operators with potentials growing at different rates. Corresponding asymptotic estimates are provided in \eqref{eq:resnorm.even.power} with $n = 1$ (top left), \eqref{eq:resnorm.odd.power} with $n = 2$ (top right), \eqref{eq:resnorm.japbracket.power} with $p = 2/3$ (bottom left) and \eqref{eq:fast.V.resnorm.iR} (bottom right). To produce the plots, we have used $\| \opA_{1,\pi/2}^{-1} \| = \| \opA_2^{-1} \| \approx 1.33377$ and $\| \opA_{2/3}^{-1} \| \approx 1.12648$, calculated using NDEigenvalues in Mathematica.}
	\label{fig:examples.V}
\end{figure}

We also show the behaviour of $\Psi(\la)$ for the operator with sub-linear potential $V(x) = i \langle x \rangle^{\frac23}$ in Fig.~\ref{fig:examples.V}, remarking that the completeness of the eigensystem for this operator with Dirichlet boundary conditions in $L^2(\Rplus)$ was proved in \cite{Tumanov-2021-280}.

\subsection{Slowly growing potential}
\label{ssec:example.slow.V}
Let $\opH = \Dt + i \log \langle x \rangle$ with domain $\Dom(\opH) = \WttR \cap \Dom(\log \langle x \rangle)$. Then
\begin{equation*}
	\Psi(i b) = \| \opA_{1,\frac \pi 2}^{-1} \| e^{\frac23 b} (1 + \BigO(e^{-\frac23 b})), \quad b \to +\infty.
\end{equation*}
As in the sub-linear potential case, the fact that $\Psi(\la)$ grows along the imaginary axis leads to an $\eps$-shifted critical curve that intersects it at some $b > 0$.

\subsection{Fast growing potential}
\label{ssec:example.fast.V}
Let $\opH = \Dt + i e^{x^2}$ with $\Dom(\opH) = \WttR \cap \Dom(e^{x^2})$. Then
\begin{equation}
	\label{eq:fast.V.resnorm.iR}\Psi(i b) = 2^{-\frac23} \| \opA_{1,\frac \pi 2}^{-1} \| b^{-\frac23} (\log b)^{-\frac13} (1 + \BigO(b^{-\frac13} (\log b)^{\frac13})), \quad b \to +\infty,
\end{equation}
which is as before illustrated in Fig.~\ref{fig:examples.V}. Since the decay of $\Psi(\la)$ on the imaginary axis is faster than for any polynomial potential, the region for uniform boundedness of $\Psi(\la)$ adjacent to the imaginary axis is correspondingly wider. Note that Theorem~\ref{thm:R} on the behaviour of $\Psi(\la)$ for $\la \in \Rplus$ is not applicable in this case, see also Figure~\ref{fig:examples.V}, and therefore the description of the critical region next to the real axis is currently an open question although  \cite[Eq.~(5.5)]{Krejcirik-2019-276} provides a clue as to what it may look like.

\appendix 
\section{Generalised Airy operator}
\label{sec:genAiry}

We analyse the following first order operator in $\Lt(\R)$ which we refer to as a generalised Airy operator
\begin{equation}
	\label{eq:genairyde}
	\opA = \Nt + W, \quad
	\Dom(\opA) = \{ u \in \Lt(\R) \, : \, -u' + W u \in \Lt(\R) \}.
\end{equation}
\begin{proposition}	\label{prop:A.gen}
	Let $W \in L^{\infty}_{\loc}(\R)$ with $\Re W \ge 0$ a.e.~and let $A$ be as in \eqref{eq:genairyde}. Then
	\begin{enumerate}[\upshape i), wide]
		\item \label{itm:A.maccr} $A$ is densely defined and m-accretive;
		\item $A$ has a compact resolvent if
		\begin{equation}\label{eq:ReW.essinf}
			\lim_{N \to +\infty} \essinf_{|x| \geq N}{\Re W(x)} = +\infty;
		\end{equation}
		\item \label{itm:A.star} the adjoint operator reads
		\begin{equation}
			\opA^* = \Ntp + \ov {W}, \quad
			\Dom(\opA^*) = \big\{ u \in \Lt(\R) \, : \, u' + \ov{W} u \in \Lt(\R) \big\};
		\end{equation}
		\item we have
		\begin{equation}\label{eq:airy.evs}
			\la \in \sigma_p(\opA) \quad \iff \quad \exp \left(\int_{0}^{x} \Re W(t) \, \dd t - \Re \la\, x \right) \in L^2(\R);
		\end{equation}
		hence $\sigma_p(\opA) = \emptyset$ if 
		\begin{equation}\label{eq:ReW.essinf+}
			\lim_{N \to +\infty} \essinf_{x \geq N}{\Re W(x)} = +\infty.
		\end{equation}
	\end{enumerate}
\end{proposition}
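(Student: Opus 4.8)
The plan is to reduce all four assertions to elementary facts about the first–order linear ODE $-u'+Wu=\lambda u+f$, which is explicitly solvable by an integrating factor. Throughout I would use that $u\in\Dom(\opA)$ forces $u'=Wu-\opA u\in L^1_{\loc}(\R)$ (since $W\in L^\infty_{\loc}$), so every such $u$ is locally absolutely continuous and ordinary integration by parts on bounded intervals is available.

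For the first claim (density and m-accretivity), density is immediate from $\CcR\subset\Dom(\opA)$. For accretivity I would fix $u\in\Dom(\opA)$, put $f:=\opA u$, and test against $\chi_n^2u$, where $\chi_n$ equals $1$ on $[-n,n]$, is supported in $[-2n,2n]$, and satisfies $\|\chi_n'\|_\infty\lesssim n^{-1}$. Integration by parts gives $\Re\langle -u',\chi_n^2u\rangle=\int\chi_n\chi_n'|u|^2\to0$, while $\Re\langle Wu,\chi_n^2u\rangle=\int\chi_n^2(\Re W)|u|^2\ge0$; since $\chi_n^2u\to u$ in $L^2(\R)$, letting $n\to\infty$ yields $\Re\langle\opA u,u\rangle=\int(\Re W)|u|^2\ge0$. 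To obtain m-accretivity it then suffices to solve $(\opA+1)u=f$ for arbitrary $f\in L^2(\R)$: with $p(x):=\int_0^x(W(t)+1)\,\dd t$ I would set $u(x):=\int_x^\infty e^{-(p(t)-p(x))}f(t)\,\dd t$, which is well defined because $\Re\bigl(p(t)-p(x)\bigr)\ge t-x$ for $t>x$ (as $\Re W\ge0$). The same bound gives $|u|\le g*|f|$ with $g(s):=e^{s}\chi_{(-\infty,0)}(s)$, so $\|u\|\le\|f\|$ by Young's inequality; differentiating shows $u$ solves the ODE, hence $u\in\Dom(\opA)$ and $(\opA+1)u=f$. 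Thus $\Ran(\opA+1)=L^2(\R)$, so $\opA$ is m-accretive (and, as a by-product, $\|(\opA+1)^{-1}\|\le1$).

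For compactness of the resolvent under \eqref{eq:ReW.essinf}, I would use $\Re\langle(\opA+1)u,u\rangle=\|u\|^2+\int(\Re W)|u|^2$, so every $u=(\opA+1)^{-1}f$ with $\|f\|\le1$ satisfies $\|u\|\le1$ and $\int(\Re W)|u|^2\le1$; given $\eps>0$, choosing $N$ with $\Re W\ge1/\eps$ a.e.\ on $\{|x|\ge N\}$ forces $\int_{|x|\ge N}|u|^2\le\eps$, i.e.\ the image of the unit ball has uniformly small tails. On $[-N,N]$ one has $u'=(W+1)u-f$ with $W\in L^\infty(-N,N)$, giving a uniform $W^{1,2}(-N,N)$-bound and hence precompactness of the restrictions in $L^2(-N,N)$ by the Rellich theorem; combining this with the uniform tail smallness (a routine Fréchet--Kolmogorov/diagonal argument) shows the image of the unit ball is precompact in $L^2(\R)$. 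For the adjoint, the formal adjoint of $\Nt+W$ is $\Ntp+\ov W$, and running the cutoff integration by parts above with a test function $v$ satisfying $v'+\ov W v\in L^2(\R)$ gives $\langle\opA u,v\rangle=\langle u,v'+\ov W v\rangle$ for all $u\in\Dom(\opA)$, while testing the definition of $\opA^*$ against $\CcR$ gives the reverse inclusion; this identifies $\opA^*=\Ntp+\ov W$ with the stated maximal domain. For the point spectrum, $\opA u=\lambda u$ is the ODE $u'=(W-\lambda)u$, whose nontrivial solutions are $u(x)=u(0)\exp\bigl(\int_0^xW(t)\,\dd t-\lambda x\bigr)$ with $u(0)\neq0$; since $|u(x)|=|u(0)|\exp\bigl(\int_0^x\Re W(t)\,\dd t-(\Re\lambda)x\bigr)$, such a solution lies in $\Dom(\opA)$ exactly when $\exp\bigl(\int_0^x\Re W(t)\,\dd t-(\Re\lambda)x\bigr)\in L^2(\R)$, which is \eqref{eq:airy.evs}; and under \eqref{eq:ReW.essinf+}, given $\lambda$ one picks $M>\Re\lambda$ and $N_0$ with $\Re W\ge M$ a.e.\ on $[N_0,\infty)$, so that $\int_0^x\Re W-(\Re\lambda)x\ge\mathrm{const}+(M-\Re\lambda)x\to+\infty$ as $x\to+\infty$, forcing $\sigma_p(\opA)=\emptyset$.

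The main obstacle I anticipate is the accretivity argument on the \emph{maximal} domain — justifying the cutoff integration by parts and the limit passage when $u'$ need not be globally in $L^2$ — together with the surjectivity of $\opA+1$ via the explicit integrating factor and the Young-inequality bound that keeps the solution square integrable; once these are in place, the compactness packaging, the adjoint identification, and the point-spectrum computation are routine.
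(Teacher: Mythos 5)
Your proof is correct, but the route differs from the paper's in two notable places, and the differences are worth recording.

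For part \textup{i)}, the paper first establishes (via a separate cut-off lemma, citing an analogue of \cite[Lem.~3.6]{Krejcirik-2017-221}) that $\Core_A:=\{u\in W^{1,2}(\R):\supp u \text{ bounded}\}$ is a core of $A$, proves accretivity by integration by parts on the core, and then shows $\Ran(A+\la)$ is dense by exhibiting the integrating-factor solution only for $f\in\CcR$ and combining with the a priori estimate $\|u\|\le\la^{-1}\|(A+\la)u\|$ to conclude the range is closed. You instead prove $\Re\langle Au,u\rangle=\int(\Re W)|u|^2$ directly on the maximal domain by testing against $\chi_n^2u$ and passing to the limit (the key identity $\Re\langle -u',\chi_n^2u\rangle=\int\chi_n\chi_n'|u|^2\to 0$ is exactly right, justified because $u\in W^{1,2}_{\loc}$), and you obtain surjectivity of $A+1$ in one step by solving the ODE for \emph{arbitrary} $f\in L^2$ and bounding $|u|\le g*|f|$ with $g(s)=e^s\chi_{(-\infty,0)}(s)$, $\|g\|_1=1$, via Young's inequality. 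This is arguably cleaner: it bypasses the core lemma and produces the bound $\|(A+1)^{-1}\|\le 1$ at the same time. For part \textup{iii)}, the paper shows $B:=\partial_x+\overline W$ (maximal domain) is m-accretive by the same method, observes $B\subset A^*$, and invokes the general fact that the adjoint of an m-accretive operator is m-accretive, so the inclusion must be equality (no proper m-accretive extension). You instead argue both inclusions directly: the cutoff integration by parts gives $B\subset A^*$, and testing $\langle A\varphi,v\rangle=\langle\varphi,A^*v\rangle$ against $\varphi\in\CcR$ shows $v'+\overline W v=A^*v$ distributionally, hence $v\in\Dom(B)$. Your argument is more elementary and self-contained; the paper's is shorter given the abstract m-accretivity theorem. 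Parts \textup{ii)} and \textup{iv)} coincide in substance with the paper's brief treatment.
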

\begin{proof}
	\begin{enumerate}[\upshape i), wide]
		\item It is clear that $\CcR \subset \Dom(\opA)$ and therefore that $\opA$ is densely defined. Moreover, a standard cut-off argument, using a sequence $u_n:= \phi(x/n) u$ for $0 \neq \phi \in \CcR$ and any $u \in \Dom(\opA)$, see \eg~\cite[Lem.~3.6]{Krejcirik-2017-221}, shows that 
		\begin{equation}\label{eq:core.A}
			\Core_A:=\{ u \in W^{1,2}(\R) \, : \, \supp u \text{ is bounded}\}
		\end{equation}
		is a core of $A$. Thus for all $u \in \Core_A$, we have $\langle \opA u, u \rangle = -\langle u', u \rangle +  \langle W u, u \rangle$, hence
		\begin{equation*}
			\Re \langle \opA u, u \rangle = \langle \Re W u, u \rangle = \| ( \Re W )^{\frac{1}{2}} u \|^2 >0,
		\end{equation*}
		i.e.~$\opA$ is accretive; moreover, 
		\begin{equation}
			\label{eq:reW.upperbound}
			2 \| \left( \Re W \right)^{\frac{1}{2}} u \|^2 \le \| \opA u \|^2 + \| u \|^2.
		\end{equation}

		For $\la > 0$ and $u \in \Dom(\opA)$, we have 
		\begin{equation}
			\| \left( \opA + \la \right) u \|^2 = \| \opA u \|^2 + \la^2 \| u \|^2 + 2 \la \| \left( \Re W \right)^{\frac{1}{2}} u \|^2,	
		\end{equation}
		thus
		\begin{equation}
			\| u \| \le \frac{1}{\la} \| \left( \opA + \la \right) u \|.
		\end{equation}
		This shows that $\opA + \la$ is injective, that $( \opA + \la )^{-1} : \Ran( \opA + \la ) \rightarrow \Dom(\opA)$ is bounded and that $\|( \opA + \la )^{-1} \| \le 1/\la$. Moreover, $\Ran ( \opA + \la )$ is closed. 
		
		Next we show that $\Ran(\opA+\la)$ is dense in $\Lt(\R)$. Let $f \in \CcR$ and assume that $\supp f \subset [a,b]$ for some $a, b \in \R$, $a < b$. Elementary calculations show that
		\begin{align*}
			u(x) = e^{\int_0^x W(t) \dd t + \la x} \int_x^b f(y) e^{-\int_0^y W(t) \dd t - \la y} \dd y \, \chi_{(-\infty, b)}(x)
		\end{align*}
		solves $-u' + ( W + \la )  u = f$. Furthermore, since $\Re W \ge 0$ and $\la > 0$
		\begin{align*}
			|u(x)| &\le e^{\int_0^x \Re W(t) \dd t + \la x} \, \int_a^b |f(y)| e^{-\int_0^y \Re W(t) \dd t - \la y}\,  \dd y \, \chi_{(-\infty, b)}(x) \\
			&\leq e^{\int_0^x \Re W(t) \dd t + \la x} \, \|f\|_{L^1} \, \chi_{(-\infty, b)}(x),
		\end{align*}
		hence $u \in L^2(\R)$. We have thus shown $\CcR \subset \Ran\left(\opA + \la\right)$, consequently $\Ran\left(\opA + \la\right) = \Lt(\R)$, $-\la \in \rho(\opA)$ and therefore $\opA$ is m-accretive.
		\item The compactness of $(\opA+1)^{-1}$ follows from \eqref{eq:ReW.essinf}, $\Dom(\opA) \subset W^{1,2}_{\rm loc}(\R)$ and \eqref{eq:reW.upperbound} (see \eg~\cite[Sub-secs.~14.2, 5.2]{Helffer-2013-book}).
		\item By simple adjustments of the arguments to prove \ref{itm:A.maccr}, we can show that $\opB := \dd / \dd x  + \ov W$ with the maximal domain $\Dom(\opB) := \{ u \in \Lt(\R) \, : \, u' + \ov{W} u \in \Lt(\R) \}$ is m-accretive. Moreover, for all $u \in \Core_A$ and $v \in \Dom(\opB)$, we have
		\begin{equation*}
			\langle \opA u, v \rangle = \langle -u', v \rangle + \langle W u, v \rangle = \langle u, v' \rangle + \langle u, \overline{W} v \rangle =  \langle u, \opB v \rangle,
		\end{equation*}
		which shows that $ \opB \subset \opA^*$. However, the fact that $\opA$ is m-accretive implies that $\opA^*$ is also m-accretive (see \eg~\cite[Thm.~6.6]{EE}) and therefore it must be the case that $\opB = \opA^*$, as claimed. 
		
		\item If $\la \in \sigma_p(\opA)$, there is $0 \neq u_{\la} \in \Dom(\opA)$ such that
		$
		-u_{\la}' + W u_{\la} -\la u_{\la} = 0.
		$
		Then $u_\la$ must have the form
		$
		u_{\la}(x) = C \exp( \int_{0}^{x} W(t)\, \dd t - \la x)$, $x \in \R$, 
		for some $C \in \C \setminus \{0\}$. Therefore
		\begin{equation*}
			|u_{\la}(x)| = |C| e^{\int_{0}^{x} \Re W(t) \, \dd t - (\Re \la) x},\quad x \in \R,
		\end{equation*}
		from which \eqref{eq:airy.evs} follows. Finally, using \eqref{eq:ReW.essinf+}, we obtain
		\begin{equation}
			\lim_{x \to +\infty} \frac{\int_{0}^{x} \Re W(t) \, \dd t}{x} =  +\infty,
		\end{equation}
		thus no $u_\la$ can be in $L^2(\R)$.
		\qedhere
	\end{enumerate}
	
\end{proof}

%\begin{remark}
%	The above argument also shows that, if $\la > 0$ and $f \in \CcR$, $\left( \opA + \la \right)^{-1} f$ has the integral representation
%	%
%	\begin{equation}
	%		\label{eq:genAiry.int.rep}
	%		\left(\left( \opA + \la \right)^{-1} f\right)(x) = \int_{-\infty}^{\infty} k_{\la}(x,y) f(y) \dd y,
	%	\end{equation}
%	%
%	with kernel
%	%
%	\begin{align*}
	%		k_{\la}(x,y) := \left\{ \begin{array}{cc} 	0, & \hspace{5mm} y \le x \\	e^{-\int_x^y W(t) \dd t - \la (y - x)}, & \hspace{5mm} y > x \\
		%		\end{array}. \right.
	%	\end{align*}
%	%
%	Moreover, since $\Re W(x) \ge 0$
%	%
%	\begin{equation*}
	%		|k_{\la}(x,y)| \le e^{-\int_x^y \Re W(t) \dd t - \la (y - x)} \cdot \chi_{\left(y > x\right)}(x,y) \le e^{- \la (y - x)} \cdot \chi_{\left(y > x\right)}(x,y)
	%	\end{equation*}
%	%
%	and hence
%	%
%	\begin{gather*}
	%		\underset{x \in \R}{\sup} \int_{-\infty}^{\infty} |k_{\la}(x,y)| \dd y \le \frac{1}{\la} < \infty, \\
	%		\underset{y \in \R}{\sup} \int_{-\infty}^{\infty} |k_{\la}(x,y)| \dd x \le \frac{1}{\la} < \infty.
	%	\end{gather*}
%	%
%	Therefore, by Schur's lemma (see e.g. \cite[Lem.~7.1]{Helffer-2013-book}), the representation \eqref{eq:genAiry.int.rep} can be continuously extended to $\Lt(\R)$.
%\end{remark}
%
%\begin{remark}
%	Proposition~\ref{genAiry.prop.maccretive} implies that the operator $\opA$ is maximally accretive and therefore
%	%
%	\begin{equation}
	%		\label{eq:genAiry.m.accretive}
	%		\{\la: \Re \la < 0\} \subset \rho(\opA) \text{ and } \left\| \left( \opA - \la \right)^{-1} \right\| \le \frac{1}{|\Re \la|}, \; \Re \la < 0.
	%	\end{equation}
%	%
%\end{remark}

\subsection{Separation property}

Under more restrictive assumptions on $W$, analogous to \eqref{eq:vgrowth}, the graph norm of $A$ separates.  
\begin{proposition}
	\label{prop:A.sep}
	Let $W \in \LilocR \cap C^1\left( \R\setminus [-x_0,x_0] \right)$, with some $x_0 > 0$, satisfying $\Re W \ge 0$ a.e., and suppose that
	\begin{enumerate}[\upshape (i), wide]
		\item \label{itm:genAiry.separation.prop} there exist $\eps \in (0,1)$ and $M> 0$ such that
		\begin{equation}\label{eq:A.W.sep.asm}
			|\Re W'(x)| \le \eps |\Re W(x)|^2 + M, \quad |x| > x_0;
		\end{equation}
		\item \label{itm:genAiry.im.part} $\Im V$ is relatively bounded w.r.t.~$\Re W$, \ie~there is $C_W \geq 0$ such that
		\begin{equation}\label{eq:ImW.rb}
			|\Im W| \leq C_W (\Re W + 1) \quad \text{a.e.~in } \R.
		\end{equation}
	\end{enumerate}
	Then 
	\begin{equation}\label{eq:A.dom.sep}
		\Dom(\opA) = \Dom(A^*) = W^{1,2}(\R) \cap \Dom(\Re W)
	\end{equation}
	and we have
	\begin{equation}\label{eq:genAiry.graph.norm}	
		\begin{aligned}
			\| \opA u \|^2 + \| u \|^2 &\geq C_A \left(\| u' \|^2 + \| \Re W u \|^2 + \| u \|^2 \right), \quad u \in \Dom(A),
			\\
			\| \opA^* u \|^2 + \| u \|^2 &\geq C_{A^*} \left(\| u' \|^2 + \| \Re W u \|^2 + \| u \|^2 \right), \quad u \in \Dom(A^*);
		\end{aligned}
	\end{equation}
	the constants $C_{A}, C_{A^*}>0$ depend only on $\eps$, $M$, $C_W$ and $\|W \chi_{[-x_0,x_0]}\|_\infty$.
\end{proposition}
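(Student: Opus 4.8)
The plan is to reduce the separation of the graph norm of $\opA$ to a one-dimensional analogue of the Kato--Brezis type arguments used for Schr\"odinger operators with complex potentials (\cf~\cite{Almog-2015-40,Krejcirik-2017-221}), exploiting the first-order structure here. First I would reduce everything to the ``unbounded part'' of $W$: writing $W = \phi W + (1-\phi) W =: W_b + W_u$ with a cut-off $\phi \in C_c^\infty((-2x_0,2x_0))$, $\phi \equiv 1$ on $[-x_0,x_0]$, the term $W_b \in \LiR$ is a bounded perturbation, so it suffices to prove \eqref{eq:A.dom.sep}--\eqref{eq:genAiry.graph.norm} for $-\partial_x + W_u$, where $W_u \in C^1(\R)$ satisfies \eqref{eq:A.W.sep.asm} and \eqref{eq:ImW.rb} globally with slightly adjusted constants. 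By Proposition~\ref{prop:A.gen} we already know $\opA$ (and hence the perturbed operator) is m-accretive with core $\Core_A$ from \eqref{eq:core.A}, so all the estimates below need only be verified for $u \in \Core_A$ and then extended by density.

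The key computation is an integration by parts on the real part of $W$. For $u \in \Core_A$, expanding $\|\opA u\|^2 = \|u'\|^2 + \|Wu\|^2 - 2\Re\langle u', Wu\rangle$ and writing $-2\Re\langle u', Wu\rangle = -2\Re\langle u', (\Re W) u\rangle + 2\Im\langle u', (\Im W) u\rangle$, the first term is $-\int (\Re W)\,\partial_x(|u|^2) = \int (\Re W')\,|u|^2$ after integrating by parts (boundary terms vanish since $u$ has bounded support and lies in $W^{1,2}$). Applying \eqref{eq:A.W.sep.asm} with a small $\eps$, the ``bad'' term $\int(\Re W')|u|^2$ is controlled by $\eps \|\Re W\,u\|^2 + M\|u\|^2$, which can be absorbed into $\|Wu\|^2 \geq \|\Re W\,u\|^2$. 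The cross term with $\Im W$ is handled using \eqref{eq:ImW.rb}: $|\Im W| \leq C_W(\Re W + 1)$ gives $|\langle u', (\Im W) u\rangle| \leq \|u'\|\,\|(\Im W)u\| \lesssim \|u'\|(\|\Re W\,u\| + \|u\|)$, which after a Cauchy--Schwarz with a free parameter is absorbed into a small multiple of $\|u'\|^2 + \|\Re W\,u\|^2$ plus $C\|u\|^2$. Collecting terms yields $\|\opA u\|^2 + \|u\|^2 \gtrsim \|u'\|^2 + \|\Re W\,u\|^2 + \|u\|^2$; the same argument run with $\opA^* = \partial_x + \ov W$ from Proposition~\ref{prop:A.gen}\ref{itm:A.star} gives the adjoint estimate, with the sign of the integration-by-parts term unchanged since it only involves $\Re W' $ and $|u|^2$. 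Finally, the inclusion $\Dom(\opA) \supseteq W^{1,2}(\R)\cap\Dom(\Re W)$ is immediate from \eqref{eq:ImW.rb} (which makes $\Im W$ relatively bounded, hence $Wu \in L^2$ whenever $(\Re W)u, u' \in L^2$), and the reverse inclusion $\Dom(\opA)\subseteq W^{1,2}\cap\Dom(\Re W)$ follows from the graph-norm estimate applied to the closure; the identity $\Dom(\opA)=\Dom(\opA^*)$ then follows since both equal the same space.

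The main obstacle I anticipate is the justification of the integration by parts and the density extension: one must check that the formal manipulations on $\Core_A$ genuinely transfer to all of $\Dom(\opA)$, i.e.\ that $\Core_A$ is a core not merely for $\opA$ but in the stronger sense that $u_n \to u$ in graph norm implies $\|\Re W\,u_n\| \to \|\Re W\,u\|$ and $\|u_n'\| \to \|u'\|$. This is where the a priori bound $2\|(\Re W)^{1/2}u\|^2 \leq \|\opA u\|^2 + \|u\|^2$ from \eqref{eq:reW.upperbound}, together with the cut-off sequence $u_n = \phi(x/n)u$ used in Proposition~\ref{prop:A.gen}, does the work: one shows the commutator $[\opA, \phi(\cdot/n)] = -n^{-1}\phi'(\cdot/n)$ is uniformly bounded and tends to $0$ strongly, so $\opA u_n \to \opA u$, and then the graph-norm inequality (first established on $\Core_A$) propagates to the limit. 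A secondary technical point is tracking that all constants depend only on $\eps$, $M$, $C_W$ and $\|W\chi_{[-x_0,x_0]}\|_\infty$ — this requires being careful that the absorption constants introduced by Cauchy--Schwarz are universal and that the bounded-perturbation step changes the constants in a controlled way; this is routine bookkeeping once the core estimate is in place.
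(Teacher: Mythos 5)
Your proposal follows the paper's proof almost verbatim: the same cut-off decomposition $W = (1-\phi)W + \phi W$ reducing to a bounded perturbation, the same integration by parts on $\Core_A$ (whose core property was established in Proposition~\ref{prop:A.gen}), absorption of the $\Re W'$ term via \eqref{eq:A.W.sep.asm} and of the $\Im W$ cross term via \eqref{eq:ImW.rb} with Young's inequality, and the same argument for $\opA^*$. The one substantive difference is that you make the density-extension step explicit (weak lower semicontinuity of $\|u'\|$ and $\|\Re W\,u\|$ along the approximating sequence), which the paper leaves implicit with the parenthetical ``(and hence for all $u\in\Dom(A_1)$)''; aside from a harmless sign slip in the decomposition of $-2\Re\langle u', Wu\rangle$ (it should produce $-2\Im\langle u',(\Im W)u\rangle$, not $+$, though Cauchy--Schwarz renders this irrelevant), the argument is sound.
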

\begin{proof}
	Consider $\phi \in C^\infty_c((-2x_0,2 x_0))$ such that $0 \le \phi \le 1$ and $\phi = 1$ on $[-x_0, x_0]$.
	We split $W = W_1 + W_2 :=(1 - \phi) W + \phi W$, where $W_2 \in \LiR$, $W_1 \in C^1(\R)$ and  $\supp W_1 \subset (-\infty, -x_0] \cup [x_0, +\infty)$. Since $W \in \LilocR$ and $W_1' = (1 - \phi)  W' - \phi' W$, the assumption \eqref{eq:A.W.sep.asm} is satisfied also for $W_1$, possibly with a different constant $M'$.
	
	Let $\opA_1$ be the operator determined by \eqref{eq:genairyde} with potential $W_1$. We show that the separation \eqref{eq:A.dom.sep} and \eqref{eq:genAiry.graph.norm} holds for $A_1$. The latter remain valid for $\opA = \opA_1 + W_2$ since $W_2$ is bounded. 

	For $u \in \Core_{A_1}$, see \eqref{eq:core.A} and \eqref{eq:A.dom.sep}, integration by parts yields
	\begin{align*}
		\| \opA_1 u \|^2 &= \| u' \|^2 + \| W_1  u \|^2 - 2 \Re \langle u',  W_1  u\rangle 
		\\
		&= \| u' \|^2 + \| W_1  u \|^2 - 2 ( \Re \langle u', \Re W_1  u\rangle + \Im \langle u',  \Im W_1  u\rangle) \\
		&= \| u' \|^2 + \| W_1 \, u \|^2 + \langle u, \Re W_1'  u\rangle - 2 \Im \langle u',  \Im W_1 u\rangle 
		\\
		&\ge \| u' \|^2 + \| \Re W_1  u \|^2 + \| \Im W_1  u \|^2 - \langle u, \, |\Re W_1'|  u\rangle - 2  \| u' \|  \| \Im W_1  u \|.
	\end{align*}
	Using \eqref{eq:A.W.sep.asm} for $W_1$ (see remarks above), Young inequality with $\delta \in (0,1)$ and the assumption~\eqref{eq:ImW.rb} in the second step, we arrive at 
	\begin{align*}
		\| \opA_1  u \|^2 &\ge (1 - \delta) \| u' \|^2 + (1 - \eps) \| \Re W_1 u \|^2 - (\delta^{-1} - 1) \| \Im W_1  u \|^2 - M' \| u \|^2 
		\\
		&\ge 
		(1 - \delta) \| u' \|^2 + ( 1 - \eps - C_W' (\delta^{-1} - 1) )  \| \Re W_1  u \|^2 
		\\
		&\quad 
		- ( M' + C_W' ( \delta^{-1} - 1 ))  \| u \|^2.
	\end{align*}
	We select $\delta$ so that $C_W'/\left(1 - \eps + C_W'\right) < \delta < 1$, thus $1 - \eps - C_W' \left(\delta^{-1} - 1\right) > 0$. Therefore for all $u \in \Core_{A_1}$ (and hence for all $u \in \Dom(A_1)$)
	\begin{equation}
		\label{eq:genAiry.A1.graph.norm}	
		\| \opA_1  u \|^2 + \| u \|^2 \gs \| u' \|^2 + \| \Re W_1 \, u \|^2 + \| u \|^2.
	\end{equation}
	Since the opposite inequality is immediate, we conclude with \eqref{eq:A.dom.sep} for $A_1$ and hence for $A$ since $W_2$ is bounded.
	The reasoning for $A^*$ is completely analogous.
\end{proof}

\bibliography{references}

\begin{thebibliography}{10}

\bibitem{Abels-2011}
{\sc Abels, H.}
\newblock {\em {Pseudodifferential and Singular Integral Operators}}.
\newblock {De Gruyter}, 2011.

\bibitem{Almog-2015-40}
{\sc Almog, Y., and Helffer, B.}
\newblock {On the spectrum of non-selfadjoint Schr{\"o}dinger operators with
  compact resolvent}.
\newblock {\em Comm. Partial Differential Equations 40\/} (2015), 1441--1466.

\bibitem{Almog-2016-48}
{\sc Almog, Y., and Henry, R.}
\newblock {Spectral analysis of a complex Schr\"{o}dinger operator in the
  semiclassical limit}.
\newblock {\em SIAM J. Math. Anal. 48\/} (2016), 2962--2993.

\bibitem{Arifoski-2020-52}
{\sc Arifoski, A., and Siegl, P.}
\newblock {Pseudospectra of damped wave equation with unbounded damping}.
\newblock {\em SIAM J. Math. Anal. 52\/} (2020), 1343–1362.

\bibitem{ArSi-generalised-2022}
{\sc Arnal, A., and Siegl, P.}
\newblock {Generalised Airy Operators}.
\newblock {\em arXiv preprint arXiv:2208.14389\/} (2022).

\bibitem{CB-CIRM}
{\sc Batty, C.}
\newblock {Rates of decay associated with operator semigroups}.
\newblock Talk at the CIRM conference Mathematical aspects of the physics with
  non-self-adjoint operators: 10 years after, Feb. 2021., 2021.

\bibitem{Batty-2016-270}
{\sc Batty, C.~J., Borichev, A., and Tomilov, Y.}
\newblock {$L^p$-tauberian theorems and $L^p$-rates for energy decay}.
\newblock {\em J. Funct. Anal. 270\/} (2016), 1153--1201.

\bibitem{Bellis-2018-9}
{\sc Bellis, B.}
\newblock {Subelliptic resolvent estimates for non-self-adjoint semiclassical
  Schr\"odinger operators}.
\newblock {\em J. Spectr. Theory 9\/} (2018), 171--194.

\bibitem{Bellis-2019-277}
{\sc Bellis, B., and Hitrik, M.}
\newblock {Semigroup expansions for non-selfadjoint Schr\"{o}dinger operators}.
\newblock {\em J. Funct. Anal. 277\/} (2019), 3586--3598.

\bibitem{Bender-1998-80}
{\sc Bender, C.~M., and Boettcher, S.}
\newblock {Real Spectra in Non-Hermitian Hamiltonians Having $\mathcal{PT}$
  Symmetry}.
\newblock {\em Phys. Rev. Lett. 80\/} (1998), 5243--5246.

\bibitem{BordeauxMontrieux-2013}
{\sc Bordeaux~Montrieux, W.}
\newblock {Estimation de r{\'e}solvante et construction de quasimode pr{\`e}s
  du bord du pseudospectre}.
\newblock arXiv:1301.3102, 2013.

\bibitem{Boulton-2002-47}
{\sc Boulton, L.}
\newblock {Non-self-adjoint harmonic oscillator, compact semigroups and
  pseudospectra}.
\newblock {\em J. Operator Theory 47}, 2 (2002), 413--429.

\bibitem{Brezis-1979-58}
{\sc Br\'{e}zis, H., and Kato, T.}
\newblock {Remarks on the {S}chr\"{o}dinger operator with singular complex
  potentials}.
\newblock {\em J. Math. Pures Appl. 58\/} (1979), 137--151.

\bibitem{Davies-1995}
{\sc Davies, E.~B.}
\newblock {\em {Spectral theory and differential operators}}.
\newblock Cambridge University Press, 1995.

\bibitem{Davies-1999-200}
{\sc Davies, E.~B.}
\newblock {Semi-Classical States for Non-Self-Adjoint Schr\"odinger Operators}.
\newblock {\em Comm. Math. Phys. 200\/} (1999), 35--41.

\bibitem{Davies-2000-43}
{\sc Davies, E.~B.}
\newblock {Pseudospectra of differential operators}.
\newblock {\em J. Operator Theory 43\/} (2000), 243--262.

\bibitem{Davies-2007}
{\sc Davies, E.~B.}
\newblock {\em {Linear operators and their spectra}}.
\newblock Cambridge University Press, 2007.

\bibitem{Dencker-2004-57}
{\sc Dencker, N., Sj{\"o}strand, J., and Zworski, M.}
\newblock {Pseudospectra of semiclassical (pseudo-) differential operators}.
\newblock {\em Commun. Pure Appl. Math. 57\/} (2004), 384--415.

\bibitem{Dondl-2016}
{\sc Dondl, P.~W., Dorey, P., and R{\"o}sler, F.}
\newblock {A Bound on the Pseudospectrum for a Class of Non-normal
  Schr\"odinger Operators}.
\newblock {\em Applied Mathematics Research {eXpress}\/} (2016).

\bibitem{Duc-2022}
{\sc Duc, T.~N.}
\newblock {Pseudomodes for biharmonic operators with complex potentials}.
\newblock arXiv:2201.03305v1 [math.SP], Jan. 2022.

\bibitem{EE}
{\sc Edmunds, D.~E., and Evans, W.~D.}
\newblock {\em {Spectral Theory and Differential Operators}}.
\newblock Oxford University Press, New York, 1987.

\bibitem{Grafakos-2014-249}
{\sc Grafakos, L.}
\newblock {\em Classical {F}ourier analysis}, vol.~249.
\newblock Springer, New York, 2014.

\bibitem{Helffer-2013-book}
{\sc Helffer, B.}
\newblock {\em {Spectral theory and its applications}}.
\newblock Cambridge University Press, 2013.

\bibitem{Henry-2014}
{\sc Henry, R.}
\newblock {On the semi-classical analysis of Schr\"odinger operators with
  purely imaginary electric potentials in a bounded domain}.
\newblock arXiv:1405.6183, 2014.

\bibitem{Hoorfar-2008-9}
{\sc Hoorfar, A., and Hassani, M.}
\newblock {Inequalities on the Lambert $W$ function and hyperpower function}.
\newblock {\em J. Inequal. Pure and Appl. Math 9\/} (2008), 5--9.

\bibitem{Kato-1978-5}
{\sc Kato, T.}
\newblock {On some Schr\"odinger operators with a singular complex potential}.
\newblock {\em Ann. Scuola Norm. Super. Pisa, Cl. Sci. IV 5\/} (1978),
  105--114.

\bibitem{Kato-1966}
{\sc Kato, T.}
\newblock {\em {Perturbation theory for linear operators}}.
\newblock Springer-Verlag, Berlin, 1995.

\bibitem{Krejcirik-2017-221}
{\sc Krej{\v{c}}i{\v{r}}{\'i}k, D., Raymond, N., Royer, J., and Siegl, P.}
\newblock {N}on-accretive {S}chr{\"o}dinger operators and exponential decay of
  their eigenfunctions.
\newblock {\em Israel J. Math. 221\/} (2017), 779--802.

\bibitem{KREJCIRIK2022109440}
{\sc Krejčiřík, D., and {Nguyen Duc}, T.}
\newblock {Pseudomodes for non-self-adjoint Dirac operators}.
\newblock {\em J. Funct. Anal. 282}, 12 (2022), 109440.

\bibitem{Krejcirik-2019-276}
{\sc Krej\v{c}i\v{r}{\'i}k, D., and Siegl, P.}
\newblock {Pseudomodes for Schr\"odinger operators with complex potentials}.
\newblock {\em J. Funct. Anal. 276\/} (2019), 2856--2900.

\bibitem{MiSiVi-2020}
{\sc Mityagin, B., Siegl, P., and Viola, J.}
\newblock {Concentration of eigenfunctions of Schr\"odinger operators}.
\newblock {\em J. Fourier Anal. Appl. 28\/} (2022).

\bibitem{Pravda-Starov-2006-73}
{\sc Pravda-Starov, K.}
\newblock {A Complete Study of the Pseudo-Spectrum for the Rotated Harmonic
  Oscillator}.
\newblock {\em J. London Math. Soc. 73\/} (2006), 745--761.

\bibitem{Semoradova-toappear}
{\sc Semor{\'a}dov{\'a}, I., and Siegl, P.}
\newblock {Diverging eigenvalues in domain truncations of Schr\"odinger
  operators with complex potentials}.
\newblock {\em SIAM J. Math. Anal.\/} (to appear).

\bibitem{Seneta-1976}
{\sc Seneta, E.}
\newblock {\em {Regularly varying functions}}.
\newblock Springer-Verlag, Berlin-New York, 1976.

\bibitem{Sjoestrand-2009}
{\sc Sj{\"o}strand, J.}
\newblock {Resolvent Estimates for Non-Selfadjoint Operators via Semigroups}.
\newblock In {\em {Around the Research of Vladimir Maz'ya III}}. Springer New
  York, 2009, pp.~359--384.

\bibitem{stahn2018decay}
{\sc Stahn, R.}
\newblock {Decay of $ C_0 $-semigroups and local decay of waves on even (and
  odd) dimensional exterior domains}.
\newblock {\em J. Evolution Equations 18}, 4 (2018), 1633--1674.

\bibitem{Trefethen-2005}
{\sc Trefethen, L.~N., and Embree, M.}
\newblock {\em {Spectra and Pseudospectra: The Behavior of Nonnormal Matrices
  and Operators}}.
\newblock Princeton University Press, 2005.

\bibitem{Tumanov-2021-280}
{\sc Tumanov, S.}
\newblock {Completeness theorem for the system of eigenfunctions of the complex
  Schr\"odinger operator $\mathcal{L}_c=-d^2/dx^2 + cx^{2/3}$}.
\newblock {\em J. Funct. Anal. 280\/} (2021), 108820.

\bibitem{Weidmann-2003}
{\sc Weidmann, J.}
\newblock {\em {Lineare Operatoren in Hilbertr\"aumen}}.
\newblock Vieweg+Teubner Verlag, 2003.

\bibitem{Zworski-2001-129}
{\sc Zworski, M.}
\newblock {A remark on a paper of E. B. Davies}.
\newblock {\em Proc. Amer. Math. Soc. 129\/} (2001), 2955--2957.

\end{thebibliography}
\bibliographystyle{acm}

\end{document}